\documentclass[11pt,leqno]{article}
\usepackage{hyperref,amssymb,amsmath,amsthm,mathtools}
\usepackage[english]{babel}
\usepackage[utf8]{inputenc}
\usepackage[abbrev,nobysame,lite]{amsrefs}
\AtBeginDocument{\def\MR#1{}} 

\usepackage{mathrsfs}
\usepackage[right = 2.5cm, left=2.5cm, top = 2.5cm, bottom =2.5cm]{geometry}
\geometry{letterpaper}
\usepackage[normalem]{ulem}
\renewcommand{\epsilon}{\varepsilon}

\makeatletter
\def\@seccntformat#1{\csname the#1\endcsname.\quad}
\makeatother

\usepackage{xcolor,tikz}
\usetikzlibrary{patterns}
\usepackage[outline]{contour}
\contourlength{1.7pt}


\makeatletter
\def\blfootnote{\xdef\@thefnmark{}\@footnotetext}
\makeatother

\usepackage{soul}

\newcommand{\eq}[2]{\begin{equation}\label{#1}#2\end{equation}}
\newcommand{\pde}[2]{\ensuremath{\left\{\begin{array}{l}#1\vspace{0.3cm}\\#2\end{array}\right.}}
\newcommand{\pdet}[3]{\ensuremath{\left\{\begin{array}{l}#1\vspace{0.3cm}\\#2\vspace{0.3cm}\\#3\end{array}\right.}}
\newcommand{\pdef}[4]{\ensuremath{\left\{\begin{array}{l}#1\vspace{0.3cm}\\#2\vspace{0.3cm}\\#3\vspace{0.3cm}\\#4\end{array}\right.}}

\newcommand{\R}{\ensuremath{\mathbb{R}}}
\newcommand{\C}{\ensuremath{\mathbb{C}}}
\newcommand{\Z}{\ensuremath{\mathbb{Z}}}
\newcommand{\N}{\ensuremath{\mathbb{N}}}
\newcommand{\T}{\ensuremath{\mathbb{T}}}
\newcommand{\cE}{\ensuremath{\mathcal{E}}}
\newcommand{\Schwartz}{\ensuremath{\mathscr{S}}}

\newcommand{\abs}[1]{\left\vert#1\right\vert}
\newcommand{\norm}[1]{\left\lVert#1\right\rVert}
\newcommand{\brak}[1]{\langle #1 \rangle} 
\newcommand{\Brak}[1]{\left\langle #1 \right\rangle} 
\newcommand{\set}[1]{\left\{ #1 \right\} }
\newcommand{\eps}{\varepsilon}
\newcommand{\grad}{\nabla}

\newtheorem{lem}{Lemma}[section]
\newtheorem{lemma}[lem]{Lemma}
\newtheorem{thrm}[lem]{Theorem}
\newtheorem{prop}[lem]{Proposition}
\newtheorem{cor}[lem]{Corollary}
\theoremstyle{definition}
\newtheorem{defn}[lem]{Definition}

\newtheorem{rem}[lem]{Remark}

\newcommand{\bpf}{\begin{proof}}
\newcommand{\epf}{\end{proof}}

\renewcommand{\Re}{\operatorname{Re}}
\renewcommand{\Im}{\operatorname{Im}}
\newcommand{\mc}{\mathcal}
\newcommand{\mb}{\mathbf}
\newcommand{\mr}{\mathrm}

\DeclareMathOperator{\supp}{supp}

\newcommand{\<}{\langle}
\renewcommand{\>}{\rangle}
\newcommand{\cL}{\mc{L}}

\newcommand{\cD}{\mc{D}}
\DeclareMathOperator{\Sp}{Span}
\newcommand{\cM}{\mc{M}}
\newcommand{\fM}{\mc{M}}
\DeclareMathOperator{\Div}{div}

\newcommand{\cT}{\mc{T}}
\newcommand{\BMO}{\mr{BMO}}

\newcommand{\ft}{\mathfrak t}
\newcommand{\fn}{\mathfrak n}
\newcommand{\fb}{\mathfrak b}
\DeclareMathOperator{\dist}{dist}
\DeclareMathOperator{\tr}{tr}

\newcommand{\fD}{\mathfrak D}
\DeclareMathOperator{\Curl}{curl}
\newcommand{\bD}{\mathbf D}

\newcommand{\fJ}{J}
\newcommand{\bd}{\mb d}
\newcommand{\bN}{\mb N}
\newcommand{\bF}{\mb F}
\newcommand{\bS}{\mathbb S}
\newcommand{\fE}{\mathfrak{E}}
\newcommand{\fF}{\mathfrak{F}}
\newcommand{\bK}{\mathbb{K}}
\newcommand{\onabla}{\overline{\nabla}}
\newcommand{\oDiv}{\overline{\Div}}
\newcommand{\RHS}{\mr{RHS}}
\newcommand{\oDelta}{\overline\Delta}

\mathtoolsset{showonlyrefs=true}

\title{Vortex filament solutions of the Navier-Stokes equations}
\author{Jacob Bedrossian\thanks{\footnotesize University of Maryland, College Park. \href{mailto:jacob@cscamm.umd.edu}{\texttt{jacob@cscamm.umd.edu} }. The  author  was  partially  supported  by  NSF
CAREER grant DMS-1552826 and NSF RNMS 1107444 (Ki-Net).} \and Pierre Germain\thanks{\footnotesize Courant Institute of Mathematical Sciences, New York University. \href{mailto:pgermain@cims.nyu.edu}{\texttt{pgermain@cims.nyu.edu}}. The  author  was  partially  supported  by  NSF grant DMS-1501019.} \and Benjamin Harrop-Griffiths\thanks{\footnotesize  Department of Mathematics, UCLA. \href{mailto:harropgriffiths@math.ucla.edu}{\texttt{harropgriffiths@math.ucla.edu}}. This work was completed while the author was supported by a Junior Fellow award from the Simons Foundation.}}
\date{}

\numberwithin{equation}{section}

\begin{document}

\maketitle
\begin{abstract}
We consider solutions of the Navier-Stokes equations in \(3d\) with vortex filament initial data of arbitrary circulation, that is, initial vorticity given by a divergence-free vector-valued measure of arbitrary mass supported on a smooth curve. First, we prove global well-posedness for perturbations of the Oseen vortex column in scaling-critical spaces. Second, we prove local well-posedness (in a sense to be made precise) when the filament is a smooth, closed, non-self-intersecting curve. Besides their physical interest, these results are the first to give well-posedness in a neighborhood of large self-similar solutions of $3d$ Navier-Stokes, as well as solutions which are locally approximately self-similar.
\end{abstract}
 
\setcounter{tocdepth}{1}
{\small\tableofcontents}

\blfootnote{\textit{MSC 2010 Subject Classification:} 35Q30; 76D03; 76D05.}

\section{Introduction}

\subsection{Vortex filaments}

The incompressible \(3d\) Navier-Stokes equations in vorticity form on $\mathbb R^3$, with viscosity normalized to $1$, are
\eq{NSE}{\tag{NS}
\boxed{
\partial_t \omega + u\cdot\nabla\omega - \omega\cdot\nabla u = \Delta \omega, 
}
}
where the velocity \(u\) and vorticity \(\omega\) are related by the Biot-Savart law
\eq{BS}{\tag{BS}
\boxed{
u = (-\Delta)^{-1}\nabla\times \omega.
}
}
As usual, we also have the divergence-free requirement on the vorticity
\begin{align}
\boxed{
\grad \cdot \omega = 0,
}
\end{align}
for which it suffices to choose divergence-free initial data. 

In this article we consider solutions with vortex filament initial data, i.e. the initial vorticity
\eq{IntroData}{
\omega(t=0) = \alpha\delta_\Gamma,
}
where the circulation \(\alpha\in \R\), and, for a smooth oriented curve \(\Gamma\subset\R^3\), we define $\delta_\Gamma$ to be the vector-valued measure satisfying for any test function \(\varphi\in \mc C^\infty_c(\R^3)\)
\[
\<\delta_\Gamma,\varphi\> = \int_\Gamma \varphi \cdot d\vec s.
\]

Of particular interest throughout this article will be the case of large Reynolds numbers, which corresponds to the limit \(|\alpha|\rightarrow\infty\). For $\alpha$ large, this data falls outside the realm of previously existing local well-posedness theory of mild solutions and, as the velocity is not in $L^2_{\mr{loc}}$, one cannot construct Leray-Hopf weak solutions either. 

The term `vortex filament' refers to a configuration of intense vorticity approximately concentrated along a curve. In experiments, such structures tend to move in a coherent manner over relatively long time-scales (see e.g. the experiments on knotted vortices \cite{KlecknerIrvine13}) and are also thought to be potentially related to intermittent behavior in turbulent flows (see e.g. \cite{DouadyEtAl91,MoffattEtAl94,PullinSaffman98}). 
The mathematical study of vortex filaments dates back to the work of Helmholtz~\cite{MR1579057}, with other early studies by Kelvin~\cite{Kelvin1880,Thomson1868} and da Rios ~\cite{daRios1906}. The latter formally derived the first dimension reduced model, now called the local induction approximation, which sought to simplify the dynamics to the evolution of a curve rather than an entire vorticity field.  
This was later rediscovered in the 1960s with a renewal of interest in vortex filament motion, where more refined models were also considered (see e.g.~\cite{Ricca96,JerrardSeis17,MajdaBertozzi} and the references therein). The binormal flow, which is derived from the local induction approximation, is an interesting equation in its own right and has been the subject of much research (e.g.~\cite{MR3363420,MR1602745,MR1090719,MR3353807,MR3059243,MR3429472,MR3116002,MR3204898,MR2862038,MR2472037,2018arXiv180706948B}).

The above derivations of dimension reduced models are not mathematically rigorous and essentially neglect viscosity, instead modeling the filament as a smooth object of finite width in the Euler equations. However, passing the width to zero in the Euler equations is a very singular limit. Work has been done to rigorously justify the dimension reduction conditional on certain hypotheses about the solution of Euler (see in particular~\cite{JerrardSeis17} and the references therein) and in the case of axisymmetric vortex rings \cite{BCM00,2016arXiv160902030G}. To our knowledge, a complete description of vortex filaments either in the Euler or Navier-Stokes equations remains open. In the viscous case, it is natural to model filaments with the data \eqref{IntroData} (as suggested as early as \cite{MR993821} at least) and for this initial data, with $\alpha$ large,  most of the fundamental questions of existence, uniqueness, continuity and dynamics in \(3d\) remain open. In this article we develop a framework to study general (smooth, non-self-intersecting) vortex filaments in \(3d\) and use this to prove several existence and uniqueness results that hold for arbitrary circulation numbers \(\alpha\in \R\).

\subsection{Criticality} The standard approach to constructing well-posed solutions in low regularity spaces is that of the mild solution.  
That is, one formally writes the solution of \eqref{NSE} with initial data \(\omega(t = 0) = \omega_0\) using the Duhamel formula as
\begin{equation}
\omega(t) = e^{t\Delta}\omega_0 - \int_0^te^{(t-s)\Delta}\Div\Bigl(u(s)\otimes \omega(s) - \omega(s)\otimes u(s)\Bigr)\,ds, \label{milddefDuhamel}
\end{equation}
and may then attempt to use a contraction mapping argument in a suitable space. 
A natural question is: what is the largest space of functions in which the mild formulation of \eqref{NSE} is well-posed (in the sense of Hadamard)? 
Taking coordinates \(y\in \R^3\), we observe that the equation \eqref{NSE} is invariant under the scaling
\[
\omega(t,y)\mapsto \lambda^2 \omega(\lambda^2 t,\lambda y)\quad\text{for}\quad \lambda>0,
\]
where we note that the corresponding scaling of the velocity is
\[
u(t,y)\mapsto \lambda u(\lambda^2t,\lambda y).
\]
Heuristic considerations suggest that the largest possible spaces in which one can obtain mild solutions of \eqref{NSE} are \emph{critical} in the sense that the corresponding norm is invariant under this scaling. 

We refer to a space $X$ as \emph{ultra-critical} if $X$ is critical and the Schwartz functions are \emph{not} dense (we will see some examples below). A common feature of such ultra-critical spaces is that they contain initial data that are invariant under the scaling, so one can expect self-similar solutions to live in precisely these classes. Another (closely related) common feature is that, in general, one only obtains global existence for small data, and local well-posedness if the distance of the data to Schwartz functions is small. The circulation number \(\alpha\) of a vortex filament is invariant under this scaling, and so one may view the problem of local well-posedness for initial data of the form \eqref{IntroData} as a large data problem in (ultra-)critical spaces.

\subsection{2d local well-posedness}
Taking \(\R^3 = \R^2\times \R\) with coordinates \((x,z)\in \R^2\times \R\), an explicit and important example of a vortex filament is obtained when $\Gamma$ is the line $\{ x = 0 \}$. It is called the Oseen vortex, and is given by
\eq{omegag}{
\omega^g =  \begin{bmatrix}0\\\frac 1 t G(\frac x{\sqrt{ t}})\end{bmatrix},\quad\text{where}\quad G(\xi) = \frac1{4\pi}e^{-\frac14 |\xi|^2},
}
with corresponding velocity field
\eq{ug}{
u^g = \begin{bmatrix}\frac 1 {\sqrt{ t}}g(\frac x{\sqrt{ t}})\\0\end{bmatrix},\quad\text{where}\quad g(\xi) = \frac1{2\pi}\frac{\xi^\perp}{|\xi|^2}(1 - e^{-\frac14 |\xi|^2}),
}
and \(\xi^\perp = (-\xi_2,\xi_1)^T\). It is natural to expect that the Oseen vortex provides the microscopic structure for the evolution of \textit{any} smooth, non-self-intersecting vortex filament. This expectation is, in some sense, confirmed by our results. The Oseen vortex is also a two-dimensional solution, and a detailed understanding of its stability in $2d$ is a key element of our investigations. 

We see that $L^1$ is the critical Lebesgue space for the vorticity in $2d$, whereas $\fM$, the space of finite measures equipped with the total variation norm, is an ultra-critical space. Note that the Oseen vortex \eqref{omegag} is a mild solution with ultra-critical initial\ data.  
Uniqueness of the Oseen vortex with $\delta$ initial data was proved in~\cite{MR2176270,MR2123378}, and then uniqueness for arbitrary $\fM$ initial data in the work of Gallagher and Gallay~\cite{MR2178064} (see also~\cite{MR3269635,MR2968938}). See also earlier work of Giga, Miyakawa, and Osada \cite{GM88}, Gallagher and Planchon~\cite{GallagherPlanchon02}, and~\cite{MR2200642}.

Besides a reduction to $2d$, another possible symmetry reduction is axisymmetry. For the \(3d\) problem with large circulation numbers, one can consider the axisymmetric case without swirl and take vortex ring initial data of the form \eqref{IntroData} with \(\Gamma = \{|x|=R\}\) for some \(R>0\). Global existence of axisymmetric solutions of this type with arbitrary circulation number was proved by Feng and \v{S}ver\'ak~\cite{MR3296145} and uniqueness by Gallay and \v{S}ver\'ak~\cite{2016arXiv160902030G} (see also~\cite{2015arXiv151001036G}).

Finally, we note that a careful analysis of the linearized stability of the Oseen vortex (and more general column vortices), without any symmetry assumptions, has recently been carried out for the inviscid case (\(\nu = 0\)) by Gallay and Smets~\cite{MR4010658,gallay2018spectral}.

\subsection{$3d$ local well-posedness}
The first critical well-posedness results for \eqref{NSE} dealt with spaces excluding self-similar data. Fujita and Kato~\cite{MR0142928,MR0166499} proved well-posedness of \eqref{NSE} for the velocity in the space \(\dot H^{\frac12}\) (see also~\cite{MR1145160}). Twenty years later Kato~\cite{MR760047} proved a similar result for the velocity in the larger space \(L^3\) (see also~\cite{MR833416}). Both of these results prove local well-posedness for arbitrary initial data and global well-posedness for sufficiently small initial data. See ~\cite{MR2776367} for a separate line of research focused on identifying data that are large in critical norms, but give rise to global solutions.

The next step in the theory was to deal with ultra-critical spaces, hence allowing self-similar data. Well-posedness in critical Besov spaces was proved by Cannone~\cite{MR1617394,MR1688096}, Planchon~\cite{MR1395675}, Chemin~\cite{MR1753481} and Cannone and Planchon~\cite{MR1373769}. Giga and Miyakwa~\cite{MR993821} considered solutions with the vorticity in the critical Morrey space defined as the set of signed measures satisfying $\sup_{r>0,y\in \R^3} r^{-1} \abs{\mu (B(y,r)) } < \infty$ and they observed there that data of the type \eqref{IntroData} falls into precisely this class. Their results were subsequently improved by Taylor~\cite{MR1187618} (see also~\cite{MR1179482,MR1274547}). The largest space of initial data for which well-posedness for small data is known is the space \(\BMO^{-1}\) of Koch and Tataru~\cite{MR1808843} (see also~\cite{MR2352218}).

Finally, let us mention the work of Jia and \v{S}ver\'ak~\cite{MR3179576}, who proved the existence of smooth self-similar solutions for arbitrarily large initial velocities that are locally H\"older continuous away from zero. This provides some large data solutions in the ultra-critical space $L^{3,\infty}$ (weak $L^3$). Furthermore, in~\cite{MR3341963} they proved a conditional \emph{non-uniqueness} result for self-similar initial data in $L^{3,\infty}$ under suitable spectral assumptions on the corresponding linearized operator (see also~\cite{GS17}). Roughly speaking, they prove that if the linearization around the self-similar solution (in self-similar variables) has eigenvalues that move from stable to unstable, then one can perform a bifurcation and construct additional smooth solutions. 
Our work will show that a similar bifurcation \emph{cannot} happen for the solution \eqref{omegag}.

\medskip

Our results, which will be presented in the next subsection, are the first to give local well-posedness for the $3d$ Navier-Stokes equations in a class of solutions containing large self-similar solutions.
The well-posedness class is essentially the mild solutions which are sufficiently close to the self-similar Gaussian in a certain scaling-critical sense as $t \searrow 0$. Subcritical contributions are vanishingly small for short time, so no smallness requirement will be present. Moreover, it will turn out that the curvature of the filament is effectively subcritical. Note that, in particular, this indeed rules out other self-similar solutions in a certain neighborhood of \eqref{omegag}, but does not rule out the existence of other self-similar solutions with the same initial data that are sufficiently different from \eqref{omegag}.

\subsection{A sketch of obtained results}

Our first results deal with perturbations of the straight filament $\alpha\delta_{\{x = 0\}}$. We prove local well-posedness for arbitrary perturbations in a subcritical space. For small perturbations in a critical space, we are able to obtain global solutions, which relax to the Oseen vortex. A simplified statement is as follows:

\begin{thrm}[Simplified statement] There exists a scale invariant space $X$ and $\epsilon(\alpha) >0$ such that: if $\| \mu^b\|_X < \epsilon$ and $\grad \cdot \mu^b = 0$ in the sense of distributions, there exists a unique global solution $\omega$ to~\eqref{NSE} with data
$$
\omega(t=0) = \alpha \delta_{\{x=0\}} + \mu^b
$$
which can be decomposed into
$$
\omega(t,x,z) = \begin{bmatrix}0\\\frac \alpha{ t}G(\frac x{\sqrt{ t}})\end{bmatrix} + \tfrac{1}{t} \Omega^c\left(\log t,\tfrac{x}{\sqrt{t}},z\right) + \omega^b(t,x,z),
$$
where 
$$
\sup_{t>0} \left[ \| \langle \xi \rangle^2 \Omega^c \|_{L^\infty_z L^2_\xi} + t^{\frac 14} \| \omega^b \|_{L^\infty_z L^{4/3}_x} \right] \lesssim \| \mu^b \|_{X}.
$$
Furthermore, the map $\mu^b \mapsto (\Omega^c,\omega^b)$ is continuous.
\end{thrm}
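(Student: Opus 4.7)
The plan is to write $\omega = \omega^g + \tilde\omega$, where $\omega^g$ is the Oseen column~\eqref{omegag}, and derive for $\tilde\omega$ the perturbed equation
\[
\partial_t \tilde\omega - \Delta \tilde\omega + \mathcal{L}^g\tilde\omega = -\tilde u \cdot \nabla \tilde\omega + \tilde\omega\cdot \nabla \tilde u, \qquad \tilde\omega(0)=\mu^b,
\]
where $\mathcal{L}^g$ collects the four linear terms coupling $\tilde\omega$ to the fixed profile $\omega^g$. Because $\omega^g$ is $z$-translation invariant, $\mathcal{L}^g$ acts as a $2d$ operator in $x$ with $z$ merely a parameter, and in self-similar $(x,t)$-variables $(\xi,\tau)=(x/\sqrt t,\log t)$ the combined operator $\Delta_x - \mathcal{L}^g$ becomes the time-independent Gallay--Wayne operator $\mathcal{L}_\alpha$ in $\xi$, tensored with a transverse heat piece in $z$. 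I would then split $\tilde\omega$ as
\[
\tilde\omega(t,x,z) = \tfrac{1}{t}\Omega^c(\log t, \tfrac{x}{\sqrt t}, z) + \omega^b(t,x,z),
\]
with $\Omega^c$ evolving by a self-similar equation driven by $\mathcal{L}_\alpha$ plus a $z$-diffusion and $\omega^b$ evolving essentially by the heat semigroup, the projection onto the $\Omega^c$ piece being defined by a cutoff onto critical scales in $x$.

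For the linear step, I would establish: (a) on the critical side, $\mathcal{L}_\alpha$ generates a $C^0$-semigroup on a weighted space like $L^2(\langle\xi\rangle^4\,d\xi)$ bounded uniformly in the transverse Fourier variable dual to $z$, using the Gallay--Wayne spectral picture for the $2d$-linearized Oseen vortex (together with Gallagher--Gallay uniqueness to identify the $\delta_{\{x=0\}}$ data with the Gaussian); (b) on the subcritical side, the heat semigroup gives $\|e^{t\Delta}\mu^b\|_{L^\infty_z L^{4/3}_x} \lesssim t^{-1/4}\|\mu^b\|_X$ for the correct choice of scale invariant space $X$ (a mixed space of measures, flat in $z$ and integrating against the Biot--Savart kernel in $x$). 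The key structural gain is that the full Biot--Savart law is \emph{not} needed for $\Omega^c$: the inverse Laplacian and curl decompose cleanly between the $(\xi)$-plane and the $z$-direction, so the velocity induced by $\Omega^c$ splits into a $2d$ rotational flow (absorbed into $\mathcal{L}_\alpha$) and genuinely $3d$ pieces carrying a $\partial_z$, which are subcritical.

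The nonlinear step is a trilinear bookkeeping: the quadratic term $\tilde u\cdot\nabla\tilde\omega-\tilde\omega\cdot\nabla\tilde u$ splits into (critical)$\times$(critical), mixed, and (subcritical)$\times$(subcritical) contributions. Of these, the only contribution to the critical channel is the $z$-independent $2d$ self-interaction of $\Omega^c$; everything else either contains a $\partial_z$ (hence is subcritical and goes into $\omega^b$) or is directly subcritical by $t$-scaling. I would then set up a contraction mapping on the product space
\[
\bigl\{(\Omega^c,\omega^b) : \sup_{t>0}\bigl[\|\langle\xi\rangle^2\Omega^c\|_{L^\infty_z L^2_\xi} + t^{1/4}\|\omega^b\|_{L^\infty_z L^{4/3}_x}\bigr]\le C\|\mu^b\|_X\bigr\},
\]
with the Duhamel map built from the two semigroups above. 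Smallness of $\|\mu^b\|_X$, combined with the uniform semigroup bounds (with constants depending on $\alpha$, which produces the $\epsilon(\alpha)$), closes the fixed point and yields continuity of $\mu^b\mapsto(\Omega^c,\omega^b)$.

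The main obstacle is the linear analysis of $\mathcal{L}_\alpha$ in the critical weighted space, \emph{uniformly in the transverse Fourier variable} $\eta$ dual to $z$. For $\eta=0$ one inherits the $2d$ Gallay--Wayne spectral gap (with slow neutral translation modes corresponding to displacement of the vortex column), but these translation modes must be tracked as $\eta$ varies: they are precisely the modes that encode bending of the filament, and they do not decay but are merely bounded, so one must show they do not destabilize the nonlinear iteration. This forces a careful decomposition of $\Omega^c$ into decaying and neutral parts and an estimate showing that the nonlinearity feeds into the neutral sector only through terms that gain time-integrability, which is the delicate heart of the argument.
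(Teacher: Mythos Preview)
Your high-level architecture---Oseen plus core plus background, self-similar variables for the core, Fourier in $z$, contraction in a product space---matches the paper. But two of your concrete mechanisms are off in ways that matter.

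First, the splitting $\tilde\omega = \tfrac{1}{t}\Omega^c + \omega^b$ is \emph{not} defined by a ``cutoff onto critical scales in $x$.'' In the paper the decomposition is by PDE: $\widetilde\omega^c$ (hence $\Omega^c$) solves the full nonlinear equation with initial data $\alpha\delta_{\{x=0\}}$, while $\omega^b$ solves it with initial data $\mu^b$; the coupling between the two pieces is through the shared velocity $u$. There is no projection operator. This matters because your cutoff scheme would produce commutator errors between the two channels that you would then have to estimate, whereas the PDE splitting makes the Duhamel formulas~\eqref{eqPhi} clean.

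Second, and more seriously, you write that $\omega^b$ ``evolves essentially by the heat semigroup.'' It does not: the relevant linear propagator $\mathbb{S}(t,s)$ is advection--diffusion--\emph{stretching} by the Oseen column, and the stretching term $\omega\cdot\nabla u^g$ has $\|\nabla u^g\|_{L^\infty}\sim t^{-1}$, which is non-integrable at $t=0$. Extending $\mathbb{S}(t,s)$ down to $s=0$ is a genuine obstacle (Section~\ref{sec:StretchLin}), resolved by observing that the auxiliary scalar $\psi = x\cdot\omega^x - 2t\partial_z\omega^z$ satisfies a pure transport--diffusion equation with no stretching, and that the stretching term in the $\omega^x$ equation can then be rewritten as $W\psi$ with $W$ time-integrable in $\ell^\infty L^1_t L^2_x$. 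This is why the data norm~\eqref{topologydata} contains the odd-looking piece $\|x\cdot(\mu^b)^x\|_{\ell^1 B_z L^2_x}$.

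Finally, your identification of the ``main obstacle'' as tracking neutral translation modes uniformly in the transverse frequency is not how the paper proceeds. The linear propagator $S(\tau,\sigma)$ is analyzed by writing the $z$-Fourier-transformed system~\eqref{FLinear} as two decoupled 2d semigroups (one the Gallay--Wayne operator, one the operator from Burgers vortices) plus a coupling $Z$ that carries an explicit factor $e^{\tau/2}\zeta$ and is therefore time-integrable uniformly in $\zeta$ (Corollary~\ref{cor:Pert}). The mean-zero condition needed for the spectral gap is supplied structurally: all forcing terms in the $\Omega^c$ equation are in divergence form, so $\int F^z\,d\xi = 0$ automatically. No explicit neutral-mode bookkeeping is required.
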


Our second main result deals with perturbations of arbitrary vortex filaments. Consider a closed, non-self-intersecting curve $\Gamma$, and define $\Phi$ a smooth map from a tubular neighborhood of $\{(0,z):z\in \T\}\subset \R^2\times \T$ to a tubular neighborhood of $\Gamma$ (we refer to the next section for a more detailed description).

\begin{thrm}[Simplified statement]
For any $\mu^b \in W^{1, \frac{12}{11}}$ satisfying $\grad \cdot \mu^b = 0$ in the sense of distributions, there exists $T>0$ and a unique solution $\omega$ to~\eqref{NSE} on $[0,T]$ with data
$$
\omega(t=0) = \alpha \delta_\Gamma + \mu^b
$$
which, in a tubular neighborhood of $\Gamma$, can be decomposed into
$$
\left(\left(\det \grad \Phi \right)(\grad \Phi)^{-1}\right)\!(x,z)\; \omega(t,\Phi(x,z)) = 
\begin{bmatrix}0\\\frac \alpha{ t}G(\frac x{\sqrt{ t}})\end{bmatrix} + \tfrac{1}{t} \Omega^c\left(\log t,\tfrac{x}{\sqrt{t}},z\right) + \omega^b(t,x,z),
$$
where 
$$
\sup_{0<t\leq T} \left[ \| \langle \xi \rangle^2 \Omega^c \|_{L^\infty_z L^2_\xi} + t^{\frac14} \| \omega^b \|_{L^\infty_z L^{4/3}_x} \right] < \infty.
$$
Furthermore, the map $\mu^b \mapsto (\Omega^c,\omega^b)$ is continuous.
\end{thrm}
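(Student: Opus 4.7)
I would reduce the curved-filament problem to the straight-filament case (the first theorem) by straightening the tubular neighborhood of $\Gamma$ via $\Phi$. Setting
$$
\tilde\omega(t,x,z) := (\det\nabla\Phi)(\nabla\Phi)^{-1}(x,z)\,\omega(t,\Phi(x,z)),
$$
the Piola transform of $\omega$ viewed as a $2$-form, preserves the divergence-free condition and maps the initial vorticity to $\alpha\delta_{\{x=0\}}+\tilde\mu^b$ on a neighborhood of $\{0\}\times\T\subset\R^2\times\T$. Pulling back \eqref{NSE} and \eqref{BS} through $\Phi$ yields a modified system
$$
\partial_t\tilde\omega -\Delta\tilde\omega + \tilde u\cdot\nabla\tilde\omega - \tilde\omega\cdot\nabla\tilde u = \cE[\tilde\omega,\tilde u],\qquad \tilde u = \tilde{\mathcal{B}}[\tilde\omega],
$$
where $\cE$ is a second-order linear differential operator and $\tilde{\mathcal{B}}$ a modified Biot-Savart operator, with smooth coefficients depending on the curvature and torsion of $\Gamma$; choosing $\Phi$ as the arc-length exponential map on the normal bundle, $\cE$ and $\tilde{\mathcal{B}}-\mathcal{B}_0$ carry an explicit factor of $|x|$ (where $\mathcal{B}_0$ denotes the flat Biot-Savart operator).

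\textbf{Ansatz and contraction mapping.} I would substitute the ansatz
$$
\tilde\omega(t,x,z) = \omega^g(t,x) + \tfrac{1}{t}\Omega^c\bigl(\log t,\tfrac{x}{\sqrt t},z\bigr) + \omega^b(t,x,z)
$$
into the modified system. Since $\omega^g$ solves the flat $2d$ Navier-Stokes, the resulting coupled Duhamel equations for $(\Omega^c,\omega^b)$ inherit the linear and bilinear structure used in the first theorem, plus additional forcing terms from $\cE[\omega^g,u^g]$, from $\cE$ applied to $\Omega^c$ and $\omega^b$, and from $\tilde{\mathcal{B}}-\mathcal{B}_0$. The factor $|x|$ in $\cE$, combined with the Gaussian scale $|x|\sim\sqrt t$, renders all such contributions subcritical in the norm
$$
\|(\Omega^c,\omega^b)\|_X := \sup_{0<t\leq T}\Bigl[\|\langle\xi\rangle^2\Omega^c\|_{L^\infty_z L^2_\xi} + t^{1/4}\|\omega^b\|_{L^\infty_z L^{4/3}_x}\Bigr],
$$
gaining at least a power of $\sqrt t$; for $T$ small (depending on $\alpha$, $\Gamma$, and $\|\mu^b\|_{W^{1,12/11}}$) they can be absorbed without any smallness on $\mu^b$. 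A contraction mapping argument in $X$ modelled on the proof of the first theorem then produces $(\Omega^c,\omega^b)$ with continuous dependence. Globally, $\omega$ is defined on $\R^3$ by pushing the decomposition forward under $\Phi$ inside the tubular neighborhood and combining, via a smooth cutoff, with a standard mild-solution construction for the portion of $\mu^b$ lying outside, the $W^{1,12/11}$ regularity being enough to close the heat-flow estimates there.

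\textbf{Uniqueness and main obstacle.} Uniqueness in the class characterised by finiteness of $X$ should follow from a weak-strong style estimate: the difference of two solutions starts at zero in $X$ and satisfies a linear equation whose bilinear coefficients are small on a short sub-interval, and Gronwall closes. The principal technical obstacle, as in the first theorem but magnified by the geometry, is the derivation of bilinear and linear estimates for $\tilde{\mathcal{B}}$ and $\cE$ in the anisotropic scaling-critical spaces above: one must show that the nonlocality of Biot-Savart does not spoil the $L^\infty_z L^2_\xi$ control of $\Omega^c$ despite contributions from vorticity supported far from $\Gamma$, and that the remainder terms in $\cE$ integrate to finite quantities in the self-similar variable $\tau=\log t$. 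Checking these estimates, and in particular verifying that the forcing $\cE[\omega^g,u^g]$ only generates an $\Omega^c$-contribution in the weighted $z$-uniform class, is where I expect the bulk of the technical work.
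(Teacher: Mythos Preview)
Your intuition is correct---the curvature is subcritical and the Piola transform is the right pullback---but the contraction does not close in the norm $X$ you write down. The obstruction is that $\cE=\Delta_\Phi-\Delta$ is genuinely second order at top: $\Delta_\Phi=\Delta+A^{ij}\partial_i\partial_j+B^j\partial_j+C$ with $|A^{ij}|\lesssim|x|$ but $B^j,C$ merely bounded. When the term $A^{ij}\partial_i\partial_j\eta^{c1}$ is placed on the right-hand side and inverted by the straight-filament propagator (or the heat semigroup), the smoothing absorbs at most one divergence; the remaining derivative on $\eta^{c1}$ is not controlled by $X$. Concretely, in self-similar time the relevant Duhamel integrand behaves like $(t-s)^{-1}s^{-1/2}$, which is not integrable at $s=t$. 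The factor of $|x|\sim\sqrt t$ you invoke buys back exactly one half-power of $t$, which matches the extra half-power lost to the second derivative, so there is no net gain and no smallness to absorb.

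The paper's fix is twofold. First, each of $\omega^c$ and $\omega^b$ is split into a primary piece $\omega^{\ast 1}$ (in straightened variables, evolved with the straight-filament propagators) and a secondary piece $\omega^{\ast 2}$ (in physical variables, evolved with the bare heat semigroup); the Laplacian errors $\cE^g,\cE^c,\cE^b$ are routed entirely into the $\omega^{\ast 2}$ equations. Second, the norm on the primary pieces is upgraded to carry an extra $\langle\overline\nabla\rangle^\beta$ with $\beta\in(0,\tfrac14)$; this fractional regularity is precisely what makes the $(t-s)^{-1+\beta/2}s^{-1/2-\beta/2}$ integral converge. There is also a second difficulty your outline underestimates: the anisotropic space $L^\infty_zL^{4/3}_x$ for $\omega^b$ cannot be matched to the isotropic far field by a cutoff, because the two scales of regularity (high along $\Gamma$, low transversally) do not interpolate. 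The paper resolves this by replacing the background norm with an isotropic $W^{1,4/3}$-type critical norm (hence the $W^{1,12/11}$ hypothesis on $\mu^b$) and by measuring $\omega^{c2}$ simultaneously in an anisotropic norm near $\Gamma$ and an isotropic norm away from it, with a careful overlap argument.
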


We note that in the case of the curved filament we do not expect the above decomposition of the vorticity to be valid on longer timescales. 
Indeed, at high Reynolds number, the filament will evolve in a fully nonlinear manner, e.g. under the local induction approximation and its refinements \cite{Ricca96,MajdaBertozzi}. 
A rigorous proof of these dynamics in general remains an important open problem and our results may be viewed as a first step towards a solution (see \cite{BCM00,JerrardSeis17} and the references therein for progress on the inviscid problem and~\cite{2016arXiv160902030G} for the case of vortex rings in Navier-Stokes). 

\subsection*{Notations and conventions} 
Throughout this article we will typically not distinguish the target space of various functions, using \(L^p_x\) to denote the usual Lebesgue space with measure \(dx\) for scalar fields, vector fields and tensor fields alike.

We follow the following conventions regarding vector calculus: 
\begin{itemize}
\item $(a_{ij})_{ij}$, $(a^i_j)_{ij}$, $(a^{ij})_{ij}$ all denote the matrix with line index $i$, column index $j$.
\item If $a$, $b$ are vectors, then $a \otimes b = (a^i b^j)_{ij}$.
\item If $u$ is a vector field, then $\nabla u = (\partial_j u^i)_{ij}$.
\item Given a (\(k\times k\)-)tensor field $f$ (i.e. a \(k\times k\)  matrix-valued function) we write $\nabla \cdot f = \operatorname{div} f = (\partial_i f^{ij})_{j}$, where we use the Einstein summation convention, i.e. \(\partial_if^{ij} = \sum_i \partial_if^{ij}\)
\item Given two vector fields $f$ and $g$ we define the bilinear operator \(B[f,g] = \Div(f \otimes g - g \otimes f)\) 
\item For both vectors and matrices, we denote $\abs{v}$ to be the usual norm induced by the Euclidean metric.
\end{itemize}

We denote $g \lesssim f$ if there exists a constant $C > 0$ such that $g \leq Cf$ and we use $g \lesssim_{\alpha,\beta,\dots} f$ to emphasize dependence of $C$ on parameters $\alpha,\beta,\dots$. 
We similarly write $g \approx f$ if we have both $g \lesssim f$ and $f \lesssim g$.

As usual we denote Sobolev spaces as (with the usual extension to $\textup{ess}\sup$ for $p = \infty$) 
\[ \norm{f}_{W^{k,p}} = \left(\sum_{|\alpha| \leq k}\int \abs{\grad^\alpha f}^p dy \right)^{\frac1p},
\]
and Fourier multipliers $m(\frac1i\grad)f$ as $\widehat{m(\frac1i\grad)f}(\xi) = m(\xi) \widehat{f}(\xi)$ (with the usual specialization in the event that we are only taking the Fourier transform in $z$). 
We use $\brak{x} = (1 + \abs{x}^2)^{1/2}$. As is customary, we subsequently write $\sup$ indistinguishably from $\textup{ess} \sup$ for notational simplicity.

Finally, the coefficient
$$
a(\tau) = 1 - e^{-\tau}
$$
will be handy in many estimates.

\subsection*{Acknowledgements} The authors wish to thank the anonymous referees for their careful reading of the manuscript and their numerous insightful and constructive comments.

\section{Statement of results and outline of the proof}

\subsection{Function spaces}
In order to state our results, it will be useful to first define several function spaces. 

To handle the self-similar part of the solution, for \(1\leq p<\infty\) and \(m\geq 0\) we define the weighted Lebesgue space \(L^p_\xi(m)\) with norm
\[
\|f\|_{L^p_\xi(m)}^p = \int_{\R^2}\<\xi\>^{pm}|f(\xi)|^p\,d\xi.
\]
In order to control the eigenfunctions of several linear operators, we extend this definition to \(m = \infty\) by defining the Hilbert space \(L^2_\xi(\infty)\) with inner product
\[
\< f,h\>_{L^2_\xi(\infty)} = \int_{\R^2} f(\xi)\cdot\overline h(\xi)\,G(\xi)^{-1}\,d\xi,
\]
where the Gaussian \(G\) is defined as in \eqref{omegag}.

We adopt the following normalization for the Fourier transform in the $z$-direction:
\[
\widehat f(\zeta) = \frac1{\sqrt{2\pi}}\int f(z)e^{-iz\zeta}\,dz.
\]
To control the regularity in the translation-invariant \(z\)-direction, for a Banach space \(X\) of functions defined on \(\R^2\), we define the X-valued Wiener algebra \(B_zX\) as the space of functions defined on \(\R^2\times \R\) or \(\R^2\times \T\) with norm
\[
\|f\|_{B_zX} = \begin{cases}\displaystyle\int_\R \|\widehat f(\cdot,\zeta)\|_X\,d\zeta,&\text{ for }(x,z)\in\R^2\times \R,\vspace{0.3cm}\\\displaystyle\sum\limits_{\zeta\in \Z}\|\widehat f(\cdot,\zeta)\|_X,&\text{ for }(x,z)\in\R^2\times \T.\end{cases}
\]

For initial data in ultra-critical spaces one generally cannot expect to have strong continuity up to time \(t = 0\). As a consequence, given a space of functions \(X\) continuously embedded in the space of tempered distributions \(\Schwartz'\), we say that \(\omega\in \mc C_w([0,T];X)\) if \(\omega\in L^\infty([0,T];X)\) and for all \(t_0\in[0,T]\) and test functions \(\phi\in \Schwartz\) we have
\[
\lim\limits_{\substack{t\rightarrow t_0\\t\in[0,T]}}\<\omega(t),\phi\>=\<\omega(t_0),\phi\>.
\]

In the case of the straight filament we require function spaces with some additional spatial summability. For a Sobolev-type space \(X\), and a smooth partition of unity \(1 = \sum_{M\in 2^\Z} \chi_M\) so that \(\chi_M = \chi_M(x)\) is a smooth, non-negative, radially symmetric, bump function supported in the annulus \(\{\frac M2\leq |x|\leq 2M\}\), we define
\begin{align}
\|f\|_{\ell^p X}^p = \sum\limits_{M\in 2^\Z}\|\chi_M f\|_X^p, \label{def:ell}
\end{align}
with the obvious modification for \(p = \infty\).

Finally, we give a rigorous definition of what we mean by a \emph{mild solution} of \eqref{NSE}:

\begin{defn}\label{milddef}
Let $\cM^{\frac{3}{2}}$ be the space of vector-valued regular Borel measures such that $$\norm{\mu}_{\cM^{\frac32}} := \sup_{r>0,y\in \R^3} \Big\{r^{-1} \abs{\mu\left(B(y,r)\right)}\Big\} < \infty.$$ 
Given a $T > 0$, we call a function $\omega \in \mc C_w([0,T];\cM^{\frac32})$ a \emph{mild solution} to \eqref{NSE} with initial data $\omega_0 \in \cM^{\frac32}$ provided 
\begin{itemize}
\item[(i)] the initial data is attained $\omega(0) = \omega_0$ (hence $\omega(t) \rightharpoonup^\ast \omega(0)$ as $t \searrow 0$); 
\item[(ii)] the equations are satisfied in the sense of Duhamel's formula \eqref{milddefDuhamel} (and in particular, the Duhamel integral is well-defined);
\item[(iii)] $\omega(t)$ is divergence free in the sense of distributions for all $t \in [0,T]$.  
\end{itemize}
\end{defn}

\subsection{The straight filament}
We are now in a position to state our main result for critical perturbations of the straight filament:
\begin{thrm}[Critical perturbations] \label{thm:StrtFil} For any $\alpha\in \R$, and any $m\geq 2$, there exists $\epsilon_0 > 0$ such that if \(\mu^b\colon\R^2\times \R\rightarrow\R^3\) or \(\mu^b\colon\R^2\times \T\rightarrow\R^3\) satisfies \(\nabla\cdot\mu^b = 0\) in the sense of distributions and the estimate
\eq{topologydata}{
\| \mu^b \|_{B_z L^1_x} +  \|x \cdot (\mu^b)^x \|_{\ell^1B_z L^{2}_x} = \epsilon < \epsilon_0,
}
where \(\mu^b=((\mu^b)^x,(\mu^b)^z)^T\in \R^2\times \R\), then the following holds: 
\begin{itemize}
\item[(i)](Existence) There exists a global mild solution of the Navier-Stokes equation~\eqref{NSE} with initial data
\eq{StraightInit}{
\omega(t=0) = 
\begin{bmatrix}
0 \\ 
\alpha \delta_{x = 0}
\end{bmatrix} + \mu^b,
}
which can be decomposed into
\begin{align}
\omega(t,x,z)  = 
\begin{bmatrix}
0 \\
\frac{\alpha}{t}G\left(\frac{x}{\sqrt{t}}\right)
\end{bmatrix}
+ 
\tfrac{1}{t}\Omega^c \left(\log t, \tfrac{x}{\sqrt{t}},z \right) + \omega^b(t,x,z),  \label{def:destrt}
\end{align}
where the ``core" part $\Omega^c$ and the ``background" part $\omega^b$ satisfy the estimates
\eq{topologysolution}{
\sup_{-\infty<\tau<\infty} \norm{\Omega^c(\tau)}_{B_z L^2_\xi(m)}  + \sup_{0<t<\infty} t^{\frac14}\norm{\omega^b(t)}_{B_z L^{4/3}_x}\lesssim \eps.  
}
\item[(ii)](Uniqueness) If $\omega'$ is another mild solution with initial data \eqref{StraightInit} admitting the decomposition
$$
\omega'(t,x,z) = 
\begin{bmatrix}
0 \\ 
\frac{\alpha}{t}G\left(\frac{x}{\sqrt{t}}\right)
\end{bmatrix}
+ 
\tfrac{1}{t}(\Omega^c)' \left(\log t, \tfrac{x}{\sqrt{t}},z \right) + (\omega^b)'(t,x,z),
$$
where $(\Omega^c)'$ and $(\omega^b)'$ satisfy the bounds~\eqref{topologysolution}, then $\omega = \omega'$.
\item[(iii)](Lipschitz dependence) The solution map from the data to solution
$$
\mu^b \mapsto (\omega^b,\Omega^c)
$$
is locally Lipschitz continuous if one endows the data space with the norm~\eqref{topologydata} and the solution space with the norm~\eqref{topologysolution}. Similarly, the solution also depends on $\alpha$ in a locally Lipschitz manner.
\end{itemize}
\end{thrm}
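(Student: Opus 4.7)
The strategy is a contraction mapping for the pair $(\Omega^c,\omega^b)$ in the Banach space defined by the norms in \eqref{topologysolution}. Substituting the ansatz \eqref{def:destrt} into the Duhamel formulation \eqref{milddefDuhamel} and subtracting the Oseen evolution yields a coupled system: the background part $\omega^b$ satisfies a mild equation driven by the $3d$ heat semigroup, with initial data $\mu^b$ and forcing obtained by projecting the bilinear interaction $B[u,\omega]$ evaluated on $\omega = \omega^g + t^{-1}\Omega^c(\log t,x/\sqrt t,z) + \omega^b$; the core part $\Omega^c$, expressed in self-similar variables $(\tau,\xi) = (\log t, x/\sqrt t)$, satisfies a mild equation generated by the linearization of the self-similar $2d$ Navier--Stokes operator around the Oseen profile $\alpha G(\xi)e_z$, with the remaining projection of the bilinear terms as forcing. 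The velocity is recovered from the full $\omega$ via \eqref{BS}.

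Taking the Fourier transform in $z$ at frequency $\zeta$ diagonalizes the vertical direction and reduces the core equation to a family, parametrized by $\zeta$, of $2d$ time-dependent evolutions with generator $\mathcal{L}_{\alpha,\zeta\sqrt t}$, equal to the Gallay--Wayne linearized Oseen operator at $\zeta=0$ plus a $-\zeta^2 t$ vertical damping together with $O(\zeta\sqrt t)$ stretching couplings coming from Biot--Savart. The crucial linear input is a decay estimate on this evolution in $L^2_\xi(m)$ (with $m\geq 2$), providing exponential decay on the subspace orthogonal to the neutral modes of translation and rescaling of the vortex. The weight $\langle\xi\rangle^{2m}$ is chosen so that the rotation term $\alpha u^g\cdot\nabla_\xi$ is skew-adjoint (since $u^g$ is divergence-free and tangent to circles, $u^g\cdot\nabla\langle\xi\rangle^{2m}=0$), which prevents the linear evolution from growing as $|\alpha|\to\infty$; as in the Gallay--Smets analysis cited in the introduction, at large $|\alpha|$ and $\zeta\neq 0$ this rotation produces additional dispersive decay. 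Small $|\zeta\sqrt t|$ is treated by perturbing the Gallay--Wayne spectral gap at $\zeta=0$, and large $|\zeta\sqrt t|$ by the parabolic $-\zeta^2 t$ damping. Neutral translation modes are removed by a gauge choice implicit in the decomposition together with the divergence-free constraint.

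The nonlinear step consists of closing the bilinear estimates for $B[\cdot,\cdot]$ in the product space. The self-interaction $B[\omega^b,\omega^b]$ is controlled in the $\omega^b$ norm by standard Koch--Tataru-type parabolic estimates in $L^{4/3}_x$; $B[\Omega^c,\Omega^c]$ is controlled in $L^2_\xi(m)$ by $2d$ convolution estimates exploiting the Gaussian weight; cross terms with $\omega^g$ use the $t^{-1/2}$ self-similar decay and near-filament localization of $u^g$; and cross terms between $\Omega^c$ and $\omega^b$ exploit the Gaussian decay of the former. The Wiener algebra structure of $B_z$ is essential here, since it forms a Banach algebra under convolution in $\zeta$ and absorbs the $z$-direction products without derivative loss. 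The linear data contributions, in particular $e^{t\Delta}\mu^b$, are bounded directly from \eqref{topologydata} via explicit heat kernel computations. The principal obstacle is the uniform-in-$\zeta$ decay analysis of $\mathcal{L}_{\alpha,\zeta\sqrt t}$ across the three regimes described above; once this linear theory is in hand, existence, uniqueness within the class \eqref{topologysolution}, and Lipschitz dependence on both $\mu^b$ and $\alpha$ follow as a single package from the contraction mapping principle applied to the coupled fixed point problem.
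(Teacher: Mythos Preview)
Your overall architecture---contraction on $(\Omega^c,\omega^b)$, self-similar variables for the core, Fourier in $z$ to reduce to $2d$ problems, Wiener algebra for products---matches the paper. The linear analysis of the core propagator you sketch is close to the paper's Section~3. But there is a genuine gap in the treatment of $\omega^b$.

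You propose to evolve $\omega^b$ by the heat semigroup $e^{t\Delta}$ and treat all bilinear interactions as forcing. This fails for the term $B[u^g,\omega^b]$: since $\|u^g(s)\|_{L^\infty}\sim|\alpha|s^{-1/2}$, one computes
\[
t^{1/4}\int_0^t\|e^{(t-s)\Delta}\nabla\cdot(u^g\otimes\omega^b)\|_{B_zL^{4/3}_x}\,ds
\;\lesssim\; |\alpha|\,t^{1/4}\!\int_0^t(t-s)^{-1/2}s^{-3/4}\,ds\cdot\sup_s s^{1/4}\|\omega^b\|_{B_zL^{4/3}_x},
\]
and the time integral equals $Ct^{-1/4}$, giving an $O(|\alpha|)$ contribution with no smallness. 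For the arbitrary-$\alpha$ statement of the theorem this breaks the contraction. The paper's remedy is to absorb the Oseen advection \emph{and stretching} into a second linear propagator $\mathbb{S}(t,s)$ solving $\partial_t\omega+u^g\cdot\nabla\omega-\omega\cdot\nabla u^g=\Delta\omega$, and to run Duhamel for $\omega^b$ with $\mathbb{S}$ rather than $e^{t\Delta}$ (Section~4 and Section~5). Proving $L^{4/3}$ bounds on $\mathbb{S}(t,0)\mu^b$ uniform in $\alpha$ is nontrivial precisely because of the stretching term $\omega\cdot\nabla u^g$, whose coefficient behaves like $\alpha/t$; the paper resolves this by observing that $\psi:=x\cdot\omega^x-2t\partial_z\omega^z$ satisfies a pure advection--diffusion equation with no stretching, which is exactly why the second data norm $\|x\cdot(\mu^b)^x\|_{\ell^1B_zL^2_x}$ appears in \eqref{topologydata}. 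Your plan neither introduces $\mathbb{S}$ nor explains the role of this second norm, and the appeal to ``explicit heat kernel computations'' for the linear data would not by itself produce it. Finally, your ``gauge choice'' for neutral modes is not how the paper proceeds: the zero mode of the core linearization is avoided because the forcing is always in divergence form, so that $\int F^z\,d\xi=0$ and the improved decay estimate applies directly.
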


\begin{rem}
This theorem remains true if $B_z$ is replaced everywhere by the space of Fourier transform of measures $\widehat{\fM}$: for a Banach space $X$, $\widehat{\fM}_z X_x$ is the space of Fourier transforms (in $z$) of $X$ (in $x$)-valued measures. The proof is identical. This framework allows data and solutions that do not decay as $z \to \infty$.
\end{rem}
\begin{rem}
Further refinements of Theorem~\ref{thm:StrtFil} part (ii) have been investigated in~\cite{bedrossian2020uniqueness}, where it is shown that any (not too singular) solution which is `sufficiently two dimensional' in a suitable sense as $t \searrow 0$ is the same as the solution we construct (one can also consider the condition to be that the initial datum is attained in a suitably stronger sense than just $\mc C_w([0,T];\cM^{\frac32})$.  
\end{rem}

The proof of Theorem~\ref{thm:StrtFil} follows from applying the contraction principle to the equations satisfied by the core and background pieces.
The decomposition is reminiscent of that used in the proof of uniqueness in \cite{MR2178064} and the contraction principle variant thereof used in \cite{MR3269635}. 
 In order to obtain bounds for these pieces we first introduce the self-similar coordinates
\[
\tau = \log t,\qquad \xi = \frac{x}{\sqrt{ t}},\qquad z = z,
\]
where we note that as \(\omega^g\) (defined as in \eqref{omegag}) is translation-invariant in \(z\) we do not rescale the \(z\)-coordinate. We then define
\[
\Omega(\tau,\xi,z) =  e^\tau  \omega(e^\tau,e^{\frac \tau2}\xi,z),\qquad U(\tau,\xi,z) =e^{\frac\tau2} u (e^\tau,e^{\frac \tau2}\xi,z),
\]
and may write the equation \eqref{NSE} as
\eq{NSESS}{
\partial_\tau \Omega + U\cdot\onabla\Omega - \Omega\cdot\onabla U = \left(\cL + e^\tau\partial_z^2\right)\Omega,
}
where the rescaled gradient \(\onabla\) and the \(2d\) Fokker-Planck operator \(\cL\) are defined by,
\[
\onabla = \begin{bmatrix}\nabla_\xi\\e^{\frac \tau 2}\partial_z\end{bmatrix},\qquad \cL = \Delta_\xi + \frac12 \xi \cdot\nabla_\xi + 1.
\]
We also note that under this change of variables the Biot-Savart law becomes
\eq{BSSS}{
U = (- \oDelta)^{-1}\onabla\times\Omega,
}
where the rescaled Laplacian,
\[
\oDelta = \Delta_\xi + e^\tau\partial_z^2.
\]
Finally, we will denote $\Omega^g$ and $U^g$ for the rescaled versions of $\omega^g$ and $u^g$,
$$
\Omega^g(\tau,\xi,z) = \begin{bmatrix}0\\  G(\xi)\end{bmatrix}, \qquad U^g(\tau,\xi,z) = \begin{bmatrix} g(\xi) \\ 0\end{bmatrix}.
$$

The core piece, $\Omega^c$, is taken to satisfy the equation
\[
\pde{
\partial_\tau \Omega^c + U\cdot\onabla(\alpha\Omega^g + \Omega^c) - (\alpha\Omega^g + \Omega^c)\cdot\onabla U = \left(\cL + e^\tau\partial_z^2\right)\Omega^c,
}{
\lim\limits_{\tau\rightarrow-\infty}\Omega^c(\tau) = 0,
}
\]
In order to construct solutions we first prove estimates for the solution operator \(\Omega(\tau) = S(\tau,\sigma)\Omega(\sigma)\) of the corresponding linearized equation
\[
\pde{
\partial_\tau \Omega + \alpha [ U^g\cdot\onabla\Omega + U\cdot\onabla \Omega^g - \Omega\cdot\onabla U^g - \Omega^g\cdot\onabla U ] = \left(\cL + e^\tau\partial_z^2\right)\Omega,
}{
U = (-\oDelta)^{-1}\onabla\times \Omega. 
}
\]
The key to our argument is the observation that, in the limit \(\tau\rightarrow-\infty\), the equations decouple into a pair of \(z\)-independent linear equations, with a coupling $(Z^\xi,Z^z)$ which contains $e^{\frac \tau2}\partial_z$ derivatives and so is formally time-integrable:
\[
\pde{
\partial_\tau\Omega^\xi + \alpha [g\cdot\nabla_\xi\Omega^\xi - \Omega^\xi\cdot\nabla_\xi g ]- \cL \Omega^\xi = \alpha Z^\xi ,
}{
\partial_\tau \Omega^z + \alpha [ g\cdot\nabla_\xi\Omega^z - (-\Delta_\xi)^{-1}\nabla_\xi^\perp \Omega^z\cdot \nabla_\xi G ] - \cL\Omega^z = \alpha Z^z.
}
\]
The first of these linear equations appeared in the context of Burgers vortices in~\cite{MR2770021}, whereas the second is precisely the \(2d\) Navier-Stokes equations linearized around the self-similar solution, which has been extensively studied in~\cite{MR1912106,MR2123378} (see also \cite{LWZ17}). In order to rigorously reduce the full system to the limiting case \(\tau = -\infty\) we use translation-invariance in \(z\) to take the Fourier transform in $z$ and estimate frequency-by-frequency. Exponential decay and smoothing estimates for the $2d$ linear semigroups defined by the operators on the left and taking advantage of the general structure of the $Z^\xi$ and $Z^z$ terms permits one to obtain uniform-in-frequency stability. The analysis of the linear propagator \(S(\tau,\sigma)\) is carried out in Section \ref{sec:SSlin}.

The background piece, $\omega^b$, is taken to satisfy the equation
\[
\pde{
\partial_t \omega^b + u\cdot\nabla\omega^b - \omega^b\cdot\nabla u = \Delta \omega^b,
}{
\omega^b(0) = \mu^b.
}
\]
Solutions are then constructed by establishing estimates for the solution operator \(\omega(t) = \bS(t,s)\omega(s)\) for the corresponding linearized equation
\[
\partial_t \omega + \alpha u^g\cdot\nabla\omega - \alpha \omega\cdot\nabla u^g = \Delta \omega.
\]
The analysis in this case is similar to the core piece, taking the Fourier transform in \(z\) and treating the resulting system as a perturbation of a system of \(2d\) equations. The $2d$ semigroup estimates are obtained by methods similar to those applied for the \(2d\) case considered in~\cite{MR2178064}. However, here the vortex stretching causes additional difficulties in obtaining estimates for the operator \(\bS(t,0)\) that are not present in \(2d\). These difficulties are overcome by taking advantage of the special structure of the equation satisfied by the radial component of the vorticity \(x\cdot\omega^x\). The analysis of the linear propagator \(\bS(t,s)\) is carried out in Section \ref{sec:StretchLin}.

The bulk of the work for the straight filament is to obtain suitable estimates for the linear propagators \(S(\tau,\sigma)\), \(\bS(t,s)\). Given these bounds, the proof of Theorem~\ref{thm:StrtFil} follows from an elementary application of the contraction principle that we carry out in Section \ref{sec:Straight}.

We remark that closing the contraction in Theorem~\ref{thm:StrtFil} essentially relies on the fact that the operator \(\bS(t,s)\) for the background piece satisfies the estimate
\[
t^{\frac14}\|\bS(t,0)\mu^b\|_{B_zL^{4/3}_x}\ll1.
\]
As usual, if we work with subcritical perturbations of the straight filament the smallness of the data is replaced by a short-time assumption. 

\begin{thrm}[Subritical perturbations] \label{thm:StrtFil2} For any $\alpha\in \R$, \(1<p\leq \frac43\), $m\geq2$, and function \(\mu^b\colon\R^2\times \R\rightarrow\R^3\) or \(\mu^b\colon\R^2\times \T\rightarrow \R^3\) satisfying \(\nabla\cdot\mu^b = 0\) in the sense of distributions and
\eq{topologydata2}{
\| \mu^b \|_{B_z L^p_x} +  \|x \cdot (\mu^b)^x \|_{B_z L^{\frac{2p}{2-p}}_x} = K,
}
there exists \(T = T(\alpha,p,m,K)>0\) such that:

\begin{itemize}
\item[(i)](Existence) There exists a mild solution of the Navier-Stokes equation~\eqref{NSE} on the time interval \([0,T]\) with initial data
\eq{StraightInit2}{
\omega(t=0) = 
\begin{bmatrix}
0 \\
\alpha \delta_{x = 0}
\end{bmatrix} + \mu^b,
}
which can be decomposed into
\[
\omega(t,x,z)  = 
\begin{bmatrix}
0 \\
\frac{\alpha}{t}G\left(\frac{x}{\sqrt{t}}\right)
\end{bmatrix}
+ 
\tfrac{1}{t}\Omega^c \left(\log t, \tfrac{x}{\sqrt{t}},z \right) + \omega^b(t,x,z), 
\]
where the ``core" part $\Omega^c$ and the ``background" part $\omega^b$ satisfy the estimates
\eq{topologysolution2}{
\lim_{t \searrow 0}\left( \sup_{-\infty<\tau\leq \ln t} \norm{\Omega^c(\tau)}_{B_z L^2_\xi(m)}  + \sup_{0<s\leq t} s^{\frac14}\norm{\omega^b(s)}_{B_z L^{4/3}_x}\right) = 0. 
}
\item[(ii)](Uniqueness) If $\omega'$ is another mild solution on the time interval \([0,T]\) with initial data \eqref{StraightInit2} admitting the decomposition
$$
\omega'(t,x,z) = 
\begin{bmatrix}
0 \\
\frac{\alpha}{t}G\left(\frac{x}{\sqrt{t}}\right)
\end{bmatrix}
+ 
\tfrac{1}{t}(\Omega^c)' \left(\log t, \tfrac{x}{\sqrt{t}},z \right) + (\omega^b)'(t,x,z),
$$
where $(\Omega^c)'$ and $(\omega^b)'$ satisfy~\eqref{topologysolution2}, then $\omega = \omega'$.
\item[(iii)](Lipschitz dependence) The solution map from the data to solution
$$
\mu^b \mapsto (\omega^b,\Omega^c)
$$
is locally Lipschitz continuous if one endows the data space with the norm~\eqref{topologydata2} and the solution space with the norm appearing in~\eqref{topologysolution2}. Similarly, the solution depends in a locally Lipschitz manner on $\alpha$. 
\end{itemize}
\end{thrm}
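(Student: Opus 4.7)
The plan is to adapt the fixed-point argument that established Theorem~\ref{thm:StrtFil}, replacing the smallness of the data by the short-time smallness provided by the subcritical scaling of~\eqref{topologydata2}. I would use the same decomposition $\omega = \omega^g + \tfrac{1}{t}\Omega^c(\log t,\tfrac{x}{\sqrt{t}},z) + \omega^b(t,x,z)$, write the coupled $(\Omega^c,\omega^b)$ system in Duhamel form with the linear propagators $S(\tau,\sigma)$ and $\bS(t,s)$ of Sections~\ref{sec:SSlin}--\ref{sec:StretchLin}, and reuse the bilinear estimates from the proof of Theorem~\ref{thm:StrtFil}; these depend only on $\alpha$ and $m$ and are scaling-invariant, so they apply verbatim. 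A simplifying feature is that the core equation carries no \emph{direct} linear data term: the condition $\lim_{\tau\to-\infty}\Omega^c(\tau)=0$ means that $\Omega^c$ inherits its smallness solely from the coupling to $\omega^b$. Hence the only genuinely new estimate needed concerns the linear contribution $\bS(t,0)\mu^b$ to the background.

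The key observation is that both norms in~\eqref{topologydata2} are strictly subcritical when $p>1$: under the vorticity rescaling $\mu(x)\mapsto \lambda^2\mu(\lambda x)$, the first scales as $\lambda^{2-2/p}$ and the second (with $q=\tfrac{2p}{2-p}>2$) as $\lambda^{2(p-1)/p}$, both with strictly positive exponents. By the $L^p_x\to L^{4/3}_x$ smoothing underlying the construction of $\bS$ in Section~\ref{sec:StretchLin}, one therefore expects
\[
t^{\frac14}\|\bS(t,0)\mu^b\|_{B_zL^{4/3}_x}\;\lesssim_\alpha\; t^{1-\frac1p}\bigl(\|\mu^b\|_{B_zL^p_x}+\|x\cdot(\mu^b)^x\|_{B_zL^{q}_x}\bigr)\longrightarrow 0\quad\text{as }t\to 0^+.
\]
Hence, for any $\eta>0$, there exists $T=T(\alpha,p,m,K)>0$ so that this linear piece has norm smaller than $\eta/2$ in the topology defined by~\eqref{topologysolution2}.

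I would then run the contraction on the complete metric subspace of pairs $(\Omega^c,\omega^b)$ whose norm from~\eqref{topologysolution2} restricted to $[0,T']$ tends to $0$ as $T'\to 0^+$, further restricted to a ball of radius $\eta>0$ chosen so that the quadratic bilinear constant from Theorem~\ref{thm:StrtFil} satisfies $C(\alpha,m)\eta^2<\eta/2$. Existence and Lipschitz dependence on $\mu^b$ and $\alpha$ then follow directly from the contraction. Uniqueness is not immediate, since the admissible class is larger than the small ball; however, the vanishing assumption in~\eqref{topologysolution2} forces any competing solution into the small ball on some sub-interval $[0,T']$, where the contraction gives agreement, and a standard continuation argument propagates this to $[0,T]$. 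The main technical obstacle is showing that the $\alpha$-dependent propagator $\bS(t,0)$ inherits subcritical $L^p_x\to L^{4/3}_x$ smoothing, together with the weighted contribution from $x\cdot(\mu^b)^x\in B_zL^{q}_x$, \emph{uniformly} on $1<p\leq \tfrac43$; since Section~\ref{sec:StretchLin} is tuned to the critical endpoint, this requires revisiting the Duhamel expansion of $\bS(t,0)$ around the free heat semigroup and checking that the corrections remain uniformly bounded, with particular care near the special equation for the radial component $x\cdot\omega^x$ that plays a distinguished role in controlling vortex stretching.
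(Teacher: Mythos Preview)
Your proposal is correct and follows essentially the same route as the paper: reuse the fixed-point machinery of Theorem~\ref{thm:StrtFil} verbatim, with the sole new input being the subcritical smoothing estimate $t^{1/4}\|\bS(t,0)\mu^b\|_{B_zL^{4/3}_x}\lesssim t^{1-1/p}K\to 0$, which the paper records as Proposition~\ref{subcriticallinear} and proves precisely by revisiting the $\omega^z$, $\psi=x\cdot\omega^x-2t\partial_z\omega^z$, and $\omega^x$ decomposition you anticipate. The paper's write-up is even terser than yours---it simply fixes the same constants $M,D,\epsilon_0$ from Theorem~\ref{thm:StrtFil} and chooses $T$ small enough that the linear data term fits into $B_{\epsilon_0,T}$---but your slightly more elaborate handling of uniqueness via the vanishing condition and continuation is a valid refinement.
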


\subsection{The curved filament} \label{sec:curvedIntro}
Our second set of results concern the case that \(\Gamma\subset \R^3\) is a smooth, non-self-intersecting, closed curve that, after rescaling, may be assumed to have length \(2\pi\). The key to our approach in this case is that on sufficiently short timescales, the curvature of the filament is expected to be \emph{subcritical}. 
Making this intuition rigorous is rather involved, however, it ultimately allows us to treat the general problem as a perturbation of the straight filament by introducing local coordinates near the filament that ``straighten out'' the curve and choosing $T$ sufficiently small. 

We define a unit speed parameterization \(\gamma\colon \T\rightarrow \R^3\) and an orthonormal frame \(\ft,\fn,\fb\colon \T\rightarrow\R^3\) along \(\Gamma\) so that \(\ft = \gamma'\) is the unit tangent vector and the frame is oriented such that \(\fb = \ft\times \fn\). In the case that \(\Gamma\) has non-vanishing curvature, an explicit example is given by the Frenet-Serret frame,
\[
\ft = \gamma',\qquad \fn = \frac{\gamma''}{|\gamma''|},\qquad \fb = \ft\times \fn,
\]
for which we have the Frenet-Serret formulas,
\[
\ft' = \kappa \fn,\qquad \fn' = -\kappa \ft + \tau \fb,\qquad \fb' = - \tau \fn,
\]
where \(\kappa\) is the curvature and \(\tau\) is the torsion.

For each \(R>0\) we define a tubular neighborhood of \(\Gamma\) of radius \(32R\),
\[
\Gamma_R = \left\{y\in \R^3:\dist(y,\Gamma)<32R\right\},
\]
and a corresponding straight tube
\[
\Sigma_R = \left\{(x,z)\in \R^2\times \T:|x|<32R\right\}.
\]
Choosing \(0<R_0\ll1\) sufficiently small (depending on the curvature of \(\Gamma\)) we may view \(\Gamma_{R_0}\), considered to live in the ``physical frame,'' as the image of the open set \(\Sigma_{R_0}\), considered to live in a ``straightened frame,'' under the map \(\Phi\colon \Sigma_{R_0}\rightarrow \Gamma_{R_0}\) defined by
\[
\Phi(x,z) = \gamma(z) + x_1\fn(z) + x_2\fb(z).
\]
We define the following mapping, which transforms vorticity defined in the straightened frame back into the physical frame   
\[
(Q_\Phi \eta)\circ\Phi = (\det\grad \Phi)^{-1} (\grad \Phi)\  \eta.
\]
Further, we define $\chi_R$ to be a smooth, non-negative, radial bump function supported on $\abs{x} \leq 2R$ and identically equal to $1$ for $\abs{x} \leq R$, and take $\widetilde{\chi}_R = \chi_R \circ \Phi^{-1}$. Finally, define the approximate solution 
\[
\eta^g(x,z) = \begin{bmatrix} 0  \\ \frac1{t} G \left(\frac{x}{\sqrt{t}} \right) \end{bmatrix}, \qquad \omega^g = Q_\Phi\left(\chi_{2R} \eta^g \right). 
\]
\begin{figure}[h]
\centering
\begin{tikzpicture}
\shade[shading=axis, draw=none, left color=black!35!white, right color=black!5!white, shading angle=135] (-.5,0)--(-.5,6) arc (180:0:0.5 and 0.25) -- (.5,6)--(.5,0)  arc (360:180:0.5 and 0.25) --cycle;
\draw[very thick] (0,0)--(0,6);
\draw (0,6) ellipse (0.5 and 0.25);
\draw (-.5,0)--(-.5,6);
\draw (.5,0)--(.5,6);
\draw (.5,0) arc (360:180:0.5 and 0.25);
\draw[dashed] (-.5,0) arc (180:0:0.5 and 0.25);
\filldraw[shading=axis, draw=none, left color=black!35!white, right color=black!5!white, shading angle=135] (7,3) ellipse (2.5 and 1.35);
\filldraw[fill=white, draw=none] (7,3) ellipse (1.5 and 0.65);
\draw[very thick] (7,3) ellipse (2 and 1);
\draw (7,3) ellipse (2.5 and 1.35);
\draw (7,3) ellipse (1.5 and 0.65);
\draw[dashed] (5.5,3) arc (360:180:0.5 and 0.4);
\draw (4.5,3) arc (180:0:0.5 and 0.4);
\draw[dashed] (9.5,3) arc (360:180:0.5 and 0.4);
\draw (8.5,3) arc (180:0:0.5 and 0.4);
\draw (7,1.65) arc (270:450:0.2 and 0.35);
\draw[dashed] (7,2.35) arc (450:630:0.2 and 0.35);
\draw (7,3.65) arc (270:450:0.2 and 0.35);
\draw[dashed] (7,4.35) arc (450:630:0.2 and 0.35);
\draw[very thick,->] (1,3) .. controls (2,3.2) and (3,3.2) .. (4,3);
\node at (2.5,3.5) {\(\Phi\)};
\node at (-1,2) {\(\Sigma_{R_0}\)};
\node at (9.5,1.9) {\(\Gamma_{R_0}\)};
\end{tikzpicture}
\caption{The mapping \(\Phi\) from the straightened frame to the physical frame.}\label{fig:Phi}
\end{figure}
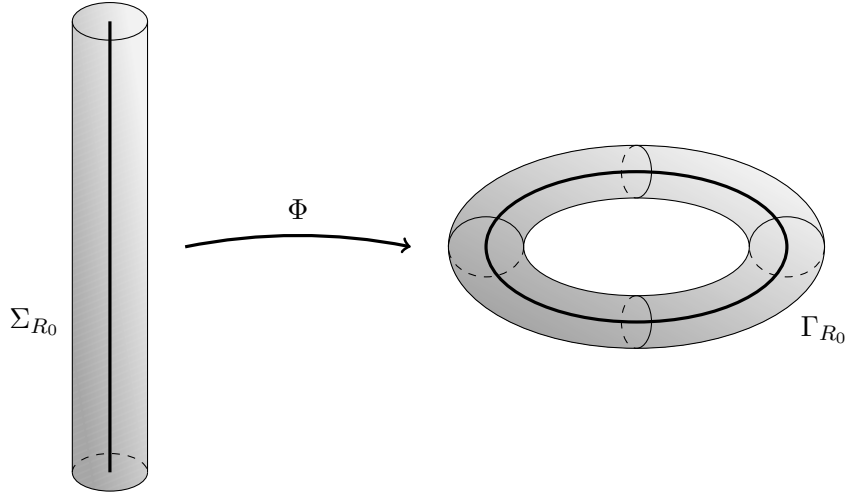

\begin{thrm} \label{thm:curved}
Let $\alpha\in \R$ and $\Gamma \subset \R^3$ be a smooth, non-self-intersecting, closed curve. 
For any initial condition satisfying 
\begin{align}
\omega(t=0) = \alpha\delta_\Gamma + \mu^b, 
\end{align}
where $\mu^b \in W^{1,\frac{12}{11}}$, there is a $T = T(\alpha,\Gamma,\mu^b)$ such that
\begin{itemize} 
\item[(i)](Existence) There exists a mild solution $\omega$ of \eqref{NSE} on $[0,T]$ that admits the decomposition $\omega= \widetilde{\omega}^c + \omega^b$ satisfying (as mild solutions with $u = \grad \times (-\Delta)^{-1}\omega$) 
\begin{align}
&\pde{
\partial_t \widetilde{\omega}^c + B[u,\widetilde{\omega}^c] = \Delta \widetilde{\omega}^c,
}{ 
\widetilde{\omega}^c(t=0) = \alpha \delta_{\Gamma},
}
\\
&\pde{
\partial_t \omega^b +  B[u,\omega^b] = \Delta \omega^b,
}{
\omega^b(t=0) = \mu^b,  
}
\end{align}
such that for any $m \geq 2$ and sufficiently small $0<R \leq R_0$,
\begin{equation} \label{CurveEsts}
\begin{split} 
&\lim_{T \searrow 0} \sup_{0 < t \leq T}\sqrt{t}\norm{\left\<\frac{x}{\sqrt{t}}\right\>^m Q_{\Phi}^{-1} \left(\widetilde\chi_{8R}\left(\widetilde{\omega}^c - \alpha\omega^g\right)\right)}_{B_z L^2_x} = 0,\\
&  \lim_{T \searrow 0} \sup_{0 < t \leq T} \sqrt{t}\norm{(1 - \widetilde{\chi}_{6R})\left\<\frac{\dist(y,\Gamma)}{\sqrt{t}}\right\>^m \widetilde{\omega}^c}_{L^3_y}  = 0,\\
&\lim_{T \searrow 0} \sup_{0 < t \leq T} t^{\frac14}\norm{ \omega^b }_{W^{1,4/3}_y} = 0.
\end{split}
\end{equation}
Further, taking \(\Omega^c\) to be the self-similar scaling of \(\omega^c = \widetilde \omega^c - \alpha\omega^g\), there holds the following decomposition for $(x,z) \in \Gamma_{R/64}$ and \(0<t \leq T\)
\begin{align}
\left(\left(\det\grad \Phi \right) (\grad \Phi)^{-1}\right)\!(x,z)\; \omega(t,\Phi(x,z)) = \begin{bmatrix}
0 \\
\frac{\alpha}{t}G\left(\frac{x}{\sqrt{t}}\right)
\end{bmatrix}
+ 
\tfrac{1}{t}\Omega^c \left(\log t, \tfrac{x}{\sqrt{t}},z \right) + \omega^b(t,x,z).
\end{align}
\item[(ii)] (Uniqueness) Suppose $\omega'$ is another mild solution with initial data \eqref{StraightInit}, suppose $(\widetilde{\omega}^c)'$ and $(\omega^b)'$ are analogous to the definition in (i), and suppose that these satisfy the estimates in \eqref{CurveEsts} for some $m \geq 2$ and sufficiently small $ 0<R\leq R_0$. Then $\omega' = \omega$.
\item[(iii)] (Lipschitz dependence) The solution map from the data to solution
$$
\mu^b \mapsto (\omega^b,\Omega^c)
$$
is Lipschitz continuous if one endows the data space with $\norm{\cdot}_{W^{1,\frac{12}{11}}}$ and the solution space with the norm appearing in \eqref{CurveEsts} (more precise estimates are available below). 
\end{itemize}
\end{thrm}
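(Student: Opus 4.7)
The plan is to treat Theorem \ref{thm:curved} as a localization of Theorem \ref{thm:StrtFil2}, in which the curvature and torsion of $\Gamma$ enter only as subcritical perturbations on short times. Setting $\widetilde \omega^c = \omega^g + \omega^c$ and $\omega = \omega^g + \omega^c + \omega^b$, I would look for the pair $(\omega^c,\omega^b)$ as the fixed point of the Picard map defined by Duhamel's formula for the two linear equations in the statement, with the only nonlinear coupling passing through $u = \nabla \times (-\Delta)^{-1}(\omega^g + \omega^c + \omega^b)$ in the bilinear terms $B[u,\widetilde\omega^c]$ and $B[u,\omega^b]$. The underlying function space is the product Banach space whose norm is exactly the sum appearing in \eqref{CurveEsts}, and $T$ will ultimately be chosen small enough (depending on $\alpha$, $\Gamma$, and $\|\mu^b\|_{W^{1,12/11}}$) to guarantee that the Picard map is a contraction.

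\textbf{Core piece.} Near $\Gamma$ I would work in the straightened coordinates $(x,z)\in\Sigma_{R_0}$ and transform the equation for $\widetilde\omega^c$ via $Q_\Phi^{-1}$. The resulting equation is a cut-off version of the flat-space stretching/transport equation for the straight filament (controlled by Theorem \ref{thm:StrtFil2}), modified by: (a) metric corrections from $\det\nabla\Phi$ and $\nabla\Phi$ depending on the Frenet-Serret data $\kappa,\tau$; (b) first-order tangential transport errors along $\mathfrak t$; and (c) cut-off commutators supported in $\mathrm{supp}(\chi_{2R}-\chi_R)$. Because $\eta^g$ concentrates on the parabolic length scale $\sqrt t$, each of these corrections gains an extra factor $\sqrt t / R$ relative to the scaling-critical baseline, so their Duhamel contributions vanish as $t\searrow 0$ in the first norm of \eqref{CurveEsts}. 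Combined with the linear propagator estimates for $S(\tau,\sigma)$ and $\bS(t,s)$ from Sections \ref{sec:SSlin}--\ref{sec:StretchLin}, this yields the first bound in \eqref{CurveEsts}. For the far field, outside the tube $\Gamma_{4R}$, the Gaussian tail of $\eta^g$ is of order $e^{-cR^2/t}$; together with heat-kernel smoothing this gives the second bound in \eqref{CurveEsts} by an elementary Duhamel estimate in $L^3_y$ that absorbs $B[u,\omega^c]$ perturbatively.

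\textbf{Background piece, uniqueness, Lipschitz.} Since $W^{1,\frac{12}{11}}\hookrightarrow L^{\frac{4}{3}}$ by Sobolev embedding, the heat semigroup yields $\sup_{0<t\leq T}t^{1/4}\|e^{t\Delta}\mu^b\|_{W^{1,4/3}_y}\to 0$ as $T\to 0$, and this gives the third norm in \eqref{CurveEsts} for the free evolution; the bilinear Duhamel term $B[u,\omega^b]$ is then absorbed perturbatively using Morrey-type bounds on $u$ inherited from $\alpha$ and the core norms, together with a small positive power of $t$. The contraction argument simultaneously gives existence, uniqueness among solutions satisfying \eqref{CurveEsts}, and local Lipschitz dependence of $(\omega^c,\omega^b)$ on $\mu^b$, mirroring part (iii) of Theorem \ref{thm:StrtFil2}. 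The main obstacle is step (a)--(c) of the core analysis: one has to give an explicit accounting of the geometric error terms produced by $Q_\Phi^{-1}$ in a form compatible with the linear theory developed for the straight filament, and to verify that the cut-off transition error from $\chi_{2R}-\chi_R$ is consistently controlled by both the near-field $B_z L^2_\xi(m)$ norm and the far-field weighted $L^3_y$ norm. Once this bookkeeping is settled, the proof becomes a short-time contraction mapping of the same shape as that of Theorem \ref{thm:StrtFil2}.
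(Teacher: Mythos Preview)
Your overall philosophy is right — curvature is subcritical and the straight-filament propagators $S(\tau,\sigma)$, $\bS(t,s)$ should do most of the work — but the two-piece decomposition $(\omega^c,\omega^b)$ you propose does not close, and the step you flag as ``bookkeeping'' is in fact the heart of the matter. The paper's proof uses a \emph{four}-piece decomposition $\widetilde\omega^c = Q_\Phi(\chi_{2R}\widetilde\eta^{c1}) + \omega^{c2}$, $\omega^b = Q_\Phi(\chi_{2R}\eta^{b1}) + \omega^{b2}$, with the $\ast 1$ pieces living in the straightened frame (evolved by $S$ and $\bS$) and the $\ast 2$ pieces living in the physical frame (evolved by $e^{t\Delta}$).

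The reason your scheme fails is the top-order viscosity error. Pulling back by $Q_\Phi^{-1}$ replaces $\Delta$ by $\Delta_\Phi = \Delta + A^{ij}\partial_i\partial_j + \text{lower order}$, with $|A|\lesssim |x|$ (Lemmas \ref{lem:Geometry}, \ref{lem:Coefs}). The error $\mathcal{E}^c = Q_\Phi(\chi_{2R}\Delta\eta^{c1} - \Delta_\Phi(\chi_{2R}\eta^{c1}))$ therefore carries \emph{two} derivatives of $\eta^{c1}$, and the smoothing bounds \eqref{SmoothedBound}--\eqref{SmoothedBound1} for $S(\tau,\sigma)$ absorb only one $\oDiv$. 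The $\sqrt t/R$ gain you identify is real but does not compensate for the extra derivative in the $B_zL^2_\xi(m)$ framework. The paper's remedy is to route $\mathcal{E}^c$ (and $\mathcal{E}^g,\mathcal{E}^b$) into the physical-frame pieces $\omega^{c2},\omega^{b2}$, where the free heat semigroup together with a small amount of extra fractional regularity $\beta\in(0,\tfrac14)$ on $\eta^{c1},\eta^{b1}$ (this is why the norm is $\bN_c^\beta$, not $\bN_c^0$) handles the second derivative via interpolation; see \eqref{ErrorContribution-1} and \eqref{ineq:Ebheat}.

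The second issue you underplay is the anisotropic-to-isotropic transition. Near the filament the natural spaces are $B_zL^p_x$, but these only make sense in the tube; away from $\Gamma$ one must work in isotropic $L^p_y$. Matching the two is not automatic: going from $L^p_y$ to $B_zL^p_x$ costs a full $z$-derivative (Lemma \ref{lem:Sobolev}), which is why the paper (i) controls $\omega^{c2}$ in \emph{two} overlapping norms, $\bN_c^0$ on $\{\bd\le 16R\}$ and $\bF_c$ on $\{\bd\ge 6R\}$, exploiting an $O(R)$ separation of supports from $\omega^{c1}$ (Figure \ref{fig:c2supports}), and (ii) replaces the $B_zL^{4/3}_x$ background norm of Theorem \ref{thm:StrtFil2} by the isotropic $W^{1,4/3}_y$-based norms $\bN_b^\beta$, $\bF_b$, which embed correctly into both regimes. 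This transition, together with a careful choice of the weights $M_c,M_{b1},M_{b2}$ and of $R$ before $T$, is what makes the contraction close (Proposition \ref{prop:AllAs} and the proof of Theorem \ref{thm:FixedPt}); it is structural, not bookkeeping.
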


\begin{rem}
We do not expect that the requirement $\mu^b \in W^{1,\frac{12}{11}}$ is sharp; it would be more natural to expect $\mu^b \in L^{\frac32}$ to be sufficient, however, this would require some non-trivial technical refinements. We also do not expect the uniqueness statement in (ii) above to be sharp.
\end{rem}

Let us briefly mention some of the difficulties in making the nonlinear perturbation argument for Theorem \ref{thm:curved}. 
In Section \ref{sec:ChangeVars}, the properties of the coordinate system that straighten the arbitrary filament are recorded. 
In particular, we see that all the alterations are either lower order (in terms of derivatives) or have coefficients with size $O(\abs{x})$, and hence will be asymptotically small as $t \searrow 0$ as the vorticity will be concentrated mostly in a tubular neighborhood of the filament of size $O(\sqrt{t})$.  
Hence, we can expect all of the curvature effects to be \emph{subcritical}. 
There are two major technical issues with making this rigorous: 
\begin{itemize}
\item[\textbf{(a)}] In the straightened coordinate system $\Delta$ has been replaced by a second order operator with variable coefficients, even for top order terms (see Section \ref{sec:ChangeVars}). This makes the curvature effects difficult to treat in a perturbative manner. 
\item[\textbf{(b)}] The straightened coordinate system only makes sense very close to the filament; away from it, we cannot use the $B_z L^p$ spaces, which are closely adapted to the geometry. This is problematic since the natural anisotropic $B_z L^p$ spaces used in the straight filament assign far more regularity along the filament than transversally. Making a smooth transition to isotropic regularity is delicate as, heuristically, it requires going up in regularity in the transverse directions and down in regularity along the filament.
\end{itemize} 
To deal with the difficulty posed by \textbf{(a)}, we will decompose the natural analogues of $\omega^c$ and $\omega^b$ each into two sub-pieces; a primary $\omega^{\ast 1}$ (for \(\ast=c,b\)) which will describe the leading order `critical' behavior near the filament and a secondary $\omega^{\ast 2}$ which will deal with some of the most problematic subcritical errors coming from the geometry. 
The $\omega^{\ast 2}$ unknowns will live in the original (physical) variables and are solved using the usual heat semigroup, whereas the $\omega^{\ast 1}$ unknowns are naturally formulated in the straightened coordinates and will require the use of the two straight filament propagators \(S(\tau,\sigma)\), \(\bS(t,s)\). Accordingly the $\omega^{\ast 1}$ unknowns are estimated in a manner similar to the straight filament whereas the $\omega^{\ast 2}$ estimates require different arguments. In order to deal with the errors in the viscosity term, the $\omega^{\ast 2}$ unknowns will have slightly lower regularity than the $\omega^{\ast 1}$ counterparts.

In order to deal with \textbf{(b)}, we will need two technical ideas. First, we will change the style of the norms we are using on $\omega^b$ (relative to the straight filament). In particular, we will be using a slightly stronger set of norms that are naturally isotropic but are also critical and satisfy the proper embeddings into the anisotropic spaces. This compromise explains the need to take perturbations that are more subcritical than what was needed in the straight filament, Theorem \ref{thm:StrtFil2}. Second, for $\omega^{c2}$, which interacts directly with the most singular piece, $\omega^{c1}$, we will not be able to avoid transitioning from isotropic to anisotropic. For this we obtain anisotropic estimates near the filament and isotropic estimates at higher regularity far enough from the filament; the overlap region is the most difficult. The details of how to carry out the perturbation argument are rather technical and are left to Sections~\ref{sec:Curve1} and~\ref{sec:Curve2}.

\section{The linearized problem in self-similar variables} \label{sec:SSlin}

\subsection{Statement of the estimates}

In this section we consider the linearization of the equation \eqref{NSE} about the self-similar solution \(\omega^g\),
\eq{LinearizedProblem}{
\pde{
\partial_t\omega + \alpha [u^g\cdot\nabla\omega + u\cdot\nabla\omega^g - \omega^g\cdot\nabla u - \omega \cdot\nabla u^g ] = \Delta \omega,
}{
\nabla\cdot \omega = 0.
}
}
Switching to self-similar coordinates we obtain the system,
\eq{Linear}{
\pdet{
\partial_\tau \Omega^\xi + \alpha [ g\cdot\nabla_\xi \Omega^\xi - \Omega^\xi\cdot\nabla_\xi g - e^{\frac\tau2} G\partial_zU^\xi] = \left(\cL + e^\tau\partial_z^2\right)\Omega^\xi,
}{
\partial_\tau \Omega^z + \alpha [g\cdot\nabla_\xi \Omega^z +  U^\xi\cdot\nabla_\xi G - e^{\frac\tau2} G\partial_zU^z] = \left(\cL + e^\tau\partial_z^2\right)\Omega^z,
}{
\nabla_\xi\cdot \Omega^\xi + e^{\frac \tau2}\partial_z\Omega^z = 0,
}
}
where the Biot-Savart law is given by
\eq{BSMain2}{
\pde{
U^\xi = e^{\frac\tau2}\partial_z(-\overline \Delta)^{-1} (\Omega^\xi)^\perp - \nabla_\xi^\perp (-\overline \Delta)^{-1}\Omega^z,
}{
U^z = \nabla_\xi^\perp\cdot(-\overline \Delta)^{-1}\Omega^\xi.
}
}
with the following notations for differential operators in self-similar coordinates:
\begin{align*}
& \overline{\nabla} = (\nabla_\xi,e^{\frac\tau2} \partial_z)^T, \qquad \qquad \overline{\nabla}^\beta = \partial_{\xi_1}^{\beta_1} \partial_{\xi_2}^{\beta_2} (e^{\frac\tau2} \partial_z)^{\beta_3} \quad\mbox{if $\beta = (\beta_1,\beta_2,\beta_3)$},\\
& \oDiv F = \Div_\xi F^\xi + e^{\frac\tau2}\partial_zF^z, \qquad \overline\Delta = \Delta_\xi + e^\tau \partial_z^2.
\end{align*}

For \(\tau\geq\sigma\) we define the solution operator \(S(\tau,\sigma)\) for the equation \eqref{Linear} by
\[
\Omega(\tau) = S(\tau,\sigma)\Omega(\sigma).
\]
\textit{For the remainder of this section, we adopt the convention that the semigroup $S(\tau,\sigma)$ is defined for $\Omega(\sigma)$ which might have a non-zero divergence (which simply amounts to lifting the last condition in~\eqref{Linear}). Notice that the condition that $\overline{\nabla} \cdot \Omega = 0$ is propagated by the flow.
}

\medskip

In this section we prove the following result:

\begin{thrm}\label{prop:LinearEstimates}
Let \(\alpha\in\R\) and \(m>1\). Then, for all \(\sigma\in \R\) the map \(\tau\mapsto S(\tau,\sigma)\) is continuous as a map from \([\sigma,\infty)\) to the space of bounded operators on \(B_zL^2_\xi(m)\). For all $\gamma>0$, we have the estimate
\eq{BasicBound}{
\|S(\tau,\sigma)F\|_{B_zL^2_\xi(m)}\lesssim e^{\gamma(\tau-\sigma)} \|F\|_{B_zL^2_\xi(m)} ,
}
where the implicit constant depends on \(\alpha,m,\gamma\).

Further, if \(\alpha\neq 0\) there exists $\mu = \mu(\alpha) \in (0,\frac{1}{2})$ such that, whenever \(m>1 + 2\mu\) and \(F\in B_zL^2_\xi(m)\) satisfies \(\int F^z\,d\xi = 0\), we have the estimate,
\eq{MainBound}{
\|S(\tau,\sigma)F\|_{B_z L^2_\xi(m)} \lesssim e^{-\mu(\tau - \sigma)}\|F\|_{B_z L^2_\xi(m)},
}
where the implicit constant depends on \(\alpha,m\).

Finally, if \(\alpha\neq 0\) and \(m>1 + 2\mu\) where \(\mu = \mu(\alpha)\) is as above, then for \(1 < p\leq 2\) and all \(3\times 3\)-tensors \(F\in B_zL^p_\xi(m)\) satisfying \(\oDiv\,\oDiv F = 0\) we have the estimates,
\begin{align}
\|S(\tau,\sigma)\oDiv F\|_{B_z L^2_\xi(m)} &\lesssim \frac{e^{-\mu(\tau - \sigma)}}{a(\tau -\sigma)^{\frac1p}}\|F\|_{B_z L^p_\xi(m)},\label{SmoothedBound}\\
\|\onabla S(\tau,\sigma)\oDiv F\|_{B_z L^2_\xi(m)} &\lesssim \frac{e^{-\mu(\tau - \sigma)}}{a(\tau -\sigma)^{\frac1p + \frac12}}\|F\|_{B_z L^p_\xi(m)},\label{SmoothedBound1}
\end{align}
where \(a(\tau) = 1 - e^{-\tau}\) and the implicit constants depend on \(\alpha,m,p\).
\end{thrm}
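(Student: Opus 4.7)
The plan is to exploit translation invariance in $z$: taking the $z$-Fourier transform of \eqref{Linear} produces a $\zeta$-parametrized family of two-dimensional linear systems for $\widehat\Omega(\tau,\xi,\zeta)$, in which every $\partial_z$ becomes $i\zeta$ and $\oDelta$ becomes $\Delta_\xi-e^\tau\zeta^2$. Since the Wiener-algebra norm satisfies $\|f\|_{B_zL^2_\xi(m)}=\int\|\widehat f(\cdot,\zeta)\|_{L^2_\xi(m)}\,d\zeta$, it suffices to bound the frequency-localized propagator $\widehat S(\tau,\sigma,\zeta)$ on $L^2_\xi(m)$ uniformly in $\zeta$. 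I would then decompose its generator as $\mathcal A+\mathcal C_{\tau,\zeta}$, where $\mathcal A$ is the $\tau\to-\infty$ limit obtained by dropping every term carrying an explicit factor $e^{\tau/2}|\zeta|$ or $e^\tau\zeta^2$, and $\mathcal C_{\tau,\zeta}$ collects the rest.

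The key point is that $\mathcal A$ decouples into two independent two-dimensional operators: one acting on the planar component $\Omega^\xi$, which is precisely the Burgers-vortex-type linearization of \cite{MR2770021}, and one acting on the $\Omega^z$ component, which is the Gallay--Wayne linearization of $2d$ Navier--Stokes about the Oseen vortex from \cite{MR1912106,MR2123378}. From these references one extracts the sub-exponential bound $\|e^{(\tau-\sigma)\mathcal A}\|\lesssim e^{\gamma(\tau-\sigma)}$ on all of $L^2_\xi(m)$ for every $\gamma>0$, yielding \eqref{BasicBound}, together with a spectral gap $\mu=\mu(\alpha)\in(0,\tfrac12)$ on the codimension-one subspace $\{\int F^z\,d\xi=0\}$ provided $m>1+2\mu$, the missing mode being the Gaussian eigenfunction of $\cL$ at eigenvalue zero (corresponding to conservation of total mass). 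The full propagator is then reconstructed by the Duhamel identity
$$\widehat S(\tau,\sigma,\zeta)=e^{(\tau-\sigma)\mathcal A}+\int_\sigma^\tau e^{(\tau-s)\mathcal A}\,\mathcal C_{s,\zeta}\,\widehat S(s,\sigma,\zeta)\,ds,$$
and a Gronwall argument produces \eqref{MainBound} once one verifies that $\int_\sigma^\tau\|\mathcal C_{s,\zeta}\|\,ds$ is uniformly small after pairing with the heat damping built into $e^{(\tau-s)\mathcal A}$ at frequency $\zeta$. Preservation of $\int\widehat\Omega^z\,d\xi=0$ along the flow is verified directly by integrating the $\Omega^z$-equation in $\xi$ and using the divergence-free constraint $\nabla_\xi\cdot\widehat\Omega^\xi+e^{\tau/2}i\zeta\widehat\Omega^z=0$.

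For the smoothing estimates \eqref{SmoothedBound} and \eqref{SmoothedBound1}, I would insert $\oDiv F$ into the Duhamel representation and exploit the parabolic smoothing of the scalar Fokker--Planck semigroup, $\|e^{(\tau-\sigma)\cL}\Div_\xi\|_{L^p_\xi(m)\to L^2_\xi(m)}\lesssim a(\tau-\sigma)^{-1/p}$, which follows from the Ornstein--Uhlenbeck change of variables intertwining $e^{\tau\cL}$ with the standard Euclidean heat kernel, under which the effective time becomes exactly $a(\tau-\sigma)=1-e^{-(\tau-\sigma)}$. The hypothesis $\oDiv\,\oDiv F=0$ precludes any obstruction from the zero mode of the divergence-free projection appearing in the Biot--Savart law \eqref{BSMain2}. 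An additional parabolic derivative yields \eqref{SmoothedBound1}; the Duhamel iteration is split into $s\in[\sigma,(\sigma+\tau)/2]$ and $s\in[(\sigma+\tau)/2,\tau]$ so that the combined weight $a(\cdot)^{-1/p-1/2}$ never produces a divergent time integral.

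The main obstacle will be delivering bounds that are \emph{simultaneously} exponentially decaying at the sharp rate $\mu$, divergence-aware, and uniform in $\zeta\in\R$. In the crossover regime $e^{\tau/2}|\zeta|\sim 1$, neither the smallness of $\mathcal C_{\tau,\zeta}$ nor the high-frequency heat damping $e^{-c(e^\tau-e^\sigma)\zeta^2}$ of the limiting semigroup is individually decisive, and the two must be combined: one estimates $\|\mathcal C_{s,\zeta}\|\lesssim e^{s/2}|\zeta|$ and pairs this with the Gaussian-in-$\zeta$ damping to obtain a factor whose time integral remains $O(1)$ uniformly in $\zeta$. Once these uniform frequency-localized bounds are in hand, integration in $\zeta$ produces the $B_z$ estimates; continuity of $\tau\mapsto S(\tau,\sigma)$ in the bounded-operator topology on $B_zL^2_\xi(m)$ then follows from the Duhamel representation together with strong continuity of $e^{\tau\mathcal A}$ and dominated convergence in $\zeta$.
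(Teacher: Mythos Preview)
Your proposal is correct and matches the paper's strategy: take the $z$-Fourier transform, treat the propagator frequency-by-frequency as a Duhamel--Gronwall perturbation of the decoupled 2D semigroups $e^{\tau(\cL-\alpha\Gamma)}$ and $e^{\tau(\cL-\alpha\Lambda)}$ (the paper's coupling estimate is $\|Z\|_{L^2_\xi(m)}\lesssim|\zeta|^{1-2\delta}e^{(\frac12-\delta)s}\|w\|_{L^2_\xi(m)}$, paired with the damping $e^{-|\zeta|^2(e^\tau-e^s)}$), and then combine the resulting long-time bounds with short-time parabolic smoothing for \eqref{SmoothedBound}--\eqref{SmoothedBound1}. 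The one place the paper's implementation differs from your sketch is the smoothing step: rather than inserting $\oDiv F$ directly into the Duhamel formula (which makes the bootstrap circular, since $S(s,\sigma)\oDiv F$ is what you are trying to control), the paper runs a short-time contraction on an auxiliary ``undivergenced'' tensor equation so that $S(\tau,\sigma)\oDiv=\oDiv R(\tau,\sigma)$, and for long times decomposes $\oDiv R(\sigma+\tfrac\delta2,\sigma)F=h_1+e^{\sigma/2}\partial_z h_2$ with $\int h_1\,d\xi=0$, handling $h_2$ by commuting $\partial_z$ through $S$ to gain $e^{-(\tau-\sigma)/2}$. Your alternative---observing that $\oDiv\,\oDiv F=0$ forces $\int\widehat F^{zz}(\cdot,\zeta)\,d\xi=0$ for $\zeta\neq0$ and hence $\int(\oDiv F)^z\,d\xi=0$ at every frequency, so that \eqref{MainBound} applies directly once the short-time step lands you in $L^2_\xi(m)$---also works, though this is a statement about the zero eigenfunction $G$ of $\cL-\alpha\Lambda$, not about the Biot--Savart law.
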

Only estimates~\eqref{SmoothedBound} and~\eqref{SmoothedBound1} will be used in controlling the nonlinear problem: the former in the case of the straight filament (with \(p = 4/3\)) and the latter in order to derive fractional regularity by interpolation, which will be needed to deal with the curved filament. As for estimate~\eqref{BasicBound}, it guarantees that the the flow is well-defined on $L^2_\xi(m)$. Finally, estimate~\eqref{MainBound} is used as an intermediary step.

The proof of Theorem~\ref{prop:LinearEstimates} will follow a similar strategy to the proof of~\cite[Proposition~4.6]{MR2178064}, first proving long-time estimates for the operator \(S(\tau,\sigma)\) on \(L^2_\xi(m)\), and then combining this with short time smoothing estimates to obtain the estimates \eqref{SmoothedBound}, \eqref{SmoothedBound1}. A key difficulty we encounter in \(3d\) is that the operator is no longer a compact perturbation of the Fokker-Planck operator \(\cL\), indeed it is translation-invariant in \(z\). However, we may take advantage of this translation-invariance by taking Fourier transform in \(z\) and then estimating the resulting operator frequency-by-frequency. In particular, we will show that we may reduce to the linear operator at fixed \(z\)-frequency, which is a compact perturbation of the Fokker-Planck operator \(\cL\).

\subsection{Long time estimates}
In this section we prove that the solution operator \(S(\tau,\sigma)\) is well-defined, and satisfies the estimates \eqref{BasicBound}, \eqref{MainBound}.

We start by taking the Fourier transform in \(z\) of the equation \eqref{Linear} and setting $w(\tau,\xi,\zeta) = \widehat{\Omega}(\tau,\xi,\zeta)$ to obtain the system,
\eq{FLinear}{
\pde{
\left(\partial_\tau + e^\tau|\zeta|^2 - \cL + \alpha\Gamma\right)w^\xi = \alpha Z^\xi(w),
}{
\left(\partial_\tau + e^\tau|\zeta|^2 - \cL + \alpha\Lambda\right)w^z = \alpha Z^z(w),
}}
where the linear operators are denoted
\[
\Gamma = g\cdot\nabla_\xi - \nabla_\xi g,\qquad \Lambda = g\cdot\nabla_\xi - \nabla_\xi G\cdot \nabla_\xi^\perp(-\Delta_\xi)^{-1},
\]
and the perturbative terms are given by
\begin{align*}
Z^\xi(w) &=  i e^{\frac\tau 2} \zeta G \widehat{U^\xi} \\&= i e^{\frac\tau2} \zeta G \left[ ie^{\frac\tau2}\zeta(e^\tau|\zeta|^2 - \Delta_\xi)^{-1}(w^\xi)^\perp - \nabla_\xi^\perp(e^\tau|\zeta|^2 - \Delta_\xi)^{-1}w^z \right],\\
Z^z(w) &= i e^{\frac\tau2} \zeta G \widehat{U^z} - \nabla_\xi G\cdot\left( \widehat U^\xi + \nabla_\xi^\perp(-\Delta_\xi)^{-1}w^z\right)\\
&= i e^{\frac\tau2}  \zeta\nabla_\xi^\perp\cdot \left( G (e^\tau|\zeta|^2 - \Delta_\xi)^{-1}w^\xi \right) + \nabla_\xi G\cdot\left( (e^\tau|\zeta|^2 - \Delta_\xi)^{-1} - (-\Delta_\xi)^{-1}\right)\nabla_\xi^\perp w^z.
\end{align*}

The existence of the solution operator \(S(\tau,\sigma)\) and the estimates \eqref{BasicBound}, \eqref{MainBound} are given in the following proposition:
\begin{prop}\label{prop:BasicAP}
Let \(m>1\) and \(\zeta\in \R\) be fixed. Then, for all \(\sigma\in \R\) and all \(w_\sigma\in L^2_\xi(m)\), there exists a unique mild solution \(w\in \mc C([\sigma,\infty);L^2_\xi(m))\) of the equation \eqref{FLinear} satisfying \(w(\sigma) = w_\sigma\). For all $\gamma>0$, it satisfies the estimate
\eq{BasicAP}{
\|w(\tau)\|_{L^2_\xi(m)}\lesssim e^{\gamma(\tau-\sigma)}\|w_\sigma\|_{L^2_\xi(m)}.
}

Further, if \(\alpha\neq 0\) then there exists some \(0<\mu = \mu(\alpha)<\frac12\) so that, whenever \(m>1 + 2\mu\) and \(w_\sigma\in L^2_\xi(m)\) satisfies \(\int w_\sigma^z\,d\xi = 0\), we have the improved estimate
\eq{ImprovedAP}{
\|w(\tau)\|_{L^2_\xi(m)}\lesssim e^{-\mu(\tau - \sigma)}\|w_\sigma\|_{L^2_\xi(m)}.
}
In both estimates, the implicit constant is independent of $\zeta$.
\end{prop}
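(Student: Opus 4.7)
The plan is to view \eqref{FLinear}, at fixed $z$-frequency $\zeta$, as a perturbation of two decoupled two-dimensional linear problems whose spectral theory is already well developed. Discarding the coupling $\alpha Z^{\xi,z}$ but keeping the non-negative damping $e^\tau|\zeta|^2$, the $w^\xi$ equation becomes the linearization analyzed in the Burgers-vortex work \cite{MR2770021}, and the $w^z$ equation is the $2d$ Navier--Stokes equation linearized around $\alpha G$, studied in \cite{MR1912106,MR2178064}. From those references I would import, on $L^2_\xi(m)$: (i) for any $m>1$, the decoupled evolution operators $T^\xi(\tau,\sigma),T^z(\tau,\sigma)$ have spectral abscissa at most $0$, so $\|T^{\xi,z}(\tau,\sigma)\|_{L^2_\xi(m)\to L^2_\xi(m)}\lesssim_\gamma e^{\gamma(\tau-\sigma)}$ for any $\gamma>0$; and (ii) for $\alpha\neq 0$ and $m>1+2\mu(\alpha)$ with some $\mu(\alpha)\in(0,\tfrac{1}{2})$, the operator $T^\xi$, and $T^z$ restricted to the mean-zero subspace, admit a genuine spectral gap giving $\|T^{\xi,z}(\tau,\sigma)\|\lesssim e^{-\mu(\tau-\sigma)}$. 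The extra term $e^\tau|\zeta|^2$ is a non-negative bounded perturbation, so these bounds are preserved uniformly in $\zeta$.

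Next I would recast \eqref{FLinear} through the Duhamel formulas
\begin{equation*}
w^{\xi,z}(\tau) = T^{\xi,z}(\tau,\sigma)\,w^{\xi,z}_\sigma + \alpha\int_\sigma^\tau T^{\xi,z}(\tau,\tau')\,Z^{\xi,z}(w(\tau'))\,d\tau'.
\end{equation*}
Since the Fourier multipliers $\lambda^2(\lambda^2-\Delta_\xi)^{-1}$ and $\lambda(\lambda^2-\Delta_\xi)^{-1}\nabla_\xi$ are bounded on $L^2$ uniformly in $\lambda=e^{\tau/2}|\zeta|$, and multiplication by the Schwartz functions $G$ and $\nabla G$ maps $L^2$ into $L^2_\xi(m)$, the coupling operators $Z^{\xi,z}$ are bounded $L^2_\xi(m)\to L^2_\xi(m)$ uniformly in $\tau,\zeta$. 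A Banach contraction on short intervals then gives existence and uniqueness in $\mathcal{C}([\sigma,\sigma+\delta];L^2_\xi(m))$, and iteration combined with the growth bound (i) and Gronwall yields the basic estimate \eqref{BasicAP} on all of $[\sigma,\infty)$, with constants independent of $\zeta$.

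For the improved estimate \eqref{ImprovedAP}, I would first verify that the mean-zero condition $\int w^z\,d\xi=0$ is preserved by the full coupled flow: the operator $-\cL+\alpha\Lambda+e^\tau|\zeta|^2$ trivially preserves means, the first term in $Z^z$ is a perfect $\xi$-divergence, and the second is $\nabla_\xi G$ paired against a vector field of curl form, which has zero mean against any constant after integration by parts. Given this, the main quantitative step is to estimate the coupling norm as a function of $\lambda=e^{\tau'/2}|\zeta|$. Using the identity $\lambda^2(\lambda^2-\Delta_\xi)^{-1}=I+\Delta_\xi(\lambda^2-\Delta_\xi)^{-1}$ together with the Schwartz decay of $G$ and $\nabla G$, one obtains an operator-norm bound $\|Z^{\xi,z}(w(\tau'))\|_{L^2_\xi(m)}\lesssim\beta(\lambda)\|w(\tau')\|_{L^2_\xi(m)}$ with $\beta(\lambda)\to 0$ as $\lambda\to 0$; combined with the spectral gap (ii), a Gronwall bootstrap in the Duhamel representation produces \eqref{ImprovedAP} at rate $\mu(\alpha)$.

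The main obstacle will be securing uniformity in $\zeta$. The coupling norm is only of order unity in the intermediate window $\lambda\sim 1$, and one must absorb this contribution into the additional dissipation $e^\tau|\zeta|^2\sim\lambda^2\gtrsim 1$ present in that same regime. After the rescaling $\tau'\mapsto\tau'-2\log|\zeta|$, this intermediate window has fixed length, which reduces the absorption to a $\zeta$-independent perturbative problem. Verifying that the final decay rate $\mu(\alpha)$ survives this absorption—while the mean-zero condition is maintained and the constants do not depend on $\zeta$—is the technical core of the argument.
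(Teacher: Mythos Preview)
Your overall strategy coincides with the paper's: view \eqref{FLinear} at fixed $\zeta$ as a perturbation of the two decoupled $2d$ problems governed by $\cL-\alpha\Gamma$ and $\cL-\alpha\Lambda$, import their spectral bounds (the paper packages these as Proposition~\ref{prop:SemigroupL2m}), write Duhamel, and check that $Z^z$ has zero $\xi$-mean so the spectral gap on $L^2_{\xi,0}(m)$ applies. The difference, and the gap in your write-up, is in how $Z$ is bounded quantitatively.

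For \eqref{BasicAP} you use only that $Z$ is bounded uniformly in $(\tau,\zeta)$. Gronwall then produces a factor $e^{C|\alpha|(\tau-\sigma)}$ which is \emph{not} absorbed by the heat damping when $|\zeta|$ is small (for $\lambda=e^{\tau/2}|\zeta|\ll 1$ the damping $e^{-|\zeta|^2(e^\tau-e^\sigma)}$ is essentially $1$), so you only get \eqref{BasicAP} for $\gamma\gtrsim|\alpha|$, not for all $\gamma>0$. The cure is the mechanism you already invoke for \eqref{ImprovedAP}: one needs $\|Z\|$ small when $\lambda$ is small. The paper does this cleanly via a single interpolated bound (Corollary~\ref{cor:Pert}, from $L^p$ estimates on the $2d$ Bessel potentials): for any $\delta\in(0,\tfrac12]$,
\[
\|Z(w)\|_{L^2_\xi(m)}\ \lesssim\ |\zeta|^{1-2\delta}\,e^{(\frac12-\delta)\tau}\,\|w\|_{L^2_\xi(m)}.
\]
With $\delta=\tfrac14$, Gronwall applied to $e^{|\zeta|^2e^\tau}e^{-\gamma\tau}\|w(\tau)\|$ (resp.\ $e^{|\zeta|^2e^\tau}e^{\mu\tau}\|w(\tau)\|$) yields the factor $\exp\!\big(C|\zeta|^{1/2}(e^{\tau/4}-e^{\sigma/4})\big)$, which is absorbed by $e^{-|\zeta|^2(e^\tau-e^\sigma)}$ uniformly in $\zeta,\sigma,\tau$. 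This handles both \eqref{BasicAP} and \eqref{ImprovedAP} at once, with no regime splitting.

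A smaller imprecision: the coupling does \emph{not} become small again for large $\lambda$ (for instance $\lambda^2(\lambda^2-\Delta_\xi)^{-1}\to I$ strongly), so what you call the ``intermediate window $\lambda\sim1$'' is really the entire half-line $\lambda\gtrsim1$, not a fixed-length interval after your rescaling. Your absorption idea still works there because the damping rate $\lambda^2$ grows exponentially in $\tau$, but the picture is not quite as you describe it.
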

In order to prove Proposition~\ref{prop:BasicAP} we first prove estimates for the Biot-Savart operator in self-similar coordinates:
\begin{lem}\label{lem:Nonlocal} Let \(m>1\) and \(\lambda>0\).

\begin{itemize}
\item[(i)] If \(1< r\leq \infty\) and \(0<\delta \leq\min\{\frac12, 1 - \frac1r\}\),
\eq{Bessel}{
\|(\lambda^2 - \Delta_\xi)^{-1}f\|_{L^r_\xi} \lesssim \lambda^{-(\frac 2r + 2\delta)}\|f\|_{L^2_\xi(m)}.
}

\item[(ii)] If \(1<r\leq 2\) and \(0<\delta\leq1 - \frac1r\),
\eq{Bessel2}{
\|\nabla_\xi(\lambda^2 - \Delta_\xi)^{-1}f\|_{L^r_\xi} \lesssim \lambda^{1 - \frac 2r - 2\delta}\|f\|_{L^2_\xi(m)}.
}

\item[(iii)] If \(2<r<\infty\),
\eq{Bessel3}{
\|\nabla_\xi(-\Delta_\xi)^{-1}f\|_{L^r_\xi}\lesssim \|f\|_{L^2_\xi(m)}.
}
\end{itemize}
\end{lem}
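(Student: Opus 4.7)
The plan is to combine the weighted embedding $L^2_\xi(m)\hookrightarrow L^p(\R^2)$ with standard convolution (and Riesz potential) estimates for the $2d$ Bessel kernel. First I would establish the embedding: for $m>1$ and $1\leq p\leq 2$, Cauchy--Schwarz gives
\[
\|f\|_{L^p_\xi}\leq \|\langle\xi\rangle^{-m}\|_{L^q_\xi}\|f\|_{L^2_\xi(m)}\lesssim_{m,p}\|f\|_{L^2_\xi(m)},\qquad \tfrac1q=\tfrac1p-\tfrac12,
\]
since $mq>2$ for $m>1$, $p\leq 2$. With this in hand the proof reduces to unweighted convolution bounds.

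For (i), I would note that $(\lambda^2-\Delta_\xi)^{-1}$ is convolution with the $2d$ Bessel kernel $K_\lambda$, which by the Fourier rescaling $\eta\mapsto \lambda\eta$ satisfies $K_\lambda(\xi)=K_1(\lambda\xi)$. Because $K_1$ has only a logarithmic singularity at $0$ and exponential decay at infinity, $K_1\in L^s(\R^2)$ for every $1\leq s<\infty$, hence $\|K_\lambda\|_{L^s_\xi}\lesssim\lambda^{-2/s}$. Young's inequality then gives
\[
\|(\lambda^2-\Delta_\xi)^{-1}f\|_{L^r_\xi}\lesssim \lambda^{-2/s}\|f\|_{L^p_\xi},\qquad \tfrac1s=1-\tfrac1p+\tfrac1r.
\]
Setting $\tfrac1p=1-\delta$ (so $p\in(1,2]$ exactly when $\delta\in(0,1/2]$) and chaining with the weighted embedding turns the right-hand side into $\lambda^{-(2/r+2\delta)}\|f\|_{L^2_\xi(m)}$; the requirement $s\geq 1$ translates into $\delta\leq 1-\tfrac1r$, reproducing the stated range.

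For (ii) I would proceed identically, now with $\nabla_\xi K_\lambda(\xi)=\lambda\nabla K_1(\lambda\xi)$. Since $\nabla K_1$ has a $|\xi|^{-1}$ singularity at the origin (together with exponential decay), $\nabla K_1\in L^s(\R^2)$ for $1\leq s<2$, so $\|\nabla_\xi K_\lambda\|_{L^s_\xi}\lesssim \lambda^{1-2/s}$. The same choice $\tfrac1p=1-\delta$ in Young produces the target factor $\lambda^{1-2/r-2\delta}$; the constraint $s<2$ becomes $\tfrac1r+\delta>\tfrac12$, which is automatic for $r\leq 2$, $\delta>0$, while $s\geq 1$ again gives $\delta\leq 1-\tfrac1r$ (which, since $r\leq 2$, already forces $\delta\leq\tfrac12$, so the $L^p$ embedding is available).

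For (iii), the operator $\nabla_\xi(-\Delta_\xi)^{-1}$ factors as a vector of Riesz transforms composed with the Riesz potential $|\nabla_\xi|^{-1}$, so by Hardy--Littlewood--Sobolev it is bounded $L^p\to L^r$ whenever $1<p<r<\infty$ and $\tfrac1p-\tfrac1r=\tfrac12$. Taking $\tfrac1p=\tfrac1r+\tfrac12$ yields $p\in(1,2)$ for $r\in(2,\infty)$, and post-composing with the weighted embedding completes the bound. The main difficulty is essentially book-keeping: one has to line up the three constraints (integrability of the weight, integrability of the kernel, and Young's summation relation) to recover precisely the advertised range of $\delta$; once the factorization through an intermediate $L^p$ space is fixed, the actual estimates are classical.
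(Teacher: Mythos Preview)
Your proposal is correct and follows essentially the same approach as the paper: both chain the embedding $L^2_\xi(m)\hookrightarrow L^p_\xi$ (for $1\le p\le 2$, $m>1$) with Young's inequality applied to the rescaled $2d$ Bessel kernel for (i)--(ii), and Hardy--Littlewood--Sobolev for (iii), with the same choices $\tfrac1p=1-\delta$ and $\tfrac1s=\tfrac1r+\delta$. The only cosmetic difference is that for (iii) you factor $\nabla_\xi(-\Delta_\xi)^{-1}$ as Riesz transforms composed with $|\nabla_\xi|^{-1}$, whereas the paper convolves directly with the explicit kernel $-\tfrac1{2\pi}\tfrac{\xi}{|\xi|^2}$; both routes land on the same HLS step with exponent $p=\tfrac{2r}{2+r}$.
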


\begin{proof}~
\noindent
\emph{i)} We may write
\[
(\lambda^2 - \Delta_\xi)^{-1}f (\xi) = \int_{\R^2}K(\lambda(\xi - \eta))f(\eta)\,d\eta,
\]
where \(K\) is the \(2d\) Bessel potential. Recalling that \(K\in L^p_\xi\) for all \(1\leq p<\infty\) and taking \(\frac 1p + \frac 1q = 1 + \frac 1r\), we obtain
\[
\|(\lambda^2 - \Delta_\xi)^{-1}f\|_{L^r_\xi}\lesssim \lambda^{-\frac 2p}\|K\|_{L^p_\xi}\|f\|_{L^q_\xi}.
\]
For \(1\leq q\leq2\) and \(m>\frac2q - 1\) we have the embedding,
\[
\|f\|_{L^q_\xi}\lesssim \|f\|_{L^2_\xi(m)}.
\]
In particular, taking \(\frac1p = \frac 1r + \delta\) and \(\frac1q = 1 - \delta\) we obtain the estimate \eqref{Bessel}.

\smallskip
\noindent
\emph{ii)} The estimate \eqref{Bessel2} is similar to \eqref{Bessel}, using that \(\nabla_\xi K\in L^p_\xi\) for all \(1\leq p<2\).

\smallskip
\noindent
\emph{iii)} We recall that \(\nabla_\xi(-\Delta_\xi)^{-1}f = k*f\), where the kernel
\[
k(\xi) = -\frac1{2\pi}\frac{\xi}{|\xi|^2}.
\]
As a consequence, we may apply the Hardy-Littlewood-Sobolev inequality to obtain
\[
\|\nabla_\xi(-\Delta_\xi)^{-1}f\|_{L^r_\xi}\lesssim \|f\|_{L^{\frac{2r}{2 + r}}},
\]
and then use the embedding \(L^2_\xi(m)\subset L^{\frac{2r}{2 + r}}\).
\end{proof}

Applying Lemma~\ref{lem:Nonlocal} with \(\lambda = e^{\frac\tau2}|\zeta|\), we obtain the following estimates for the perturbative term:
\begin{cor}\label{cor:Pert}
For all \(m> 1\) and \(0<\delta\leq \frac12\) we have the estimate,
\eq{Pert}{
\|Z\|_{L^2_\xi(m)} \lesssim_{\delta,m} |\zeta|^{1 - 2\delta}e^{(\frac12-\delta)\tau}\|w\|_{L^2_\xi(m)}.
}
\end{cor}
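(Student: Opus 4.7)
Since $|\zeta|^{1-2\delta}e^{(\frac12-\delta)\tau} = (e^{\tau/2}|\zeta|)^{1-2\delta}$, setting $\lambda := e^{\tau/2}|\zeta|$, the target reads $\|Z\|_{L^2_\xi(m)} \lesssim \lambda^{1-2\delta}\|w\|_{L^2_\xi(m)}$. The plan is to treat each of the four constituent pieces of $(Z^\xi, Z^z)$ separately by invoking the three items of Lemma~\ref{lem:Nonlocal} at the parameter $\lambda$. Throughout, the reduction from $L^r_\xi$ estimates back to the weighted target space is painless: since $G$, $\nabla G$, and $\nabla^\perp G$ are Schwartz, $\langle\xi\rangle^m\nabla^\alpha G \in L^p_\xi$ for every $p\in[1,\infty]$, so pointwise multiplication by $G$ or one of its derivatives is bounded from any $L^r_\xi$ into $L^2_\xi(m)$ by H\"older. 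This is what allows us to feed the outputs of Lemma~\ref{lem:Nonlocal} directly into $L^2_\xi(m)$.

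For the first term of $Z^\xi$, the factor $e^\tau|\zeta|^2 = \lambda^2$ sits in front and Lemma~\ref{lem:Nonlocal}(i) with $r=2$ gives $\|(\lambda^2-\Delta_\xi)^{-1}(w^\xi)^\perp\|_{L^2_\xi}\lesssim \lambda^{-(1+2\delta)}\|w\|_{L^2_\xi(m)}$; multiplication by $G$ and the prefactor produces $\lambda^{2}\cdot\lambda^{-(1+2\delta)} = \lambda^{1-2\delta}$. For the second term of $Z^\xi$, Lemma~\ref{lem:Nonlocal}(ii) with $r=2$ controls $\nabla_\xi^\perp(\lambda^2-\Delta_\xi)^{-1}w^z$ in $L^2_\xi$ by $\lambda^{-2\delta}\|w\|_{L^2_\xi(m)}$, and multiplication by $G$ and the prefactor $e^{\tau/2}|\zeta| = \lambda$ again yields $\lambda^{1-2\delta}$. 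For the first term of $Z^z$, distribute the outer curl via $\nabla^\perp\cdot(GA) = (\nabla^\perp G)\cdot A + G\,\nabla^\perp\cdot A$: the piece with the derivative on the resolvent is handled by Lemma~\ref{lem:Nonlocal}(ii) as above, while the piece $(\nabla^\perp G)\cdot(\lambda^2-\Delta_\xi)^{-1}w^\xi$ is handled by invoking Lemma~\ref{lem:Nonlocal}(i) at $r=\infty$, which furnishes the $L^\infty_\xi$ bound $\lambda^{-2\delta}\|w\|_{L^2_\xi(m)}$; combined with the Schwartz factor $\nabla^\perp G$ and the prefactor $\lambda$, this once more collapses to $\lambda^{1-2\delta}$.

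The only genuine obstacle is the remaining term $\nabla G\cdot\bigl((\lambda^2-\Delta_\xi)^{-1}-(-\Delta_\xi)^{-1}\bigr)\nabla^\perp w^z$, because $(-\Delta_\xi)^{-1}$ is unbounded on $L^2_\xi$ in two dimensions and Lemma~\ref{lem:Nonlocal} requires an $L^2_\xi(m)$ input. The plan is to exploit the second resolvent identity
\[
(\lambda^2-\Delta_\xi)^{-1} - (-\Delta_\xi)^{-1} = -\lambda^2(\lambda^2-\Delta_\xi)^{-1}(-\Delta_\xi)^{-1},
\]
rewriting the offending term as $-\lambda^2\,\nabla G\cdot(\lambda^2-\Delta_\xi)^{-1}\nabla^\perp(-\Delta_\xi)^{-1}w^z$. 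I would then apply Lemma~\ref{lem:Nonlocal}(iii) to place $\nabla^\perp(-\Delta_\xi)^{-1}w^z$ in $L^r_\xi$ for a suitably chosen $r\in(2,2/(1-2\delta)]$, convolve against the rescaled Bessel potential $K_\lambda(\xi) = K(\lambda\xi)$ via Young's inequality (using $\|K_\lambda\|_{L^p_\xi} = \lambda^{-2/p}\|K\|_{L^p_\xi}$ for $p\in[1,\infty)$), and finally use the Schwartz decay of $\nabla G$ to land in $L^2_\xi(m)$. Choosing the Young exponent $p = 2/(1+2\delta)\in[1,2)$, the accumulated power of $\lambda$ is $\lambda^{2}\cdot\lambda^{-2/p} = \lambda^{1-2\delta}$, completing the estimate; the constants introduced ($\|K\|_{L^p}$ and weighted $L^q$ norms of $\nabla G$) account for the stated dependence on $\delta$ and $m$.
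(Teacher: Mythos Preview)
Your proof is correct and follows essentially the same approach as the paper's: both treat the four constituent terms of $Z$ via Lemma~\ref{lem:Nonlocal} at $\lambda = e^{\tau/2}|\zeta|$, and both handle the delicate resolvent-difference term in $Z^z$ by the second resolvent identity followed by Lemma~\ref{lem:Nonlocal}(iii) at the exponent $r = 2/(1-2\delta)$ together with a Young-type bound on the rescaled Bessel potential. The only cosmetic differences are that the paper bundles the two pieces of $Z^\xi$ into a single estimate on $\widehat{U^\xi}$, and that the paper lands the resolvent output directly in $L^\infty_\xi$ whereas you allow a general $L^s_\xi$ before absorbing with the Schwartz factor $\nabla G$.
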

\begin{proof}
We first observe that when \(\zeta = 0\) we have \(Z = 0\) so it suffices to consider the case \(\zeta\neq0\). Applying the estimates \eqref{Bessel}, \eqref{Bessel2} with \(r = 2\) and \(\lambda = e^{\frac\tau2}|\zeta|\) we then obtain the estimate
\[
\|\widehat{U^\xi} \|_{L^2_\xi}\lesssim |\zeta|^{-2\delta}e^{-\delta\tau}\|w\|_{L^2_\xi(m)}.
\]
The estimate for the \(\xi\)-component, \(Z^\xi\) then follows from the rapid decay of $G$.

To bound \(Z^z\) we first note that \(ie^{\frac\tau2}\zeta G\nabla_\xi^\perp\cdot (e^\tau |\zeta|^2 - \Delta_\xi)^{-1}w^\xi\) may be bounded similarly to \(Z^\xi\). Next we bound \(ie^{\frac12\tau}\zeta (\nabla_\xi^\perp G)\cdot (e^\tau |\zeta|^2 - \Delta_\xi)^{-1}w^\xi\) by applying the estimate \eqref{Bessel} with \(r = \infty\) and again using the rapid decay of \(\nabla_\xi G\).

For the remaining term we apply the resolvent identity to obtain
\[
\nabla_\xi^\perp(e^\tau|\zeta|^2 - \Delta_\xi)^{-1}w^z - \nabla^\perp_\xi(-\Delta_\xi)^{-1}w^z = e^\tau|\zeta|^2(e^\tau|\zeta|^2 - \Delta_\xi)^{-1}\nabla_\xi^\perp \Delta_\xi^{-1}w^z.
\]
Proceeding as in the proof of \eqref{Bessel} we may then bound,
\begin{align*}
\|\nabla_\xi^\perp(e^\tau|\zeta|^2 - \Delta_\xi)^{-1}w^z - \nabla^\perp_\xi(-\Delta_\xi)^{-1}w^z\|_{L^\infty_\xi} &\lesssim |\zeta|^{1 - 2\delta}e^{(\frac12-\delta)\tau}\|\nabla_\xi^\perp\Delta_\xi^{-1} w^z\|_{L^{\frac2{1-2\delta}}_\xi}\\
&\lesssim |\zeta|^{1 - 2\delta}e^{(\frac12-\delta)\tau}\|w\|_{L^2_\xi(m)},
\end{align*}
where the second inequality follows from the estimate \eqref{Bessel3}. The estimate then follows from the rapid decay of \(\nabla_\xi G\).
\end{proof}

Using these estimates we may complete the proof of Proposition~\ref{prop:BasicAP}:
\begin{proof}[Proof of Proposition~\ref{prop:BasicAP}]
From Proposition~\ref{prop:SemigroupL2m} we may define the semigroup
\[
\cT(\tau) = \begin{bmatrix}e^{\tau(\cL - \alpha \Gamma)}&0\\0&e^{\tau(\cL - \alpha\Lambda)}\end{bmatrix},
\]
on \(L^2_\xi(m)\) and from the estimate \eqref{SemigroupL2m} we have for any $\gamma>0$
\[
\|\cT(\tau)\|_{L^2_\xi(m)\rightarrow L^2_\xi(m)}\lesssim_\gamma e^{\gamma \tau}.
\]
Using the Duhamel formula we then write mild solutions of \eqref{FLinear} in the form,
\eq{Duhamel}{
w(\tau) = e^{ - |\zeta|^2(e^\tau - e^\sigma)}\cT(\tau - \sigma)w(\sigma) + \alpha\int_\sigma^\tau e^{ - |\zeta|^2(e^\tau - e^s)}\cT(\tau - s)Z(w(s))\,ds.
}
We now take \(\epsilon>0\) and define the map
\[
\mathscr T\colon \mc C([\sigma,\sigma+\epsilon];L^2_\xi(m))\rightarrow \mc C([\sigma,\sigma + \epsilon];L^2_\xi(m)),
\]
by
\[
\mathscr T(w) = \alpha\int_\sigma^\tau e^{ - |\zeta|^2(e^\tau - e^s)}\cT(\tau - s)Z(w(s))\,ds.
\]
Taking \(\delta = \frac14\) (say) and applying the estimate \eqref{Pert} for the perturbative term \(Z\), we may then bound,
\begin{align*}
\left\|\mathscr T(w)\right\|_{L^2_\xi(m)} &\lesssim \int_\sigma^\tau \frac{e^{(\gamma-\frac14)(\tau - s)}}{a(\tau - s)^{\frac14}}\|w(s)\|_{L^2_\xi(m)}\,ds \lesssim (\tau - \sigma)^{\frac34} \sup\limits_{\tau\in[\sigma,\sigma+\epsilon]}\|w(\tau)\|_{L^2_\xi(m)},
\end{align*}
where we recall that $a(\tau)=1-e^{-\tau}$.
In particular, by choosing \(0<\epsilon = \epsilon(\alpha) \ll 1\) sufficiently small (independently of \(\sigma\)) we may use the contraction principle to find a unique mild solution of \eqref{FLinear} on the time interval \([\sigma,\sigma + \epsilon]\). Further, as \(\epsilon\) is independent of \(\sigma\), we may iterate this argument to obtain a global solution.

To obtain the a priori estimate \eqref{BasicAP} we use an identical argument to obtain the estimate,
\[
e^{e^\tau |\zeta|^2} e^{-\gamma \tau}\|w(\tau)\|_{L^2_\xi(m)}\lesssim e^{e^\sigma |\zeta|^2} e^{-\gamma \sigma} \|w(\sigma)\|_{L^2_\xi(m)} +  \int_\sigma^\tau |\zeta|^{\frac12}e^{(\frac14-\gamma) s} e^{e^s|\zeta|^2}\|w(s)\|_{L^2_\xi(m)}\,ds.
\]
Applying the integrated form of Gronwall's inequality to the continuous non-negative function \(\tau\mapsto e^{e^\tau |\zeta|^2} e^{-\gamma \tau}\|w(\tau)\|_{L^2_\xi(m)}\) we obtain,
\[
e^{e^\tau |\zeta|^2} e^{-\gamma \tau} \|w(\tau)\|_{L^2_\xi(m)} \lesssim e^{e^\sigma |\zeta|^2} e^{-\gamma \sigma}\|w(\sigma)\|_{L^2_\xi(m)}e^{C|\zeta|^{\frac12}(e^{\frac\tau4} - e^{\frac\sigma4})},
\]
from which the estimate \eqref{BasicAP} follows.

For the improved estimate \eqref{ImprovedAP}, we first use the estimates \eqref{SemigroupL2ma},~\eqref{SemigroupL2mb} to find \(0<\mu = \mu(\alpha)<\frac12\) so that for \(m>1 + 2\mu\) we have,
\[
\|e^{\tau(\cL - \alpha\Gamma)}\|_{L^2_\xi(m)\rightarrow L^2_\xi(m)}\lesssim e^{-\mu \tau},\qquad \|e^{\tau(\cL - \alpha\Lambda)}\|_{L^2_{\xi,0}(m)\rightarrow L^2_{\xi,0}(m)}\lesssim e^{-\mu \tau},
\]
where the closed subspace \(L^2_{\xi,0}(m) = \{f\in L^2_\xi(m):\int f\,d\xi =0\}\). Next we observe that the perturbative term,
\[
Z^z = \nabla_\xi^\perp\cdot\left(i e^{\frac\tau2}  \zeta G (e^\tau|\zeta|^2 - \Delta_\xi)^{-1}w^\xi \right) + \nabla_\xi\cdot\left(G\left( (e^\tau|\zeta|^2 - \Delta_\xi)^{-1} - (-\Delta_\xi)^{-1}\right)\nabla_\xi^\perp w^z\right),
\]
so from the estimate \eqref{Pert}, we see that \(Z^z\in L^2_{\xi,0}(m)\). As a consequence, provided \(\int w^z_\sigma\,d\xi = 0\), we may estimate as above to obtain,
\[
e^{\mu\tau + e^\tau|\zeta|^2}\|w(\tau)\|_{L^2_\xi(m)}\lesssim e^{\mu\sigma + e^\sigma|\zeta|^2}\|w(\sigma)\|_{L^2_\xi(m)} + \int_\sigma^\tau |\zeta|^{\frac12}e^{\frac s4} e^{\mu s + e^s|\zeta|^2}\|w(s)\|_{L^2_\xi(m)}\,ds,
\]
and applying the integrated form of Gronwall's inequality to the continuous non-negative function \(\tau\mapsto e^{\mu\tau + e^\tau|\zeta|^2}\|w(\tau)\|_{L^2_\xi(m)}\) we have,
\[
e^{\mu\tau + e^\tau|\zeta|^2}\|w(\tau)\|_{L^2_\xi(m)}\lesssim e^{\mu\sigma + e^\sigma|\zeta|^2}\|w(\sigma)\|_{L^2_\xi(m)}e^{C |\zeta|^{\frac12}(e^{\frac\tau4} - e^{\frac\sigma4})},
\]
from which we obtain the estimate \eqref{ImprovedAP}.
\end{proof}

\subsection{Short time estimates}~
In order to prove the estimates \eqref{SmoothedBound}, \eqref{SmoothedBound1} we will combine the long time estimates \eqref{BasicBound},~\eqref{MainBound} with several short time smoothing estimates. We start with the following estimate that we prove similarly to~\cite[Proposition~4.6]{MR2178064}:
\begin{lem}\label{lem:shorttime1}
Let \(1 < p\leq 2\). Then there exists some \(0<\delta = \delta(\alpha)\ll1\) so that for all \(\sigma\leq \tau\leq \sigma + \delta\) and any \(3\times 3\) tensor field \(F\) satisfying \(\oDiv\,\oDiv F = 0\) we have the estimate
\eq{ShortTime1}{
\|S(\tau,\sigma)\oDiv F\|_{B_zL^2_\xi(m)}\lesssim \frac{1}{(\tau - \sigma)^{\frac1p}}\|F\|_{B_zL^p_\xi(m)}.
}

Further, for \(\tau>\sigma\) there exists a bounded operator \(R(\tau,\sigma)\) on \(B_zL^p_\xi(m)\) so that \(S(\tau,\sigma)\oDiv F = \oDiv R(\tau,\sigma) F\) and we have the estimates
\begin{align}
\|R(\tau,\sigma)F\|_{B_zL^2_\xi(m)} &\lesssim \frac{1}{(\tau - \sigma)^{\frac1p - \frac12}}\|F\|_{B_zL^p_\xi(m)},\label{ShortTime3}\\
\|\onabla R(\tau,\sigma)F\|_{B_zL^2_\xi(m)} & \lesssim \frac{1}{(\tau - \sigma)^{\frac1p}}\|F\|_{B_zL^p_\xi(m)}.\label{ShortTime3a}
\end{align}
\end{lem}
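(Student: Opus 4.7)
The plan is to work frequency-by-frequency on the Fourier side in $z$, reducing to uniform-in-$\zeta$ bounds on the fiber operators from Proposition~\ref{prop:BasicAP}, and to combine the divergence-smoothing of the 2d semigroups with the Gaussian damping $e^{-|\zeta|^2(e^\tau-e^\sigma)}$. Writing
\[
\widehat{\oDiv F}(\xi,\zeta) = \Div_\xi \widehat{F^\xi}(\xi,\zeta) + i e^{\sigma/2}\zeta\, \widehat{F^z}(\xi,\zeta),
\]
the homogeneous term in the Duhamel representation \eqref{Duhamel} splits into two pieces. For the $\Div_\xi$ piece one invokes the divergence-smoothing estimate
\[
\|e^{(\tau-\sigma)(\cL-\alpha\Gamma)}\Div_\xi h\|_{L^2_\xi(m)} + \|e^{(\tau-\sigma)(\cL-\alpha\Lambda)}\Div_\xi h\|_{L^2_\xi(m)} \lesssim \frac{1}{a(\tau-\sigma)^{1/p}}\|h\|_{L^p_\xi(m)},
\]
which is the 2d bound in the spirit of \cite[Prop.~4.6]{MR2178064}, obtained from the Mehler kernel for $e^{\tau\cL}$ (where integration by parts transfers a $\nabla_\eta$ onto the Gaussian, gaining $1/\sqrt{a(\tau)}$) together with a short-time Duhamel absorption of the bounded perturbations $\Gamma$, $\Lambda$. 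For the $ie^{\sigma/2}\zeta\,\widehat{F^z}$ piece one combines the plain $L^p\to L^2$ smoothing (loss $a(\tau-\sigma)^{-(1/p-1/2)}$) with the pointwise Gaussian gain
\[
e^{\sigma/2}|\zeta|\, e^{-|\zeta|^2(e^\tau-e^\sigma)/2} \lesssim \frac{1}{\sqrt{a(\tau-\sigma)}},
\]
which follows from $e^\tau-e^\sigma = e^\tau a(\tau-\sigma) \geq e^\sigma a(\tau-\sigma)$, recovering the missing half power.

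Next, the Duhamel integral $\alpha \int_\sigma^\tau e^{-|\zeta|^2(e^\tau-e^s)}\cT(\tau-s)Z(w(s))\,ds$ is absorbed by a contraction. Setting $N(\tau) := \sup_{s\in[\sigma,\tau]}(s-\sigma)^{1/p}\|w(s)\|_{L^2_\xi(m)}$, using the $L^2\to L^2$ bound for $\cT$ together with Corollary~\ref{cor:Pert} for a small $\delta = \delta(\alpha) \in (0,1/2)$, and invoking the pointwise absorption $e^{-|\zeta|^2(e^\tau-e^s)}|\zeta|^{1-2\delta} \lesssim e^{-s(1-2\delta)/2}(\tau-s)^{-(1-2\delta)/2}$ (whose $e^{-s(1-2\delta)/2}$ factor cancels exactly with the $e^{(1/2-\delta)s}$ prefactor in Corollary~\ref{cor:Pert}), the integrand is dominated by $(\tau-s)^{-(1-2\delta)/2}(s-\sigma)^{-1/p} N(\tau)$. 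The resulting beta-type integral is $\approx (\tau-\sigma)^{1/2+\delta-1/p}$, and can be absorbed into the left-hand side for $(\tau-\sigma) \leq \delta(\alpha)$ small enough, establishing \eqref{ShortTime1}.

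To construct $R(\tau,\sigma)$ and prove \eqref{ShortTime3}--\eqref{ShortTime3a}, I would exploit the commutation $[\partial_j,\cL] = \tfrac12 \partial_j$, which gives $e^{\tau\cL}\Div_\xi = e^{-\tau/2}\Div_\xi e^{\tau\cL}$; the $z$-derivative $e^{\sigma/2}\partial_z$ is represented at evaluation time $\tau$ as $e^{(\sigma-\tau)/2}(e^{\tau/2}\partial_z)$, a genuine $\oDiv$-component. The perturbations $\Gamma, \Lambda$ are absorbed by Duhamel using the identity $\Gamma f = \oDiv(g\otimes f - f\otimes g) + g\,\oDiv f$ (and its analog for $\Lambda$), which preserves divergence form modulo controlled remainders; $Z^z$ is already in divergence form (as displayed in the proof of Proposition~\ref{prop:BasicAP}), and $Z^\xi$ can be put in divergence form by integration by parts against the Mehler kernel of $e^{(\tau-s)\cL}$. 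Iterating these steps through the Duhamel series defines the tensor $R(\tau,\sigma)F$ satisfying $S(\tau,\sigma)\oDiv F = \oDiv R(\tau,\sigma)F$. Estimate \eqref{ShortTime3} then follows from the plain $L^p\to L^2$ smoothing for $\cT$ (without using divergence), and \eqref{ShortTime3a} by gaining one extra derivative from the Mehler kernel in the $\xi$-direction and from the same Gaussian-damping mechanism in the $z$-direction.

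The main obstacle is bookkeeping the divergence structure through the entire Duhamel expansion: the commutation is clean for the 2d free semigroup, but through the $Z$ terms and the nonlocal pieces of $\Gamma$, $\Lambda$ one must maintain a tensorial representation at every step, and the anisotropic scaling between $\nabla_\xi$-smoothing (rate $1/\sqrt{a(\tau-\sigma)}$) and $e^{\tau/2}\partial_z$-taming (via Gaussian damping) must line up \emph{uniformly in $\zeta$} in order to deliver the stated $(\tau-\sigma)^{-1/p}$ bound on $\onabla R(\tau,\sigma)F$.
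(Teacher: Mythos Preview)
Your approach to \eqref{ShortTime1} alone is workable in outline, though the phrase ``bounded perturbations $\Gamma, \Lambda$'' is wrong: both contain the transport term $g\cdot\nabla_\xi$, so the 2d divergence-smoothing estimate you invoke for $\cT(\tau-\sigma)$ already requires its own short-time fixed point carrying a gradient norm, not a simple absorption. You are effectively deferring the real work to a 2d black box that itself needs the same machinery.

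The genuine gap is the construction of $R(\tau,\sigma)$. Your plan --- propagate divergence structure through an iterated Duhamel series using $[\partial_j,\cL]=\tfrac12\partial_j$, rewrite $\Gamma,\Lambda,Z$ in divergence form modulo remainders, and repeat --- is precisely the bookkeeping problem you flag at the end, and you do not show it closes. The remainder $g\,\oDiv f$ in your identity for $\Gamma f$ is not in divergence form, so each pass spawns new terms; summability of the resulting series and a concrete identification of $R(\tau,\sigma)F$ as a specific tensor are never established.

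The paper avoids all of this with one structural move: rather than tracking $\Omega=\oDiv F$, write a PDE for the tensor $F$ itself. Formally applying $\oDiv^{-1}$ to \eqref{Linear} yields
\[
\partial_\tau F - \bigl(\cL + e^\tau\partial_z^2 - \tfrac12\bigr)F = \RHS(F),
\]
where $\RHS(F)$ is built from the tensor products $g\otimes\Omega$, $G\otimes U$ and their transposes, with $\Omega=\oDiv F$ and $U$ its Biot--Savart velocity. The key bound is $\|\RHS(F)\|_{B_zL^p_\xi(m)}\lesssim\|\Omega\|_{B_zL^2_\xi(m)}\le\|\onabla F\|_{B_zL^2_\xi(m)}$, so a single contraction in the norm
\[
\sup_{\tau\in[\sigma,\sigma+\delta]}\Bigl(\|F(\tau)\|_{B_zL^p_\xi(m)} + (\tau-\sigma)^{\frac1p-\frac12}\|F(\tau)\|_{B_zL^2_\xi(m)} + (\tau-\sigma)^{\frac1p}\|\onabla F(\tau)\|_{B_zL^2_\xi(m)}\Bigr)
\]
closes directly from the Fokker--Planck smoothing \eqref{estimatesFP}. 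Then $R(\tau,\sigma)$ is \emph{defined} as the solution operator of this tensor equation, the identity $S(\tau,\sigma)\oDiv=\oDiv R(\tau,\sigma)$ holds by construction, and all three estimates \eqref{ShortTime1}--\eqref{ShortTime3a} fall out of the fixed-point bound at once. No commutation identities, no iterated series.
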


\begin{proof}
Start with the following equation (which can be thought of, formally, as the result of applying \(\oDiv^{-1}\) to the equation \eqref{Linear})
\eq{Div-1SS}{
\pde{
\partial_\tau F - \left(\cL + e^\tau\partial_z^2 - \frac12\right) F = \RHS(F),
}{
\oDiv\,\oDiv F = 0,
}
}
where, for \(\Omega = \oDiv F\) and \(U = (- \oDelta)^{-1}\onabla\times\oDiv F\),
\[
\RHS(F) = - \alpha \begin{bmatrix}g\\0\end{bmatrix}\otimes \Omega - \alpha U \otimes\begin{bmatrix}0\\G\end{bmatrix} + \alpha \begin{bmatrix}0\\G\end{bmatrix}\otimes U + \alpha \Omega\otimes \begin{bmatrix}g\\0\end{bmatrix}.
\]
We then take \(R(\tau,\sigma)\) to be the solution operator for the equation \eqref{Div-1SS}, which (formally) satisfies \(S(\tau,\sigma)\oDiv = \oDiv R(\tau,\sigma)\). 

The solution of \eqref{Div-1SS} may be written using the Duhamel formula as
\[
F(\tau) = e^{(\tau - \sigma)\left(\cL - \frac12\right) + e^\tau a(\tau - \sigma)\partial_z^2}F(\sigma) + \int_\sigma^\tau e^{(\tau - s)\left(\cL - \frac12\right) + e^\tau a(\tau - s)\partial_z^2}\RHS(F(s))\,ds.
\]
Our strategy will now be to apply Banach's fixed point theorem to the mapping
\[
F\mapsto e^{(\tau - \sigma)\left(\cL - \frac12\right) + e^\tau a(\tau - \sigma)\partial_z^2}F(\sigma) + \int_\sigma^\tau e^{(\tau - s)\left(\cL - \frac12\right) + e^\tau a(\tau - s)\partial_z^2}\RHS(F(s))\,ds
\]
in the closed subspace \(X\subset \mc C([\sigma,\sigma+\delta];B_zL^p_\xi(m))\) with finite norm
\[
\|F\|_X = \sup\limits_{\tau\in[\sigma,\sigma + \delta]} \left(\|F(\tau)\|_{B_zL^p_\xi(m)} + (\tau - \sigma)^{\frac1p - \frac12}\|F(\tau)\|_{B_zL^2_\xi(m)} + (\tau - \sigma)^{\frac1p}\|\onabla F(\tau)\|_{B_zL^2_\xi(m)}\right),
\]
where \(\delta>0\) will be chosen sufficiently small (independently of \(\sigma\)).

Noting, on the one hand, that the operator norm on any $L^p$ space of $(e^{\frac\tau2} \partial_z)^\beta e^{e^\tau a(\tau-\sigma) \partial_z^2}$ is $\lesssim a(\tau-\sigma)^{-\frac\beta2}$ and, on the other hand, applying~\eqref{estimatesFP}, for \(\beta\in \N^3\) and \(1\leq p\leq q\leq \infty\) we obtain
\eq{FPSmoothed}{
\|\onabla^\beta e^{(\tau - \sigma)\cL + e^\tau a(\tau - \sigma)\partial_z^2}f\|_{B_zL_\xi^q(m)} \lesssim \frac{1}{(\tau - \sigma)^{\frac1p - \frac1q + \frac{|\beta|}2}}\|f\|_{B_zL_\xi^p(m)}.
}
(Recall that $|\tau-\sigma| \leq \delta\ll1 $ hence $a(\tau-\sigma) \approx \tau-\sigma$.) In particular,
\[
\|e^{(\tau - \sigma)\left(\cL - \frac12\right) + e^\tau a(\tau - \sigma)\partial_z^2}F(\sigma)\|_{X} \lesssim \|F(\sigma)\|_{L^p_\xi(m)}.
\]

As a consequence, it remains to show that the map
\[
\mathscr T\colon F \mapsto \int_\sigma^\tau e^{(\tau - s)\left(\cL - \frac12\right) + e^\tau a(\tau - s)\partial_z^2}\RHS(F(s))\,ds
\]
is a contraction on \(X\).

We recall from Lemma~\ref{lem:BSx} that
\[
\|U\|_{B_zL^4_\xi}\lesssim \|\Omega\|_{B_zL^{4/3}_\xi}\lesssim\|\Omega\|_{B_zL^2_\xi(m)}.
\]
Observing that, for \(1<p\leq 2\),
\[
\|g\|_{L^{\frac{2p}{2 - p}}_\xi} + \|G\|_{L^{\frac{4p}{4 - p}}_\xi(m)}\lesssim 1,
\]
we may then apply H\"older's inequality to obtain the estimate,
\eq{RHSF}{
\|\RHS(F)\|_{B_z L^p_\xi(m)}\lesssim \left(\|U\|_{B_z L^4_\xi} + \|\Omega\|_{B_z L^2_\xi(m)}\right)\lesssim \|\Omega\|_{B_z L^2_\xi(m)},
}
where we note that the implicit constant depends on \(|\alpha|\).

Applying the estimate \eqref{FPSmoothed} for the linear propagator, followed by the estimate \eqref{RHSF} for \(\RHS(F)\) we then obtain
\begin{align*}
\|\mathscr T(F)\|_{B_zL^p_\xi(m)} &\lesssim  \int_\sigma^\tau \|\RHS(F(s))\|_{B_zL^p_\xi(m)}\,ds\\
&\lesssim  \|F\|_X\int_\sigma^\tau \frac{1}{(s - \sigma)^{\frac1p}}\,ds\\
&\lesssim (\tau - \sigma)^{1-\frac{1}{p}}\|F\|_X,
\end{align*}
Similarly, we have
\begin{align*}
(\tau - \sigma)^{\frac1p}\| \onabla \mathscr T(F)\|_{B_zL^2_\xi(m)}
&\lesssim (\tau - \sigma)^{\frac1p}\int_\sigma^\tau \frac{1}{(\tau - s)^{\frac1p}}\|\RHS(F(s))\|_{B_zL^p_\xi(m)}\,ds\\
&\lesssim  (\tau - \sigma)^{\frac1p}\|F\|_X\int_\sigma^\tau \frac{1}{(\tau - s)^{\frac1p}(s - \sigma)^{\frac1p}}\,ds\\
&\lesssim (\tau - \sigma)^{1 - \frac1p}\|F\|_X,
\end{align*}
and an essentially identical estimate yields
\[
(\tau - \sigma)^{\frac1p - \frac12}\|\mathscr T(F)\|_{B_zL^2_\xi(m)}\lesssim (\tau - \sigma)^{1 - \frac1p}\|F\|_X.
\]
Combining these estimates we obtain
\[
\|\mathscr T(F)\|_X\lesssim \delta^{1 - \frac1p}\|F\|_X,
\]
so we may choose \(0<\delta = \delta(\alpha)\ll1\) sufficiently small (independently of \(\sigma\)) to ensure that \(\mathscr T\) is a contraction on \(X\). The estimates \eqref{ShortTime1}, \eqref{ShortTime3}, \eqref{ShortTime3a} are then a consequence of the bounds for the solution \(F\).
\end{proof}

An essentially identical argument applied directly to the equation \eqref{Linear} then yields our second short time smoothing estimate:
\begin{lem}\label{lem:shorttime2}
There exists \(0<\delta = \delta(\alpha)\ll1\) so that for all $\sigma \leq \tau \leq \sigma+\delta$ we have the estimate,
\eq{ShortTime2}{
\| \onabla S(\tau,\sigma) F \|_{B_zL^2_\xi(m)} \lesssim \frac{1}{(\tau - \sigma)^{\frac12}}\| F \|_{B_zL^2_\xi(m)}.
}
\end{lem}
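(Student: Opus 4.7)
I would mimic the fixed-point argument used in the proof of Lemma~\ref{lem:shorttime1}, but run it directly on the equation~\eqref{Linear} for $\Omega$ rather than on the pre-divergence equation for $F$. Writing \eqref{Linear} as $\partial_\tau\Omega - (\cL + e^\tau\partial_z^2)\Omega = N(\Omega)$ with
$$N(\Omega) = -U^g\cdot\onabla\Omega - U\cdot\onabla\Omega^g + \Omega^g\cdot\onabla U + \Omega\cdot\onabla U^g,$$
the Duhamel formula gives
$$\Omega(\tau) = e^{(\tau-\sigma)\cL + e^\tau a(\tau-\sigma)\partial_z^2}\Omega(\sigma) + \int_\sigma^\tau e^{(\tau-s)\cL + e^\tau a(\tau-s)\partial_z^2} N(\Omega(s))\,ds.$$
I would set up the contraction in the Banach space $Y\subset \mc C([\sigma,\sigma+\delta]; B_zL^2_\xi(m))$ with norm
$$\|\Omega\|_Y = \sup_{\tau\in[\sigma,\sigma+\delta]}\Bigl( \|\Omega(\tau)\|_{B_zL^2_\xi(m)} + (\tau-\sigma)^{\frac12}\|\onabla\Omega(\tau)\|_{B_zL^2_\xi(m)}\Bigr),$$
for $0 < \delta = \delta(\alpha) \ll 1$ to be chosen. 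The Fokker--Planck smoothing bound \eqref{FPSmoothed} (with $p=q=2$, $|\beta|=0,1$) puts the homogeneous part $e^{(\tau-\sigma)\cL+\cdots}\Omega(\sigma)$ into $Y$ with norm $\lesssim\|\Omega(\sigma)\|_{B_zL^2_\xi(m)}$.

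\textbf{Nonlinear estimate and closing the contraction.} The crux is showing $\|N(\Omega)\|_{B_zL^2_\xi(m)}\lesssim (\tau-\sigma)^{-1/2}\|\Omega\|_Y$. The only term containing a derivative of the unknown is $U^g\cdot\onabla\Omega = \alpha\, g\cdot\nabla_\xi\Omega$; since $g\in L^\infty_\xi$,
$$\|U^g\cdot\onabla\Omega\|_{B_zL^2_\xi(m)} \lesssim |\alpha|\,\|\onabla\Omega\|_{B_zL^2_\xi(m)} \lesssim |\alpha|\,(\tau-\sigma)^{-\frac12}\|\Omega\|_Y.$$
For the remaining three terms I would bound in $B_zL^2_\xi(m)$ by $\|\Omega\|_{B_zL^2_\xi(m)}\leq \|\Omega\|_Y$: $\Omega\cdot\onabla U^g$ uses $\|\nabla_\xi g\|_{L^\infty_\xi}<\infty$; $\Omega^g\cdot\onabla U$ uses the Gaussian decay of $G$ together with the $B_zL^2_\xi$-boundedness of $\onabla U$ in terms of $\Omega$ (applied frequency-by-frequency in $\zeta$, where it is a Mikhlin multiplier with symbol $\eta_i\eta_j/(e^\tau|\zeta|^2+|\eta|^2)$ bounded uniformly in $\zeta$); and $U\cdot\onabla\Omega^g$ uses the Gaussian decay of $\nabla_\xi G$ together with $\|U\|_{B_zL^4_\xi}\lesssim\|\Omega\|_{B_zL^{4/3}_\xi}\lesssim\|\Omega\|_{B_zL^2_\xi(m)}$ from Lemma~\ref{lem:BSx}. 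Combining these with \eqref{FPSmoothed} (and using that the beta-integral $\int_\sigma^\tau(\tau-s)^{-1/2}(s-\sigma)^{-1/2}\,ds$ is a universal constant) gives
$$\|\mathscr T(\Omega)\|_Y \lesssim \delta^{\frac12}\,\|\Omega\|_Y,$$
so for $\delta(\alpha)>0$ small enough (independently of $\sigma$) the Duhamel map is a contraction on a closed ball of $Y$. By the uniqueness part of Proposition~\ref{prop:BasicAP}, the fixed point coincides with $S(\tau,\sigma)\Omega(\sigma)$, and reading off the gradient component of $\|\cdot\|_Y$ yields \eqref{ShortTime2}.

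\textbf{Main obstacle.} The only real subtlety is the transport term $\alpha\, g\cdot\nabla_\xi\Omega$: it contains exactly one derivative of the unknown, which is the quantity we are trying to control. We can only absorb this by working in the smoothing norm $Y$ that already carries a weighted $(\tau-\sigma)^{1/2}\onabla$ component; the $(\tau-s)^{-1/2}$ gain from \eqref{FPSmoothed} then matches the $(s-\sigma)^{-1/2}$ singularity via the beta integral, but the resulting estimate only produces the factor $\delta^{1/2}$, so it is \emph{only} valid on short time intervals $[\sigma,\sigma+\delta]$ with $\delta$ sufficiently small depending on $\alpha$ (through the constant $|\alpha|\|g\|_{L^\infty_\xi}$). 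All the other pieces of $N$ are harmless because $U^g$, $\Omega^g$ and their derivatives decay (or are bounded) on $\R^2$, and the Biot--Savart law is bounded in the appropriate sense uniformly in the $z$-frequency $\zeta$.
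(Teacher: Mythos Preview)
Your proposal is correct and follows essentially the same route as the paper: write \eqref{Linear} in Duhamel form against the Fokker--Planck heat flow, set up a contraction in the norm $\|\Omega\|+(\tau-\sigma)^{1/2}\|\onabla\Omega\|$ on $B_zL^2_\xi(m)$, and estimate the four perturbative pieces exactly as you describe. The only minor difference is your treatment of $\Omega^g\cdot\onabla U=\alpha G\,e^{\tau/2}\partial_zU$: the paper bounds this via $\|e^{\tau/2}\partial_zU\|_{B_zL^4_\xi}\lesssim\|e^{\tau/2}\partial_z\Omega\|_{B_zL^2_\xi(m)}$ (costing a factor $(s-\sigma)^{-1/2}$), whereas your Plancherel/Mikhlin bound $\|\onabla U\|_{B_zL^2_\xi}\lesssim\|\Omega\|_{B_zL^2_\xi}$ avoids that loss---either works.
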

\begin{proof}

We first write the equation \eqref{Linear} in the form,
\eq{LinearRepeated}{
\partial_\tau \Omega - (\mathcal{L} + e^\tau \partial_z^2) \Omega = \RHS(\Omega),
}
where
$$
\RHS(\Omega) = - \alpha g \cdot \nabla_\xi \Omega + \alpha e^{\frac\tau2 } G \partial_z U + \alpha \begin{bmatrix} \Omega^\xi \cdot \nabla_\xi g \\ - U^\xi \cdot \nabla_\xi G \end{bmatrix}.
$$

Following a similar argument to Lemma~\ref{lem:shorttime1} we will solve this by applying Banach's fixed point theorem to the mapping,
$$
\Omega \mapsto e^{(\tau-\sigma) \mathcal{L} + e^\tau a(\tau-\sigma) \partial_z^2} \Omega(s) + \int_\sigma^\tau e^{(\tau-\sigma) \mathcal{L} + e^\tau a(\tau-s)\partial_z^2} \RHS(\Omega(s))\,ds,
$$
in the closed subspace $X\subset \mc C([\sigma,\sigma + \delta];B_zL^2_\xi(m))$ with finite norm,
$$
\| \Omega \|_X = \sup_{\tau\in[\sigma,\sigma+ \delta]}\left(\| \Omega(\tau) \|_{B_z L^2_\xi(m)} + a(\tau-\sigma)^{\frac12} \| \onabla \Omega(\tau) \|_{B_z L^2_\xi(m)} \right).
$$
Applying the estimate \eqref{FPSmoothed} we see that
$$
\| e^{(\tau-\sigma) \mathcal{L} + e^\tau a(\tau-\sigma) \partial_z^2} F \|_X \lesssim \| F \|_{L^2_\xi(m)},
$$
So again matters reduce to proving that the map
$$
\mathscr{T}\colon\Omega\mapsto\int_\sigma^\tau e^{(\tau-s) \mathcal{L} + e^\tau a(\tau-s)\partial_z^2} \RHS(\Omega(s))\,ds
$$
is a contraction on $X$ for $\delta>0$ chosen sufficiently small.

To prove this we first notice that, by H\"older's inequality,
\begin{align*}
& \| g \cdot \nabla_\xi \Omega(s) \|_{B_z L^2_\xi(m)} \lesssim \| \onabla \Omega(s) \|_{B_z L^2_\xi(m)} \lesssim (s-\sigma)^{-\frac12} \| \Omega \|_X ,\\
& \| \Omega^\xi(s) \cdot \nabla_\xi g \|_{B_z L^2_\xi(m)} \lesssim \| \Omega(s) \|_{B_z L^2_\xi(m)} \lesssim \| \Omega \|_X,
\end{align*}
and applying Lemma~\ref{lem:BSx} we may similarly bound
\begin{align*}
& \| G e^{\frac s2} \partial_z U(s) \|_{B_z L^2_\xi(m)} \lesssim \| e^{\frac s2} \partial_z U(s) \|_{B_z L^4_\xi} \lesssim \| e^{\frac s2} \partial_z \Omega(s) \|_{B_z L^2_\xi(m)} \lesssim (s-\sigma)^{-\frac12} \| \Omega \|_X, \\
& \| U^\xi(s) \cdot \nabla_\xi G\|_{B_z L^2_\xi(m)} \lesssim \| U(s) \|_{B_z L^4_\xi} \lesssim \| \Omega(s) \|_{B_z L^2_\xi(m)} \lesssim  \| \Omega \|_X.
\end{align*}
We just proved that
$$
\| \mr{RHS}(\Omega(s)) \|_{B_z L^2_\xi(m)} \lesssim (s-\sigma)^{-\frac12} \| \Omega \|_{X}.
$$
Thus, we may apply the estimate \eqref{FPSmoothed} for the operator \(e^{(\tau - \sigma)\cL + e^\tau a(\tau - \sigma)\partial_z^2}\) to obtain
\begin{align*}
(\tau-\sigma)^{\frac12} \| \onabla \mathscr{T}(\Omega) \|_{B_z L^2_\xi(m)} & \lesssim \int_\sigma^\tau \frac{(\tau-\sigma)^{\frac12}}{(\tau-s)^{\frac12}} \| \RHS(\Omega(s)) \|_{B_z L^2_\xi(m)} \,ds \\
& \lesssim \| \Omega \|_X \int_\sigma^\tau \frac{(\tau-\sigma)^{\frac12}}{(\tau-s)^{\frac12}(s-\sigma)^{\frac12}} \,ds\\
& \lesssim (\tau- \sigma)^{\frac12} \| \Omega \|_X, 
\end{align*}
and similarly,
$$
\| \mathscr{T}(\Omega) \|_{B_z L^2_\xi(m)} \lesssim (\tau- \sigma)^{\frac12}  \| \Omega \|_X.
$$
Overall, we find that
$$
\| \mathscr{T}(\Omega) \|_{X} \lesssim \delta^{\frac12} \| \Omega \|_X,
$$
so choosing $\delta$ small enough $\mathscr{T}$ is a contraction on \(X\), from which the desired estimate follows.
\end{proof}

\subsection{Proof of Theorem~\ref{prop:LinearEstimates}}
From Proposition~\ref{prop:BasicAP} we know that the solution operator \(S(\tau,\sigma)\) is well defined and satisfies the estimates \eqref{BasicBound}, \eqref{MainBound}. Thus it remains to prove the estimates \eqref{SmoothedBound}, \eqref{SmoothedBound1}.

We first take \(\delta>0\) to be the minimum of the \(\delta\)'s from lemmas~\ref{lem:shorttime1} and~\ref{lem:shorttime2}.

\smallskip
\noindent\underline{\emph{Case 1:} \(\sigma\leq \tau\leq \sigma + \delta\).} In this regime, $a(\tau-\sigma) \approx \tau - \sigma$; here the estimate \eqref{SmoothedBound} follows directly from the estimate \eqref{ShortTime1}. For the estimate \eqref{SmoothedBound1} we take \(\eta = \frac12(\tau + \sigma)\in (\sigma,\tau)\) and then apply the estimate \eqref{ShortTime2} on the interval \([\eta,\tau]\) and the estimate \eqref{ShortTime1} on the interval \([\sigma,\eta]\) to obtain
\begin{align*}
\|\onabla S(\tau,\sigma)\oDiv F\|_{B_zL^2_\xi(m)} &\lesssim \|\onabla S(\tau,\eta)S(\eta,\sigma)\oDiv F\|_{B_zL^2_\xi(m)}\\
&\lesssim a(\tau - \eta)^{-\frac12}\|S(\eta,\sigma)\oDiv F\|_{B_zL^2_\xi(m)}\\
&\lesssim a(\tau - \eta)^{-\frac12}a(\eta - \sigma)^{-\frac1p}\|F\|_{B_zL^p_\xi(m)},
\end{align*}
and the estimate then follows since \(a(\tau - \eta) \approx  a(\eta - \sigma) \approx a(\tau - \sigma)\).

\smallskip
\noindent\underline{\emph{Case 2:}  \(\tau>\sigma + \delta\).} We first note that in this case \(a(\tau - \sigma)\approx 1\). Next we show that the estimate \eqref{SmoothedBound1} follows from the estimate \eqref{SmoothedBound}. Indeed, if we assume \eqref{SmoothedBound} is true, we have
\begin{align*}
\|\onabla S(\tau,\sigma)\oDiv F\|_{B_zL^2_\xi(m)} &= \|\onabla S(\tau,\tau - \tfrac\delta2)S(\tau - \tfrac\delta 2,\sigma)\oDiv F\|_{B_zL^2_\xi(m)}\\
&\lesssim \|S(\tau - \tfrac\delta 2,\sigma)\oDiv F\|_{B_zL^2_\xi(m)}\\
&\lesssim e^{-\mu(\tau - \sigma)}\|F\|_{B_zL^p_\xi(m)}.
\end{align*}

It remains to prove the estimate \eqref{SmoothedBound}. Here we first apply Lemma~\ref{lem:shorttime1} on the time interval \([\sigma,\sigma + \tfrac\delta2]\) and, writing \(S(\sigma + \tfrac\delta 2,\sigma)\oDiv F = \oDiv R(\sigma + \tfrac\delta 2,\sigma) F\),
we may decompose
\[
S(\sigma + \tfrac\delta 2,\sigma)\oDiv F = h_1 + e^{\frac \sigma2}\partial_zh_2,
\]
where \(h_1,h_2\) are vector fields satisfying the estimates
\[
\|h_1\|_{B_zL^2_\xi(m)} + \|h_2\|_{B_zL^2_\xi(m)}\lesssim \|F\|_{B_zL^p_\xi(m)},
\]
and \(\int h_1\,d\xi = 0\).

For \(h_1\) we apply the long time estimate \eqref{MainBound} to obtain
\[
\|S(\tau,\sigma + \tfrac\delta 2)h_1\|_{B_zL^2_\xi(m)}\lesssim e^{-\mu(\tau - \sigma)}\|h_1\|_{B_zL^2_\xi(m)}\lesssim  e^{-\mu(\tau - \sigma)}\|F\|_{B_zL^p_\xi(m)}.
\]
For \(h_2\) we instead apply the long time estimate \eqref{BasicBound} with the short time estimate \eqref{ShortTime2} and the fact that \(\partial_z\) commutes with \(S(\tau,\sigma)\) to obtain
\begin{align*}
\|S(\tau,\sigma + \tfrac\delta 2)e^{\sigma/2}\partial_zh_2\|_{B_zL^2_\xi(m)} &= e^{\sigma/2}\|\partial_zS(\tau,\tau - \tfrac\delta2)S(\tau - \tfrac\delta 2,\sigma + \tfrac\delta 2)h_2\|_{B_zL^2_\xi(m)}\\
&\lesssim e^{-\frac12(\tau - \sigma)}\|S(\tau - \tfrac\delta 2,\sigma + \tfrac\delta 2)h_2\|_{B_zL^2_\xi(m)}\\
&\lesssim e^{(\gamma-\frac12)(\tau - \sigma)}\|h_2\|_{B_zL^2_\xi(m)}\\
&\lesssim e^{-\mu(\tau - \sigma)}\|F\|_{B_zL^p_\xi(m)}.
\end{align*}
\qed

\section{Linear estimates for advection-diffusion-stretching by the Oseen vortex} \label{sec:StretchLin}

\subsection{Statement of the estimates}
In this section, we consider the linear equation,
\eq{SmoothTD}{
\pdet{
\partial_t\omega + \alpha[ u^g \cdot\nabla \omega - \omega\cdot\nabla u^g] = \Delta \omega,
}{
\nabla\cdot \omega = 0,
}{
\omega(s) = \omega_s,
}
}
where \(0\leq s<T\) and the velocity \(u^g = \begin{bmatrix}\frac 1{\sqrt t}g(\frac x{\sqrt t})\\0\end{bmatrix}\).

If \(\omega_s\in B_zL^1_x\) satisfies \(\nabla\cdot \omega_s = 0\) in the sense of distributions, we say that \(\omega\in \mc C_w([s,T];B_zL^1_x)\) is a mild solution of \eqref{SmoothTD} if for all \(t\in(s,T]\) we have
\eq{TDDuhamel}{
\omega(t) = e^{(t-s)\Delta}\omega_s - \alpha \int_s^t e^{(t - \sigma)\Delta}\Div\Bigl(u^g(\sigma)\otimes \omega(\sigma) - \omega(\sigma)\otimes u^g(\sigma)\Bigr)\,d\sigma,
}
and \(\langle \omega(t)\,,\, \phi \rangle \rightarrow \<\omega_s\,,\,\phi\>\) as \(t\searrow s\) for all test functions $\phi \in \Schwartz$. We note that the expression \eqref{TDDuhamel} converges in $B_zL^1_x$ since $\| u^g(t) \|_{L^\infty} \lesssim \frac 1 {\sqrt t}$ and
\[
\|e^{t\Delta}\|_{B_zL^1_x\rightarrow B_zL^1_x}\lesssim 1,\qquad \|e^{t\Delta}\Div\|_{B_zL^1_x\rightarrow B_zL^1_x}\lesssim t^{-\frac12}.
\]
We also note that the divergence-free condition is preserved by the flow, i.e. \(\nabla \cdot\omega(t) = 0\) (in the classical sense) for all \(t\in(s,T]\). Finally, as \(u^g\) is smooth on \((0,T]\), every mild solution of \eqref{TDDuhamel} must satisfy \(\omega\in \mc C([s,T]\cap (0,T];B_zL^1_x)\).

We first consider the case that \(0<s<T\), where the velocity field \(u^g\) is smooth. Here we have the following modification of~\cite[Proposition~4.3]{MR2178064}, which we prove in Section~\ref{sect:LinearSmoothing}:

\begin{prop}\label{prop:LinearSmoothing}
Let \(0<s<t\leq T\). Given \(\omega_s\in B_zL^1_x\) satisfying \(\nabla\cdot\omega_s = 0\) in the sense of distributions there exists a unique mild solution \(\omega \in \mc C([s,T];B_zL^1_x)\) of the equation \eqref{SmoothTD}.

Taking \(\mathbb{S}(t,s)\) to be the corresponding solution operator, for any \(1\leq q\leq \infty\) and \(\gamma>0\) we have the estimates,
\begin{align}
\|\bS(t,s)\omega_s\|_{B_zL^q_x} &\lesssim (t - s)^{-\left(1 - \frac1q\right)}\left(\frac ts\right)^\gamma \|\omega_s\|_{B_zL^1_x},\label{LSmooth1}\\
\|\nabla \bS(t,s)\omega_s\|_{B_zL^q_x} &\lesssim (t - s)^{-\left(\frac32 - \frac1q\right)}\left(\frac ts\right)^\gamma \|\omega_s\|_{B_zL^1_x},\label{LSmooth1a}\\
\|\nabla^2 \bS(t,s)\omega_s\|_{B_zL^q_x} &\lesssim (t - s)^{-\left(2 - \frac1q\right)}\left(\frac ts\right)^\gamma \|\omega_s\|_{B_zL^1_x},\label{LSmooth1b}
\end{align}
where the implicit constants depend on \(\gamma,q,\alpha\).

Further, if \(f_s\in B_zL^1_x\) is a $3 \times 3$ tensor field satisfying \(\Div\Div f_s = 0\) in the sense of distributions, for any \(1\leq q\leq \infty\) and \(\gamma>0\) we have the estimates,
\begin{align}
\|\mathbb{S}(t,s)\Div f_s\|_{B_zL^q_x} &\lesssim (t - s)^{-\left(\frac32 - \frac1q\right)}\left(\frac ts\right)^\gamma\|f_s\|_{B_zL^1_x},\label{LSmooth2}\\
\|\nabla \bS(t,s)\Div f_s\|_{B_zL^q_x} &\lesssim (t - s)^{-\left(2 - \frac1q\right)}\left(\frac ts\right)^\gamma\|f_s\|_{B_zL^1_x},\label{LSmooth2a}
\end{align}
where the constants depend on \(\gamma,q,\alpha\).

Finally, the above estimates hold with \(B_z\) replaced by \(L^r_z\) for any \(1\leq r\leq \infty\).
\end{prop}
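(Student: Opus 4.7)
My plan is to adapt the $2d$ Gallagher--Gallay short-time smoothing argument, exploiting the $z$-translation invariance of $u^g$ to reduce to a family of $2d$ problems via the Fourier transform in $z$. Since $u^g$ is independent of $z$ with $u^{g,z} = 0$, each Fourier mode $\hat\omega(t, x, \zeta)$ satisfies
\[
\partial_t \hat\omega + u^g \cdot \nabla_x \hat\omega - \hat\omega \cdot \nabla_x u^g = (\Delta_x - \zeta^2) \hat\omega,
\]
with an additional dissipation $e^{-(t-s)\zeta^2}$ in the associated heat semigroup that can only help. The $B_z L^q_x$-norm thus reduces matters to proving the desired bounds uniformly in $\zeta$, and it is enough to handle the case $\zeta = 0$.

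The main step is a short-time smoothing estimate on intervals $[s, (1+\eta) s]$ for a small $\eta > 0$. Using the Duhamel formula
\[
\hat\omega(t) = e^{(t-s)\Delta_x} \hat\omega_s - \int_s^t e^{(t-\sigma) \Delta_x} \Div_x\bigl(u^g \otimes \hat\omega - \hat\omega \otimes u^g\bigr)(\sigma)\,d\sigma,
\]
I would run Banach's fixed point in the space with norm
\[
\sup_{s \leq t \leq (1+\eta) s} \Bigl( \|\hat\omega(t)\|_{L^1_x} + (t-s)^{1 - 1/q} \|\hat\omega(t)\|_{L^q_x} \Bigr).
\]
The heat-kernel smoothing $\|e^{(t-\sigma)\Delta_x} \Div_x\|_{L^1_x \to L^q_x} \lesssim (t-\sigma)^{-(3/2 - 1/q)}$, combined with $\|u^g(\sigma)\|_{L^\infty_x} \lesssim \alpha \sigma^{-1/2}$ and $(t - \sigma) \leq \eta s$, makes the relevant integrals small enough to close the contraction for $\eta = \eta(\alpha)$ sufficiently small. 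The derivative estimates \eqref{LSmooth1a}, \eqref{LSmooth1b} follow by differentiating Duhamel and paying one or two additional $(t-s)^{-1/2}$ factors. The tensor-divergence estimates \eqref{LSmooth2}, \eqref{LSmooth2a} are obtained by constructing an auxiliary operator $R(t, s)$ satisfying $\bS(t,s) \Div f = \Div R(t, s) f$ whenever $\Div \Div f = 0$, by a fixed-point argument entirely parallel to Lemma~\ref{lem:shorttime1}.

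To pass from short to long times, I iterate the short-time estimate dyadically on the intervals $[(1+\eta)^k s, (1+\eta)^{k+1} s]$. Each iteration produces a multiplicative growth $\leq 1 + C(\alpha) \eta^{1/2}$ in the $L^1_x$ norm; after roughly $\log(t/s)/\log(1+\eta)$ steps the total growth is $(t/s)^{\log(1 + C \eta^{1/2})/\log(1+\eta)}$. Driving $\eta \to 0$ pushes the exponent to $0$, recovering the factor $(t/s)^\gamma$ for any $\gamma > 0$. The main obstacle is the vortex-stretching term $\hat\omega \cdot \nabla_x u^g$, which has no $2d$ analogue: since $\|\nabla u^g(\sigma)\|_{L^\infty_x} \lesssim \alpha \sigma^{-1}$ is only borderline integrable, this term could in principle force exponential-in-$\log(t/s)$ growth. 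However, since we are allowed to lose an arbitrary polynomial factor $(t/s)^\gamma$ and $\eta$ may be chosen arbitrarily small, the perturbative treatment already suffices. The finer use of the special structure of the radial equation for $x \cdot \omega^x$ promised in the introduction is needed only when $s \to 0$, which lies outside the scope of this proposition.
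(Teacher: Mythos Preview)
Your short-time analysis and the Fourier-in-$z$ reduction are essentially what the paper does, but the long-time argument has a genuine arithmetic error that breaks the proof. You claim that iterating the short-time bound over $N \approx \log(t/s)/\log(1+\eta)$ steps, each contributing a factor $1 + C\eta^{1/2}$, yields total growth $(t/s)^{\log(1+C\eta^{1/2})/\log(1+\eta)}$ with exponent tending to $0$ as $\eta \to 0$. But for small $\eta$,
\[
\frac{\log(1+C\eta^{1/2})}{\log(1+\eta)} \;\sim\; \frac{C\eta^{1/2}}{\eta} \;=\; C\eta^{-1/2} \;\longrightarrow\; \infty,
\]
so the exponent \emph{blows up}, not shrinks. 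Dyadic iteration of the Duhamel estimate therefore only recovers the crude bound $(t/s)^{C\alpha}$ (this is exactly the content of the paper's Lemma~\ref{lem:CLBAD}); it cannot produce $(t/s)^\gamma$ for arbitrarily small $\gamma$.

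The paper obtains the sharp factor by a different mechanism: it passes to self-similar variables, where the $x$-component satisfies $\partial_\tau B^\xi = (\cL - \alpha\Gamma) B^\xi$, and then invokes the semigroup bound $\|e^{\tau(\cL - \alpha\Gamma)}\|_{L^1_\xi \to L^1_\xi} \lesssim e^{\gamma\tau}$ for every $\gamma>0$ (Proposition~\ref{prop:SemigroupL1}). That bound is a spectral statement---one must show $\cL - \alpha\Gamma$ has no eigenvalues with $\Re\lambda > 0$ on $L^1_\xi$, which the paper does by proving such eigenfunctions have Gaussian decay and then using the skew-symmetry structure in $L^2_\xi(\infty)$. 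The tensor estimates \eqref{LSmooth2}--\eqref{LSmooth2a} likewise require analogous long-time $L^1$ bounds for the auxiliary operator $\bK(t,s)$, again via spectral analysis of $\cL - \alpha\Xi$ and $\cL - \alpha\Pi$. None of this can be replaced by a perturbative iteration; the vortex-stretching term is exactly critical under the scaling, and the arbitrarily-small exponent is a genuinely non-perturbative fact about the linearized operator.
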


The main obstruction in extending the solution operator \(\mathbb S(t,s)\) to \(s = 0\) arises from the vortex stretching term, for which we have the (crude) estimate \(\|\nabla u^g\|_{L^\infty}\lesssim\frac\alpha t\). In order to improve this bound we will take advantage of the specific structure of the linear equation \eqref{SmoothTD}. First we observe that the \(z\)-component satisfies a self-contained equation
\eq{z-eqn}{
\boxed{
\partial_t\omega^z + \alpha (u^g)^x\cdot\nabla_x \omega^z = \Delta\omega^z.
}
}
Next we write
\[
(u^g)^x = Vx^\perp,\quad\text{where}\quad V = \frac1{2\pi|x|^2}\left(1 - e^{-\frac{|x|^2}{4t}}\right),
\]
and compute
\begin{equation}\label{p-def}
\nabla_x(Vx^\perp) = V J + \frac{\partial_r V}{r} x^\perp \otimes x,
\end{equation}
where \(r = |x|\), the matrix \(J = \begin{bmatrix}0&-1\\1&0\end{bmatrix}\) and the matrix $x^\perp \otimes x = \begin{bmatrix}-x_1x_2 & -x_2^2\\x_1^2&x_1x_2\end{bmatrix}$. In particular, if we define
\[
\psi = x\cdot\omega^x - 2t\partial_z\omega^z,
\]
we may write the equation for the \(x\)-component as
\eq{x-eqn}{
\boxed{
\partial_t \omega^x + \alpha (u^g)^x\cdot\nabla_x \omega^x - \alpha V J \omega^x = \Delta\omega^x + \alpha W(\psi + 2t\partial_z\omega^z),
}
}
where
\eq{WWW-def}{
W = \partial_r V \frac{x^\perp}{|x|} = -\frac1{\pi|x|^3}\left(1 - \left(1 + \frac{|x|^2}{4t}\right)e^{-\frac{|x|^2}{4t}}\right)\frac{x^\perp}{|x|}.
}
Finally, we turn to deriving the equation on $\psi$. First, use that differentiation in $z$ commutes with \eqref{z-eqn} to derive the equation satisfied by $-2t\partial_z \omega^z$. Second, dot~\eqref{x-eqn} with $x$ to derive the equation satisfied by $x\cdot \omega^x$. Adding both equations, observe that several terms cancel since \(\nabla\cdot\omega = 0\), leading to an  equation identical to \(\omega^z\)
\eq{psi-eqn}{
\boxed{
\partial_t\psi + \alpha (u^g)^x\cdot\nabla_x \psi = \Delta \psi.
}
}

Our strategy will be to first solve for \(\omega^z\) and \(\psi\), and then solve for \(\omega^x\), where the troublesome vortex stretching term now appears as an inhomogeneous term depending on \(\psi\). By scaling it is natural to try and bound \(\psi\) in the space \(B_zL^2_x\). However, such an approach does not directly yield an improved estimate as \(\|W\|_{L^2_x}\lesssim \frac1 t\). However, we observe that
\[
|W(t,x)|\lesssim \min\left\{\frac{|x|}{t^2},\frac1{|x|^3}\right\}
\]
so taking \(M\in 2^\Z\), we obtain
\[
\|W\|_{L^2_x(|x|\approx M)} \lesssim  \min\left\{\frac{M^2}{t^2},\frac1{M^2}\right\},
\]
and hence we have the estimate (recalling the notation \eqref{def:ell})
\eq{WInf2Bound}{
\|W\|_{\ell^\infty L^1([0,\infty);L^2_x)}\lesssim 1.
}

The estimate \eqref{WInf2Bound} motivates defining the closed subspace \(Y\subset C_w([s,T];B_zL^1_x)\) with finite norm
\[
\|\omega\|_Y = \|\omega\|_{L^\infty([s,T];B_zL^1_x)} + \|\psi\|_{\ell^1L^\infty([s,T]; B_zL^2_x)},
\]
and corresponding initial data space \(Y_s\subset B_zL^1_x\) with finite norm
\[
\|\omega_s\|_{Y_s} = \|\omega_s\|_{B_zL^1_x} + \|x\cdot\omega^x_s - 2 s\partial_z\omega_s^z\|_{\ell^1B_zL^2_x},
\]
where we note\footnote{A modification of the estimate \eqref{psi-p} below yields the bound
\(
\|\omega^z\|_{\ell^1L^\infty([0,T];B_zL^2_x)}\lesssim t^{-\frac12}\|\omega^z_0\|_{B_zL^1_x}
\),
which is sufficient to prove that for any \(\phi\in \Schwartz\) we have \(\<t \partial_z\omega^z(t),\phi\>\rightarrow 0\) as \(t\searrow 0\).
}
that for \(s = 0\) we have
\[
\|\omega_0\|_{Y_0} = \|\omega_0\|_{B_zL^1_x} + \|x\cdot\omega^x_0\|_{\ell^1B_zL^2_x}.
\]

We then have the following Proposition, which we prove in Section~\ref{sect:CriticalData}:
\begin{prop}\label{prop:MildBasic}
Let \(0\leq s< t\leq T\). Given \(\omega_s\in Y_s\) satisfying \(\nabla\cdot\omega_s = 0\) in the sense of distributions there exists a unique mild solution \(\omega \in Y\) of the equation \eqref{SmoothTD} and we have the estimate
\eq{YBound}{
\|\omega\|_Y\lesssim \|\omega_s\|_{Y_s},
}
where the constant depends on \(\alpha\).

Further, taking \(\mathbb S(t,s)\) to be the corresponding solution operator, for any \(1\leq q \leq \infty\) we have the estimate
\eq{SBBound}{
\|\mathbb{S} (t,s)\omega_s\|_{B_zL^q_x} \lesssim (t - s)^{-\left(1 - \frac1q\right)} \|\omega_s\|_{Y_s},
}
where the constant depends on \(q,\alpha\).
\end{prop}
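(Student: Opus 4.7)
The plan is to exploit the triangular structure of the system: $\omega^z$ solves the scalar advection-diffusion \eqref{z-eqn}; once $\omega^z$ is known, the auxiliary quantity $\psi = x\cdot\omega^x - 2t\partial_z\omega^z$ satisfies the same scalar equation \eqref{psi-eqn}; and with $\omega^z$ and $\psi$ in hand, equation \eqref{x-eqn} for $\omega^x$ still contains the vortex stretching $VJ\omega^x$, but the dangerous interaction with vertical derivatives has been isolated into the explicit source $W(\psi + 2t\partial_z\omega^z)$, where by \eqref{WInf2Bound} $\|W\|_{\ell^\infty L^1_t L^2_x}\lesssim \alpha$. So I would solve first for $(\omega^z,\psi)$ and then treat the $\omega^x$ equation as a linear problem with known source.

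\textbf{Step 1 (the scalar equation).} The scalar operator $\partial_t + (u^g)^x\cdot\nabla_x - \Delta$ has divergence-free drift with time-integrable $L^\infty$-norm ($\|(u^g)^x\|_{L^\infty}\lesssim \alpha/\sqrt{t}$), so a standard approximation argument by smooth initial data gives a $B_z L^p_x$-contractive evolution for every $1\leq p\leq \infty$, uniformly down to $s=0$. This yields both the $B_z L^1_x$ bound on $\omega^z$ and the $B_z L^2_x$ bound on $\psi$. To upgrade the latter to an $\ell^1 B_z L^2_x$ bound, I would take the partition $\{\chi_M\}$ and derive pointwise-in-$\zeta$ energy estimates for $\|\chi_M \widehat\psi(t,\zeta)\|_{L^2_x}^2$. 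The commutators $[(u^g)^x\cdot\nabla_x,\chi_M]$ and $[\Delta,\chi_M]$ are supported on $\{|x|\sim M\}$, and using $|(u^g)^x|\lesssim \alpha/|x|$ together with $\|\nabla^k\chi_M\|_{L^\infty}\lesssim M^{-k}$, they couple only adjacent dyadic scales with integrable coefficients. A Gronwall-type bootstrap summed in $M$ then yields
$$
\|\psi\|_{\ell^1 L^\infty([s,T]; B_z L^2_x)} \lesssim \|\psi_s\|_{\ell^1 B_z L^2_x}
$$
uniformly down to $s=0$.

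\textbf{Step 2 (the $\omega^x$ equation).} With $(\omega^z,\psi)$ fixed, I view \eqref{x-eqn} as a linear equation for $\omega^x$ with known source. In self-similar variables $\tau = \log t$, $\xi = x/\sqrt{t}$, the homogeneous operator $\partial_t + (u^g)^x\cdot\nabla_x - VJ - \Delta$ becomes time-autonomous (modulo an $e^\tau\partial_z^2$ term) and admits exponentially decaying smoothing estimates of the sort proved in Theorem~\ref{prop:LinearEstimates}. Its Duhamel formula is then closed using the H\"older-type pairing
$$
\|W\psi\|_{L^1_\sigma B_z L^1_x} \lesssim \|W\|_{\ell^\infty L^1_\sigma L^2_x}\,\|\psi\|_{\ell^1 L^\infty_\sigma B_z L^2_x}
$$
(the partition of unity matching the two factors on common annuli), together with an analogous pairing for $W\,\sigma\partial_z\omega^z$, which uses the scalar-equation smoothing $\|\sigma\partial_z\omega^z(\sigma)\|_{B_zL^2_x}\lesssim \sigma^{1/2}\|\omega^z_s\|_{B_zL^1_x}$. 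Combining these with the Step~1 bounds delivers the $Y$-estimate \eqref{YBound}.

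\textbf{Step 3 (the smoothing estimate and uniqueness).} The bound \eqref{SBBound} is obtained by a bootstrap: split $[s,t]$ at its midpoint $(s+t)/2$; on $[(s+t)/2,t]$ the starting time is positive, so Proposition~\ref{prop:LinearSmoothing} applies with $(t/\widetilde{s})^\gamma \lesssim 1$, yielding $(t-s)^{-(1-1/q)}$-smoothing from $B_zL^1_x$ into $B_zL^q_x$; and the initial datum at time $(s+t)/2$ is controlled in $B_zL^1_x$ by $\|\omega_s\|_{Y_s}$ via \eqref{YBound}. Uniqueness at $s=0$ follows from linearity applied to the difference of two solutions in $Y$, using the contraction built above on a short initial interval.

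\textbf{Main obstacle.} The hardest step is the uniform-in-$s$ $\ell^1 B_z L^2_x$ propagation of $\psi$ down to $s=0$: the partition-of-unity commutators are individually small but couple distant spatial scales, and they must be absorbed into the diffusion without generating time losses that would destroy the critical scaling; matching the $\ell^1$ summability with the $1/|x|$ decay of $(u^g)^x$ is what makes this work. Only with this bound in hand can the vortex stretching source in \eqref{x-eqn} be paired against $W$ in the correct critical norm to close the $\omega^x$ analysis at the endpoint $s=0$.
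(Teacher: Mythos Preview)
Your high-level plan (solve $\omega^z$, then $\psi$, then $\omega^x$; deduce \eqref{SBBound} by splitting at the midpoint and invoking Proposition~\ref{prop:LinearSmoothing}) matches the paper. However, the execution of Steps~1 and~2 diverges from the paper's and contains gaps.

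\textbf{The $\psi$ bound.} The paper does \emph{not} use energy/commutator estimates. It takes the Fourier transform in $z$, reduces to the scalar $2d$ equation, and applies the pointwise Gaussian kernel bound \eqref{CarlenLoss2}: this gives, for fixed $\zeta$, $\|\chi_M w_\zeta(t)\|_{L^2_x}$ controlled by $\|\chi_{M'}w_\zeta(s)\|_{L^2_x}$ with off-diagonal weights $\frac{MM'}{t-s}e^{-C\max(M,M')^2/(t-s)}$, which are summable in $M$ uniformly in $t$. Your commutator scheme produces coupling coefficients of size $M^{-2}$ (from $|\nabla\chi_M|^2$), which blow up as $M\to 0$; the claim that these are ``integrable'' is exactly what needs proving, and the dissipation does not obviously absorb it.

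\textbf{The $\omega^x$ equation.} This is the main gap. First, Theorem~\ref{prop:LinearEstimates} concerns the operator $S(\tau,\sigma)$ in $B_zL^2_\xi(m)$, built from the \emph{full} linearization with Biot-Savart; it is not the propagator of $\partial_t + (u^g)^x\cdot\nabla_x - VJ - \Delta$ in $B_zL^1_x$, so you cannot invoke it. Second, and more concretely, your smoothing claim $\|\sigma\partial_z\omega^z(\sigma)\|_{B_zL^2_x}\lesssim \sigma^{1/2}\|\omega^z_0\|_{B_zL^1_x}$ is off by a factor $\sigma^{1/2}$: the $L^1_x\to L^2_x$ smoothing costs $\sigma^{-1/2}$ and absorbing $|\zeta|$ into $e^{-\sigma|\zeta|^2}$ costs another $\sigma^{-1/2}$, so the bound is only $O(1)$. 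With the correct bound, pairing against $\|W\|_{L^2_x}\lesssim\alpha/\sigma$ gives $\alpha/\sigma$, which is not time-integrable at $\sigma=0$, and your Duhamel argument does not close. The paper avoids both issues by a completely different device: writing $\widehat{\omega^x}=e^{-(t-s)|\zeta|^2}b$, it observes that (since $J$ is skew) the regularized modulus $c=(\delta^2\varphi^2+|b|^2)^{1/2}$ is a \emph{subsolution} of the scalar $2d$ advection-diffusion equation with source $|W(\psi+2t\partial_z\omega^z)|$, to which the sign-preserving Carlen--Loss $L^1$ machinery applies directly. Crucially, the $\partial_z$ in the source is then absorbed into the \emph{later} Gaussian $e^{-(t-\sigma)|\zeta|^2}$, producing the integrable singularity $(t-\sigma)^{-1/2}$ rather than $\sigma^{-1/2}$.

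\textbf{Uniqueness.} The paper's argument is structural rather than contractive: once $\omega^z=0=\psi$ (scalar uniqueness), the constraints $x\cdot\omega^x=0=\nabla_x\cdot\omega^x$ force $\omega^x=f(|x|)\,x^\perp/|x|$, and $f$ solves a Bessel-type heat equation with zero data.
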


Finally, we record subcritical estimates, which will be used to prove local well-posedness for (large) subcritical data (Theorem~\ref{thm:StrtFil2}).

\begin{prop} 
\label{subcriticallinear}
For any $p \in(1,\frac{4}{3}]$,
$$
\| \mathbb{S}(t,0) \omega_0 \|_{B_z L^{4/3}_x} \lesssim t^{-\left(\frac{1}{p}-\frac{3}{4}\right)} \left[ \| \omega_0 \|_{B_z L^p_x} + \| x \cdot (\omega_0)^x \|_{B_z L^{\frac{2p}{2-p}}} \right].
$$
\end{prop}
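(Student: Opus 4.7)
The approach mirrors the decoupling strategy used in Proposition~\ref{prop:MildBasic}. Recall that $\omega^z$ satisfies the scalar equation \eqref{z-eqn}, that the quantity $\psi := x \cdot \omega^x - 2t \partial_z \omega^z$ satisfies the same scalar equation \eqref{psi-eqn}, and that $\omega^x$ satisfies \eqref{x-eqn} driven by the source $W(\psi + 2t \partial_z \omega^z)$. Crucially, $\psi(t=0) = x \cdot \omega^x_0$, so the weighted hypothesis $x \cdot \omega^x_0 \in B_z L^{2p/(2-p)}_x$ provides initial data for $\psi$ in the correct subcritical space.

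The first step is to prove a scalar smoothing estimate: if $v$ solves $\partial_t v + (u^g)^x \cdot \nabla_x v = \Delta v$ with $v(0) = v_0 \in B_z L^r_x$, then for $1 < r \leq q \leq \infty$,
\[
\|v(t)\|_{B_z L^q_x} \lesssim t^{-(\frac{1}{r} - \frac{1}{q})} \|v_0\|_{B_z L^r_x}.
\]
This follows from Duhamel's formula and a contraction argument in a time-weighted space, using the heat semigroup smoothing $\|e^{t\Delta}\|_{L^r \to L^q} \lesssim t^{-(1/r - 1/q)}$, the divergence-free structure of $(u^g)^x$ (so the drift term can be written in divergence form), and the pointwise bound $\|(u^g)^x\|_{L^\infty_x} \lesssim \alpha\, t^{-1/2}$, which is integrable in time. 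The assumption $r > 1$ is essential so that the Beta integrals arising in the Duhamel iteration have strictly positive exponent. Applied to $\omega^z$ (with $r = p$, $q = 4/3$) this already yields the bound $\|\omega^z(t)\|_{B_z L^{4/3}_x} \lesssim t^{-(1/p - 3/4)} \|\omega^z_0\|_{B_z L^p_x}$; applied to $\psi$ (with $r = 2p/(2-p)$ and an appropriate intermediate $q$) it yields the corresponding control on $\psi$.

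The final step is to solve \eqref{x-eqn} for $\omega^x$ via Duhamel, treating $W(\psi + 2t \partial_z \omega^z)$ as a source and absorbing $(u^g)^x \cdot \nabla_x \omega^x - V J \omega^x$ by a contraction in the appropriate time-weighted space (noting that $V$ has the same decay as $(u^g)^x / |x|$). The key ingredient for the source term is the dyadic bound \eqref{WInf2Bound}, namely $\|W\|_{\ell^\infty L^1_t L^2_x} \lesssim \alpha$, combined via H\"older's inequality and summation over dyadic spatial shells with the previously obtained $L^q_x$ control on $\psi$ and $\omega^z$. The main obstacle is to balance the time decay and spatial integrability in the source estimate: the pairing $\|W \psi\|_{L^{4/3}_x}$ requires a careful H\"older/dyadic decomposition so that the subsequent time integral yields exactly the power $t^{-(1/p - 3/4)}$, while the $2t W \partial_z \omega^z$ term uses that the extra factor of $t$ compensates the $z$-derivative on $\omega^z$ (the latter handled at fixed $\zeta$ in the Wiener structure of $B_z$). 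All estimates are uniform in the $z$-frequency $\zeta$, which is automatic from working in $B_z$, and the proof closes by the standard Banach fixed point argument.
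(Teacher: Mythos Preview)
Your decoupling strategy (solve \eqref{z-eqn} for $\omega^z$, \eqref{psi-eqn} for $\psi$, then feed into \eqref{x-eqn}) is exactly the paper's route, but two of the technical steps you propose do not close as written.

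First, the scalar smoothing estimate cannot be obtained by a Duhamel contraction when $|\alpha|$ is large. With the drift satisfying $\|(u^g)^x\|_{L^\infty_x}\lesssim \alpha\, t^{-1/2}$, the Duhamel remainder in the norm $\sup_t t^{1/r-1/q}\|v(t)\|_{L^q_x}$ carries the scale-invariant factor $\alpha\int_0^t (t-s)^{-1/2}s^{-1/2-(1/r-1/q)}\,ds\,\cdot\,t^{1/r-1/q}$, which is $O(\alpha)$ independently of $t$; there is no smallness to contract with. The same obstruction appears in your proposed treatment of $\omega^x$, where you want to ``absorb $(u^g)^x\cdot\nabla_x\omega^x - VJ\omega^x$ by a contraction.'' The paper instead invokes the Carlen--Loss estimate \eqref{CarlenLoss} (and, for $\omega^x$, the subsolution argument from the proof of Proposition~\ref{prop:MildBasic} applied to $|b|$), which holds for arbitrary divergence-free drift and hence for all $\alpha$.

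Second, the dyadic bound \eqref{WInf2Bound} is the wrong tool for the source term here. That bound was designed for the \emph{critical} case, where it pairs with $\psi\in\ell^1L^\infty_tB_zL^2_x$; but your hypothesis places $\psi_0\in B_zL^{2p/(2-p)}_x$ with exponent strictly larger than $2$, and the scalar flow does not smooth down to $L^2_x$. The paper instead uses the direct pointwise-in-time bound $\|W(\sigma)\|_{L^{2p/(3p-2)}_x}\lesssim \sigma^{-1/p}$ (a simple scaling computation from $|W|\lesssim\min(|x|/t^2,|x|^{-3})$), which pairs via H\"older with $\|\psi(\sigma)\|_{B_zL^{2p/(2-p)}_x}$ and with $\|\partial_z\omega^z(\sigma)\|_{B_zL^{2p/(2-p)}_x}$ to put $W\psi$ and $\sigma W\partial_z\omega^z$ in $B_zL^1_x$; after heat smoothing $L^1_x\to L^{4/3}_x$ the time integral $\int_0^t (t-\sigma)^{-1/4}\sigma^{-1/p}\,d\sigma\sim t^{3/4-1/p}$ produces exactly the claimed rate. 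This is both simpler and sharper than any dyadic decomposition.
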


Before turning to the proof of these propositions, we briefly recall some properties of the \(2d\) scalar advection-diffusion equation,
\eq{2dTD}{
\pde{
\partial_t b + \alpha(u^g)^x\cdot\nabla_x b = \Delta_x b,
}{
b(s) = b_s.
}
}
We recall that for any \(1\leq p\leq \infty\) and \(b_s\in L^p_x\), the equation \eqref{2dTD} has a unique mild solution \(b\in \mc C_w([s,\infty);L^p_x)\) (see, e.g., \cite{MR1381974,MR916761} or \cite[Sectionhttps://www.overleaf.com/project/5d0ce4f27f43e260f1d89712 A.3]{MR3269635} for details). Moreover, the maximum principle ensures that the corresponding solution operator is sign-preserving.

From~\cite[Theorem~5]{MR1381974}, solutions of \eqref{2dTD} satisfy the estimate
\eq{CarlenLoss}{
\|b(t)\|_{L^q_x}\lesssim (t-s)^{-\left(\frac1p - \frac1q\right)}\|b_s\|_{L^p_x},
}
whenever \(1\leq p\leq q\leq \infty\). Further, again from~\cite[Theorem~5]{MR1381974} (also see~\cite{MR916761}), for any \(0<\beta<1\) we have the estimate
\eq{CarlenLoss2}{
|b(t,x)|\lesssim_\beta \frac{1}{4\pi t}\int_{\R^2}e^{-\beta\frac{|x - y|^2}{4t}}|b_0(y)|\,dy.
}

\subsection{Proof of Proposition~\ref{prop:LinearSmoothing}}\label{sect:LinearSmoothing}~
We now prove Proposition~\ref{prop:LinearSmoothing}. We first note that the existence of a unique mild solution follows from an elementary contraction mapping argument on sufficiently short time intervals. As a consequence, it will suffice to prove the estimates~\eqref{LSmooth1}--\eqref{LSmooth2a} for the solution operator \(\mathbb S(t,s)\).

For simplicity we will only present the proof of the estimates for the spaces \(B_zL^p_x\). In several places we will reduce matters to \(2d\) by writing \(\widehat \omega(t,x,\zeta) = e^{-(t - s)|\zeta|^2}b(t,x,\zeta)\) and then consider estimates for fixed \(\zeta\), where we note that for fixed \(\zeta\) the function \(b\) is well-defined. In order to replace \(B_z\) by \(L^r_z\), we argue similarly, but in physical space (variable $z$) rather than in frequency space (variable $\zeta$). Namely, we let $b$ satisfy \(\omega(t,x,z) = e^{(t - s)\partial_z^2}b(t,x,z)\), and deduce bounds for fixed $z$. The boundedness of the heat kernel (in the $z$ variable) on $L^r_z L^p_x$ concludes the argument.

In order to both state and prove our results it will be useful to recall that the equation \eqref{SmoothTD} decouples as
\[
\pde{
\partial_t\omega^x + \alpha [(u^g)^x\cdot\nabla_x\omega^x - \omega^x\cdot\nabla_x (u^g)^x ] = \Delta \omega^x,
}{
\partial_t \omega^z + \alpha (u^g)^x\cdot\nabla_x\omega^z = \Delta \omega^z,
}
\]
and hence the solution operator has a diagonal structure,
\[
\bS = \begin{bmatrix}\bS^x&0\\0&\bS^z\end{bmatrix}.
\]

As in \cite[Proposition~4.3]{MR2178064} our strategy will be to combine short time smoothing estimates with long time estimates in the space \(B_zL^1_x\). We start with the following a priori estimates that follow from \eqref{CarlenLoss}:
\begin{lem}\label{lem:CLBAD}
For any \(0<s<t\leq T\), \(1\leq q\leq \infty\) and \(\omega_s\in B_zL^1_x\) satisfying \(\nabla\cdot\omega_s = 0\) in the sense of distributions we have the estimates
\begin{align}
\|\mathbb{S}^x(t,s)\omega^x_s\|_{B_zL^q_x} &\lesssim (t - s)^{-\left(1 - \frac1q\right)}\left(\frac ts\right)^{C|\alpha|}\|\omega^x_s\|_{B_zL^1_x},\label{CLBAD1}\\
\|\mathbb{S}^z(t,s)\omega^z_s\|_{B_zL^q_x} &\lesssim (t - s)^{-\left(1 - \frac1q\right)}\|\omega^z_s\|_{B_zL^1_x}.\label{CLBAD2}
\end{align}
\end{lem}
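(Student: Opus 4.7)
The strategy is to reduce the three-dimensional problem to a two-dimensional one per $z$-frequency, and then invoke the Carlen--Loss ultracontractivity estimates of~\cite{MR1381974}. Writing $\widehat{\omega}(t,x,\zeta) = e^{-(t-s)|\zeta|^2}\, b(t,x,\zeta)$, the factor $e^{-(t-s)|\zeta|^2}$ carries the purely diffusive dependence on $z$, and $b$ satisfies the decoupled two-dimensional system
\[
\begin{aligned}
\partial_t b^z + (u^g)^x \cdot \nabla_x b^z &= \Delta_x b^z,\\
\partial_t b^x + (u^g)^x \cdot \nabla_x b^x - b^x \cdot \nabla_x (u^g)^x &= \Delta_x b^x,
\end{aligned}
\]
at each fixed $\zeta$, with data $b(s,\cdot,\zeta) = \widehat{\omega_s}(\cdot,\zeta)$. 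Since $|e^{-(t-s)|\zeta|^2}| \leq 1$ uniformly, any bound for $b$ uniform in $\zeta$ yields the corresponding $B_z L^q_x$ bound after integration in $\zeta$. For the $L^r_z$ version mentioned in Proposition~\ref{prop:LinearSmoothing}, one argues identically in physical space, writing $\omega = e^{(t-s)\partial_z^2} b$ and using that the heat kernel in $z$ is bounded on $L^r_z$.

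For the scalar equation satisfied by $b^z$, the drift $(u^g)^x$ is divergence-free and smooth on $(0,T] \times \R^2$. The estimate~\eqref{CLBAD2} is then exactly the ultracontractivity bound of~\cite[Theorem~5]{MR1381974} for two-dimensional transport--diffusion with a divergence-free drift, whose constant is insensitive to the drift itself.

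For $b^x$ the vortex stretching term $-b^x \cdot \nabla_x (u^g)^x$ must be absorbed. The pivotal pointwise estimate, immediate from~\eqref{ug}, is $\|\nabla_x (u^g)^x\|_{L^\infty_x} \lesssim \alpha/t$. Multiplying the $b^x$ equation componentwise by $\sgn(b^x)$ and integrating, the transport term drops (as $\nabla_x \cdot (u^g)^x = 0$), the diffusion term is nonpositive, and the stretching term contributes at most $(C\alpha/t)\|b^x\|_{L^1_x}$. Gronwall yields the $L^1 \to L^1$ bound $\|b^x(t)\|_{L^1_x} \leq (t/s)^{C\alpha}\|b^x(s)\|_{L^1_x}$, which is the $q=1$ case of~\eqref{CLBAD1}. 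For general $q$, the plan is to split $[s,t]$ at $\sigma = (s+t)/2$, apply the $L^1 \to L^1$ bound just obtained on $[s,\sigma]$, and then apply an $L^1 \to L^q$ smoothing bound on $[\sigma,t]$. The latter is obtained by a Duhamel iteration against the scalar Carlen--Loss semigroup of the previous paragraph: once the base time exceeds $\sigma$, the coefficient $\nabla_x (u^g)^x$ is bounded on $[\sigma,t]$, so the stretching term is a controlled lower-order perturbation and its Duhamel integrals converge. Because $t-\sigma \approx t-s$ and $t/\sigma \leq 2$, the resulting $L^1 \to L^q$ smoothing factor $(t-\sigma)^{-(1-1/q)}$ is consistent with~\eqref{CLBAD1}.

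The principal technical obstacle is precisely the $1/t$ singularity of $\nabla u^g$ at $t = 0$, which forces the loss $(t/s)^{C\alpha}$ in~\eqref{CLBAD1} and prevents the $b^x$ bound from extending to $s = 0$. This is exactly the issue that the reformulation via the auxiliary unknown $\psi = x \cdot \omega^x - 2t\partial_z\omega^z$---which, by~\eqref{psi-eqn}, satisfies the scalar transport--diffusion equation with no stretching---is engineered to circumvent in Proposition~\ref{prop:MildBasic}.
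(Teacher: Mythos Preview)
Your proposal is correct and follows essentially the same strategy as the paper: reduce to $2d$ per $z$-frequency via $\widehat{\omega}=e^{-(t-s)|\zeta|^2}b$, apply Carlen--Loss directly to the scalar $b^z$ equation, and control the stretching in the $b^x$ equation via $\|\nabla_x(u^g)^x\|_{L^\infty}\lesssim\alpha/t$.

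The only organizational difference lies in how the $L^1\to L^q$ bound for $b^x$ is obtained. The paper works with the pointwise differential inequality satisfied by $|b^x|$ (made rigorous through the regularization $c=(\delta^2\varphi^2+|b^x|^2)^{1/2}$ as in the proof of Proposition~\ref{prop:MildBasic}), absorbs the stretching by the integrating factor $(t/s)^{C\alpha}$, and then invokes Carlen--Loss once to go from $L^1$ to $L^q$ in a single step. You instead first extract the $L^1\to L^1$ bound by Gronwall, then split the interval at its midpoint and run a Duhamel iteration on the second half where $\nabla_x(u^g)^x$ is bounded. Both routes are valid; the paper's is slightly more direct since the subsolution argument delivers the full hypercontractive estimate at once, whereas your two-step version trades the Kato-type inequality for a more elementary perturbation argument.
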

\begin{proof}
Taking \(\widehat \omega(t) = e^{-(t - s)|\zeta|^2}b(t)\) we obtain the \(2\)-dimensional equations
\[
\pde{
\partial_t b^x + \alpha[(u^g)^x\cdot\nabla_xb^x - b^x\cdot\nabla_x (u^g)^x] = \Delta_x b^x,
}{
\partial_t b^z + \alpha (u^g)^x\cdot\nabla_x b^z = \Delta_x b^z.
}
\]

For the \(z\)-component, we apply~\eqref{CarlenLoss} to obtain
\[
\|b^z(t)\|_{L^q_x}\lesssim (t - s)^{-\left(1 - \frac1q\right)}\|b^z(s)\|_{L^1_x}.
\]
The estimate \eqref{CLBAD2} then follows from multiplying both sides by \(e^{-(t-s)|\zeta|^2}>0\) and integrating in \(\zeta\).

At least formally, the estimate \eqref{CLBAD1} for the \(x\)-component is proved similarly, by applying \eqref{CarlenLoss} to the equation satisfied by \(|b^x|\).

To make this argument rigorous, we introduce
\[
\varphi(t,x) = e^{(t - s)\Delta_x}G = \frac1{4\pi(1 + t - s)}e^{-\frac{|x|^2}{4(1 + t - s)}},
\]
and for \(\delta>0\) take the smooth approximation
\(
c = \left(\delta^2\varphi^2 + |b^x|^2\right)^{\frac12}
\)
to \(|b^x|\). Writing \(b^x = (b_1,b_2)^T\), we compute
\[
\Delta_x c = \frac1c\left(\delta^2\varphi\Delta_x\varphi + \Re(\bar b_1\Delta_x b_1 + \bar b_2\Delta_xb_2)\right) - F_1,
\]
where the error term
\begin{align*}
F_1 &= - \frac{\delta^2}{c^3}\left(|b_1|^2|\nabla_x\varphi|^2 + \varphi^2|\nabla_xb_1|^2 - 2\Re\left(\varphi \bar b_1\nabla_xb_1\cdot\nabla_x\varphi\right)\right)\\
&\quad - \frac{\delta^2}{c^3}\left(|b_2|^2|\nabla_x\varphi|^2 + \varphi^2|\nabla_xb_2|^2 - 2\Re\left(\varphi\bar b_2\nabla_x b_2\cdot\nabla_x\varphi\right)\right)\\
&\quad - \frac1{2c^3}\left(|b_1|^2|\nabla_xb_1|^2  - \Re\left(\bar b_1^2\nabla_x b_1\cdot\nabla_x b_1\right)\right)\\
&\quad - \frac1{2c^3}\left(|b_2|^2|\nabla_xb_2|^2  - \Re\left(\bar b_2^2\nabla_x b_2\cdot\nabla_x b_2\right)\right)\\
&\quad - \frac1{c^3}\left(|b_1|^2|\nabla_xb_2|^2 + |b_2|^2|\nabla_xb_1|^2 - \Re\left(\bar b_1\bar b_2\nabla_xb_1\cdot\nabla_xb_2 + b_1\bar b_2\nabla_x \bar b_1\cdot\nabla_x b_2) \right)\right),
\end{align*}
is readily seen to be non-positive. This yields the equation
\[
\partial_t c + \alpha(u^g)^x\cdot\nabla_x c = \Delta_xc + F,
\]
where the inhomogeneous term
\[
F = F_1 + F_2,
\]
and, recalling \eqref{WWW-def},
\[
F_2 =  \frac\alpha c\Re\left((\overline{ b^x}\cdot W)(x\cdot b^x)\right).
\]

Choosing \(\delta\)-independent \(C>0\) so that
\[
|F_2|\leq C\frac{|\alpha|}t c,
\]
we then have
\[
\left(\partial_t + \alpha(u^g)^x\cdot\nabla_x - \Delta_x\right)\left( \left(\frac st\right)^{C|\alpha|}c\right)\leq 0.
\]
As the solution operator of \eqref{2dTD} is sign-preserving and \(c\) is positive, we may apply \eqref{CarlenLoss} to obtain
\[
\|c(t)\|_{L^q_x}\lesssim (t - s)^{-\left(1 - \frac1q\right)}\left(\frac ts\right)^{C|\alpha|}\|c(s)\|_{L^1_x}.
\]
Sending \(\delta\to 0\) then yields the estimate
\[
\|b^x(t)\|_{L^q_x}\lesssim (t - s)^{-\left(1 - \frac1q\right)}\left(\frac ts\right)^{C|\alpha|}\|b^x(s)\|_{L^1_x}.
\]
Again multiplying by \(e^{-(t-s)|\zeta|^2}\) and integrating, we yield \eqref{CLBAD1}.
\end{proof}

In order to improve the power of \(\frac ts\) in the estimate \eqref{CLBAD1} and obtain the estimate \eqref{LSmooth1} we require the following lemma, which relies on semigroup estimates proved in Appendix~\ref{app:Semigroups} using similar arguments to \cite[Section~4]{MR2123378}:
\begin{lem}\label{lem:LongTime}
For any \(0<s<t\leq T\), \(\gamma>0\)  and \(\omega_s\in B_zL^1_x\) satisfying \(\nabla\cdot\omega_s = 0\) in the sense of distributions we have the estimate,
\eq{LongTime}{
\|\mathbb S^x(t,s)\omega^x_s\|_{B_zL^1_x}\lesssim\left(\frac ts\right)^\gamma\|\omega^x_s\|_{B_zL^1_x},
}
where the implicit constant depends on \(\alpha,\gamma\).
\end{lem}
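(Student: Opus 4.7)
The main idea is to exploit translation-invariance in $z$ and self-similar scaling to reduce the estimate to a semigroup bound for a time-independent operator in two dimensions, which is then supplied by Appendix~\ref{app:Semigroups}. The source of the blow-up $(t/s)^{C\alpha}$ in Lemma~\ref{lem:CLBAD} is the vortex-stretching term $\omega^x \cdot \nabla_x (u^g)^x$, and the improvement to $(t/s)^\gamma$ for arbitrary $\gamma>0$ must come from cancellations within the linearization around the Oseen vortex, known from the Burgers-vortex analysis of Gallay--Wayne.

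\textbf{Step 1: Reduction to a 2d problem.} Taking the Fourier transform in $z$, write $\widehat{\omega}^x(t,x,\zeta) = e^{-(t-s)|\zeta|^2}\,b(t,x,\zeta)$. The prefactor absorbs the $\partial_z^2$ contribution to $\Delta$, and since the coefficients $(u^g)^x$ do not depend on $z$, the function $b$ satisfies the purely two-dimensional equation
\begin{align*}
\partial_t b + (u^g)^x \cdot \nabla_x b - b \cdot \nabla_x (u^g)^x = \Delta_x b,
\end{align*}
with $\zeta$ only entering the initial data. Since $e^{-(t-s)|\zeta|^2}\leq 1$, a uniform-in-$\zeta$ bound $\|b(t,\cdot,\zeta)\|_{L^1_x} \lesssim (t/s)^\gamma \|b(s,\cdot,\zeta)\|_{L^1_x}$ implies the desired $B_z L^1_x$ estimate upon integrating in $\zeta$.

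\textbf{Step 2: Self-similar reformulation.} Pass to the self-similar variables $\tau = \log t$, $\xi = x/\sqrt{t}$, setting $B(\tau,\xi) = t\, b(t,\sqrt{t}\,\xi,\zeta)$. This change of variables preserves the $L^1$ norm, and turns the equation for $b$ into the autonomous evolution
\begin{align*}
\partial_\tau B = (\cL - \alpha\Gamma) B, \qquad \Gamma = g\cdot\nabla_\xi - (\nabla_\xi g)\,\cdot\,.
\end{align*}
Hence the lemma is reduced to showing that the semigroup $e^{\tau(\cL - \alpha\Gamma)}$ has growth bound $\lesssim e^{\gamma\tau}$ on $L^1_\xi$ for every $\gamma>0$.

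\textbf{Step 3: Semigroup bound via $L^2(m)$.} The operator $\cL - \alpha\Gamma$ is precisely the Burgers-vortex linearization, which Appendix~\ref{app:Semigroups} analyzes on the weighted spaces $L^2_\xi(m)$. Because $g,\nabla_\xi g$ decay at infinity, $\Gamma$ is a compact perturbation of $\cL$, so spectral stability of the Oseen vortex places the spectrum in $\{\Re\lambda \leq 0\}$ for $m$ sufficiently large, yielding a growth bound $e^{\gamma\tau}$. To transfer this to $L^1_\xi$, split the interval $[\sigma,\tau]$ as $[\sigma,\sigma+1]\cup[\sigma+1,\tau-1]\cup[\tau-1,\tau]$ (treating the trivial case $\tau-\sigma \leq 2$ by Lemma~\ref{lem:CLBAD}): on $[\sigma,\sigma+1]$ use the short-time smoothing from $L^1_\xi$ to $L^2_\xi(m)$ provided by the Fokker--Planck part together with Lemma~\ref{lem:CLBAD}; on the middle interval apply the $L^2_\xi(m)$ growth bound; and on $[\tau-1,\tau]$ use the continuous embedding $L^2_\xi(m) \hookrightarrow L^1_\xi$ (valid for $m>1$ by Cauchy--Schwarz) combined once more with Lemma~\ref{lem:CLBAD}.

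\textbf{Main obstacle.} The hard part is not the reduction but the spectral input: identifying the right functional setting on which $\cL - \alpha\Gamma$ has spectrum in the closed left half-plane. The Gallay--Wayne-type arguments reproduced in the appendix (and hinted at by the reference to~\cite{MR2770021}) handle this on $L^2_\xi(m)$, and the remaining delicate point is to pay the unavoidable cost $(t/s)^\gamma$ when bridging from $L^1_\xi$ to $L^2_\xi(m)$ and back, without incurring $(t/s)^{C\alpha}$ from the stretching term.
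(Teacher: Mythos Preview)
Your Steps 1 and 2 are exactly the paper's reduction: Fourier in $z$, peel off the heat factor $e^{-(t-s)|\zeta|^2}$, and pass to self-similar variables to arrive at the autonomous evolution $\partial_\tau B^\xi = (\cL - \alpha\Gamma) B^\xi$. The paper then finishes in one line by invoking the $L^1_\xi$ semigroup bound \eqref{SemigroupL1} from Proposition~\ref{prop:SemigroupL1}: for all $\gamma>0$, $\|e^{\tau(\cL - \alpha\Gamma)}\|_{L^1_\xi\to L^1_\xi}\lesssim e^{\gamma\tau}$. That is, Appendix~\ref{app:Semigroups} analyzes $\cL - \alpha\Gamma$ not only on $L^2_\xi(m)$ but also directly on $L^1_\xi$ (essential spectral radius $1$, and eigenvalues with $\Re\lambda>0$ are ruled out by the same Gaussian-decay plus $L^2_\xi(\infty)$ argument), so no transfer through $L^2_\xi(m)$ is needed.

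Your Step 3 takes a detour: use the $L^2_\xi(m)$ growth bound and bridge with short-time $L^1_\xi\to L^2_\xi(m)$ smoothing on one end and the embedding $L^2_\xi(m)\hookrightarrow L^1_\xi$ on the other. This can be made to work, but your justification of the first leg has a small gap: Lemma~\ref{lem:CLBAD} gives $B_z L^1_x\to B_z L^q_x$ estimates in \emph{unweighted} $L^q$, not $L^2_\xi(m)$, so it does not directly supply the weighted smoothing you need. One would have to run a short-time fixed-point argument (as in Lemmas~\ref{lem:shorttime1}--\ref{lem:shorttime2}) using \eqref{estimatesFP} for $e^{\tau\cL}$ on $L^p_\xi(m)$ and treating $\Gamma$ as a bounded-coefficient first-order perturbation. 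That is routine, but it is extra work the paper avoids by proving the $L^1$ spectral bound directly.
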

\bpf
Taking \(\widehat \omega(t) = e^{-(t - s)\zeta^2}b(t) \) as in Lemma~\ref{lem:CLBAD} we see that it suffices to prove that
\eq{LongTimeGoal}{
\|b^x(t)\|_{L^1_x}\lesssim \left(\frac ts\right)^\gamma\|b^x(s)\|_{L^1_x}.
}
Rescaling $b^x$ to $B^\xi(\tau,\xi,\zeta) = e^\tau b^x(e^\tau,e^{\tau/2} \xi,\zeta)$, we obtain
\[
\partial_\tau B^\xi + \alpha \Gamma B^\xi = \cL B^\xi,
\]
where we recall the definition of the operator,
\[
\Gamma = g\cdot\nabla_\xi - \nabla_\xi g.
\]
From the estimate \eqref{SemigroupL1}, for any \(\gamma>0\) we have
\[
\|e^{\tau(\cL - \alpha\Gamma)}\|_{L^1_\xi\rightarrow L^1_\xi}\lesssim e^{\gamma \tau},
\]
and hence we obtain the estimate
\[
\|B^\xi(\tau)\|_{L^1_\xi}\lesssim e^{\gamma (\tau-\sigma)}\|B^\xi(\sigma)\|_{L^1_\xi}.
\]
Returning to the original variables, we obtain the estimate \eqref{LongTimeGoal}.
\epf

To prove the estimates \eqref{LSmooth1a}, \eqref{LSmooth1b} we require a short time smoothing estimate. Here we have the following lemma:
\begin{lem}\label{lem:AnotherSmoothingBound}
There exists \(0<\delta = \delta(\alpha)\ll1\) so that for all \(0<s<t\leq (1 + \delta)s\), \(1\leq q\leq\infty\) and \(\omega_s\in B_zL^q_x\) satisfying \(\nabla\cdot\omega_s = 0\) in the sense of distributions we have the estimates
\begin{align}
\|\nabla \bS(t,s)\omega_s\|_{B_zL^q_x}&\lesssim (t - s)^{-\frac12}\|\omega_s\|_{B_zL^q_x},\label{CLBAD3}\\
\|\nabla^2 \bS(t,s)\omega_s\|_{B_zL^q_x}&\lesssim (t - s)^{-1}\|\omega_s\|_{B_zL^q_x},\label{CLBAD3-a}
\end{align}
\end{lem}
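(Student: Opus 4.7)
The plan is to run a contraction argument for the Duhamel formulation on the short interval $[s,(1+\delta)s]$ in a space that already encodes the desired smoothing. Let $X$ be the Banach space of continuous $\omega\colon[s,(1+\delta)s]\to B_zL^q_x$ with norm
\[
\|\omega\|_X := \sup_{s \leq t \leq (1+\delta)s}\Bigl[\|\omega(t)\|_{B_zL^q_x} + (t-s)^{1/2}\|\nabla\omega(t)\|_{B_zL^q_x} + (t-s)\|\nabla^2\omega(t)\|_{B_zL^q_x}\Bigr].
\]
Using $\nabla\cdot u^g = 0 = \nabla\cdot\omega$, rewrite \eqref{TDDuhamel} in the non-divergence form
\[
\mathscr{T}\omega(t) = e^{(t-s)\Delta}\omega_s - \int_s^t e^{(t-\sigma)\Delta}\bigl(u^g\cdot\nabla\omega - \omega\cdot\nabla u^g\bigr)(\sigma)\,d\sigma,
\]
and show $\mathscr{T}$ is a contraction on the ball of $X$ of size $\|\omega_s\|_{B_zL^q_x}$. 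The only analytic ingredients are the standard heat-semigroup bounds $\|\nabla^k e^{t\Delta}\|_{B_zL^q_x\to B_zL^q_x}\lesssim t^{-k/2}$ (valid for $1\leq q\leq\infty$), together with the pointwise bounds $\|u^g(\sigma)\|_{L^\infty}\lesssim \alpha s^{-1/2}$, $\|\nabla u^g(\sigma)\|_{L^\infty}\lesssim \alpha s^{-1}$, and $\|\nabla^2 u^g(\sigma)\|_{L^\infty}\lesssim \alpha s^{-3/2}$, which follow from the self-similar form of $u^g$ together with $\sigma\approx s$ on the short interval.

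The linear term $e^{(t-s)\Delta}\omega_s$ sits in $X$ with norm $\lesssim \|\omega_s\|_{B_zL^q_x}$ by direct semigroup estimates. For the first-derivative contribution of the Duhamel integral, H\"older gives $\|u^g\cdot\nabla\omega\|_{B_zL^q_x}\lesssim \alpha s^{-1/2}(\sigma-s)^{-1/2}\|\omega\|_X$ and $\|\omega\cdot\nabla u^g\|_{B_zL^q_x}\lesssim \alpha s^{-1}\|\omega\|_X$; pairing with $(t-\sigma)^{-1/2}$ from $\nabla e^{(t-\sigma)\Delta}$, evaluating the resulting Beta integral, and using $t-s\leq \delta s$ produces
\[
(t-s)^{1/2}\|\nabla\mathscr{T}\omega(t)\|_{B_zL^q_x}\lesssim \|\omega_s\|_{B_zL^q_x} + C\alpha\delta^{1/2}\|\omega\|_X.
\]
The unweighted $L^q$ bound is similar and even simpler.

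The main obstacle is the second-derivative estimate: placing $\nabla^2$ entirely onto the semigroup creates a non-integrable $(t-\sigma)^{-1}$ at $\sigma=t$, while moving one derivative onto the integrand produces $u^g\cdot\nabla^2\omega \sim s^{-1/2}(\sigma-s)^{-1}$, non-integrable at $\sigma=s$. I would resolve this by splitting the Duhamel integral at $t^\ast=(s+t)/2$: on $[s,t^\ast]$ apply $\nabla^2$ to the semigroup, so $(t-\sigma)^{-1}\lesssim (t-s)^{-1}$ factors out of the integral; on $[t^\ast,t]$ apply $\nabla$ to both the semigroup and the integrand, expanding $\nabla(u^g\cdot\nabla\omega - \omega\cdot\nabla u^g)$ into its four natural pieces and using $\sigma-s\approx t-s$ to convert every inverse power of $\sigma-s$ into the same inverse power of $t-s$ before integrating the remaining $(t-\sigma)^{-1/2}$ factor. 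After multiplying through by $(t-s)$ and using $t-s\leq\delta s$ everywhere, every contribution is $O(\alpha\delta^{1/2}\|\omega\|_X)$, yielding
\[
\|\mathscr{T}\omega\|_X\lesssim \|\omega_s\|_{B_zL^q_x} + C\alpha\delta^{1/2}\|\omega\|_X.
\]
Choosing $\delta=\delta(\alpha)$ small closes the contraction and gives a unique fixed point in $X$, which agrees with $\bS(t,s)\omega_s$ by uniqueness from Proposition~\ref{prop:LinearSmoothing} (directly for $q=1$, and by approximation by $B_zL^q_x\cap B_zL^1_x$ data otherwise); the bounds \eqref{CLBAD3} and \eqref{CLBAD3-a} follow. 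Passage from $B_z$ to $L^r_z$ is carried out exactly as in the opening of Section~\ref{sect:LinearSmoothing}.
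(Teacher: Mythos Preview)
Your argument is correct and closes the fixed point exactly as the paper does, with the same norm $X$ (the paper calls it $Z$). The only genuine divergence is in how you handle the second-derivative bound \eqref{CLBAD3-a}. The paper does \emph{not} split the time integral; instead it keeps the divergence form, runs a Littlewood--Paley decomposition in $x$, and controls $\sum_N N^2\|P_N K\omega\|$ by pushing the kernel decay $\langle (t-\sigma)^{1/2}N\rangle^{-k}$ against the $B^{3/2}_{q,1}$ norm of $u^g\otimes\omega$, which it bounds by a paraproduct estimate and the interpolation $\sum_N N^{3/2}\|P_N\omega\|\lesssim\|\nabla\omega\|^{1/2}\|\nabla^2\omega\|^{1/2}$. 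This produces an integrand $(t-\sigma)^{-3/4}\sigma^{-1/2}(\sigma-s)^{-3/4}$ that is integrable at both endpoints without any splitting.

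Your midpoint splitting is more elementary---no frequency decomposition or paraproducts---and is a perfectly standard device for this kind of short-time parabolic smoothing. The paper's route, on the other hand, is the one that extends naturally to the fractional $\langle\nabla\rangle^\beta$ estimates used later (e.g.\ in Lemma~\ref{lem:NLbs} and the curved-filament section), which is presumably why the authors set it up this way. For the present lemma alone, your proof is shorter.
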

\begin{proof}
We define the operator
\[
K\omega = \int_s^t e^{(t - \sigma)\Delta}\Div\left(u^g(\sigma)\otimes \omega(\sigma) - \omega(\sigma)\otimes u^g(\sigma)\right)\,d\sigma,
\]
and the norm
\[
\|\omega\|_Z = \sup\limits_{t\in[s,(1 + \delta)s]}\left(\|\omega\|_{B_zL^q_x} + (t-s)^{\frac12}\|\nabla\omega\|_{B_zL^q_x} + (t - s)\|\nabla^2 \omega\|_{B_zL^q_x}\right).
\]

Using the estimates
\[
\|\nabla^k e^{(t - \sigma)\Delta}\Div\|_{B_zL^q_x\rightarrow B_zL^q_x}\lesssim (t - \sigma)^{-\frac12-\frac k 2}\quad\text{and}\quad\|\nabla^ku^g(\sigma)\|_{L^\infty}\lesssim \sigma^{-\frac12-\frac k2},
\]
for \(t\in[s,(1 + \delta)s]\) and \(k\in\{0,1,2\}\), we may bound
\begin{align*}
&\left\|\nabla^k \int_s^{\frac{t+s}2} e^{(t - \sigma)\Delta}\Div\left(u^g(\sigma)\otimes \omega(\sigma) - \omega(\sigma)\otimes u^g(\sigma)\right)\,d\sigma\right\|_{B_zL^q_x}\\
&\qquad\lesssim \int_s^{\frac{t+s}2} (t-\sigma)^{-\frac12-\frac k 2}\|u^g(\sigma)\|_{L^\infty_x}\|\omega(\sigma)\|_{B_zL^q_x}\,d\sigma\lesssim \sqrt\delta (t-s)^{-\frac{k}2}\|\omega\|_Z.
\end{align*}
Similarly, we have
\begin{align*}
&\left\|\nabla^k \int_{\frac{t+s}2}^t e^{(t - \sigma)\Delta}\Div\left(u^g(\sigma)\otimes \omega(\sigma) - \omega(\sigma)\otimes u^g(\sigma)\right)\,d\sigma\right\|_{B_zL^q_x}\\
&\qquad\lesssim \sum_{j=0}^k\int_{\frac{t+s}2}^t (t-\sigma)^{-\frac12}\|\nabla^j u^g(\sigma)\|_{L^\infty_x}\|\nabla^{k-j}\omega(\sigma)\|_{B_zL^q_x}\,d\sigma\lesssim \sqrt\delta (t-s)^{-\frac{k}2}\|\omega\|_Z.
\end{align*}
Combining these bounds, for  \(t\in[s,(1 + \delta)s]\) we obtain
\[
\|K\omega\|_{B_zL^q_x} + (t-s)^{\frac12}\|\nabla K\omega\|_{B_zL^q_x} + (t-s)\|\nabla ^2K\omega\|_{B_zL^q_x} \lesssim\sqrt \delta\|\omega\|_Z.
\]

Choosing \(0<\delta\ll_\alpha 1\) sufficiently small we may then apply the contraction principle to obtain a mild solution of \eqref{SmoothTD} satisfying the estimate
\[
\|\omega(t)\|_{B_zL^q_x} + (t - s)^{\frac12}\|\nabla \omega(t)\|_{B_zL^q_x} + (t - s)\|\omega(t)\|_{B_zL^q_x}\lesssim\|\omega_s\|_{B_zL^q_x}.
\]
By uniqueness of mild solutions in the space \(B_zL^q_x\) (when \(s>0\)) we obtain the estimate \eqref{CLBAD3}.
\end{proof}

To prove the estimate \eqref{LSmooth2}, we follow the argument of~\cite[Proposition~4.3]{MR2178064} and first consider the equation
\eq{Auxiliary}{
\pdet{
\partial_t f + \alpha [u^g\otimes \Div f - \Div f\otimes u^g ]= \Delta f
}{
\Div \Div f = 0
}
{
f(s) = f_s,
}
}
where \(f\) is a \(3\times 3\) tensor field, which is formally obtained by applying \(\Div^{-1}\) to the equation \eqref{SmoothTD}. Given \(0<s<t\leq T\) and \(f_s\in B_zL^1_x\) satisfying \(\Div\Div f_s = 0\) in the sense of distributions, there exists a unique mild solution \(f(t) = \bK(t,s)f_s\) of the equation \eqref{Auxiliary}. Furthermore, if \(\Div f_s\in B_zL^1_x\), then \(\bS(t,s)\Div f_s = \Div \bK(t,s)f_s\).

We then have the following short time smoothing estimate that is proved similarly to Lemma~\ref{lem:AnotherSmoothingBound}:
\begin{lem}\label{lem:ShortTimeSmoothing}
There exists \(0<\delta=\delta(\alpha)\ll1\) so that, for any \(0<s<t\leq (1 + \delta)s\) and any \(3\times 3\) tensor field  \(f_s\in B_zL^1_x\) satisfying \(\Div\Div f_s = 0\) in the sense of distributions, we have the estimate,
\eq{ShortTimeSmoothing}{
\|\bS(t,s)\Div f_s\|_{B_zL^1_x}\lesssim (t - s)^{-\frac12} \|f_s\|_{B_zL^1_x}.
}
\end{lem}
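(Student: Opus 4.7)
The plan is to follow the approach of Lemma~\ref{lem:AnotherSmoothingBound}, but applied to the auxiliary tensor equation \eqref{Auxiliary} instead of to \eqref{SmoothTD} directly. Since $\bS(t,s)\Div = \Div\,\bK(t,s)$ when acting on data $f_s$ with $\Div\Div f_s = 0$, the estimate \eqref{ShortTimeSmoothing} reduces to showing
$$
\|\nabla \bK(t,s) f_s\|_{B_zL^1_x} \lesssim (t-s)^{-\frac12}\|f_s\|_{B_zL^1_x}.
$$
The identification $\bS(t,s)\Div = \Div\bK(t,s)$ for $f_s \in B_zL^1_x$ (whose divergence is only a distribution) is handled by approximating $f_s$ by smooth tensors, where the identity is classical, and then passing to the limit.

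To prove the gradient estimate, I would apply Banach's fixed point theorem to the Duhamel formulation
$$
f(t) = e^{(t-s)\Delta}f_s - \int_s^t e^{(t-\sigma)\Delta}\bigl(u^g(\sigma)\otimes \Div f(\sigma) - \Div f(\sigma)\otimes u^g(\sigma)\bigr)\,d\sigma,
$$
working in the closed subspace $Z\subset \mc C([s,(1+\delta)s];B_zL^1_x)$ equipped with
$$
\|f\|_Z = \sup_{t\in[s,(1+\delta)s]}\Bigl(\|f(t)\|_{B_zL^1_x} + (t-s)^{\frac12}\|\nabla f(t)\|_{B_zL^1_x}\Bigr).
$$
The linear term $e^{(t-s)\Delta}f_s$ is bounded in $Z$ by the standard heat smoothing estimates $\|e^{t\Delta}\|_{B_zL^1_x\to B_zL^1_x}\lesssim 1$ and $\|\nabla e^{t\Delta}\|_{B_zL^1_x\to B_zL^1_x}\lesssim t^{-\frac12}$. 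Note that because the bound we seek involves only $\nabla f$ rather than $\nabla^2 f$, no Littlewood-Paley analysis is required here, and the argument is genuinely simpler than that of Lemma~\ref{lem:AnotherSmoothingBound}.

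For the Duhamel nonlinear term, I would combine $\|u^g(\sigma)\|_{L^\infty_x}\lesssim |\alpha|\sigma^{-\frac12}$ with $\|\Div f(\sigma)\|_{B_zL^1_x}\leq \|\nabla f(\sigma)\|_{B_zL^1_x}\lesssim (\sigma - s)^{-\frac12}\|f\|_Z$ to obtain
$$
\|u^g(\sigma)\otimes\Div f(\sigma) - \Div f(\sigma)\otimes u^g(\sigma)\|_{B_zL^1_x} \lesssim |\alpha|\,\sigma^{-\frac12}(\sigma-s)^{-\frac12}\|f\|_Z.
$$
Using heat smoothing and the Beta integral $\int_s^t(t-\sigma)^{-\frac12}(\sigma-s)^{-\frac12}\,d\sigma=\pi$, together with the observations $\sigma\approx s$ and $(t-s)^{\frac12}\leq (\delta s)^{\frac12}$ throughout $[s,(1+\delta)s]$, one finds that the Duhamel nonlinear operator maps $Z$ to itself with norm bounded by $C(\alpha)\sqrt{\delta}$. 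Choosing $\delta=\delta(\alpha)$ sufficiently small closes the contraction and yields a unique $f$ with $\|f\|_Z\lesssim \|f_s\|_{B_zL^1_x}$; taking $\|\nabla f(t)\|_{B_zL^1_x}$ gives the claim. The only real subtlety is the initial identification $\bS(t,s)\Div f_s = \Div \bK(t,s)f_s$ when $f_s$ lies only in $B_zL^1_x$; apart from this approximation step, the argument is a direct simplification of the proof of Lemma~\ref{lem:AnotherSmoothingBound}.
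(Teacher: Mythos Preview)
Your proposal is correct and follows essentially the same approach as the paper: a contraction mapping for the auxiliary tensor equation \eqref{Auxiliary} on $[s,(1+\delta)s]$, combined with the identity $\bS(t,s)\Div = \Div\,\bK(t,s)$. The only cosmetic difference is that the paper closes the fixed point in the norm $\|f\|_{B_zL^1_x} + (t-s)^{\frac12}\|\Div f\|_{B_zL^1_x}$ rather than your slightly stronger $\|f\|_{B_zL^1_x} + (t-s)^{\frac12}\|\nabla f\|_{B_zL^1_x}$; both close by the same $\sqrt{\delta}$ mechanism.
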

\bpf
Arguing as in Lemma~\ref{lem:AnotherSmoothingBound} we may apply a contraction mapping argument to find \(\delta = \delta(\alpha)>0\) and a mild solution of the equation \eqref{Auxiliary} on the time interval \([s,(1 + \delta)s]\) satisfying the estimate
\[
\|f\|_{B_zL^1_x} + (t - s)^{\frac12}\|\Div f\|_{B_zL^1_x}\lesssim \|f_s\|_{B_zL^1_x}.
\]
Using the identity \(\bS(t,s)\Div = \Div \bK(t,s)\) we obtain the estimate \eqref{ShortTimeSmoothing}.
\epf

To complete the proof of Proposition~\ref{prop:LinearSmoothing} we require a long time estimate for the operator \(\bK(t,s)\) that we prove similarly to Lemma~\ref{lem:LongTime}, again using several semigroup estimates proved in Appendix~\ref{app:Semigroups}:
\begin{lem}
For any \(0<s<t\leq T\), \(\gamma>0\) and \(3\times3\) tensor field \(f_s\in B_zL^1_x\) satisfying \(\Div\Div f_s = 0\) in the sense of distributions we have the estimate,
\eq{LongTimeT}{
\|\bK(t,s)f_s\|_{B_zL^1_x}\lesssim\left(\frac ts\right)^\gamma\|f_s\|_{B_zL^1_x},
}
where the implicit constant depends on \(\alpha,\gamma\).
\end{lem}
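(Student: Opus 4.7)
The plan is to mimic the proof of Lemma~\ref{lem:LongTime}, replacing the vector semigroup by its tensor analogue. First, I take the Fourier transform in $z$ and factor off the heat semigroup by writing $\widehat f(t,x,\zeta) = e^{-(t-s)|\zeta|^2}\, b(t,x,\zeta)$. For each fixed $\zeta$, the tensor field $b(t,\cdot,\zeta)$ satisfies a purely two-dimensional equation of the form
\[
\partial_t b + (u^g)^x \otimes \widetilde{\Div}\, b - \widetilde{\Div}\, b \otimes (u^g)^x = \Delta_x b,
\]
subject to $\widetilde{\Div}\,\widetilde{\Div}\, b = 0$, where $\widetilde{\Div}$ is the two-dimensional divergence with the third column contracted by the frozen frequency factor $i\zeta$. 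Since $e^{-(t-s)|\zeta|^2}\leq 1$ and the $B_zL^1_x$ norm is the $L^1(d\zeta)$ (or $\ell^1_\zeta$) integral of $\|\cdot\|_{L^1_x}$, it suffices to show
\[
\|b(t,\cdot,\zeta)\|_{L^1_x} \lesssim \bigl(\tfrac{t}{s}\bigr)^\gamma \|b(s,\cdot,\zeta)\|_{L^1_x}
\]
with constants independent of $\zeta$.

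Next, I pass to self-similar variables, setting $B(\tau,\xi,\zeta) = e^{\tau/2} b(e^\tau,e^{\tau/2}\xi,\zeta)$. The factor $e^{\tau/2}$ reflects that $f$ has one scaling dimension less than $\omega=\Div f$, which is the origin of the $-\tfrac12$ already visible in~\eqref{Div-1SS}. A direct computation shows that $B$ satisfies the autonomous equation
\[
\partial_\tau B = \bigl(\cL - \tfrac12\bigr) B - \alpha\,\widetilde\Gamma B, \qquad \widetilde\Gamma B := g\otimes\widetilde{\Div}\,B - \widetilde{\Div}\,B\otimes g,
\]
where $\widetilde{\Div}$ is now taken in the $\xi$-variable. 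The operator $\widetilde\Gamma$ is the tensor analogue of the scalar operator $\Gamma$ used in Lemma~\ref{lem:LongTime}, and is a relatively compact perturbation of $\cL-\tfrac12$ on weighted $L^1_\xi$ spaces.

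The crucial ingredient is then the $L^1_\xi$ semigroup bound
\[
\bigl\| e^{\tau(\cL - \tfrac12 - \alpha\widetilde\Gamma)} B_0 \bigr\|_{L^1_\xi} \lesssim e^{\gamma\tau}\|B_0\|_{L^1_\xi}\qquad\text{for any }\gamma>0,
\]
valid on the invariant subspace $\{\widetilde{\Div}\,\widetilde{\Div}\,B_0 = 0\}$. This is the tensor-valued counterpart of the scalar bound~\eqref{SemigroupL1} used in the proof of Lemma~\ref{lem:LongTime}, and I would state and prove it in Appendix~\ref{app:Semigroups} following the Gallay--Wayne-type spectral perturbation arguments already employed there. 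Granted this bound, undoing the self-similar rescaling (an $L^1$-isometry up to a harmless Jacobian) and integrating in $\zeta$ yields~\eqref{LongTimeT}.

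The main obstacle is therefore the tensor semigroup estimate itself. The substantive point is spectral: one must show that all potentially non-decaying modes of $\cL-\tfrac12 - \alpha\widetilde\Gamma$ arise from ``constant-like'' directions that are eliminated by the double-divergence constraint $\widetilde{\Div}\,\widetilde{\Div}\,B = 0$, in direct analogy with how the mean-zero condition $\int w^z_\sigma\,d\xi = 0$ eliminates the sole slow mode in Proposition~\ref{prop:BasicAP}. Once the relevant finite-dimensional eigenspace has been identified and verified to lie transverse to the constraint, the remainder of the argument (Gronwall, relative compactness of $\widetilde\Gamma$, boundedness on $L^1_\xi$) parallels the scalar case verbatim.
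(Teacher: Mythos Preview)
Your reduction to a fixed-$\zeta$ problem and the self-similar rescaling $B(\tau,\xi) = e^{\tau/2} b(e^\tau,e^{\tau/2}\xi)$ are correct, but the claim that $B$ satisfies an \emph{autonomous} equation is not. After the rescaling, the $\partial_z$ part of the divergence becomes $i e^{\tau/2}\zeta$, not $i\zeta$: the operator you call $\widetilde{\Div}$ in the $\xi$-variable is $\Div_\xi(\cdot)^\xi + i e^{\tau/2}\zeta(\cdot)^z$, and hence your $\widetilde\Gamma$ depends explicitly on $\tau$. There is therefore no single semigroup $e^{\tau(\cL-\frac12-\alpha\widetilde\Gamma)}$ to invoke, and the spectral perturbation argument you sketch does not apply directly.

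The paper resolves this by writing the $3\times 3$ tensor in block form $F = \begin{bmatrix} F^{\xi\xi} & F^{\xi z} \\ (F^{z\xi})^T & F^{zz} \end{bmatrix}$ and observing that the resulting system is \emph{triangular}: $F^{zz}$ solves the pure Fokker--Planck equation $\partial_\tau F^{zz} = (\cL-\tfrac12)F^{zz}$; $F^{\xi z}$ solves $\partial_\tau F^{\xi z} = (\cL-\tfrac12-\alpha\Xi)F^{\xi z}$ plus a forcing $ie^{\tau/2}\zeta F^{zz}$; $F^{z\xi}$ is driven by $F^{\xi z},F^{zz}$; and $F^{\xi\xi}$ solves $\partial_\tau F^{\xi\xi} = (\cL-\tfrac12-\alpha\Pi)F^{\xi\xi}$ plus a forcing $ie^{\tau/2}\zeta F^{z\xi}$. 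Each diagonal piece is a genuine autonomous $L^1_\xi$ semigroup (for the operators $\cL$, $\cL-\alpha\Xi$, $\cL-\alpha\Pi$), whose $e^{(\gamma-\frac12)\tau}$ bounds are exactly Proposition~\ref{prop:SemigroupL1}. The non-autonomous $e^{\tau/2}\zeta$ couplings are then absorbed via Duhamel, using that the forced component has already been estimated. Note also that the constraint $\Div\Div f_s=0$ plays no role in this argument; the slow modes are removed by the $-\tfrac12$ shift and the triangular structure alone, so your analogy with the mean-zero condition in Proposition~\ref{prop:BasicAP} is not the right mechanism here.
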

\begin{rem}
We note that although the estimate \eqref{LSmooth2} can be viewed as the analogue of the estimate \eqref{SmoothedBound}, we use a slightly different approach to prove it. The reason for this is most easily explained by considering the operator \(\cL\) on \(L^1_\xi\). In this case the condition \(\int f\,d\xi = 0\) is insufficient to ensure a spectral gap and hence we must impose the stronger condition \(f = \Div h\) for some \(h\in L^1_\xi\). (Contrast to the case that we consider \(\cL\) on \(L^2_\xi(m)\) for \(m>1\) where the the condition \(\int f\,d\xi = 0\) \emph{is} sufficient to ensure a spectral gap.)
\end{rem}
\begin{proof}
We start by writing the \(3\times 3\) tensor \(f\), considered as a matrix, in the form
\[
f = \begin{bmatrix}f^{xx}&f^{xz}\\(f^{zx})^T&f^{zz}\end{bmatrix},
\]
for a \(2\times 2\) tensor \(f^{xx}\), \(2\)-vectors \(f^{xz}\), \(f^{zx}\) and a scalar \(f^{zz}\). Recalling the convention that \((\Div f)^j = \partial_if^{ij}\) we see that the vector
\[
\Div f = \begin{bmatrix}\Div_xf^{xx} + \partial_zf^{zx}\\\nabla_x\cdot f^{xz} + \partial_zf^{zz}\end{bmatrix} = \Div_x f^x + \partial_zf^z,
\]
where
\[
f^x = \begin{bmatrix}f^{xx} & f^{xz}\end{bmatrix},\qquad f^z = \begin{bmatrix}f^{zx}\\f^{zz}\end{bmatrix},
\]
are respectively thought of as a \(2\times 3\) matrix and a \(3\)-vector.

Taking the Fourier transform in \(z\) we obtain the equation
\[
\partial_t \widehat f + \alpha\left[ u^g\otimes \left(\Div_x\widehat f^x + i\zeta \widehat f^z\right) - \left(\Div_x\widehat f^x + i\zeta \widehat f^z\right) \otimes u^g\right] = \left(\Delta_x - |\zeta|^2\right)\widehat f.
\]
We may then decompose this into a system of four equations
\[
\pdef{
\partial_t \widehat f^{xx} + \alpha\left[(u^g)^x\otimes \left(\Div_x\widehat f^{xx} + i\zeta \widehat f^{zx}\right) - \left(\Div_x\widehat f^{xx} + i\zeta \widehat f^{zx}\right)\otimes (u^g)^x \right] = \left(\Delta_x - |\zeta|^2\right) \widehat f^{xx} ,
}{
\partial_t \widehat f^{xz} + \alpha \left(\nabla_x\cdot f^{xz} + i\zeta \widehat f^{zz}\right)(u^g)^x = \left(\Delta_x - |\zeta|^2\right) \widehat f^{xz},
}{
\partial_t\widehat f^{zx} - \alpha \left(\nabla_x\cdot f^{xz} + i\zeta \widehat f^{zz}\right)(u^g)^x  = \left(\Delta_x - |\zeta|^2\right) \widehat f^{zx},
}{
\partial_t\widehat f^{zz} = \left(\Delta_x - |\zeta|^2\right) \widehat f^{zz}.
}
\]

Next we switch to self-similar variables, letting
\[
F(\tau,\xi,\zeta) = e^{\frac\tau 2} \widehat f(e^\tau,e^{\frac\tau 2}\xi,\zeta)
\]
to obtain the system
\[
\pdef{
\partial_\tau F^{\xi\xi} + \alpha g\otimes \left(\Div_\xi F^{\xi\xi} + ie^{\frac \tau 2}\zeta F^{z\xi}\right) - \left(\Div_\xi F^{\xi\xi} + ie^{\frac \tau 2}\zeta F^{z\xi}\right)\otimes\alpha g = \left(\cL - \frac12 - e^\tau|\zeta|^2\right) F^{\xi\xi},
}{
\partial_\tau F^{\xi z} + \left(\nabla_\xi\cdot F^{\xi z} + ie^{\frac \tau 2}\zeta F^{zz}\right)\alpha g = \left(\cL - \frac12 - e^\tau|\zeta|^2\right) F^{\xi z},
}{
\partial_\tau F^{z\xi} - \left(\nabla_\xi\cdot F^{\xi z} + ie^{\frac \tau 2}\zeta F^{zz}\right)\alpha g = \left(\cL - \frac12 - e^\tau|\zeta|^2\right) F^{z\xi},
}{
\partial_\tau F^{zz} = \left(\cL - \frac12 - e^\tau|\zeta|^2\right) F^{zz}.
}
\]

We then consider estimates for \(F^{\xi\xi},F^{\xi z},F^{z\xi},F^{zz}\) in turn:

\smallskip\noindent\underline{The \(zz\)-component.} From the expression \eqref{etcl} for the semigroup \(e^{\tau \cL}\) we obtain the estimate
\eq{Fzz}{
\|F^{zz}(\tau)\|_{L^1_\xi}\leq e^{-\frac12(\tau - \sigma)}\|F^{zz}(\sigma)\|_{L^1_\xi}.
}

\smallskip\noindent\underline{The \(\xi z\)-component.} For a \(2\)-vector \(f\) we define the operator
\[
\Xi f = (\nabla_\xi\cdot f) g,
\]
and from the estimate \eqref{SemigroupL1}, for any \(\gamma>0\) we have
\[
\|e^{\tau(\cL - \frac12 - \alpha\Xi)}\|_{L^1_\xi\rightarrow L^1_\xi}\lesssim e^{(\gamma - \frac12)\tau}.
\]
Using the Duhamel formula and the estimate \eqref{Fzz} for \(F^{\zeta\zeta}\), we then obtain the estimate
\eq{Fxz}{
\|F^{\xi z}(\tau)\|_{L^1_\xi}\lesssim e^{(\gamma - \frac12) (\tau - \sigma)}\left(\|F^{\xi z}(\sigma)\|_{L^1_\xi} + \|F^{zz}(\sigma)\|_{L^1_\xi}\right).
}

\smallskip\noindent\underline{The \(z\xi\)-component.} To bound \(F^{z\xi}\) we may simply use the mapping properties of \(e^{\tau\cL}\) with the estimates \eqref{Fzz}, \eqref{Fxz} to obtain
\eq{Fzx}{
\|F^{z\xi}(\tau)\|_{L^1_\xi}\lesssim e^{-\frac12(\tau - \sigma)}\|F^{z\xi}(\sigma)\|_{L^1_\xi} + e^{(\gamma - \frac12) (\tau - \sigma)}\left(\|F^{\xi z}(\sigma)\|_{L^1_\xi} + \|F^{zz}(\sigma)\|_{L^1_\xi}\right).
}

\smallskip\noindent\underline{The \(\xi\xi\)-component.} For a \(2\times 2\) tensor \(f\) we define the linear operator
\[
\Pi f = g\otimes \Div_\xi f - \Div_\xi f\otimes g,
\]
and from the estimate \eqref{SemigroupL1}, for all \(\gamma>0\) we have,
\[
\|e^{\tau(\cL - \frac12 - \alpha\Pi)}\|_{L^1_\xi\rightarrow L^1_\xi}\lesssim e^{(\gamma - \frac12)\tau}.
\]
Using the Duhamel formula with the estimate \eqref{Fzx}, we may then bound
\eq{Fxx}{
\|F^{\xi\xi}(\tau)\|_{L^1_\xi}\lesssim e^{(\gamma - \frac12)(\tau - \sigma)}\left(\|F^{\xi\xi}(\sigma)\|_{L^1_\xi} + \|F^{\xi z}(\sigma)\|_{L^1_\xi} +  \|F^{z\xi}(\sigma)\|_{L^1_\xi} + \|F^{zz}(\sigma)\|_{L^1_\xi} \right).
}
The estimate \eqref{LongTimeT} then follows from the bounds \eqref{Fzz}--\eqref{Fxx}.
\end{proof}

Using these lemmas we may complete the proof of Proposition~\ref{prop:LinearSmoothing}:

\bpf[Proof of Proposition~\ref{prop:LinearSmoothing}] It remains to prove the estimates \eqref{LSmooth1}--\eqref{LSmooth2a}.

\smallskip\noindent
\underline{The estimate \eqref{LSmooth1}.} For the \(z\)-component we apply the estimate \eqref{CLBAD2}. For the \(x\)-component we apply the estimates \eqref{CLBAD1} and \eqref{LongTime} to obtain
\begin{align*}
\|\bS^x(t,s)\omega_s^x\|_{B_zL^q_x} &= \|\mathbb{S}^x(t,\tfrac12(t + s))\mathbb{S}^x(\tfrac12(t + s),s)\omega_s^x\|_{B_zL^q_x}\\
&\lesssim (t - s)^{-\left(1 - \frac1q\right)}\|\bS^x(\tfrac12(t + s),s)\omega_s^x\|_{B_zL^1_x}\\
&\lesssim (t - s)^{-\left(1 - \frac1q\right)}\left(\frac ts\right)^\gamma \|\omega_s^x\|_{B_zL^1_x}.
\end{align*}

\smallskip\noindent
\underline{The estimate \eqref{LSmooth1a}.} We first choose sufficiently small \(\delta>0\) satisfying the hypothesis of Lemma~\ref{lem:AnotherSmoothingBound}. If \(0<s<t\leq (1 + \delta)s\) we may apply the short time estimate \eqref{CLBAD3} on the interval \([\frac12(t + s),t]\) with the estimate \eqref{LSmooth1} on the interval \([s,\frac12(t + s)]\) to obtain
\begin{align*}
\|\nabla\bS(t,s)\omega_s\|_{B_zL^q_x} &= \|\nabla\bS(t,\tfrac12(t+s))\bS(\tfrac12(t+s),s)\omega_s\|_{B_zL^q_x}\\
&\lesssim (t - s)^{-\frac12}\|\bS(\tfrac12(t+s),s)\omega_s\|_{B_zL^q_x}\\
&\lesssim (t - s)^{-\left(\frac32 - \frac1q\right)}\left(\frac ts\right)^\gamma\|\omega_s\|_{B_zL^1_x}.
\end{align*}
If instead we have \((1 + \delta)s<t\leq T\) we apply the short time estimate \eqref{CLBAD3} on the interval \([\frac2{2 + \delta}t,t]\) and the long time estimate \eqref{LSmooth1} on the interval \([s,\frac2{2+\delta}t]\) (noting that \(\frac1{1 + \delta}t<\frac 2{2 + \delta} t<t\)) to obtain
\begin{align*}
\|\nabla\bS(t,s)\omega_s\|_{B_zL^q_x} &= \|\nabla\bS(t,\tfrac2{2 + \delta}t)\bS(\tfrac2{2 + \delta}t,s)\omega_s\|_{B_zL^q_x}\\
&\lesssim t^{-\frac12}\|\bS(\tfrac2{2 + \delta}t,s)\omega_s\|_{B_zL^q_x}\\
&\lesssim (t - s)^{-\left(\frac32 - \frac1q\right)}\left(\frac ts\right)^\gamma\|\omega_s\|_{B_zL^1_x},
\end{align*}
where we have used the fact that \(\frac2{2 + \delta}t - s > \frac1{2 + \delta}(t - s)\) whenever \(\delta s<t - s\).

\smallskip\noindent
\underline{The estimate \eqref{LSmooth1b}.} This is proved in an identical manner to the estimate \eqref{LSmooth1a} using the estimate \eqref{CLBAD3-a}.

\smallskip\noindent
\underline{The estimate \eqref{LSmooth2}.} By exploiting a similar strategy to the proof of \eqref{LSmooth1} it suffices to show that
\[
\|\mathbb S(t,s)\Div f_s\|_{B_zL^1_x}\lesssim (t - s)^{-\frac12}\left(\frac ts\right)^\gamma\|f_s\|_{B_zL^1_x}.
\]
We take sufficiently small \(\delta >0\) as in Lemma~\ref{lem:ShortTimeSmoothing} and if \(s<t\leq (1 + \delta)s\) we may directly apply the estimate \eqref{ShortTimeSmoothing} to obtain
\[
\|\mathbb S(t,s)\Div f_s\|_{B_zL^1_x}\lesssim (t - s)^{-\frac12}\|f_s\|_{B_zL^1_x}.
\]
On the other hand, if \((1 + \delta)s< t\leq T\) we may combine the estimate \eqref{ShortTimeSmoothing} with the estimate \eqref{LongTimeT} to obtain
\begin{align*}
\|\mathbb S(t,s)\Div f_s\|_{B_zL^1_x}&\lesssim \|\mathbb S(t,\tfrac2{2 + \delta}t)\Div \bK(\tfrac2{2 + \delta} t,s) f_s\|_{B_zL^1_x}\\
&\lesssim t^{-\frac12}\|\bK(\tfrac2{2 + \delta}t,s) f_s\|_{B_zL^1_x}\\
&\lesssim (t - s)^{-\frac12}\left(\frac ts\right)^\gamma \|f_s\|_{B_zL^1_x}.
\end{align*}

\smallskip\noindent\underline{The estimate \eqref{LSmooth2a}.} This follows from the estimate \eqref{LSmooth2} in the same way that the estimate \eqref{LSmooth1a} follows from the estimate \eqref{LSmooth1}.

\epf

\subsection{Proof of Proposition~\ref{prop:MildBasic}}\label{sect:CriticalData}
We now turn to the proof of Proposition~\ref{prop:MildBasic}. We first consider the \(z\)-component and have the following lemma:
\begin{lem}\label{lem:z-comp}
For any \(1\leq p\leq \infty\) and \(\omega_s^z\in B_z L^p_x\) there exists a unique mild solution \(\omega^z\in \mc C_w([s,T];B_zL^p_x)\) of the equation \eqref{z-eqn} satisfying \(\omega^z(s) = \omega^z_s\). Further, for all \(0\leq s<t\leq T\) and \(1\leq p\leq q\leq \infty\) we have the estimate
\eq{Oz-p}{
\|\omega^z(t)\|_{B_z L^q_x} \lesssim (t - s)^{-\left(\frac1p - \frac1q\right)}\|\omega^z_s\|_{B_z L^p_x}
}
\end{lem}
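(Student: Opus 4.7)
The strategy is to exploit the two features that set the $z$-component equation \eqref{z-eqn} apart from the full linearized system: it decouples from $\omega^x$, and its drift $(u^g)^x$ is a purely two-dimensional divergence-free vector field. I will reduce to a family of two-dimensional parabolic problems parametrized by the $z$-Fourier variable, and then apply the classical $L^p\to L^q$ smoothing theory for advection-diffusion with divergence-free drifts (in the form of \cite{MR1381974}, which is already invoked in Lemma~\ref{lem:CLBAD}).

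Concretely, I would first take the Fourier transform in $z$ and set $\widehat{\omega^z}(t,x,\zeta)=e^{-(t-s)|\zeta|^2}b(t,x,\zeta)$. Then for each fixed $\zeta$ the function $b(\cdot,\cdot,\zeta)$ satisfies the purely two-dimensional equation
\[
\partial_t b + (u^g)^x\cdot\nabla_x b = \Delta_x b,\qquad b(s,x,\zeta)=\widehat{\omega_s^z}(x,\zeta),
\]
with smooth divergence-free drift on $(0,T]\times\R^2$. Testing against $|b|^{p-2}\bar b$ and integrating by parts (using $\nabla_x\cdot(u^g)^x=0$) shows that $\|b(t,\cdot,\zeta)\|_{L^p_x}$ is non-increasing for every $p\in[1,\infty]$; combined with the 2D Nash inequality this yields the ultracontractive bound
\[
\|b(t,\cdot,\zeta)\|_{L^q_x}\lesssim (t-s)^{-(\frac1p-\frac1q)}\|b(s,\cdot,\zeta)\|_{L^p_x}
\]
uniformly in $\zeta$ (with constant depending on $\alpha$ but not on $\zeta$), which is exactly the $L^p\to L^q$ statement of \cite{MR1381974}. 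Since $|e^{-(t-s)|\zeta|^2}|\leq 1$, the same estimate transfers to $\widehat{\omega^z}(t,\cdot,\zeta)$, and integrating (or summing) in $\zeta$ gives \eqref{Oz-p}.

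For existence and uniqueness when $s>0$ the drift is smooth and standard linear parabolic theory, applied frequency-by-frequency, furnishes a classical solution; the uniform-in-$\zeta$ bound just obtained ensures it lies in $\mc C([s,T];B_zL^p_x)$. Uniqueness of mild solutions in this class follows by subtracting two candidates and invoking \eqref{Oz-p} with $p=q$. For $s=0$, where the drift is singular, I would approximate $\omega_0^z$ by smooth compactly supported data, solve on $[s',T]$ using the previous case, first pass $s'\searrow 0$ using the self-similar scaling of $(u^g)^x$ together with the $\zeta$-uniform bound, and then pass to the limit on the approximation using \eqref{Oz-p}. Weak-$\ast$ continuity at $t=0$ comes from the Duhamel formula
\[
\omega^z(t)=e^{(t-s)\Delta}\omega_s^z-\int_s^t e^{(t-\sigma)\Delta}\nabla_x\cdot\bigl((u^g)^x(\sigma)\omega^z(\sigma)\bigr)\,d\sigma
\]
by testing against Schwartz functions and using the bound $\|(u^g)^x(\sigma)\|_{L^\infty}\lesssim\alpha\sigma^{-1/2}$ to control the Duhamel term.

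The only genuine issue, and the step I expect to require the most care, is producing the limit at $s=0$ rigorously: one needs the constant in the 2D Osada/Carlen--Loss estimate to be uniform both in $\zeta$ and down to $s=0$. Uniformity in $\zeta$ is automatic from the substitution since $\zeta$ enters only through the harmless factor $e^{-(t-s)|\zeta|^2}$, while uniformity down to $s=0$ follows because $(u^g)^x$ is self-similar and the estimate is scale-invariant under the parabolic scaling $(t,x)\mapsto(\lambda^2 t,\lambda x)$ that preserves the equation.
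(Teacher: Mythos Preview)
Your proposal is correct and follows essentially the same approach as the paper: Fourier transform in $z$, substitute $\widehat{\omega^z}=e^{-(t-s)|\zeta|^2}b$ to reduce to the two-dimensional advection-diffusion equation with divergence-free drift $(u^g)^x$, and apply the Carlen--Loss $L^p\to L^q$ estimate~\eqref{CarlenLoss} uniformly in $\zeta$. The paper is terser, delegating the existence/uniqueness (including the $s=0$ limit and weak continuity) to the general discussion in Appendix~\ref{app:2dReview}, whereas you spell out these steps explicitly.
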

\begin{proof}
Letting \(\widehat \omega(t) = e^{-(t - s)|\zeta|^2}b^z (t)\), we have
$$
\pde{
\partial_t b^z + \alpha (u^g)^x \cdot \nabla_x b^z = \Delta_x b^z
}{
b^z(s) = \widehat{\omega^z_s},
}
$$
which is precisely the scalar advection-diffusion equation \eqref{2dTD}. The estimate~\eqref{CarlenLoss} gives
$$
\| b^z(t) \|_{L^q_x} \lesssim (t-s)^{-\left(\frac{1}{p} - \frac{1}{q}\right)} \| b^z(s) \|_{L^p_x}
$$
for fixed $\zeta$, from which the estimate \eqref{Oz-p} follows.
\end{proof}

Next we consider \(\psi\) and have the following lemma:
\begin{lem}\label{lem:psi-comp}
For all \(\psi_s\in \ell^1B_zL^2_x\subset B_zL^2_x\) there exists a unique mild solution \(\psi\in \mc C_w([s,T];B_zL^2_x)\) of \eqref{psi-eqn} satisfying \(\psi(s) = \psi_s\). Further, we have the estimate,
\eq{psi-p}{
\|\psi\|_{\ell^1L^\infty([s,T];B_zL^2_x)} \lesssim \|\psi_s\|_{\ell^1B_zL^2_x}.
}
\end{lem}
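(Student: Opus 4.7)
My plan for this lemma is as follows. Since the equation for $\psi$ is a scalar passive advection--diffusion identical in form to the one satisfied by $\omega^z$ in Lemma~\ref{lem:z-comp}, the existence and uniqueness of a mild solution $\psi \in \mc C_w([s,T]; B_z L^2_x)$, together with the basic bound $\| \psi(t) \|_{B_z L^2_x} \lesssim \| \psi_s \|_{B_z L^2_x}$, follow directly from that lemma applied frequency-by-frequency in $z$. The only new content is the $\ell^1$-summed estimate \eqref{psi-p}, whose proof relies on the tangential structure $(u^g)^x = V(t, |x|)\, x^\perp$ --- an advection that cannot redistribute mass between radial annuli.

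Fixing a frequency $\zeta$ and setting $b = \widehat \psi(\cdot, \cdot, \zeta)$, in polar coordinates I have $\partial_t b + V(t, r) \partial_\theta b = \Delta_x b - |\zeta|^2 b$. The key is the angular $L^2$-density $F(t, r) = \int_0^{2\pi} |b(t, r, \theta)|^2 \, d\theta$, viewed as a radial function on $\R^2$. Multiplying the equation by $\bar b$, taking real parts, and integrating in $\theta$, the transport term vanishes (since $V$ is radial and $|b|^2$ is $2\pi$-periodic) and the $r^{-2} \partial_\theta^2$ piece of $\Delta_x |b|^2$ integrates to zero, leaving
$$ \partial_t F - \Delta_x F = -2 \int_0^{2\pi}|\nabla_x b|^2 \, d\theta - 2|\zeta|^2 F \le 0. $$
By the parabolic comparison principle, $F(t) \le e^{(t-s)\Delta} F(s)$ pointwise on $\R^2$. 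Since $\chi_M$ is radial,
$$ \| \chi_M b(t) \|_{L^2_x}^2 = \tfrac{1}{2\pi} \int_{\R^2} \chi_M^2\, F(t) \, dx \le \tfrac{1}{2\pi} \int_{\R^2} \bigl( e^{(t-s)\Delta} \chi_M^2 \bigr) F(s) \, dx. $$
Inserting a dyadic partition $F(s) = \sum_{M'} \phi_{M'} F(s)$ with $\phi_{M'} = \chi_{M'}^2/\sum_{M''} \chi_{M''}^2$ supported in $\{|x| \approx M'\}$, together with $\int \phi_{M'} F(s) \, dx \lesssim \| \chi_{M'} b(s) \|_{L^2_x}^2$, yields
$$ \| \chi_M b(t) \|_{L^2_x}^2 \lesssim \sum_{M'} K(M, M') \| \chi_{M'} b(s) \|_{L^2_x}^2, \quad K(M, M') := \bigl\| e^{(t-s)\Delta} \chi_M^2 \bigr\|_{L^\infty(|x| \approx M')}. $$
Standard Gaussian bounds give $K(M, M') \lesssim \min\!\bigl(1, M^2/(t-s)\bigr) \exp\!\bigl(-c\, \dist(M, M')^2/(t-s)\bigr)$. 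Using the subadditivity $\sqrt{\sum_i x_i} \le \sum_i \sqrt{x_i}$ and exchanging the order of summation, the estimate $\sum_M \| \chi_M b(t) \|_{L^2_x} \lesssim \sum_{M'} \| \chi_{M'} b(s) \|_{L^2_x}$ reduces to showing
$$ \sup_{M' \in 2^{\Z},\, 0 < t - s \le T} \sum_{M \in 2^{\Z}} \sqrt{K(M, M')} \lesssim 1, $$
after which summing (or integrating) in $\zeta$ finishes the proof.

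The main obstacle is this final uniform $\ell^1$ summability. A direct energy estimate on $\| \chi_M b \|_{L^2_x}^2$ would produce an error of size $M^{-2}\| \tilde\chi_M b \|_{L^2_x}^2$ from $\Delta(\chi_M^2)$, whose square root is of order $M^{-1}$ and is not summable over $M \in 2^{\Z}$; this is why the reduction via angular averaging to a scalar heat-equation subsolution is essential, as it transfers the annular bookkeeping to the explicit Gaussian kernel. The sum $\sum_M \sqrt{K(M,M')}$ is then controlled by a short case analysis: the contribution from $M \approx M'$ is finite, the contribution from $M \gg M'$ is either a geometric series of ratio $M/\sqrt{t-s}$ (when $M \lesssim \sqrt{t-s}$) or a super-exponentially decaying Gaussian tail (when $M \gg \sqrt{t-s}$), and similarly for $M \ll M'$, each case producing a uniformly bounded contribution.
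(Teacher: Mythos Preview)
Your angular-averaging idea is elegant, and your summability claim $\sup_{M',\,t}\sum_M \sqrt{K(M,M')}\lesssim 1$ is correct. However, there is a gap: what you actually prove is
\[
\sup_{t}\ \sum_M \|\chi_M \psi(t)\|_{B_zL^2_x}\ \lesssim\ \|\psi_s\|_{\ell^1 B_zL^2_x},
\]
i.e.\ control of $\|\psi\|_{L^\infty_t\,\ell^1_M\,B_zL^2_x}$, whereas the lemma asks for $\|\psi\|_{\ell^1_M\,L^\infty_t\,B_zL^2_x}=\sum_M\sup_t\|\chi_M\psi(t)\|_{B_zL^2_x}$, which is strictly stronger (and is the form actually needed in the proof of Proposition~\ref{prop:MildBasic}, where it is paired via H\"older with $\|W\|_{\ell^\infty_M L^1_t L^2_x}$). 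Pushing $\sup_t$ inside your sum would require $\sup_{M'}\sum_M \sup_t\sqrt{K(M,M')}<\infty$, but for $M\gg M'$ one has $\sup_t K(M,M')\approx 1$ (take $t-s\approx M^2$), so that sum diverges.

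This loss is intrinsic to bounding the $L^2$-density $F=\int_0^{2\pi}|b|^2\,d\theta$ rather than $|b|$ itself. If $b(s)$ is a unit-$L^2$ bump on $\{|x|\approx M_0\}$, then at time $t-s\approx M^2$ with $M\gg M_0$ your inequality $F(t)\le e^{(t-s)\Delta}F(s)$ only yields $\|\chi_M b(t)\|_{L^2}\lesssim 1$, whereas the true size is $\approx M_0/M$. The paper instead invokes the pointwise Gaussian upper bound \eqref{CarlenLoss2} on $|b|$, producing the \emph{linear} kernel $\tfrac{MM'}{t-s}\,e^{-c\max(M,M')^2/(t-s)}$; its supremum in $t$ is $\tfrac{\min(M,M')}{\max(M,M')}$, which \emph{is} $\ell^1$-summable in $M$ uniformly in $M'$. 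That extra factor of $M'/M$ for $M\gg M'$ is exactly what your squared approach loses.
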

\begin{proof}
Proceeding as in Lemma~\ref{lem:z-comp} we take \(\widehat\psi = e^{-(t - s)|\zeta|^2}w_\zeta\). We then apply the estimate \eqref{CarlenLoss2} and H\"older's inequality to obtain
\[
\|\chi_Mw_\zeta(t)\|_{L^2_x} \lesssim\sum\limits_{M'\approx M}\|\chi_{M'}w_\zeta(s)\|_{L^2_x} + \sum\limits_{M'\not\approx M}\frac{MM'}{t - s}e^{-C\frac{\max\{M^2,(M')^2\}}{t - s}}\|\chi_{M'}w_\zeta(s)\|_{L^2_x}.
\]
As a consequence,
\[
\|\chi_M\psi \|_{L^\infty([s,T];B_zL^2_x)} \lesssim \sum\limits_{M'\approx M}\|\chi_{M'}\psi_s\|_{B_zL^2_x} + \sum\limits_{M'\not\approx M}\frac{MM'}{\max\{M^2,(M')^2\}}\|\chi_{M'}\psi_s\|_{B_zL^2_x}.
\]
We may then sum over \(M\in 2^\Z\) to obtain the estimate \eqref{psi-p}.
\end{proof}

Using these estimates we may solve for \(\omega^x\) and complete the proof of proposition~\ref{prop:MildBasic}:

\begin{proof}[Proof of Proposition~\ref{prop:MildBasic}]
Using Lemmas~\ref{lem:z-comp},~\ref{lem:psi-comp} we may construct \(\omega^z\) and \(\psi\). Thus, it remains to prove that \(\omega^x\) exists, is unique and satisfies the estimate
\eq{x-ap}{
\|\omega^x(t)\|_{B_zL^1_x} \lesssim \|\omega_s\|_{Y_s}
}
The estimate \eqref{SBBound} on $[s,t]$ then follows from combining the estimate \eqref{YBound} on $[s,\frac{s+t}{2}]$ with the short time smoothing estimate \eqref{LSmooth1} on $[\frac{s+t}{2},t]$.
We start by writing \(\widehat{\omega^x} = e^{-(t - s)|\zeta|^2}b\) to obtain the \(2d\)-equation,
\eq{Hom-x-2d}{
\pde{
\partial_t b + \alpha (u^g)^x\cdot\nabla_x b - \alpha V J b = \Delta_x b + \alpha Wx\cdot b,
}{
b(s) = b_s.
}
}
Proceeding as in Lemma~\ref{lem:CLBAD}, for \(\delta>0\) we take,
\[
c = \left(\delta^2\varphi^2 + |b|^2\right)^{\frac12},
\]
where \(\varphi = e^{(t-s)\Delta_x}G\) to obtain the equation
%

%
\eq{Inhomogeneous2dTD}{
\partial_t c + \alpha (u^g)^x\cdot\nabla_x c = \Delta_x c + F,
}
where \(F_1\leq 0\) and the inhomogeneous term
\[
F = F_1 + F_2,
\]
with
\[
F_2 =  \frac1c\Re\left((\bar b\cdot W)(x\cdot b)\right).
\]

We now replace \(V\) by the mollification \(V^{(\epsilon)} = \rho_\epsilon*V\), so that \(W\) is replaced by \(W^{(\epsilon)} = \partial_rV^{(\epsilon)}\frac{x^\perp}{|x|}\), and construct a corresponding mild local solution \(b^{(\epsilon)} \in C([s,T];L^1_x)\).
The corresponding quantity \(c^{(\epsilon)} = (\delta^2\varphi^2 + |b^{(\epsilon)}|^2)^{\frac12}\) is a non-negative solution of the inhomogeneous equation \eqref{Inhomogeneous2dTD} (with \(b,V,W\) replaced by \(b^{(\epsilon)},V^{(\epsilon)},W^{(\epsilon)}\) respectively) and hence we may apply the estimate \eqref{CarlenLoss} with the fact that the solution operator of \eqref{2dTD} is sign-preserving to obtain,
\[
\|c^{(\epsilon)}(t)\|_{L^1_x}\lesssim \|c_s\|_{L^1_x} + \int_s^t \|F_2(\sigma)\|_{L^1_x}\,d\sigma.
\]
Multiplying by \(e^{-(t - s)|\zeta|^2} > 0\), integrating in \(\zeta\), and applying the estimates \eqref{WInf2Bound} for \(W\), \eqref{psi-p} for \(\psi\) and \eqref{Oz-p} for \(\omega^z\), we then obtain,
\begin{align*}
\|e^{-(t - s)|\zeta|^2}c^{(\epsilon)}(t)\|_{L^1_{x,\zeta}} &\lesssim \|e^{-(t - s)|\zeta|^2}c_s\|_{L^1_{x,\zeta}} + \int_s^t\|e^{-(t - \sigma)|\zeta|^2}W^{(\epsilon)} \hat \psi(\sigma)\|_{L^1_{x,\zeta}}\,d\sigma\\
&\qquad + \int_s^t\|e^{-(t - \sigma)|\zeta|^2}W^{(\epsilon)} \sigma \zeta \widehat{ \omega^z}(\sigma) \|_{L^1_{x,\zeta}}\,d\sigma\\
&\lesssim \|e^{-(t - s)|\zeta|^2}c_s\|_{L^1_{x,\zeta}} + \|W^{(\epsilon)}\|_{\ell^\infty L^1([s,T];L^2_x)}\|\psi\|_{\ell^1L^\infty([s,T];B_zL^2_x)}\\
&\qquad + \int_s^t(t - \sigma)^{-\frac12}\|W^{(\epsilon)}\sigma\|_{L^2_x}\|\omega^z(\sigma)\|_{B_zL^2_x}\,d\sigma\\
&\lesssim \|e^{-(t - s)|\zeta|^2}c_s\|_{L^1_{x,\zeta}} + \|\omega_s\|_{Y_s}.
\end{align*}
Taking \(\delta\rightarrow0\) we arrive at the a priori estimate,
\[
\|e^{-(t - s)|\zeta|^2}b^{(\epsilon)}(t)\|_{L^1_{x,\zeta}}\lesssim \|\omega_s\|_{Y_s}.
\]

We now pass to the limit as \(\epsilon\rightarrow 0\) to obtain \(\omega^x\in L^\infty([s,T];B_zL^1_x)\) satisfying the Duhamel formula,
\[
\omega^x(t) = e^{(t - s)\Delta}\omega^x_s - \alpha \int_s^t e^{(t - \sigma)\Delta}\Div\left((u^g)^x(\sigma)\otimes \omega^x(\sigma) - \omega^x(\sigma)\otimes (u^g)^x(\sigma)\right)\,d\sigma,
\]
as well as the estimate
\[
\|\omega^x(t)\|_{B_zL^1_x}\lesssim \|\omega_s\|_{Y_s}.
\]
Weak continuity in time then follows from an identical argument to the \(2d\) equation \eqref{2dTD}, so \(\omega^x\in \mc C_w([s,T];B_zL^1_x)\) is a mild solution of \eqref{x-eqn}.

It remains to prove that the mild solution of \eqref{x-eqn} we have constructed is unique. Suppose that \(\omega\in Y\) is a mild solution of \eqref{SmoothTD} with initial data \(\omega_s = 0\). From the uniqueness statement of Lemma~\ref{lem:z-comp} we then see that \(\omega^z = 0 = \psi\). As a consequence, the problem reduces to showing that there exists a unique mild solution of the equation \eqref{x-eqn} satisfying \(x\cdot \omega^x = 0 = \nabla_x\cdot \omega^x\) with initial data \(\omega^x_s = 0\). However, if $x \cdot \omega^x = \nabla_x \cdot \omega^x = 0$, then we may write $$\omega^x(t,x,z) = f(t,|x|,z) \frac{x^\perp}{|x|}.$$
But then $f$ is a solution of the equation,
$$
\partial_t f = \left(\partial_r^2 + \frac1r\partial_r - \frac1{r^2}\right)f,
$$
where \(r = |x|\), whose solution is clearly unique.
\end{proof}

\subsection{Proof of Proposition~\ref{subcriticallinear}} Recalling that \(1<p\leq \frac43\), by~\eqref{Oz-p},~\eqref{CLBAD3}, and~\eqref{CarlenLoss}, we have, for $p \leq q$,

\begin{align*}
\| \omega^z \|_{B_z L^{4/3}_x} &\lesssim t^{\frac34-\frac1p} \| \omega^z_0 \|_{B_z L^p_x}, \\
\| \partial_z \omega^z \|_{B_z L^{q}_x} &\lesssim t^{\frac{1}{q} - \frac{1}{p}- \frac{1}{2}} \| \omega^z_0 \|_{B_z L^p_x}, \\
\| \psi \|_{B_z L^q_x} &\lesssim t^{\frac{1}{q}-\frac{1}{p}} \| x\cdot\omega_0^x \|_{B_z L^p_x}.
\end{align*}
Arguing as in the proof of Proposition~\ref{prop:MildBasic} using~\eqref{CarlenLoss} we may bound
\begin{align*}
\|e^{-t|\zeta|^2}b(t)\|_{L^1_\zeta L^{4/3}_x} &\lesssim t^{\frac34-\frac1p} \| \omega^x_0 \|_{B_z L^p_x}  +\int_0^t(t - \sigma)^{-\frac14}\|e^{-(t-\sigma)|\zeta|^2}W\widehat\psi(\sigma)\|_{L^1_{x,\zeta}}\,d\sigma\\
&\quad + \int_0^t(t - \sigma)^{-\frac14} \|e^{-(t-\sigma)|\zeta|^2}W\sigma\zeta \widehat{\omega^z}(\sigma)\|_{L^1_{x,\zeta}}\,d\sigma\\
&\lesssim t^{\frac34-\frac1p} \| \omega^x_0 \|_{B_z L^p_x}\\
&\quad + \int_0^t(t - \sigma)^{-\frac14}\|W\|_{L^{\frac{2p}{3p-2}}_x}\left(\|\psi(\sigma)\|_{B_zL^{\frac{2p}{2-p}}_x} + \sigma\|\partial_z\omega^z(\sigma)\|_{B_zL^{\frac{2p}{2-p}}_x}\right)\,d\sigma\\
&\lesssim t^{\frac34-\frac1p} \| \omega^x_0 \|_{B_z L^p_x} + \int_0^t (t-\sigma)^{-\frac{1}{4}} \sigma^{-\frac{1}{p}} \left(\| x \cdot (\omega_0)^x \|_{B_z L^{\frac{2p}{2-p}}}  +  \| \omega_0 \|_{B_z L^p_x}\right)\,d\sigma \\
& \lesssim t^{\frac34-\frac1p} \| \omega^x_0 \|_{B_z L^p_x} +  t^{\frac{3}{4}-\frac{1}{p}} \left(\| x \cdot (\omega_0)^x \|_{B_z L^{\frac{2p}{2-p}}}  + \| \omega_0 \|_{B_z L^p_x}\right),
\end{align*}
where we have used that
\[
\|W\|_{L^{\frac{2p}{3p-2}}_x}\lesssim \sigma^{-\frac1p},
\]
which gives the desired result.\qed

\section{Nonlinear estimates for the straight filament}\label{sec:Straight}

This section is devoted to the proofs of Theorems~\ref{thm:StrtFil} and~\ref{thm:StrtFil2}. The proofs of both theorems are mostly identical, with a few differences which will be made clear.

\bigskip

We start by formally splitting the vorticity
\begin{align}
\omega(t,x,z) = \underbrace{\begin{bmatrix} 0 \\ \frac{\alpha}{t}G\left(\frac{x}{\sqrt{t}}\right) \end{bmatrix} + \omega^c(t,x,z)}_{ \displaystyle \widetilde{\omega}^c(t,x,z)}  + \omega^b(t,x,z) \label{eq:StrtSplitting}
\end{align}
with corresponding velocity field given by the Biot-Savart law
$$
u(t,x,z) =  \alpha u^g(t,x,z) + u^c(t,x,z) + u^b(t,x,z) \qquad \mbox{with} \qquad u^g(t,x,z) = \begin{bmatrix}\frac{1}{\sqrt{t}}g \left( \frac{x}{\sqrt{t}} \right) \\ 0 \end{bmatrix} .
$$
As usual, we capitalize $u$ and $\omega$ in self-similar variables: for $*=b$ or $c$,
$$
\Omega^{*}(\tau,\xi,z) = t \omega^*(t,x,z) \qquad \mbox{and} \qquad U^{*} (\tau,\xi,z) = \sqrt{t} u^*(t,x,z).
$$
It remains to define $\widetilde{\omega}^c$ and $\omega^b$: they are given by
\[
\pde{
\partial_t \widetilde{\omega}^c + u \cdot \nabla \widetilde{\omega}^c - \widetilde{\omega}^c \cdot \nabla u = \Delta \widetilde{\omega}^c}{ 
\widetilde{\omega}^c(t=0) = 
\begin{bmatrix}
0 \\ 
\alpha \delta_{x = 0}
\end{bmatrix}
}
\]
and 
\[
\pde{
\partial_t \omega^b + u \cdot \nabla \omega^b - \omega^b \cdot \nabla u = \Delta \omega^b}
{\omega^b(t=0) = \mu^b.}  
\]
Recall that $\mathbb{S}$ is the semigroup associated to the problem $\partial_t \omega + \alpha[u^g \cdot \nabla \omega - \omega \cdot\nabla u^g] = \Delta \omega$, while $S$ is the semigroup associated to $\partial_t \omega + \alpha[u^g \cdot \nabla \omega - \omega \cdot \nabla u^g + u \cdot \nabla \omega^g - \omega^g \cdot \nabla u] = \Delta \omega$ in self-similar variables.

Duhamel's formula then formally gives
\begin{equation}
\label{eqPhi}
\begin{split}
\omega^b(t) & = \mathbb{S}(t,0)\mu^b - \int_0^t \mathbb{S}(t,s) \nabla \cdot \left( u^b \otimes \omega^b - \omega^b \otimes u^b \right)\, ds \\ 
& \qquad \qquad - \int_0^t \mathbb{S}(t,s) \nabla \cdot \left( u^c \otimes \omega^b - \omega^b \otimes u^c \right)\, ds, \\
\Omega^c(\tau) & = -\alpha \int_{-\infty}^\tau S(\tau,s) \overline{\nabla}\cdot \left(U^b \otimes G e_3 - G e_3 \otimes U^b\right)\,  ds \\ 
& \quad - \int_{-\infty}^\tau S(\tau,s) \overline{\nabla} \cdot \left(U^b \otimes \Omega^c - \Omega^c \otimes U^b\right)\, ds \\
& \quad - \int_{-\infty}^\tau S(\tau,s) \overline{\nabla} \cdot \left(U^c \otimes \Omega^c - \Omega^c \otimes U^c\right)\, ds.
\end{split}
\end{equation}
Writing $\mc Q$ for the above right-hand side, we are looking for a solution of the equation
$$
(\omega^b,\Omega^c) = \left( \mathcal{Q}^b (\omega^b,\Omega^c), \mathcal{Q}^c(\omega^b,\Omega^c) \right) = \mathcal{Q}(\omega^b,\Omega^c) .
$$
Taking \(m\geq 2\), we will solve this fixed point problem by applying the Banach fixed point theorem in the following ball, for constants \(M,D\geq 1\) determined by the proof below, 

\begin{align*}
B_{\eps,T} & = \Big\{ (\omega^b,\Omega^c) \; \mbox{functions on $(0,T) \times \mathbb{R}^3$ such that} \; \nabla \cdot \omega^b = 0, \; \overline{\nabla} \cdot \Omega^c = 0, \\
&  \qquad \quad \mbox{and} \; \norm{(\omega^b,\Omega^c)}_X := M \sup_{t \in (0,T)} t^{\frac14}\|\omega^b(t)\|_{B_z L^{4/3}_x} + \sup_{\tau \in (-\infty,\log T)} \norm{\Omega^c(\tau)}_{B_z L^2_\xi(m)} \leq D\eps \Big\}. 
\end{align*}

To prove Theorem~\ref{thm:StrtFil} we will verify that, whenever the data satisfies \eqref{topologydata}, the map $\mathcal{Q}:B_{\eps,T} \to B_{\eps,T}$ is a contraction for any \(0<\epsilon\leq \epsilon_0\), any \(T>0\) and a judicious of the constants $M$, $D$, and $\epsilon_0$.

\bigskip

\noindent
\underline{Bound for the core}. We abbreviate the three summands in the definition of $\mathcal{Q}^c$ in~\eqref{eqPhi} by
$$
\mathcal{Q}^c(\omega^b,\Omega^c) = L^c + N^c_0 + N^c_1.
$$
By \eqref{SmoothedBound} and the rapid decay of \(G\),
\begin{align}
\norm{L^c(\tau)}_{B_z L^2_\xi(m)} & \lesssim \int_{-\infty}^{\tau} \frac{e^{-\mu(\tau-s)}}{a(\tau-s)^{\frac12}} \norm{G\,U^b(s)}_{B_z L^2_\xi(m)}\, ds \lesssim \int_{-\infty}^\tau \frac{e^{-\mu(\tau-s)}}{a(\tau-s)^{\frac12}} \norm{U^b(s)}_{B_z L^4_\xi}\, ds. 
\end{align}
Note that by scaling and the bound on the Biot-Savart formula in Lemma~\ref{BSx}
\begin{align}
\norm{U^b(\tau)}_{B_z L^4_\xi} = e^{\frac\tau4} \norm{u^b(e^{\tau})}_{B_z L^4_x} \lesssim \sup_{0 < t < T} t^{\frac14} \norm{\omega^b(t)}_{B_z L^{4/3}_x}. 
\end{align}
Hence, with an implicit constant independent of $T$ and $\eps$,
\begin{align}
\norm{L^c(\tau)}_{B_z L^2_\xi(m)} & \lesssim \frac{1}{M} \norm{(\omega^b,\Omega^c)}_{X}. 
\end{align}
Using again \eqref{SmoothedBound}, we have 
\begin{align*}
\norm{N^c_0(\tau)}_{B_z L^2_\xi(m)} &  \lesssim \int_{-\infty}^\tau \frac{e^{-\mu(\tau-s)}}{a(\tau-s)^{\frac34}} \norm{U^b \otimes \Omega^c(s)}_{B L^{4/3}_\xi(m)}\, ds \\ 
&  \lesssim \int_{-\infty}^\tau \frac{e^{-\mu(\tau-s)}}{a(\tau-s)^{\frac34}} \|U^b(s)\|_{B_z L^4_\xi} \norm{\Omega^c(s)}_{B_z L^{2}_\xi(m)}\, ds \\ 
& \lesssim \norm{(\omega^b,\Omega^c)}_{X}^2.
\end{align*}
The same proof applies to the nonlinear term $N^c_1$ by using Lemma~\ref{BSx}, 
$$
\| U^c(\tau) \|_{B_z L^4_\xi} = e^{\frac \tau4}\norm{u^c(e^\tau)}_{B_z L^4_x} \lesssim e^{\frac\tau4}\norm{\omega^c(e^\tau)}_{B_z L^{4/3}_x} \lesssim \| \Omega^c(\tau) \|_{B_zL^2_\xi(m)}.
$$

This completes the proof of the desired estimates near the core.

\bigskip

\noindent \underline{Bound for the background}. We abbreviate the three summands in the definition of $\mathcal{Q}^b$ in~\eqref{eqPhi} by
$$
\mathcal{Q}^b(\omega^b,\Omega^c) = L^b + N^b_0 + N^b_1.
$$
By Proposition~\ref{prop:LinearSmoothing}, choosing $\gamma \in (0,\frac{1}{2})$,
\begin{align*}
t^{\frac14}\norm{N^b_1(t)}_{B_z L^{4/3}_x} & = t^{\frac14}\int_0^t \norm{\mathbb{S}(t,s) \grad \cdot (u^c \otimes \omega^b - \omega^b \otimes u^c)}_{B_z L^{4/3}_x}\, ds \\
& \lesssim t^{\frac14}\int_0^t \frac{1}{(t-s)^{\frac34}} \left( \frac{t}{s} \right)^{\gamma} \norm{u^c(s)}_{B_z L^{4}_x} \|\omega^b(s)\|_{B_z L^{4/3}_x} \,ds \\ 
& \lesssim \frac 1M\norm{(\omega^b, \Omega^c)}_X^2  t^{\frac14} \int_0^t \frac{1}{(t-s)^{\frac34}} \left( \frac{t}{s} \right)^{\gamma} \frac{1}{s^{\frac12}}\, ds \\
& \lesssim \frac1M\norm{(\omega^b, \Omega^c)}_X^2 , 
\end{align*}
where we used Lemma~\ref{BSx}, scaling, and the inclusion $L^2_\xi(m) \subset L^{\frac43}_\xi$ to obtain
$$
\norm{u^c(t)}_{B_zL^4_x} \lesssim \norm{\omega^c(t)}_{B_zL^{4/3}_x} = t^{-\frac14} \| \Omega^c (\log t) \|_{B_zL^{4/3}_\xi} \lesssim t^{-\frac14} \| \Omega^c (\log t) \|_{B_zL^{2}_\xi(m)}\lesssim t^{-\frac14} \| (\omega^b,\Omega^c) \|_{X}.
$$
The term $N^b_0$ can be dealt with similarly. 

\bigskip

\noindent \underline{Proof of Theorem \ref{thm:StrtFil}} The above estimates imply that
$$
\| \mc Q (\omega^b,\Omega^c) \|_{X} \lesssim  M \sup_{t \in (0,T)} t^{\frac14} \| L^b \|_{B_z L^{4/3}_x} + \frac{1}{M}\| (\omega^b,\Omega^c) \|_{X} + \| (\omega^b,\Omega^c) \|_{X}^2. 
$$
By Proposition~\ref{prop:MildBasic} and the hypothesis \eqref{topologydata}, this implies that for some constant \(C_0\geq 1\) we have the estimate
\begin{align*}
\| \mc Q (\omega^b,\Omega^c) \|_{X} & \leq C_0 M  \left(\norm{\mu^b}_{B_z L^1_x} + \norm{x \cdot (\mu^b)^x}_{\ell^1 B_z L^2_x} \right)+ \frac{C_0}{M}\| (\omega^b,\Omega^c) \|_{X} + C_0 \| (\omega^b,\Omega^c) \|_{X}^2 \\
& \leq  C_0 M  \epsilon + \frac{C_0}{M}\| (\omega^b,\Omega^c) \|_{X} + C_0 \| (\omega^b,\Omega^c) \|_{X}^2.
\end{align*}
In order for $\mathcal{Q}$ to map $B_{\epsilon,T}$ to itself, it suffices that
$$
C_0 M \epsilon + \frac{C_0}{M} D \epsilon + C_0  ( D \epsilon)^2 \leq \frac{D}{2} \epsilon.
$$
This can be ensured by choosing $M = 10 C_0$, $D = 10 C_0 M$, and $\epsilon_0 \leq \frac{1}{10 C_0 D}$. A similar argument shows that $\mathcal{Q}$ is also a contraction on $B_{\epsilon,T}$.

\bigskip

\noindent \underline{Proof of Theorem \ref{thm:StrtFil2}} In this case, we take \(M,D,\epsilon_0\) to be chosen as in the proof of Theorem \ref{thm:StrtFil} and show that for sufficiently small \(T>0\) the map \(\mc Q\) is a contraction on \(B_{\epsilon_0,T}\).  We learn from Proposition~\ref{subcriticallinear} that
\begin{align}
\lim_{t \searrow 0} t^{\frac14} \| L^b \|_{B_z L^{4/3}_x} = 0. \label{ineq;Lbvan}
\end{align} 
Thus, it suffices to choose $T$ sufficiently small that the proof of Theorem \ref{thm:StrtFil} above applies.

\bigskip

\noindent \underline{Mild solution of Navier-Stokes}. 
Finally, we verify that the solution constructed by the fixed point argument above does indeed satisfy Definition \ref{milddef}. 
For this we need two things: (a) $\omega \in \mc C_w ([0,T];\cM^{\frac{3}{2}})$, specifically also $\lim_{t \searrow 0} \omega(t) = \alpha \delta_{x = 0} e_3 + \mu^b$ in the sense of $\Schwartz'$; (b) that the mild form of the equations \eqref{milddefDuhamel} is satisfied. 

Proposition~\ref{prop:LinearSmoothing}, the Duhamel formula \eqref{eqPhi}, and our contraction mapping argument ensure that
\[
\sup_{t \in (0,T)}\norm{\omega^b}_{B_z L^1_x}\lesssim \norm{\mu^b}_{B_zL^1_x} + \norm{(\omega^b,\Omega^c)}_X^2\lesssim 1.
\]
The embedding $B_z L^1_x\hookrightarrow \cM^{\frac{3}{2}}$ then yields the a priori estimate $\sup_{t \in (0,T)}\norm{\omega(t)}_{\cM^{\frac{3}{2}}} \lesssim 1$ and classical parabolic regularity ensures that $\omega \in \mc C( (0,T]; \cM^{\frac{3}{2}})$. 

It remains to verify that $\lim_{t \searrow 0} \omega(t) = \alpha \delta_{x = 0} e_3 + \mu^b$ in the sense of distributions. For this, first recall the decomposition \eqref{eq:StrtSplitting}. The term involving $G$ converges to $\alpha \delta_{x = 0} e_3$ weak$*$, and hence in the sense of distributions.

Next, observe that \eqref{ineq;Lbvan} holds not only in the setting of Theorem \ref{thm:StrtFil2}, but also (by approximation as $\mu^b \in B_z L^1_x$) in the setting of Theorem \ref{thm:StrtFil}. As a consequence, for any \(\epsilon'>0\) we may choose $T$ sufficiently small to ensure that the above contraction mapping argument closes with $\epsilon$ replaced by $\epsilon'$, regardless of the size of the initial data. This suffices to show that 
\begin{align}
\lim_{t \searrow 0} \left( \sup_{0 < s < t} s^{1/4} \norm{\omega^b(s)}_{B_z L^{4/3}_x} + \norm{\omega^c(t)}_{B_z L^1_x} \right) = 0.
\end{align}
Again appealing to the Duhamel formula \eqref{eqPhi}, for any $\phi \in \Schwartz$ we may apply \eqref{LongTimeT} (with $\bK$ defined as therein) to yield
\begin{align}
&\int \phi \cdot \mathbb{S}(t,s) \nabla \cdot \left( u^b (s)\otimes \omega^b(s) - \omega^b(s) \otimes u^b(s) \right)\, dx\\
&\qquad\qquad = -\int \grad \phi : \mathbb{K}(t,s)  \left( u^b(s) \otimes \omega^b(s) - \omega^b(s) \otimes u^b(s) \right)\, dx \\ 
&\qquad\qquad \lesssim_{\phi} \left(\frac{t}{s} \right)^\gamma \norm{u^b(s)}_{L^\infty_z L^4_x} \norm{\omega^b(s)}_{B_z L^{4/3}_x} \lesssim \left(\frac{t}{s} \right)^\gamma \frac{1}{s^{\frac12}}; 
\end{align}
where $ 0 < \gamma < \frac{1}{2}$, and similarly for the other nonlinear term in the $\omega^b$ equation in \eqref{eqPhi}. 
As a consequence, $\omega^b(s)$ converges to $\mu^b$ in $\Schwartz'$ and hence $\omega \in \mc C_w ([0,T];\cM^{\frac{3}{2}})$ attains the initial data. 

Finally, we verify that $\omega$ satisfies \eqref{milddefDuhamel}. 
It follows that $\omega$ defined via the reconstruction \eqref{def:destrt} is a classical solution of the 3D Navier-Stokes equations for $t > 0$. 
Hence, for all $0 < s < t$, 
\begin{align}
\omega(t) = e^{(t-s)\Delta}\omega(s) - \int_s^t e^{(t-t')\Delta} B[u(t'),\omega(t')]\, dt'. \label{mildprop_strt}
\end{align}
Due to the self-adjointness of the heat semigroup in $L^2$ and continuity of $\omega$ in $\Schwartz'$, we can pass to the limit in the first term: $\lim_{s \searrow 0} e^{(t-s)\Delta}\omega(s) = e^{t\Delta} \omega(0)$. 
By the Dominated Convergence Theorem, we may pass $s \searrow 0$ also in the nonlinear term; indeed 
\begin{align*}
\int_0^t \norm{e^{(t-t')\Delta} B[u(t'),\omega(t')]}_{\cM^{\frac{3}{2}}} dt' & \lesssim \int_0^t \frac{1}{(t-t')^{\frac12}} \norm{u \otimes \omega}_{B_z L^1_x}\, dt' \\ 
& \lesssim \left(1 + \norm{(\omega^b,\Omega^c)}_{X}\right)^2 \int_0^t \frac{1}{(t-t')^{\frac12} (t')^{\frac12}}\,  dt'. 
\end{align*}

\qed

\section{The curved filament} \label{sec:Curve1}

We now move to the proof of Theorem \ref{thm:curved}, the case of an arbitrary closed, non-self-intersecting, smooth filament $\Gamma$ of length \(2\pi\). 
In this section we set up the local change of coordinates, describe and motivate the decomposition of the corrections, and then outline the fixed point argument.  
In the following section, we carry out the technical details of this fixed point. 

\subsection{A local change of variables} \label{sec:ChangeVars}
We recall that \(\gamma\colon \T\rightarrow \R^3\) is a unit speed parameterization of \(\Gamma\) and \(\ft,\fn,\fb\colon \T\rightarrow\R^3\) is an orthonormal frame along \(\Gamma\) so that \(\ft = \gamma'\) and \(\fb = \ft\times \fn\). We recall that for \(R>0\) we define a tubular neighborhood of \(\Gamma\) of radius \(32R\) in the physical frame
\[
\Gamma_R = \left\{y\in \R^3:\dist(y,\Gamma)<32R\right\},
\]
and a corresponding set in the straightened frame
\[
\Sigma_R = \left\{(x,z)\in \R^2\times \T:|x|<32R\right\}.
\]
By choosing \(0<R_0\ll1\) sufficiently small we may define the map \(\Phi\colon \Sigma_{R_0}\rightarrow \Gamma_{R_0}\) by
\[
\Phi(x,z) = \gamma(z) + x_1\fn(z) + x_2\fb(z),
\]
so that \(\Phi(\Sigma_R) = \Gamma_R\) for all \(0<R\leq R_0\).

The Jacobian of \(\Phi\) is
\eq{Jacobian}{
\fJ = \nabla\Phi = \begin{bmatrix}\fn & \fb & \fD \ft + \fE \fn + \fF \fb\end{bmatrix},
}
where we define the Jacobian determinant,
\[
\fD = \det \nabla\Phi = 1 + x_1\fn'\cdot\ft + x_2\fb'\cdot\ft,
\]
and the remaining coefficients
\[
\fE = x_2 \fb'\cdot \fn,\qquad \fF = x_1\fn'\cdot\fb.
\]

Taking \(\Gamma_{R_0}\) to be endowed with the Euclidean metric \(e\), the corresponding pullback metric on \(\Sigma_{R_0}\) is then determined by the matrix,
\[
g = \fJ^T\fJ = \begin{bmatrix}1&0&\fE\\0&1&\fF\\\fE&\fF&\fD^2 + \fE^2 + \fF^2\end{bmatrix}
\]
and hence \(\Phi\colon (\Sigma_{R_0},g)\rightarrow(\Gamma_{R_0},e)\) is a smooth isometry. 
In particular, if we define the function \(\bd\colon\R^3\rightarrow [0,\infty)\) by
\[
\bd(y) = \dist(y,\Gamma),
\]
then for all \((x,z)\in \Sigma_{R_0}\) we have
\[
(\bd\circ\Phi)(x,z) = |x|.
\]

We define the pushforward map \(P_\Phi\colon T\Sigma_{R_0}\rightarrow T\Gamma_{R_0}\) mapping velocity fields defined in the straightened frame to velocity fields defined in the physical frame by
\[
(P_\Phi v)\circ\Phi = \fJ v.
\]
We also define a normalized pushforward map \(Q_\Phi \colon T\Sigma_{R_0}\rightarrow T\Gamma_{R_0}\), designed to preserve the divergence-free condition, that we use to map vorticities in the straightened frame to vorticities in the physical frame by
\[
(Q_\Phi \eta)\circ\Phi = \fD^{-1}\fJ\eta.
\]

Using these definitions we have the following identities:
\begin{lem}\label{lem:Geometry}
Let \(v,\eta\colon \Sigma_{R_0}\rightarrow \R^3\) be smooth vector fields defined in the straightened frame. Taking \(x_3 = z\) we have the identities:~

\begin{itemize}

\item[(i)] \ul{Divergence.} The divergence operator satisfies
\eq{divg}{
\nabla \cdot (Q_\Phi \eta) = \left(\fD^{-1}\nabla\cdot \eta\right)\circ\Phi^{-1}.
}

\item[(ii)] \ul{Curl.} The curl operator satisfies
\eq{Curlg}{
\nabla\times Q_\Phi \eta = Q_\Phi \Curl_\Phi \eta,
}
where the twisted curl operator is given by
\[
\Curl_\Phi \eta = \nabla\times \eta + E^j\partial_j \eta + F \eta,
\]
the matrices $E^j$ by
\begin{gather*}
E^1 = \begin{bmatrix}0&0&0\\-\frac\fE\fD&-\frac\fF\fD&1 - \fD - \frac{\fE^2 + \fF^2}\fD\\0&\frac1\fD - 1&\frac\fF\fD\end{bmatrix},\qquad
E^2 = \begin{bmatrix}\frac\fE\fD&\frac\fF\fD&\fD - 1 + \frac{\fE^2 + \fF^2}\fD\\0&0&0\\1 - \frac1\fD&0&-\frac\fE\fD\end{bmatrix},\\ 
E^3 = \begin{bmatrix}0&1 - \frac1\fD&-\frac\fF\fD\\\frac1\fD - 1&0&\frac\fE\fD\\0&0&0\end{bmatrix},
\end{gather*}
and the matrix \(F\) is smooth and bounded.  

\item[(iii)] \ul{Bilinear operator.} The bilinear operator \(B[v,\eta] = \Div(v\otimes \eta - \eta\otimes v)\) satisfies
\eq{Bilg}{
B[P_\Phi v,Q_\Phi \eta] = Q_\Phi B[v,\eta].
}

\item[(iv)] \ul{Laplacian.} The Laplacian satisfies
\eq{Deltag}{\Delta Q_\Phi \eta= Q_\Phi \Delta_\Phi \eta,
}
where the twisted Laplacian is given by
\[
\Delta_\Phi = \Delta + A^{ij}\partial_i\partial_j + B^j\partial_j + C,
\]
the matrix $A = (A^{ij})_{ij}$ by
\[
A = \begin{bmatrix}\frac{\fE^2}{\fD^2}&\frac{\fE\fF}{\fD^2}&-\frac\fE{\fD^2}\\\frac{\fE\fF}{\fD^2}&\frac{\fF^2}{\fD^2}&-\frac{\fF}{\fD^2}\\-\frac\fE{\fD^2}&-\frac\fF{\fD^2}&\frac1{\fD^2} - 1\end{bmatrix},
\]
and the matrices \(B^j,C\) are smooth and bounded.
\end{itemize}
\end{lem}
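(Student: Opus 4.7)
The plan is to verify all four identities by direct coordinate computation, using the chain rule $\partial_{y^j} = (\fJ^{-1})^k_j\,\partial_{x^k}$ together with three structural facts: Jacobi's formula $(\fJ^{-1})^k_i\partial_{x^k}\fJ^i_j = \fD^{-1}\partial_{x^j}\fD$; the symmetry of mixed partials $\partial_{x^p}\fJ^i_n = \partial_{x^n}\fJ^i_p$ (since $\fJ^i_j = \partial_{x^j}\Phi^i$); and the explicit form of $\fJ$ in \eqref{Jacobian}, from which $\det g = \fD^2$ and $g^{-1} = I + A$ follow by a one-line matrix inversion with $A$ as stated in (iv).

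For (i), I would expand $\partial_{y^i}(Q_\Phi\eta)^i = (\fJ^{-1})^k_i\partial_{x^k}(\fD^{-1}\fJ^i_j\eta^j)$ by the product rule; Jacobi's formula collapses the term where $\partial_{x^k}$ lands on $\fJ^i_j$ against the one where it lands on $\fD^{-1}$, leaving only $\fD^{-1}\partial_{x^j}\eta^j$. This is the classical Piola identity. For (ii), applying the same chain rule to $\varepsilon^{ijk}\partial_{y^j}(\fD^{-1}\fJ^k_l\eta^l)$ and inserting the explicit entries of $\fJ^{-1}$ read from \eqref{Jacobian}, the leading term produces $(\grad\times\eta)^i$, and the remaining pieces are first- and zeroth-order in $\eta$; one then regroups the first-order terms by matching indices against $\partial_{x^p}\eta^l$ to read off the stated matrices $E^j$, while the zeroth-order remainders (where every derivative has landed on a factor of $\fD^{-1}\fJ$) are collected into a single smooth bounded matrix $F$.

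For (iii), setting $u = P_\Phi v$ and $\omega = Q_\Phi\eta$, I expand
\[
B[u,\omega]^i = (\Div u)\omega^i - (\Div\omega)u^i + (u\cdot\nabla_y)\omega^i - (\omega\cdot\nabla_y)u^i.
\]
The identity $(u\cdot\nabla_y)f = (v\cdot\nabla_x)(f\circ\Phi)$ for scalars collapses the advective terms at once, and the divergence terms are handled by (i) combined with the analogous pushforward formula $\Div(P_\Phi v) = \fD^{-1}\Div(\fD v)\circ\Phi^{-1}$. After these substitutions the only nontrivial step is the cancellation of a residual cross term of the form $v^p\eta^l\partial_{x^p}\fJ^i_l - v^n\eta^p\partial_{x^p}\fJ^i_n$, which vanishes by the symmetry of mixed partials after a relabeling; the surviving terms assemble to $\fD^{-1}\fJ^i_kB[v,\eta]^k$.

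For (iv), I use that the Euclidean Laplacian pulled back to $(\Sigma_{R_0},g)$ is the Laplace--Beltrami operator $\Delta_g = g^{jl}\partial_j\partial_l + \fD^{-1}\partial_j(\fD g^{jl})\partial_l$, applied componentwise to the scalars $(Q_\Phi\eta)^i\circ\Phi = \fD^{-1}\fJ^i_k\eta^k$. Applying the product rule separates $\Delta_g(\fD^{-1}\fJ^i_k\eta^k)$ into a principal part $\fD^{-1}\fJ^i_k\cdot g^{jl}\partial_j\partial_l\eta^k$ and lower-order contributions; since $g^{-1} = I + A$, the principal part reads $\fD^{-1}\fJ^i_k(\Delta + A^{jl}\partial_j\partial_l)\eta^k$, and after isolating $(\Delta_\Phi\eta)^k$ by multiplying by $\fD\fJ^{-1}$ from the left, all remaining first- and zeroth-order terms have smooth bounded coefficients (using that $\Phi\in C^\infty$ and $\fD>0$ on $\Sigma_{R_0}$), giving the stated form of $\Delta_\Phi$. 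The main accounting burden is in (ii), where the many terms of the chain-rule expansion must be carefully organized into the explicit $E^j$; the other three parts reduce cleanly to one of the three structural facts above together with routine product-rule manipulation.
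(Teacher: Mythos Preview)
Your proposal is correct and uses the same structural ingredients as the paper: the Piola/Jacobi identity for (i), symmetry of second partials for (iii), and $g^{-1}=I+A$ for the principal part in (iv). The paper packages everything through a single covariant-derivative formalism: it sets $\Gamma_j=\fJ^{-1}\partial_j\fJ$, $\bD_j=\partial_j+\Gamma_j$, and the master identity $\nabla P_\Phi\varphi=P_\Phi((\bD\varphi)\fJ^{-1})$, then obtains (i) by tracing, (iii) from the Christoffel symmetry $\Gamma^i_{kj}=\Gamma^i_{jk}$ (which is your mixed-partials symmetry in disguise), and (iv) from the connection-Laplacian formula $\Delta P_\Phi\varphi=P_\Phi(g^{ij}(\bD_i\bD_j-\Gamma^k_{ij}\bD_k)\varphi)$ rather than your scalar Laplace--Beltrami applied componentwise. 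For (ii) the paper, instead of a raw $\varepsilon^{ijk}$ expansion, introduces the column-vector matrix $M^i_j=\fJ^{-1}[(\fJ^{-1})^i\times\fJ_j]$ and computes it in one block, which makes the extraction of the $E^j$ mechanical. Your route is more elementary and avoids connection language; the paper's is more uniform across the four parts and keeps the bookkeeping in (ii) better organized.
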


\begin{rem} As a consequence of the above formulas for coordinate changes, and imagining for a second that $\Phi$ is defined globally, we can write the Navier-Stokes equation for $u = P_\Phi v$ and $\omega = Q_\Phi \eta$:
$$
\partial_t \eta + B[v,\eta] = \Delta_\Phi \eta, \quad \mbox{where} \quad \nabla \cdot \eta = 0 \quad \mbox{and} \quad v = \fD (- \Delta_\Phi)^{-1} \operatorname{curl}_\Phi \eta.
$$
However, $\Phi$ is only defined locally, so the above only holds on its domain of definition. This will complicate the fixed point scheme that we are about to write.
\end{rem}

\begin{proof}
We define the Christoffel symbols
\[
\Gamma_j = \fJ^{-1}\partial_j\fJ,
\]
and the associated covariant derivative
\[
\bD_j = \partial_j + \Gamma_j,
\]
so that for \(\bD \varphi = \begin{bmatrix} \bD_1 \varphi&\bD_2\varphi&\bD_3\varphi\end{bmatrix}\) we have
\eq{CovariantGradient}{
\nabla P_\Phi \varphi = P_\Phi \left((\bD \varphi) J^{-1}\right).
}
One may verify by direct calculation that 
\eq{JmT}{
(J^{-1})^T = \begin{bmatrix}\fn - \frac\fE\fD\ft & \fb - \frac\fF\fD\ft & \frac1\fD\ft\end{bmatrix},
}
from which we obtain explicit expressions for the Christoffel symbols
\begin{gather*}
\Gamma_1 = \begin{bmatrix}0&0&-\frac\fE\fD\partial_1\fD\\0&0&\partial_1\fF - \frac\fF\fD\partial_1\fD\\0&0&\frac1\fD\partial_1\fD\end{bmatrix},\qquad
\Gamma_2 = \begin{bmatrix}0&0&\partial_2\fE - \frac\fE\fD\partial_2\fD\\0&0&-\frac\fF\fD\partial_2\fD\\0&0&\frac1\fD\partial_2\fD\end{bmatrix},\\
\Gamma_3 = \begin{bmatrix}-\frac\fE\fD\partial_1\fD&\partial_2\fE - \frac\fE\fD\partial_2\fD & - \fD\partial_1\fD - \frac{\fE^2}\fD\partial_1\fD + \fF\partial_2\fE - \frac{\fE\fF}\fD\partial_2\fD + \partial_3\fE - \frac\fE\fD\partial_3\fD\\\partial_1\fF - \frac\fF\fD\partial_1\fD & -\frac\fF\fD\partial_2\fD&-\fD\partial_2\fD + \fE\partial_1\fF - \frac{\fE\fF}\fD\partial_1\fD - \frac{\fF^2}\fD\partial_2\fD + \partial_3\fF - \frac\fF\fD\partial_3\fD\\\frac1\fD\partial_1\fD&\frac1\fD\partial_2\fD&\frac1\fD\partial_3\fD + \frac\fE\fD\partial_1\fD + \frac\fF\fD\partial_2\fD\end{bmatrix}.
\end{gather*}

\begin{itemize}
\item[(i)] \underline{Divergence.} Taking the trace of \eqref{CovariantGradient} yields the expression
\eq{divgg}{
\left(\nabla\cdot P_\Phi \varphi\right)\circ\Phi = \tr (\bD \varphi) = \fD^{-1}\nabla\cdot(\fD \varphi).
}
Setting \(\varphi = \fD^{-1}\eta\) then gives \eqref{divg}.

\item[(ii)] \underline{Curl.}
Using the expression \eqref{CovariantGradient} we may write
\[
(\nabla\times P_\Phi v)\circ\Phi = \begin{bmatrix}(J\bD v J^{-1})_2^3 - (J\bD v J^{-1})_3^2\\(J\bD v J^{-1})_3^1 - (J\bD v J^{-1})_1^3\\(J\bD v J^{-1})_1^2 - (J\bD v J^{-1})_2^1\end{bmatrix},
\]
where \((J\bD vJ^{-1})_j^i\) denotes the \((i,j)\)\textsuperscript{th} entry of the matrix \(J\bD vJ^{-1}\). Next we define (the matrix \(M = (M^i_j)_{ij}\) of column vectors) 
\[
M_j^i = J^{-1} [ (J^{-1})^i\times J_j],
\]
where we take \((J^{-1})^i\) to be the \(i\)\textsuperscript{th} row of \(J^{-1}\) and \(J_j\) to be the \(j\)\textsuperscript{th} column of \(J\). We then see that
\[
\nabla\times P_\Phi v = P_\Phi\left[\sum_{i,j} M_j^i(\bD v)^j_i\right],
\]
where \((\bD v)_i^j\) is the \((j,i)\)\textsuperscript{th} entry of the matrix \(\bD v\).
Using the expressions \eqref{Jacobian}, \eqref{JmT} we then compute
\[
M\!\! =\! J^{-1}\!\begin{bmatrix}-\frac\fE\fD\fb & \ft + \frac\fE\fD\fn & \fF\ft + \frac{\fE\fF}\fD\fn - (\fD + \frac{\fE^2}\fD)\fb\\-\ft - \frac\fF\fD\fb&\frac\fF\fD\fn&-\fE\ft + (\fD + \frac{\fF^2}\fD)\fn - \frac{\fE\fF}\fD\fb\\\frac1\fD\fb&-\frac1\fD\fn& - \frac\fF\fD\fn + \frac\fE\fD\fb\end{bmatrix}\!\! =\!\! \begin{bmatrix}\begin{bmatrix}0\\-\frac\fE\fD\\0\end{bmatrix}&\begin{bmatrix}0\\-\frac\fF\fD\\\frac1\fD\end{bmatrix}&\begin{bmatrix}0\\-\fD - \frac{\fE^2 + \fF^2}\fD\\\frac\fF\fD\end{bmatrix}\\\begin{bmatrix}\frac\fE\fD\\0\\-\frac1\fD\end{bmatrix}&\begin{bmatrix}\frac\fF\fD\\0\\0\end{bmatrix}&\begin{bmatrix}\fD + \frac{\fE^2 + \fF^2}\fD\\0\\-\frac\fE\fD\end{bmatrix}\\\begin{bmatrix}0\\\frac1\fD\\0\end{bmatrix}&\begin{bmatrix}-\frac1\fD\\0\\0\end{bmatrix}&\begin{bmatrix}-\frac\fF\fD\\\frac\fE\fD\\0\end{bmatrix}\end{bmatrix},
\]
from which we obtain the expression \eqref{Curlg}.

\item[(iii)] \underline{Bilinear operator.}
Denoting the \((i,j)\)\textsuperscript{th} entry of the matrix \(\Gamma_k\) by \(\Gamma_{kj}^i\) we see that the matrices satisfy the symmetry \(\Gamma_{kj}^i = \Gamma_{jk}^i\). As a consequence, for vector fields \(v,\varphi\) we may use the expression \eqref{CovariantGradient} to obtain,
\[
P_\Phi v\cdot\nabla P_\Phi \varphi - P_\Phi \varphi\cdot\nabla P_\Phi v = P_\Phi\left(v\cdot\nabla \varphi - \varphi\cdot\nabla v\right).
\]
Taking \(\varphi = \fD^{-1}\eta\) we then use the expression \eqref{divgg} to compute,
\begin{align*}
B[P_\Phi v,Q_\Phi\eta] &= (\nabla \cdot P_\Phi v)P_\Phi\varphi - (\nabla\cdot P_\Phi \varphi)P_\Phi v + P_\Phi v\cdot\nabla P_\Phi \varphi - P_\Phi \varphi\cdot\nabla P_\Phi v\\
&= P_\Phi\left(\fD^{-1}\nabla\cdot(\fD v)\, \varphi - \fD^{-1}\nabla\cdot(\fD \varphi)\, v + v\cdot\nabla\varphi - \varphi\cdot \nabla v\right)\\
&= Q_\Phi B[v,\eta].
\end{align*}

\item[(iv)] \underline{Laplacian.} A computation yields the covariant Laplacian (see for example~\cite[Lemma~4.8]{MR1468735}), 
\eq{CovariantLaplacian}{
\Delta P_\Phi \varphi =  P_\Phi\left(g^{ij}\left(\bD_i\bD_j\varphi - \Gamma^k_{ij} \bD_k\varphi\right)\right),
}
where \(g^{ij}\) is the \((i,j)\)\textsuperscript{th} entry of the matrix,
\[
g^{-1} = \begin{bmatrix}1 + \frac{\fE^2}{\fD^2}&\frac{\fE\fF}{\fD^2}&-\frac\fE{\fD^2}\\\frac{\fE\fF}{\fD^2}&1 + \frac{\fF^2}{\fD^2}&-\frac{\fF}{\fD^2}\\-\frac\fE{\fD^2}&-\frac\fF{\fD^2}&\frac1{\fD^2}\end{bmatrix}.
\]
Taking \(\varphi = \fD^{-1}\eta\) in the expression \eqref{CovariantLaplacian} we then obtain the expression \eqref{Deltag}.
\end{itemize}
\end{proof}

The next lemma is a consequence of the above calculations. It is used in multiple places below and, moreover, emphasizes why one expects the curvature of the filament to be subcritical.
\begin{lemma} \label{lem:Coefs}
Provided \(0<R_0\ll1\) is sufficiently small and \((x,z)\in \Gamma_{R_0}\), there holds the following for all $j\geq 0$ and multi-indices \(\alpha\in \N^3\),
\begin{align}
&\abs\fD \gtrsim 1,\\
&\abs{\partial_z^j(\fD - 1)}  \lesssim_j \abs{x} \\ 
&\abs{\nabla^\alpha_{x,z}\Phi} + \abs{\nabla_{x,z}^\alpha\fJ} + \abs{\nabla_{x,z}^\alpha\fD}  \lesssim_{\alpha} 1,
\end{align}
the coefficients \(E^i,A\) satisfy the estimates
\begin{align}
\abs{\partial_z^jE^1} + \abs{\partial_z^jE^2} + \abs{\partial_z^jE^3} + \abs{\partial_z^j A} \lesssim_j \abs{x},  
\end{align}
and similarly the coefficients $A,B^i,C,E^i,F$ satisfy the following estimates (where $X$ is any one of $A,B^i,C,E^i,F$)
\begin{align}
\abs{\grad_{x,z}^\alpha X} \lesssim_\alpha 1.  
\end{align}
\end{lemma}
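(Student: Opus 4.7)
The plan is to verify each bound by direct inspection of the explicit formulas derived in the proof of Lemma~\ref{lem:Geometry}, exploiting two facts: (a) since $\Gamma$ is smooth and closed, the maps $\gamma,\ft,\fn,\fb \colon \T \to \R^3$ are smooth on a compact domain, so $|\partial_z^j\gamma| + |\partial_z^j\ft| + |\partial_z^j\fn| + |\partial_z^j\fb| \lesssim_j 1$ for every $j \geq 0$; and (b) the quantities $\fD - 1$, $\fE$, $\fF$ are all explicitly linear in $x$ with smooth, bounded $z$-coefficients, namely
\[
\fD - 1 = x_1\fn'\!\cdot\ft + x_2\fb'\!\cdot\ft, \qquad \fE = x_2\fb'\!\cdot\fn, \qquad \fF = x_1\fn'\!\cdot\fb.
\]

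First, I would use (b) to obtain $|\fD - 1| \lesssim |x|$ and, since $|x| \leq 32R_0$ on $\Sigma_{R_0}$, choose $R_0$ small enough to guarantee $|\fD| \geq \tfrac12$. The bound $|\partial_z^j(\fD - 1)| \lesssim_j |x|$ is immediate because $x$ is independent of $z$, so each $z$-derivative falls on the smooth frame vectors. The estimates $|\nabla_{x,z}^\alpha\Phi| + |\nabla_{x,z}^\alpha\fJ| + |\nabla_{x,z}^\alpha\fD| \lesssim_\alpha 1$ on $\Sigma_{R_0}$ then follow by differentiating the explicit formulas~\eqref{Jacobian}, using (a) and the fact that $|x|$ is bounded on $\Sigma_{R_0}$.

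Next, I would treat the matrices $E^i$ and $A$. Inspecting the explicit entries from Lemma~\ref{lem:Geometry}(ii) and (iv), every entry of $E^i$ is a finite sum of terms of the form $\fE/\fD$, $\fF/\fD$, $(\fD - 1)$, $(1 - 1/\fD) = -(\fD-1)/\fD$, or $(\fE^2+\fF^2)/\fD$. Combining $|\fE|,|\fF|,|\fD-1| \lesssim |x|$ with the lower bound $\fD \gtrsim 1$, and absorbing quadratic terms via $|x|^2 \leq R_0|x|$, each such term is $\lesssim |x|$. An identical argument applies to $A$, whose entries involve $\fE^2/\fD^2$, $\fE\fF/\fD^2$, $\fE/\fD^2$, $\fF/\fD^2$, and $1/\fD^2 - 1 = -(\fD-1)(\fD+1)/\fD^2$. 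Applying $\partial_z^j$ preserves these bounds because $x$ does not depend on $z$ and the $z$-derivatives act only on smooth, bounded quantities.

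Finally, for the uniform bound $|\nabla_{x,z}^\alpha X| \lesssim_\alpha 1$ with $X$ any of $A,B^j,C,E^i,F$, the main observation is that since $\fD$ is bounded below by a positive constant, $1/\fD$ and all its derivatives are smooth. Each coefficient is built from polynomials in $\fD,\fE,\fF$, their $x,z$-derivatives, and $1/\fD$, hence extends to a smooth function on the compact region $\overline{\Sigma_{R_0}}$, giving uniform bounds on all mixed derivatives. The one slightly tedious step — and the main obstacle, such as it is — is tracking the explicit but long formulas for $B^j$, $C$, and $F$ implied by the Christoffel-symbol calculation in the proof of Lemma~\ref{lem:Geometry}; however, none of these pose any analytic difficulty since every ingredient has been shown to be smooth and bounded on $\Sigma_{R_0}$.
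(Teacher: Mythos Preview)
Your proposal is correct and is precisely the argument the paper has in mind: the paper states this lemma without proof, noting only that it ``is a consequence of the above calculations,'' i.e.\ of the explicit formulas for $\fD,\fE,\fF,E^i,A$ and the Christoffel symbols in Lemma~\ref{lem:Geometry}. You have simply written out those straightforward verifications, and each step is sound.
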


As a simple application of this lemma we have the following estimate:
\begin{lem}\label{lem:PhiDiff}
Provided \(0<R_0\ll1\) is sufficiently small, whenever \((x,z),(x',z')\in \Sigma_{R_0}\) we have the estimate
\eq{PhiDiff}{
|\Phi(x,z) - \Phi(x',z')|\approx |x - x'| + |z - z'|.
}
\end{lem}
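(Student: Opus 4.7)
The upper bound $|\Phi(x,z) - \Phi(x',z')| \lesssim |x-x'| + |z-z'|$ is immediate from the fundamental theorem of calculus along a lifted straight-line path from $(x',z')$ to $(x,z)$ in the universal cover $\R^2 \times \R$, combined with the $C^1$-bound $|\grad_{x,z} \Phi| \lesssim 1$ from Lemma~\ref{lem:Coefs}. Here and below $|z-z'|$ should be read as the geodesic distance on $\T$, which after passing to the lift agrees with the usual absolute value.

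For the lower bound my strategy is to split into a near case $|z-z'| \leq \delta$ and a far case $|z-z'| \geq \delta$, where $\delta>0$ is a small constant depending only on $\Gamma$ to be chosen at the end. In the near case I would Taylor expand around $(x',z')$: a direct computation using $\Phi(x,z) = \gamma(z) + x_1\fn(z) + x_2 \fb(z)$ gives
\[
\Phi(x,z) - \Phi(x',z') = (z-z')\,\partial_z\Phi(x',z') + (x_1-x_1')\fn(z') + (x_2-x_2')\fb(z') + \mathcal{E},
\]
with $\partial_z \Phi(x',z') = \ft(z') + x_1'\fn'(z') + x_2'\fb'(z')$ and, by the second-derivative bounds in Lemma~\ref{lem:Coefs}, a remainder $|\mathcal{E}| \lesssim |z-z'|^2 + |x-x'||z-z'|$. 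The linear leading term is precisely $\fJ(x',z')$ applied to $(x-x',z-z')$, and since $\det \fJ = \fD \gtrsim 1$ this is a uniform linear isomorphism. Concretely, projecting onto the orthonormal frame $\{\ft(z'), \fn(z'), \fb(z')\}$ and using $\partial_z\Phi(x',z') \cdot \ft(z') = \fD(x',z') \gtrsim 1$ together with $|\partial_z \Phi(x',z') - \ft(z')| \lesssim |x'| \leq 32 R_0$, one obtains a bound of the form
\[
\bigl|(\Phi(x,z) - \Phi(x',z'))\cdot \ft(z')\bigr| \geq c|z-z'| - C\bigl(|z-z'|^2 + R_0|x-x'|\bigr),
\]
and analogously in the $\fn(z'), \fb(z')$ directions with $|x_1-x_1'|, |x_2-x_2'|$ in place of $|z-z'|$ on the right. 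Squaring, summing and applying $2ab \leq \tfrac12 a^2 + 2b^2$ to absorb cross-terms yields $|\Phi(x,z) - \Phi(x',z')|^2 \gtrsim |x-x'|^2 + |z-z'|^2$ provided $\delta$ and $R_0$ are chosen small enough.

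In the far case $|z-z'| \geq \delta$, compactness of $\T$ together with the assumption that $\Gamma$ is smooth and non-self-intersecting ensures the existence of $c(\delta) > 0$ such that $|\gamma(z) - \gamma(z')| \geq c(\delta)$ whenever $|z-z'| \geq \delta$. The triangle inequality then gives
\[
|\Phi(x,z) - \Phi(x',z')| \geq |\gamma(z) - \gamma(z')| - |x| - |x'| \geq c(\delta) - 64 R_0 \gtrsim 1
\]
after a further shrinking of $R_0$; since both $|x-x'| \leq 64R_0$ and $|z-z'| \leq \pi$ are uniformly bounded on $\Sigma_{R_0}$, this gives the desired $\gtrsim |x-x'| + |z-z'|$.

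The main technical obstacle is the bookkeeping in the near case, namely choosing $\delta$ and $R_0$ uniformly in $(x,z),(x',z')$ so that both the quadratic Taylor errors and the $O(R_0 |z-z'|)$ errors (coming from the fact that $\partial_z\Phi$ is only close to, not equal to, $\ft$) can be absorbed. This is possible because all constants ultimately depend only on the fixed curve $\Gamma$.
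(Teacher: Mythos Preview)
Your proposal is correct and follows essentially the same approach as the paper: a near/far split on $|z-z'|$, Taylor expansion around $(x',z')$ in the near case (exploiting linearity of $\Phi$ in $x$ to control the remainder), and the non-self-intersection of $\Gamma$ plus compactness in the far case. The only cosmetic difference is that the paper handles the Jacobian lower bound by computing $|\fJ v|^2 = |v|^2(1+O(R_0))$ directly from the metric $g=\fJ^T\fJ$, whereas you project onto the frame $\{\ft,\fn,\fb\}$; both arguments are equivalent.
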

\bpf
Take \(\delta>0\) and suppose that \(|z - z'|<\delta\). Applying Taylor's Theorem and Lemma~\ref{lem:Coefs} we obtain
\[
\Phi(x,z) - \Phi(x',z') = \nabla\Phi(x',z')\begin{bmatrix}x - x'\\z - z'\end{bmatrix} + O\left(\delta\left(|x - x'| + |z - z'|\right)\right),
\]
where we note that we have used the fact that \(\Phi\) is linear in \(x\). From the definition of \(\Phi\) we may compute
\[
\left|\nabla\Phi(x',z')\begin{bmatrix}x - x'\\z - z'\end{bmatrix}\right|^2 = \left(|x - x'|^2 + |z - z'|^2\right)\left(1 + O(R_0)\right).
\]
In particular, provided \(0<\delta,R_0\ll1\) are sufficiently small, we obtain the estimate \eqref{PhiDiff}.

Conversely, if \(|z - z'|\geq \delta\) then from the definition of \(\Phi\) we see that
\[
\Phi(x,z) = \gamma(z) + O(R_0).
\]
As \(\Gamma\) is a simple smooth closed curve we have
\[
|\gamma(z) - \gamma(z')|\approx_\delta 1,
\]
whenever \(|z - z'|\geq\delta\). Thus, by choosing \(R_0\) sufficiently small (depending on \(\delta\)) we obtain the bound \eqref{PhiDiff}.
\epf

Finally, for each \(0<R\leq R_0\) we define the bump function \(\chi_R = \chi_R(x)\) to be a smooth, non-negative, radial bump function identically \(1\) on the set \(\{|x|\leq R\}\) and supported on the set \(\{|x|\leq 2R\}\). In particular, we will assume that \(\chi_R(x)>0\) for \(|x|<2R\).  We also define associated functions \(\widetilde\chi_R\) in the physical frame by \(\widetilde\chi_R\circ\Phi = \chi_R\) and observe that
\[
\widetilde\chi_RP_\Phi(\ \cdot\ ) = P_\Phi(\chi_R\ \cdot\ ),\qquad \widetilde\chi_RQ_\Phi(\ \cdot\ ) = Q_\Phi(\chi_R\ \cdot\ ),
\]
whenever \(0<R\leq R_0\).

\subsection{Decomposition of the solution} \label{sec:decomp}
As discussed after the statement of Theorem \ref{thm:curved} in Section \ref{sec:curvedIntro}, the nonlinear perturbation argument for Theorem \ref{thm:curved} is significantly more technical than for Theorems \ref{thm:StrtFil} and \ref{thm:StrtFil2}. In this section we describe the decomposition; in the following section we describe the norms and the fixed point scheme. 
We will construct our solution \(\omega\) in the physical frame by mimicking the straight filament and decomposing
\[
\omega = \widetilde\omega^c + \omega^b,
\]
where the core and background pieces satisfy the equations
\begin{align}
\partial_t \widetilde\omega^c + B[u,\widetilde\omega^c] &= \Delta\widetilde\omega^c,\label{corePhys}\\
\partial_t \omega^b + B[u,\omega^b] &= \Delta\omega^b\label{bgPhys},
\end{align}
the velocity is defined via the Biot-Savart law as
\eq{UsualBS}{
u = (-\Delta)^{-1}\nabla\times \omega,
}
and with initial data
\eq{InitData}{
\widetilde\omega^c(t = 0) = \alpha \delta_\Gamma,\qquad \omega^b(t = 0) = \mu^b.
}

To construct the core piece and background piece we will solve a system of 4 equations: 2 equations in the straightened frame to obtain vector fields \(\widetilde\eta^{c1}\), \(\eta^{b1}\) and two equations in the physical frame to obtain vector fields \(\omega^{c2}\), \(\omega^{b2}\). We will then construct the core and background pieces as
\eq{Reconstruction}{
\widetilde\omega^c = \underbrace{Q_\Phi(\chi_{2R}\,\widetilde\eta^{c1})}_{\widetilde\omega^{c1}} + \omega^{c2},\qquad \omega^b = \underbrace{Q_\Phi(\chi_{2R}\,\eta^{b1})}_{\omega^{b1}} + \omega^{b2},
}
for sufficiently small \(0<R\leq R_0\).

As in the case of the straight filament, we take
\[
\widetilde\eta^{c1} = \alpha \eta^g + \eta^{c1},
\]
where the Gaussian,
\[
\eta^g(t,x) = \begin{bmatrix}0\\\frac 1 tG(\frac x{\sqrt t})\end{bmatrix}.
\]
We may then write \(\widetilde \omega^c = \omega^g + \omega^c\), where the vector fields,
\[
\omega^g = Q_\Phi(\chi_{2R}\,\eta^g),\qquad \omega^c = \underbrace{Q_\Phi(\chi_{2R}\,\eta^{c1})}_{\omega^{c1}} + \omega^{c2}.
\]

We remark that, by an abuse of notation, in the curved case we will use \(\eta^g\) for the Gaussian vortex defined in the straightened frame. We will then take \(\omega^g\) to be corresponding vorticity in the physical frame. It is worth emphasizing a key difference from the case of the straight filament: with the current construction, \(\omega^g\) is no longer an exact solution of \eqref{NSE}.

The equations that define \(\eta^{c1},\omega^{c2},\eta^{b1},\omega^{b2}\) are given in \eqref{Core1Piece}, \eqref{Core2PiecePhys}, \eqref{bg1Piece}, \eqref{bg2PiecePhys} respectively.

\subsubsection{Some definitions}
Before proceeding, it will be useful to recall how we transform the various ingredients from the physical to straightened frame and back.

\begin{itemize}
\item Our basic ingredients in the straightened frame are:
\begin{itemize}
\item The Gaussian vortex \(\eta^g = \begin{bmatrix}0\\\frac1tG(\frac x{\sqrt t})\end{bmatrix}\)
\item The core-1 (c1) piece \(\eta^{c1}\) defined by the equation \eqref{Core1Piece}
\item The background-1 (b1) piece \(\eta^{b1}\) defined by the equation \eqref{bg1Piece}
\end{itemize}
and in the physical frame are:
\begin{itemize}
\item The core-2 (c2) piece \(\omega^{c2}\) defined by the equation \eqref{Core2PiecePhys}
\item The background-2 (b2) piece \(\omega^{b2}\) defined by the equation \eqref{bg2PiecePhys}
\end{itemize}

\item To transform vorticities from the straightened to physical frame we use the map \(\omega^* = Q_\Phi(\chi_{2R}\,\eta^*)\) and hence:
\[
\omega^g = Q_\Phi(\chi_{2R}\,\eta^g),\qquad
\omega^{c1} = Q_\Phi(\chi_{2R}\,\eta^{c1}),
\qquad \omega^{b1} = Q_\Phi(\chi_{2R}\,\eta^{b1}).
\]
\item To transform vorticities from the physical frame to the straightened frame we use the map \(\eta^* = Q_\Phi^{-1}(\widetilde\chi_{2R}\omega^*)\) and hence:
\[
\eta^{c2} = Q_\Phi^{-1}(\widetilde\chi_{2R}\,\omega^{c2}),\qquad \eta^{b2} = Q_\Phi^{-1}(\widetilde\chi_{2R}\,\omega^{b2}).
\]

\item To define velocities in the physical frame we use the Biot-Savart law:
\[
u^* = (-\Delta)^{-1}\nabla\times \omega^*.
\]
With this definition,
$$
u = \alpha u^g + \underbrace{u^{c1} + u^{c2}}_{u^c} + \underbrace{u^{b1} + u^{b2}}_{u^b}.
$$
\item To transform these velocities from the physical frame into the straightened frame we use the map:
\[
v^* = P_\Phi^{-1}(\widetilde\chi_R\,u^*)
\]
Therefore,
$$
v = \alpha v^g + \underbrace{v^{c1} + v^{c2}}_{v^c} + \underbrace{v^{b1} + v^{b2}}_{v^b}.
$$
\item We will use the explicit velocity field associated to the Gaussian vortex in the straightened frame and corresponding pushforward to the physical frame, which we denote by:
\[
\bar v^g = \begin{bmatrix}\frac 1 {\sqrt t}g(\frac x{\sqrt t})\\0\end{bmatrix},\qquad \bar u^g = P_\Phi(\chi_{4R}\, \bar v^g).
\]
\item We define the self-similar scalings of our variables in the straightened frame by
\begin{gather*}
H^*(\tau,\xi,z) = e^\tau \eta^*(e^\tau,e^{\frac\tau2}\xi,z),\qquad V^*(\tau,\xi,z) = e^{\frac\tau2}v^*(e^\tau,e^{\frac\tau2}\xi,z),\\
\bar V^g(\xi) = \begin{bmatrix}  g(\xi)\\0\end{bmatrix}.
\end{gather*}

\item Finally, the independent variable is denoted $(x,z)$ in the straightened frame; $(\xi,z)$ in the straightened frame after self-similar scaling; and $y$ in the physical frame. We try to keep track of the space where a function is defined by denoting $L^p_{x,z}$, $L^p_y$, etc... for the corresponding functional spaces.

\end{itemize}

\subsubsection{The approximate solution \(\eta^g\)}\label{sec:ugdef}
Due to the radial symmetry of \(\chi_R\) we have the equation:
\eq{ApproxSoln}{\boxed{
\partial_t \eta^g +  B[\alpha \chi_R\,\bar v^g,\eta^g] = \Delta \eta^g.
}
}
Applying the operator \(Q_\Phi( \chi_{2R}\,\cdot\,)\) to the equation \eqref{ApproxSoln} we obtain an equation in the physical frame (see Section \ref{sec:ChangeVars}),
\eq{ApproxSolnPhys}{
\partial_t \omega^g + B[\alpha \widetilde\chi_R\, \bar u^g,\omega^g] = \Delta \omega^g + \mc E^g,
}
where the error term is given by
\[
\mc E^g = Q_\Phi\left(\chi_{2R}\,\Delta \eta^g - \Delta_\Phi(\chi_{2R}\,\eta^g)\right).
\]
We note that crucially, $\bar u^g$ is not what is obtained by the Biot-Savart law, i.e.
$$\bar u^g \neq \chi_{4R}\, u^g \qquad \mbox{where} \qquad u^g = \grad \times (-\Delta)^{-1} \omega^g.$$
However, \(\bar u^g\) is ``close'' to $u^g$ in a reasonable sense, see Proposition~\ref{prop:ApproxBS}.

\subsubsection{The core piece \(\eta^{c1}\)}
We take the core piece \(\eta^{c1}\) (defined in the straightened frame) to satisfy the equation
\eq{Core1Piece}{\boxed{
\partial_t\eta^{c1} + B[v,\eta^{c1} + \eta^{c2}] + B[v - \alpha \chi_R\,\bar v^g,\alpha \eta^g] = \Delta \eta^{c1}
}
}
(recall that $v = P_\Phi^{-1} (\widetilde{\chi}_R\, u)$).
We note that this is an inhomogeneous equation (in the functions \((\eta^{c1},\omega^{c2},\eta^{b1},\omega^{b2})\)) with forcing term given by
\eq{c1InhomogeneousTerm}{
f^{c1}_I =\alpha^2 B[v^g - \chi_R\,\bar v^g,\eta^g].
}

Applying the operator \(Q_\Phi\chi_{2R}\) then yields an equation in the physical frame,
\eq{Core1PiecePhys}{
\partial_t\omega^{c1} + B[\widetilde\chi_R\, u,\omega^c] + B[\widetilde\chi_R\,(u - \alpha \bar u^g),\alpha \omega^g] = \Delta\omega^{c1} + \mc E^c,
}
for the error term
\[
\mc E^c = Q_\Phi\left(\chi_{2R}\,\Delta\eta^{c1} - \Delta_\Phi(\chi_{2R}\,\eta^{c1})\right).
\]

\subsubsection{The core piece \(\omega^{c2}\)}
The second core piece \(\omega^{c2}\) (defined in the physical frame) is then taken to satisfy the difference between the equation \eqref{corePhys} and the equations \eqref{ApproxSolnPhys}, \eqref{Core1PiecePhys},
\eq{Core2PiecePhys}{\boxed{
\partial_t\omega^{c2} + B[(1 - \widetilde\chi_R)u,\omega^c] +B[(1 - \widetilde\chi_R)(u - \alpha \bar u^g),\alpha \omega^g] = \Delta\omega^{c2} - \mc E^c- \alpha\mc E^g.
}
}
Once again we note that \eqref{Core2PiecePhys} is an inhomogeneous equation with forcing term
\eq{c2InhomogeneousTerm}{
f^{c2}_I = \alpha^2 B[(1 - \widetilde\chi_R)(u^g - \bar u^g),\omega^g] + \alpha \mc E^g.
}

\subsubsection{The background piece \(\eta^{b1}\)}
We take the background piece \(\eta^{b1}\) (defined in the straightened frame) to satisfy the equation
\eq{bg1Piece}{\boxed{
\partial_t\eta^{b1} + B[v,\eta^{b1} + \eta^{b2}] = \Delta\eta^{b1}.
}}
Applying the operator \(Q_\Phi(\chi_{2R}\,\cdot\,)\) we then obtain an equation in the physical frame,
\eq{bg1PiecePhys}{
\partial_t\omega^{b1} + B[\widetilde\chi_R\, u , \omega^b] = \Delta\omega^{b1} + \mc E^b,
}
where the error
\[
\mc E^b = Q_\Phi\left(\chi_{2R}\,\Delta\eta^{b1} - \Delta_\Phi(\chi_{2R}\,\eta^{b1})\right).
\]

\subsubsection{The background piece \(\omega^{b2}\)}
The second background piece \(\omega^{b2}\) (defined in the physical frame) is then taken to satisfy the difference between the equations \eqref{bgPhys} and \eqref{bg1PiecePhys},
\eq{bg2PiecePhys}{\boxed{
\partial_t\omega^{b2} + B[(1 - \widetilde\chi_R)u,\omega^b] = \Delta\omega^{b2} - \mc E^b.
}}

\subsection{Outline of the fixed point argument} 
As in the straight filament case in Section \ref{sec:Straight}, we will set up a fixed point in a suitable norm.
However, here the norms are more subtle. For estimating the core contributions $\omega^{c1}$ and $\omega^{c2}$ ``near'' and ``far'' from the filament: for $\beta \geq 0$,
\begin{align*}
\norm{\eta^*}_{\bN^\beta_c} & = \sup\limits_{0 < t \leq T} \sqrt{t}  \norm{ \Brak{\tfrac{x}{\sqrt{t}}}^{m}\brak{\sqrt{t} \grad}^\beta \eta^*}_{B_z L^2_x},  \\ 
\norm{\omega^*}_{\bF_c} &= \sup\limits_{0<t\leq T}\sqrt{t}\norm{(1-\widetilde\chi_{6R})\Brak{\tfrac{\bd}{\sqrt{t}} }^m \omega^*}_{L^3_y}.
\end{align*}
Note that the self-similar scaling yields the alternative expression for the \(\bN^\beta_c\) norm:
\[
\norm{\eta^*}_{\bN^\beta_c} = \sup\limits_{-\infty<\tau\leq \ln T}\norm{\brak{\onabla}^{\beta} H^*(\tau)}_{B_z L^2_\xi(m)}.
\]
For the background contributions, it turns out to be better to change the framework from that used with the straight filament.
This is due to difficulty \textbf{(b)} discussed after the statement of Theorem~\ref{thm:curved}.
We define the following norms
\begin{align*}
\norm{\eta^*}_{\bN_b^\beta} &= \sup\limits_{0 < t \leq T} t^{\frac{1}{4}}\left(R^{-\frac{3}{4}} \norm{ \brak{\sqrt{t} \nabla}^\beta \eta^* }_{L^{4/3}_{x,z}} + R^{\frac14}\norm{\brak{\sqrt{t} \nabla}^\beta \nabla \eta^*}_{L^{4/3}_{x,z}}\right),\\
\norm{\omega^*}_{\bF_b} &= \sup\limits_{0 < t \leq T} t^{\frac14}\left(R^{-\frac34}\norm{\omega^*}_{L^{4/3}_y} + R^{\frac14}\norm{\nabla \omega^*}_{L^{4/3}_y}\right).
\end{align*}
Notice both the $t$ scaling and the $R$ scaling.
In the absence of any filaments, one of the natural critical spaces to close a fixed point with initial data in the scale-invariant space $L_y^{\frac32}$ is $\sup_t t^{\frac14}\norm{\omega(t)}_{L^2_y}$.
By interpolation and Sobolev embedding, the following holds (with implicit constants independent of $R$ and $t$): 
\begin{align}
t^{\frac14}\norm{f}_{L^{2}_y} \leq t^{\frac14} \| f\|_{L^{4/3}_y}^{\frac14} \| f \|_{L^{12/5}_y}^{\frac34} \lesssim t^{\frac14} \| f\|_{L^{4/3}_y}^{\frac14} \| \nabla f\|_{L^{4/3}_y}^{\frac34} \lesssim \norm{f}_{\bF_b}; \label{ineq:FbL2}
\end{align}
thus the space $\bF_b$ is in fact a smaller critical space strictly contained in the natural $\sup_t t^{\frac14}\norm{f}_{L^2}$ space. 
Note that the $R$ dependence in $\bN_b^\beta$ and $\bF_b$ is set in the dimension-consistent manner so that embeddings such as \eqref{ineq:FbL2} are independent of $R$.

Roughly speaking, the parameter \(R\) can be thought of as controlling the error between the flat and curved Laplacians (see Lemma~\ref{lem:Coefs}). 
By choosing $R$ small, the curved Laplacian $\Delta_{\Phi}$ is well-approximated by the flat $\Delta$ (this is used to control errors in both the dissipation and in the Biot-Savart law). Indeed, there are a few terms where choosing $R$ small is necessary. However, choosing $R$ small makes other errors large. 
This detrimental $R$ dependence is eventually absorbed by choosing $T$ small, so favorable $T$ renders this issue harmless in many terms, but there are a few terms where it is imperative to track the dependence carefully (this also explains why it is convenient to define $\bF_b$ and $\bN_b^\beta$ with the dimension-consistent scaling in $R$).

We now choose \(\beta\in(0,\frac14)\) to be some fixed constant and for constants $M_c, M_{b1}, M_{b2}$ suitably chosen (depending on $R$, $\alpha$) we define the norm on the solution:
\begin{equation}\label{BigNorm}
\begin{aligned}
\norm{(\eta^{c1},\omega^{c2},\eta^{b1},\omega^{b2})}_{X_T} & =  \norm{\eta^{c1}}_{\bN^{\beta}_c} + M_{c}\norm{ Q_\Phi^{-1} [\widetilde{\chi}_{8R}\omega^{c2}] }_{\bN^0_c} + M_{c} \norm{\omega^{c2}}_{\bF_c} \\ 
& \quad + M_{b1}\norm{\eta^{b1}}_{\bN_b^{\beta}} + M_{b2} \norm{\omega^{b2}}_{\bF_b}. 
\end{aligned}
\end{equation}
We remark that we expect to be able to estimate the \(*1\)-pieces similarly to the straight filament case, and the \(b2\)-piece will be more straightforward thanks to the subcritical nature of the \(\bF_b,\bN_b\) spaces. Thus, the most challenging term to control will be the \(c2\)-piece. Here it is useful to note that the ``worst'' contributions to the \(c2\)-piece are from the \(\omega^g\) and \(\omega^{c1}\) pieces that are supported in the set \(\{\bd\leq 4R\}\). In particular, the \(X_T\)-norm is set up precisely so that we control \(\omega^{c2}\) in the anisotropic \(\bN_c^0\) norm in a neighborhood of these terms, \(\{\bd\leq 8R\}\), whereas we measure \(\omega^{c2}\) in the isotropic \(\bF_c\) norm in the region \(\{\bd\geq 6R\}\), separated from these terms by a distance of size \(O(R)\) (see Figure~\ref{fig:c2supports}). This separation of supports allows us to use the smoothing properties of the heat operator to switch from anisotropic to isotropic spaces, modulo bounded errors.


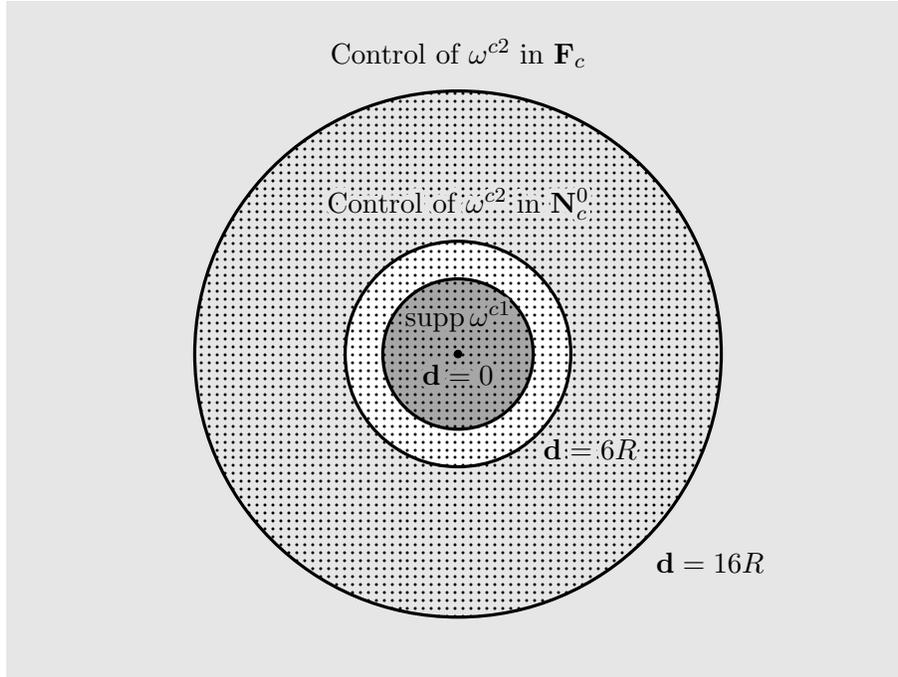
\begin{figure}[h]
\centering
\begin{tikzpicture}
\draw[fill=gray!20!white,draw=none] (-6,-4.3) -- (6,-4.3) -- (6,4.7) -- (-6,4.7)--(-6,-4.3);
\draw[fill=white, very thick] (0,0) circle (1.5);
\draw[fill=gray!70!white, very thick] (0,0) circle (1);
\draw[pattern=dots, very thick] (0,0) circle (3.5);

\node at (0,.5) {\contour{gray!70!white}{\(\supp \omega^{c1}\)}};
\node at (0,2) {\contour{gray!20!white}{Control of \(\omega^{c2}\) in \(\bN_c^0\)}};
\node at (0,4) {Control of \(\omega^{c2}\) in \(\bF_c\)};
\draw[fill=black] (0,0) circle (.05);
\node[below] at (0,0) {\contour{gray!70!white}{\(\bd = 0\)}};
\node[below right] at (1,-1) {\contour{gray!20!white}{$\bd=6R$}};
\node[below right] at (2.5,-2.5) {$\bd = 16R$};
\end{tikzpicture}
\caption{The regions of control of \(\omega^{c2}\). The dark gray area \(\{\bd\leq 4R\}\) is the support \(\omega^{c1}\). The dotted region \(\{\bd\leq 16R\}\) is the support of \(\widetilde\chi_{8R}\omega^{c2}\), controlled in the \(\bN_c^0\) norm. The light gray region \(\{\bd\geq 6R\}\) is the support of \((1 - \widetilde\chi_{6R})\omega^{c2}\), controlled in the \(\bF_c\) norm. Crucially, we will take advantage of the fact that there is a separation of size \(O(R)\) between the (dark gray) support of \(\omega^{c1}\) and the (light gray) support of \((1 - \widetilde\chi_{6R})\omega^{c2}\), which we control in the isotropic norm \(\bF_c\).}\label{fig:c2supports}
\end{figure}


For a judicious choice of the constants \(M_*\) and sufficiently small $T,R,\epsilon$, we will apply Banach's fixed point theorem in the ball 
\begin{align} 
B_{\eps,T,R,M_*} = \set{(\eta^{c1},\omega^{c2},\eta^{b1},\omega^{b2}): \norm{(\eta^{c1},\omega^{c2},\eta^{b1},\omega^{b2})}_{X_T} \leq \eps}
\end{align}
to the mapping $\mathcal{Q}:\omega \mapsto a$, where, by a slight abuse of notation, we take $\omega = (\eta^{c1},\omega^{c2},\eta^{b1},\omega^{b2})$, and $a = (a^{c1}, a^{c2},a^{b1},a^{b2})$ is given by
\begin{align}
A^{c1} & = -\int_{-\infty}^{\tau} S(\tau,s) F^{c1}(s) \, ds, \\ 
a^{c2} & = - \int_0^t e^{(t - s)\Delta}f^{c2}\, ds, \label{eq:wc2Dum} \\ 
a^{b1} & = - \int_0^t \bS(t,s) f^{b1}(s) ds, \\
a^{b2} & = e^{t\Delta}\mu^b - \int_0^t e^{(t-s)\Delta} f^{b2}\, ds, 
\end{align}
where \(A^{c_1}\), respectively \(F^{c1}\), is the self-similar scaling of \(a^{c_1}\), respectively \(f^{c1}\), and the perturbative terms:
\begin{align}
f^{c1} & = B[v - \alpha \bar v^{g},\eta^{c1}] + B[v, \eta^{c2}] + B[v - \alpha v^g - (-\Delta)^{-1} \nabla \times \eta^{c1}, \alpha \eta^g] + f^{c1}_I, \\ 
f^{c2} &= B[(1 - \widetilde\chi_R)u,\omega^c] + B[(1 - \widetilde\chi_R)(u - \alpha u^g), \alpha \omega^g] + \mc E^c + f^{c2}_I,\\
f^{b1} & = B[v - \alpha \bar v^g, \eta^{b1}] + B[v, \eta^{b2}],\\
f^{b2} &= B[(1-\widetilde{\chi}_R)u,\omega^b] + \cE^b,
\end{align}
with the inhomogeneous terms \(f^{c1}_I, f^{c2}_I\) defined as in \eqref{c1InhomogeneousTerm}, \eqref{c2InhomogeneousTerm} respectively.

The existence of a solution of \eqref{NSE} then follows as a consequence of the following Theorem:
\begin{thrm} \label{thm:FixedPt}
For $M_c$, $M_{b1}$, $M_{b2}$, R, $T$, $\eps$ suitably chosen,  $\mathcal{Q}:B_{\eps,T,R,M_\ast} \mapsto B_{\eps,T,R,M_\ast}$ and for $\rho,r \in B_{\eps,T,R,M_*}$
there holds the contraction property $\norm{\mathcal{Q}(\rho) - \mathcal{Q}(r)}_{X_T} \leq \frac{1}{2}\norm{\rho - r}_{X_T}$. 
It follows that there exists a unique fixed point $\mathcal{Q}(w) = w \in B_{\eps,T,R,M_\ast}$. More specifically, for all R sufficiently
small, there exists \(T\), \(\eps\), and \(M_*\) such that the fixed point holds.
\end{thrm}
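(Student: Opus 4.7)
The plan is to apply Banach's fixed-point theorem to $\mathcal{Q}$ on the ball $B_{\epsilon,T,R,M_\ast}$ equipped with the $X_T$-norm, bounding each of the four components $a^{c1},a^{c2},a^{b1},a^{b2}$ in its corresponding piece. Each forcing $f^{\ast}$ is a sum of bilinear terms in $w=(\eta^{c1},\omega^{c2},\eta^{b1},\omega^{b2})$ plus a geometric inhomogeneous term $f_I^{\ast}$, so the target estimate is of the schematic form
\[
\|\mathcal{Q}(w)\|_{X_T} \lesssim \|\omega_0^b\|_{W^{1,12/11}} + \delta(R,T)\,\|w\|_{X_T} + C(M_\ast)\,\|w\|_{X_T}^2,
\]
where $\delta(R,T)\to 0$ as $T\to 0$ for fixed $R$, and $C(M_\ast)$ is polynomial in the auxiliary constants. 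One then fixes the parameters in the order $M_\ast$, $R$, $T$, $\epsilon$, exactly as in the straight-filament proof: $M_\ast$ large enough to absorb all cross-couplings between different $X_T$-components, $R$ small to control the geometric errors $\mathcal{E}^g,\mathcal{E}^c,\mathcal{E}^b$, $T$ small so that $\delta(R,T)\leq \tfrac14$ (using the subcriticality encoded in~\eqref{CurveEsts}), and finally $\epsilon$ small enough to close the quadratic term.

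For the two ``self-similar'' components $A^{c1}$ and $a^{b1}$, I would use the smoothing estimate~\eqref{SmoothedBound1} of Theorem~\ref{prop:LinearEstimates} and the bounds on $\bS$ from Section~\ref{sec:StretchLin}, respectively, following the straight-filament template of Section~\ref{sec:Straight}. The bilinear terms $B[V,H]$ in $F^{c1}$ are estimated by H\"older after the Biot--Savart bound of Lemma~\ref{lem:BSx} (in straightened coordinates), and the exponential factor $e^{-\mu(\tau-s)}$ in $S(\tau,s)$ absorbs the time integration; the only genuinely new input is the inhomogeneous $f^{c1}_I=B[v^g-\chi_R\bar v^g,\eta^g]$, which measures the discrepancy between the true Biot--Savart velocity and the approximate transport velocity $\bar v^g$, and should be subcritically small in $R$ and~$T$. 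For $a^{b2}$ one uses purely subcritical heat smoothing $e^{(t-s)\Delta}\colon L^{4/3}\to W^{1,4/3}$ with gain $(t-s)^{-1/2}$; the error $\mathcal{E}^b=Q_\Phi(\chi_{2R}\Delta\eta^{b1}-\Delta_\Phi(\chi_{2R}\eta^{b1}))$ is dominated by Lemma~\ref{lem:Coefs}: the ``$A$-part'' gains $|x|\lesssim R$ on $\supp\chi_{2R}$, while the lower-order terms contribute subcritical powers of $T$.

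The principal obstacle is $a^{c2}$, whose $X_T$-component is hybrid: anisotropic $\bN_c^0$-control of $Q_\Phi^{-1}[\widetilde\chi_{8R}\omega^{c2}]$ near the filament, so that $\omega^{c2}$ can absorb contributions from the singular $\omega^g$ and $\omega^{c1}$, combined with isotropic $\bF_c$-control of $(1-\widetilde\chi_{6R})\omega^{c2}$ far from it. The bridge between the two regimes is the geometric separation $\dist(\supp(1-\widetilde\chi_{6R}),\supp\widetilde\chi_{2R})\approx R$ (see Figure~\ref{fig:c2supports}): for any source supported in $\{\bd\leq 4R\}$ and observer in $\{\bd\geq 6R\}$, the heat kernel $e^{(t-s)\Delta}$ contributes a Gaussian factor $\exp(-cR^2/(t-s))$ which, for $t\leq T\ll R^2$, beats any algebraic singularity coming from the anisotropic norms. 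This converts $\bN_c^0$-bounds on $\omega^{c1}$, $\omega^g$, and the near piece of $\omega^{c2}$ into $\bF_c$-bounds on $(1-\widetilde\chi_{6R})a^{c2}$, with a favorable power of $T/R^2$. For the near piece itself, one commutes $Q_\Phi^{-1}\widetilde\chi_{8R}$ through Duhamel at the cost of additional errors controlled by Lemma~\ref{lem:Coefs}, and the forcing $f^{c2}_I=B[(1-\widetilde\chi_R)(u^g-\bar u^g),\omega^g]+\mathcal{E}^g$ is small thanks to the Gaussian decay of $\omega^g$ away from the filament (which gives the required smallness for the Biot--Savart discrepancy $u^g-\bar u^g$ on $\supp(1-\widetilde\chi_R)$) and Lemma~\ref{lem:Coefs} applied to $\mathcal{E}^g$.

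Once all the individual component estimates are assembled, verifying the self-mapping and contraction properties reduces to bookkeeping of the schematic inequality above, together with its analogue for $\mathcal{Q}(w)-\mathcal{Q}(w')$, whose Lipschitz constants in the quadratic terms are controlled by $\|w\|_{X_T}+\|w'\|_{X_T}\leq 2\epsilon$. The resulting unique fixed point $w\in B_{\epsilon,T,R,M_\ast}$ then yields, via the reconstruction~\eqref{Reconstruction} and the definition of $\widetilde\omega^c,\omega^b$, the mild solution of~\eqref{NSE} claimed in Theorem~\ref{thm:curved}.
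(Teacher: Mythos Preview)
Your overall strategy matches the paper's: component-by-component estimates via Proposition~\ref{prop:AllAs}, with the $a^{c2}$ piece handled by the separation-of-supports heat-kernel argument you describe, and the geometric errors $\mathcal{E}^\ast$ controlled through Lemma~\ref{lem:Coefs}. The ingredients you list for each of the four components are the right ones.

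There is, however, a genuine issue in your parameter-selection order and in the schematic inequality. Your schematic
\[
\|\mathcal{Q}(w)\|_{X_T} \lesssim \|\omega_0^b\|_{W^{1,12/11}} + \delta(R,T)\,\|w\|_{X_T} + C(M_\ast)\,\|w\|_{X_T}^2
\]
omits the \emph{geometric inhomogeneous} contribution coming from $f^{c1}_I$ and $f^{c2}_I$: these involve only $\eta^g,\omega^g$ and produce a term of size $\sim R\ln R^{-1}$ (and $M_cR$) that is \emph{not} multiplied by $\|w\|_{X_T}$. For $\mathcal{Q}$ to map $B_{\eps,T,R,M_\ast}$ into itself, this forcing must itself be $\lesssim \eps$, which forces $R\ln R^{-1}\ll\eps$. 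Consequently your stated order---choose $R$, then $T$, then ``finally $\eps$ small''---cannot work as written: $\eps$ cannot be driven below the fixed threshold $R\ln R^{-1}$. The paper resolves this by choosing $\eps$ \emph{before} $R$ (after fixing $M_c$ and the ratio $M_{b2}/M_{b1}$), then taking $R$ small relative to $\eps$, and only afterward choosing $T$; the paper explicitly flags this as ``one of the reasons it is important to quantify $R$ dependence.''

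A related omission is the specific scaling $M_{b1}\sim R^{-3/4}M_c$ (with $M_{b2}=KM_{b1}$ for a universal $K$), which is not captured by ``$M_\ast$ large.'' This scaling is what makes the $R^{3/4}$ term in~\eqref{ineq:wc2-a} and the $R^{-3/4}M_{b2}^{-1}$ losses from $v^{b2}$ in~\eqref{ineq:vb2} close simultaneously; without it the cross-couplings between the $c2$ and $b$ components do not balance.
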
 

In the following section, we will only detail the estimates which give $\mathcal{Q}:B_{\eps,T,R,M_\ast} \mapsto B_{\eps,T,R,M_\ast}$; the extension to the contraction property is straightforward and is omitted for the sake of brevity.
\emph{All} implicit constants in the following will be \emph{independent of} $T$ (and any other time variables), $R$, $M_{c1}$, $M_{c2}$, $M_{b1}$, $M_{b2}$, and $\eps$ unless otherwise specified (but in general will depend on $\alpha$, $\beta$ and $m$). 

In order to simplify the exposition, it will be useful to assume initially that in addition to the assumption that \(0<R\leq R_0\) we have
\eq{Size}{
0<T\leq R^{12} \leq1 ,
}
and that the constants \(M_*\) are chosen so that
\eq{ConstantOrder}{
1\leq M_c\leq R^{\frac34}M_{b1}\leq R^{\frac34} M_{b2}.
}
In order to clarify the proof of the contraction, we will invoke the assumption \eqref{Size} in the statement of Proposition~\ref{prop:AllAs} below. However, in Section~\ref{sec:Curve2} we will largely avoid using assumption \eqref{Size} in order to elucidate the various bounds. The assumption \eqref{ConstantOrder} will be used throughout.

The proof that  $\mathcal{Q}:B_{\eps,T,R,M_\ast} \mapsto B_{\eps,T,R,M_\ast}$ is a consequence of the following a priori estimates for the terms \(a^*\):                                                  

\begin{prop}\label{prop:AllAs}~
Let \(\beta\in(0,\frac14)\) and \(m\geq 2\) be fixed. Provided \(0<R\leq R_0\) is sufficiently small and the constants \(T,M_*\) satisfy the assumptions \eqref{Size},~\eqref{ConstantOrder} the following a priori estimate holds for all $\omega \in B_{\eps,T,R,M_\ast}$:  
\begin{align}
\norm{a^{c1}}_{\bN_c^{\beta}} & \lesssim R\ln R^{-1} + \left(R\ln R^{-1} + \frac1{M_c}\right)\eps + \eps^2,\label{ineq:wc1bd}\\
M_c\norm{Q_\Phi^{-1} \left(\widetilde\chi_{8R}a^{c2}\right)}_{\bN_c^0} & \lesssim M_cR + \left(M_c R + R^{\frac34} + \frac{M_{b1}}{M_{b2}}\right) \epsilon + M_c\eps^2 \label{ineq:wc2-a},\\
M_c\norm{a^{c2}}_{\bF_c}&\lesssim M_c R + \left(M_cR + R^{\frac14}\right)\epsilon + M_c\epsilon^2 \label{ineq:wc2-b},\\
M_{b1}\norm{a^{b1}}_{\bN_b^\beta}&\lesssim \left(R\ln R^{-1} + \frac{M_{b1}}{M_{b2}}\right)\epsilon + \eps^2, \label{ineq:wb1}\\
M_{b2}\norm{a^{b2}}_{\bF_b}&\lesssim  M_{b2}\norm{e^{t \Delta}\mu^b}_{\bF_b} + \frac{M_{b2}}{M_{b1}}R\epsilon + \frac{M_{b2}}{M_{b1}}\eps^2. \label{ineq:wb2}
\end{align}

Further, if \(\omega,\omega'\in B_{\epsilon,T,R,M_*}\) have the same initial data, the differences satisfy the bounds 
\begin{align}
\|a^{c1}(\omega) - a^{c1}(\omega')\|_{\bN_c^{\beta}} &\lesssim \left(R\ln R^{-1} + \frac1{M_c} +  \eps \right) \|\omega - \omega'\|_{X_T}, \\
M_c\norm{Q_\Phi^{-1}\left(\widetilde\chi_{8R}\left(a^{c2}(\omega) - a^{c2}(\omega')\right)\right)}_{\bN_c^0} &\lesssim \left(M_c R + R^{\frac34} + \frac{M_{b1}}{M_{b2}} + M_c\epsilon\right)\|\omega - \omega'\|_{X_T},\\
M_c\norm{a^{c2}(\omega) - a^{c2}(\omega')}_{\bF_c} &\lesssim  \left(M_cR + R^{\frac14} + M_c\epsilon\right)\norm{\omega - \omega'}_{X_T}, \\
M_{b1}\norm{a^{b1}(\omega) - a^{b1}(\omega')}_{\bN_b^\beta}&\lesssim \left( R\ln R^{-1} + \frac{M_{b1}}{M_{b2}} +   \eps \right)\|\omega - \omega'\|_{X_T} , \\
M_{b2}\norm{a^{b2}(\omega) - a^{b2}(\omega')}_{\bF_b}&\lesssim \left(\frac{M_{b2}}{M_{b1}} R  +  \frac{M_{b2}}{M_{b1}}\eps\right)\|\omega - \omega'\|_{X_T}.
\end{align}
\end{prop}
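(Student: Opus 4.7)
The proof naturally splits into four groups, one for each unknown, and in each case I would (i) use the appropriate linear propagator estimate from Sections~\ref{sec:SSlin}--\ref{sec:StretchLin} to reduce the Duhamel integral to an $L^1_s$-in-dual-norm bound on the forcing, then (ii) decompose the forcing into its inhomogeneous and quadratic pieces, estimate each by H\"older plus the Biot--Savart bound of Lemma~\ref{lem:BSx}, and exploit the scaling assumption \eqref{Size} (i.e.\ $\sqrt{T}\leq R^6\ll R$) together with Lemma~\ref{lem:Coefs} to extract positive powers of $R$ from every curvature-generated error. The contraction/Lipschitz statements follow from the same argument applied to bilinear differences, so I would only detail the ``$\mc Q:B\to B$'' bounds.

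\textbf{The $a^{c1}$ piece.} Rewriting $B[\cdot,\cdot]=\oDiv(\cdot\otimes\cdot-\cdot\otimes\cdot)$ in self-similar variables exposes a divergence structure, so the smoothing estimate~\eqref{SmoothedBound1} of Theorem~\ref{prop:LinearEstimates} applies with $p=4/3$, and the $\langle\onabla\rangle^\beta$ in $\bN_c^\beta$ is absorbed by choosing $\beta<1/4$. For the inhomogeneous term $f^{c1}_I=B[v^g-\chi_R\bar v^g,\eta^g]$, the key point is that the Biot--Savart velocity $v^g$ reconstructed from $\omega^g=Q_\Phi(\chi_{2R}\eta^g)$ differs from the explicit Gaussian profile $\chi_R\bar v^g$ only because of curvature and localisation, and a careful analysis of the singular Biot--Savart kernel (cf.\ Lemma~\ref{lem:Nonlocal} and the approximate-Biot--Savart proposition referenced in Section~\ref{sec:ugdef}) produces the advertised $R\ln R^{-1}$ loss. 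The quadratic contributions $B[v,\eta^{c1}+\eta^{c2}]$ and $B[v-v^g-(-\Delta)^{-1}\nabla\times\eta^{c1},\eta^g]$ are handled by H\"older in $L^{4/3}_\xi$ after applying $\|V\|_{B_zL^4_\xi}\lesssim\|\Omega\|_{B_zL^2_\xi(m)}$ (Lemma~\ref{lem:BSx}); the $M_c^{-1}$ factor on the right of \eqref{ineq:wc1bd} is precisely the weight carried by $\eta^{c2}$ inside $X_T$.

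\textbf{The $a^{b1}$ and $a^{b2}$ pieces.} For $a^{b1}$ the exact same scheme applies, using the $\bS(t,s)$ bounds of Proposition~\ref{prop:MildBasic} in place of~\eqref{SmoothedBound1}; the coupling $B[v,\eta^{b2}]$ to $\eta^{b2}=Q_\Phi^{-1}(\widetilde\chi_{2R}\omega^{b2})$, which is only controlled via the isotropic $\bF_b$-norm weighted by $M_{b2}$, forces the cross-ratio $M_{b1}/M_{b2}$. For $a^{b2}$ one uses the ordinary heat semigroup in the physical frame: since $1-\widetilde\chi_R$ vanishes on a neighbourhood of $\Gamma$, the forcing $B[(1-\widetilde\chi_R)u,\omega^b]+\mc E^b$ is supported where the Gaussian tails of $\omega^g,\omega^{c1},\omega^{b1}$ (controlled in $\bN_{c,b}^\beta$ by the weight $\langle x/\sqrt t\rangle^m$) are exponentially small on the scale $R/\sqrt t\gg1$, while the curvature error $\mc E^b=Q_\Phi(\chi_{2R}\Delta-\Delta_\Phi\chi_{2R})\eta^{b1}$ gains a factor $O(|x|)=O(R)$ on $\supp\chi_{2R}$ by Lemma~\ref{lem:Coefs}. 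Combined with the standard bound $\|e^{(t-s)\Delta}\Div\|_{L^{4/3}\to L^{4/3}}\lesssim(t-s)^{-1/2}$ and the embedding \eqref{ineq:FbL2}, this yields \eqref{ineq:wb2}; the $M_{b2}/M_{b1}$ is the price of passing from the $\bN_b^\beta$-controlled $\eta^{b1}$ to $\bF_b$.

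\textbf{The main obstacle: the $a^{c2}$ piece.} The delicate estimates are~\eqref{ineq:wc2-a}--\eqref{ineq:wc2-b}, because $\omega^{c2}$ is measured by two different norms in two different regions (Figure~\ref{fig:c2supports}). The inhomogeneous term $f^{c2}_I=B[(1-\widetilde\chi_R)(u^g-\bar u^g),\omega^g]+\mc E^g$ drives the $M_cR$ contribution: on $\supp(1-\widetilde\chi_R)$ one has $\bd\geq R\gg\sqrt t$, so the Gaussian tails of $\omega^g$ supply any polynomial power of $R$ while the coefficient bounds of Lemma~\ref{lem:Coefs} yield a factor of $|x|\lesssim R$ in $\mc E^g$. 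For \eqref{ineq:wc2-a}, the forcing lives in $\{\bd\geq R\}$ whereas we test $a^{c2}$ against the anisotropic weight $\langle x/\sqrt t\rangle^m$ on $\supp\widetilde\chi_{8R}\subset\{\bd\leq 16R\}$: the heat kernel $e^{(t-s)\Delta}$ bridges a gap of size $\gtrsim R\gg\sqrt t$, which converts isotropic $\bF_c$- and physical-frame information into anisotropic $\bN_c^0$ information with a bounded constant, modulo the $M_{b1}/M_{b2}$ penalty for routing $\omega^{b2}$ through $\bF_b$. For \eqref{ineq:wc2-b}, the separation of supports is used in the reverse direction, converting anisotropic near-filament control of $\omega^{c1}$ and $\omega^g$ into isotropic $L^3_y$ control at distance $\geq 6R$; the $R^{1/4}$ comes from Sobolev-type interpolation between the $\bN_c^0$ and $\bN_c^\beta$ norms together with the fact that $\sqrt t\leq R^6$, which turns the natural $(R/\sqrt t)^{-m}$ decay into an arbitrarily small power of $R$. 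The book-keeping of these cross-norm transitions, and the verification that the choices $R\ll1$, $T\leq R^{12}$, and $1\leq M_c\leq R^{3/4}M_{b1}\leq R^{3/4}M_{b2}$ can indeed be made so that the right-hand sides of \eqref{ineq:wc1bd}--\eqref{ineq:wb2} all fit inside the ball $B_{\eps,T,R,M_*}$, is where the real work lies.
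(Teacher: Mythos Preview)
Your outline follows the same architecture as the paper's proof (Section~\ref{sec:Curve2}): split each $f^{*}$ into linear, nonlinear, and inhomogeneous pieces, apply the relevant propagator estimate, and feed in the preliminary velocity/vorticity bounds. Two points, however, are off and would cause trouble if you tried to fill in the details as written.

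\textbf{The role of $\beta>0$.} You describe the $\langle\onabla\rangle^{\beta}$ as something to be ``absorbed by choosing $\beta<1/4$.'' That is backwards: the fractional regularity is not a nuisance but the crucial \emph{input} that makes the curvature errors $\mc E^{c},\mc E^{g},\mc E^{b}$ integrable. Each $\mc E^{*}$ contains a genuine second-order piece $A^{ij}\partial_i\partial_j(\chi_{2R}\eta^{*1})$ with $|A|\lesssim R$ but no extra $t$-smallness (Lemma~\ref{lem:Coefs}); feeding this into $e^{(t-s)\Delta}$ gives a bare $(t-s)^{-1}$ factor, which is not time-integrable. The paper cures this by interpolating against the $\langle\sqrt t\,\nabla\rangle^{\beta}$ regularity of $\eta^{c1},\eta^{b1}$ to obtain $(t-s)^{-1+\beta/2}s^{-\beta/2}$; see the estimates \eqref{ErrorContribution-1} and \eqref{ineq:Ebheat}. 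Without this mechanism the $\bN_c^0$ bound on $a^{c2}$ and the $\bF_b$ bound on $a^{b2}$ both diverge logarithmically. Your account of $\mc E^{b}$ (``gains a factor $O(|x|)=O(R)$'') captures only the coefficient size, not the integrability issue.

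\textbf{Origin of the $R^{1/4}$ and $R^{3/4}$ factors.} These do not come from Sobolev interpolation between $\bN_c^0$ and $\bN_c^\beta$. They arise purely from the constant hierarchy \eqref{ConstantOrder}. In the $\bF_c$ estimate (Lemma~\ref{lem:ErrorHandling}) the linear forcing picks up $R^{-1/2}M_{b1}^{-1}$ from the background velocity bound \eqref{uDiffSummary} combined with the $O(R)$ separation of supports; multiplying by $M_c$ and using $M_c\leq R^{3/4}M_{b1}$ gives $M_c\cdot R^{-1/2}M_{b1}^{-1}\leq R^{1/4}$. Similarly, in the $\bN_c^0$ estimate (Lemma~\ref{lem:wc2N}) the term $R^{-3/4}M_{b2}^{-1}$ from \eqref{ineq:vb2} yields, after multiplying by $M_c$ and using \eqref{ConstantOrder}, the ratio $M_{b1}/M_{b2}$, while $M_{b1}^{-1}$ yields $R^{3/4}$. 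The weighted-norm machinery you invoke plays no role here.

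A minor point: for $a^{b1}$ the relevant propagator bounds are those of Proposition~\ref{prop:LinearSmoothing} (specifically \eqref{LSmooth2} and \eqref{LSmooth2a}, packaged as Lemma~\ref{lem:NLbs}), not Proposition~\ref{prop:MildBasic}, which handles only the initial-data term.
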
 

\begin{proof}[\textbf{Proof of Theorem \ref{thm:FixedPt}}]
Now let us explain how to set $R$, $T$, and $M_\ast$ so that the hypotheses of the Banach fixed point theorem holds. 
The most subtle point is choosing $M_\ast$ and $R$ consistently. 
First, we set $M_c$ such that the third term in \eqref{ineq:wc1bd} is consistent; note this requires choosing $M_c$ large relative only to $\alpha$. 
Next, set $M_{b1} = R^{-\frac34}M_c$ (as suggested by the size assumption \eqref{ConstantOrder}) and $M_{b2} =  K M_{b1}$ for $K$ sufficiently large to ensure that the fourth term in \eqref{ineq:wc2-a} and second term in \eqref{ineq:wb1} are consistent (it is important that $K$ is independent of $R$). 
Next, we set $\eps$ sufficiently small to ensure that the $O(\eps^2)$ terms are consistent. Next, we set $R$ sufficiently small to ensure that all remaining terms, except for the initial data term, are consistent. (Note this requires $R \ln R^{-1} \ll \eps$ and so $R$ is small relative to $\eps$, which is one of the reasons it is important to quantify $R$ dependence.)  
Finally, we set $T$ sufficiently small to ensure that both the hypothesis \eqref{Size} is satisfied and the initial data term is consistent (using Lemma~\ref{lem:Init}). This completes the proof that $\mathcal{Q}:B_{\eps,T,R,M_\ast} \mapsto B_{\eps,T,R,M_\ast}$. 

The same choices, possibly by adjusting \(M_c,K\) larger and $\eps,R,T$ smaller, also ensures that the mapping $\mathcal Q$ is a contraction. 
The theorem hence follows by the Banach fixed point. 
\end{proof} 

In Section~\ref{sec:MildlyIrritating} we prove that the solution obtained from Theorem~\ref{thm:FixedPt} is indeed a mild solution in the sense of Definition~\ref{milddef}. In Section \ref{sec:Unique} we prove the uniqueness result that follows from Theorem \ref{thm:FixedPt}; unlike in the straight filament case, this requires a short argument, as in order to apply the Banach fixed point, one must be able to decompose into $\omega^{c1},\omega^{c2},\omega^{b1},\omega^{b2}$ with the suitable a priori estimates. 
See therein for details. 

\section{Nonlinear estimates with curvature} \label{sec:Curve2}
In this section we prove Proposition~\ref{prop:AllAs}. Here we will only prove the a priori estimates as the estimates for the differences are similar.

Throughout this section we will assume that the hypothesis of Proposition~\ref{prop:AllAs}, i.e. \(\beta\in(0,\frac14)\), \(m \geq 2\) are fixed, \(0<R\ll1\) is sufficiently small and the constants \(T,R,M_*\) satisfy the inequalities \eqref{Size}, \eqref{ConstantOrder}. We will also allow constants throughout this section to implicitly depend on (admissible) values of \(\alpha,\beta,m\).

\begin{rem} \label{rem:Rsimp}
Note that obtaining the optimal powers of $R$ is not important in terms containing positive powers of $T$ and similarly, as long as the power is positive, the exact power of $T$ is not important either. Accordingly, we have not always endeavored to maintain the optimal scalings in $R$ or $T$.
\end{rem}

\subsection{Preliminary estimates}
Before starting the estimates of the contraction mapping $\mathcal{Q}$, we first outline some of the basic estimates on the initial data, the solution, and various estimates on the Biot-Savart law.

The first lemma provides the estimates coming from the initial data contribution:
\begin{lem}[Initial data bounds] \label{lem:Init}
If \(\mu^b\in W^{1,\frac{12}{11}}\), for all $R > 0$ fixed, we have the estimate
\begin{align}
\lim\limits_{T\searrow0}\norm{e^{t\Delta}\mu^b}_{\bF_b} & =0 \label{InitBound1}
\end{align}
\end{lem}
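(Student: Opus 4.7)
The plan is to establish both summands in the $\bF_b$ norm of $e^{t\Delta}\omega_0^b$ tend to zero as $T \searrow 0$, with $R > 0$ held fixed throughout (so the constants $R^{\pm}$ are harmless).

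First I would handle the $L^{4/3}_y$ piece by simple Sobolev embedding. In $\mathbb{R}^3$ we have $W^{1,12/11} \hookrightarrow L^{12/7}$ by Gagliardo--Nirenberg--Sobolev, and interpolating with the inclusion $W^{1,12/11} \hookrightarrow L^{12/11}$ yields $W^{1,12/11} \hookrightarrow L^{4/3}$ since $4/3 \in [12/11, 12/7]$. The heat semigroup is a contraction on $L^{4/3}$, so
\[
t^{1/4}\|e^{t\Delta}\omega_0^b\|_{L^{4/3}_y} \leq t^{1/4}\|\omega_0^b\|_{L^{4/3}_y} \lesssim T^{1/4}\|\omega_0^b\|_{W^{1,12/11}},
\]
which vanishes as $T \searrow 0$.

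The gradient term is the actual substance. A direct application of the $L^{12/11} \to L^{4/3}$ smoothing estimate for the heat semigroup gives only
\[
t^{1/4}\|\nabla e^{t\Delta}\omega_0^b\|_{L^{4/3}_y} = t^{1/4}\|e^{t\Delta}\nabla\omega_0^b\|_{L^{4/3}_y} \lesssim \|\nabla \omega_0^b\|_{L^{12/11}_y},
\]
since the critical Sobolev exponent gives exactly $t^{-1/4}$. This is only boundedness, not decay, so the limit must be extracted by a density argument. I would fix $\delta > 0$, pick $\omega_n \in \mc C^\infty_c(\mathbb{R}^3)$ with $\|\nabla(\omega_0^b - \omega_n)\|_{L^{12/11}} < \delta$, and split
\[
t^{1/4}\|\nabla e^{t\Delta}\omega_0^b\|_{L^{4/3}_y} \leq t^{1/4}\|e^{t\Delta}\nabla(\omega_0^b - \omega_n)\|_{L^{4/3}_y} + t^{1/4}\|\nabla e^{t\Delta}\omega_n\|_{L^{4/3}_y}.
\]
By the critical smoothing estimate the first term is $\lesssim \delta$ uniformly in $t \in (0,T]$. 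For the second term, since $\omega_n$ is smooth and compactly supported, $\nabla \omega_n \in L^{4/3}$, so contraction of the heat semigroup on $L^{4/3}$ bounds it by $t^{1/4}\|\nabla\omega_n\|_{L^{4/3}} \leq T^{1/4}\|\nabla\omega_n\|_{L^{4/3}}$, which tends to zero as $T \searrow 0$ (with $n$ fixed).

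Combining, for each $\delta > 0$,
\[
\limsup_{T \searrow 0} \sup_{0 < t \leq T} t^{1/4}\|\nabla e^{t\Delta}\omega_0^b\|_{L^{4/3}_y} \lesssim \delta,
\]
and since $\delta$ is arbitrary the $\limsup$ is zero. The main (only) obstacle is precisely the fact that smoothing alone does not give decay at the critical scaling — this is the classical phenomenon that $\omega^b$ lives in an ultra-critical space relative to the heat flow — and the remedy is the standard approximation of $W^{1,12/11}$ data by Schwartz functions. Adding the two contributions, multiplied respectively by $R^{-3/4}$ and $R^{1/4}$, and recalling $R$ is fixed, yields \eqref{InitBound1}.
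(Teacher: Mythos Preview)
Your proof is correct and follows essentially the same approach as the paper: the critical heat smoothing estimate $\|e^{t\Delta}f\|_{L^{4/3}_y}\lesssim t^{-1/4}\|f\|_{L^{12/11}_y}$ together with the density of Schwartz functions in $L^{12/11}$. Your treatment of the non-gradient term via the Sobolev embedding $W^{1,12/11}\hookrightarrow L^{4/3}$ is a minor simplification, giving the decay $T^{1/4}$ directly without the density step, whereas the paper applies the same critical-smoothing-plus-density argument uniformly to both summands.
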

\begin{proof}
From the mapping properties of the heat operator in \(\R^3\) we have
\[
\|e^{t\Delta}\mu^b\|_{L^{4/3}_y}\lesssim t^{-\frac14}\|\mu^b\|_{L^{12/11}_y}.
\]
The estimate then follows from the density of \(\Schwartz\) in \(L^{\frac{12}{11}}_y\).
\end{proof}

Next we establish several auxiliary bounds for the vorticity in the straightened coordinates:
\begin{lemma}[Vorticity bounds in straightened coordinates] \label{lem:bEmbeds-1}
We have the following estimates for \(0<t\leq T\):
\begin{itemize} 

\item[(a)] \underline{\(B_zL^{4/3}_x\) bounds}
\begin{align}
t^{\frac14}\|\eta^g\|_{B_zL^{4/3}_x}&\lesssim 1,\label{i:Basic43bd}\\
t^{\frac14}\|\eta^{c1}\|_{B_zL^{4/3}_x} &\lesssim \|\omega\|_{X_T},\label{4/3-bnd}\\
t^{\frac14}\|\eta^{c2}\|_{B_zL^{4/3}_x} + t^{\frac14}\|Q_\Phi^{-1}(\widetilde\chi_{8R}\,\omega^{c2})\|_{B_zL^{4/3}_x} &\lesssim M_c^{-1}\|\omega\|_{X_T},\label{c2-I3-bnd}\\
t^{\frac14}\| \eta^{b1} \|_{B_zL^{\frac43}_x} &\lesssim M_{b_1}^{-1}\norm\omega_{X_T}, \label{Embedding3}\\
t^{\frac14}\|\eta^{b2}\|_{B_zL^{4/3}_x} + t^{\frac14}\|Q_\Phi^{-1}(\widetilde\chi_{4R}\,\omega^{b2})\|_{B_zL^{\frac43}_x} &\lesssim M_{b_2}^{-1}\norm\omega_{X_T}.\label{Embedding1}
\end{align}
\item[(b)] \underline{\(B_z L^1_{x}\) estimates near the core.}
\begin{align}
\norm{\chi_{\frac R4}\, \eta^{c1}}_{B_z L^1_{x}} & \lesssim \norm{\omega}_{X_T}\label{i:l1-a}\\
\norm{\chi_{\frac R4}\, \eta^{c2}}_{B_z L^1_{x}} & \lesssim M_c^{-1}\norm{\omega}_{X_T}.\label{i:l1-b}
\end{align}
\item[(c)] \underline{\(L^2_{x,z}\) bounds.}
\begin{align}
t^{\frac{1}{2}}\norm{\<t^{-\frac12}x\>^m\eta^{c1}}_{L^2_{x,z}} & \lesssim \norm{\omega}_{X_T}\label{i:eta-c1-l2-ugh}\\
t^{\frac{1}{2}}\norm{\<t^{-\frac12}x\>^m\eta^{c2}}_{L^2_{x,z}} & \lesssim M_c^{-1}\norm{\omega}_{X_T},\\
\norm{(1-\chi_{\frac R4}) \eta^{c1}}_{L^2_{x,z}} & \lesssim T^{\frac{m-1}2}R^{-m} \norm{\omega}_{X_T}.\label{i:c1-sep-l2}
\end{align}
\end{itemize}
\end{lemma}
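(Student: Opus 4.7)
The plan is to verify each estimate by invoking the definitions of the norms comprising $X_T$ together with standard embeddings, H\"older inequalities, and the self-similar scaling. I will treat the three parts in order, saving the truly delicate estimate for last.

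For part (a), the bound \eqref{i:Basic43bd} will follow from a direct scaling calculation, since $\eta^g$ is independent of $z$ and hence (up to a fixed constant on $\mathbb{T}$) the $B_z L^{4/3}_x$-norm reduces to $\|G(\cdot/\sqrt{t})\|_{L^{4/3}_x(\mathbb{R}^2)} = t^{3/4}\|G\|_{L^{4/3}}$, divided by $t$. For \eqref{4/3-bnd} and \eqref{c2-I3-bnd}, my plan is to use the weighted H\"older inequality: since $m\geq 2>1/2$, the weight $\langle x/\sqrt{t}\rangle^{-m}$ is in $L^4_x(\mathbb{R}^2)$ with norm $\lesssim t^{1/4}$, and therefore
$$\|\eta^*\|_{B_z L^{4/3}_x} \lesssim t^{1/4}\|\langle x/\sqrt{t}\rangle^m \eta^*\|_{B_z L^2_x};$$
multiplying by $t^{1/4}$ and invoking the $\bN_c^\beta$ norm (together with the identity $\eta^{c2}=\chi_{2R}\,Q_\Phi^{-1}(\widetilde\chi_{8R}\omega^{c2})$, since $\widetilde\chi_{8R}\equiv 1$ on $\mathrm{supp}\,\widetilde\chi_{2R}$) concludes. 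The estimates \eqref{Embedding3}--\eqref{Embedding1} are the main obstacle: the $\bN_b^\beta$ norm controls $\eta^{b*}$ and its full gradient in $L^{4/3}_{x,z}$ (with additional fractional $\langle\sqrt{t}\nabla\rangle^\beta$-regularity), whereas the Wiener algebra $B_z$ requires more than one derivative at the $L^{4/3}_z$ scale. My plan is to combine a Hausdorff--Young and Cauchy--Schwarz argument in $\zeta$, giving
$$\|f\|_{B_z L^{4/3}_x} \lesssim_\epsilon \|\langle\partial_z\rangle^{3/4+\epsilon}f\|_{L^{4/3}_{x,z}}$$
for any $\epsilon>0$, with the operator bound $\langle\partial_z\rangle \lesssim t^{-1/2}\langle\sqrt{t}\partial_z\rangle$ at high frequency, and then interpolate between $\|\eta^{b*}\|_{L^{4/3}}$ and $\|\langle\sqrt{t}\nabla\rangle^\beta\nabla\eta^{b*}\|_{L^{4/3}}$; the hypothesis $T\leq R^{12}$ is crucial for absorbing the residual powers of $R$ and $t$ that arise from the interpolation. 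For $\eta^{b2}$, the same argument applies after writing $\eta^{b2}=\chi_{2R}Q_\Phi^{-1}(\widetilde\chi_{4R}\omega^{b2})$ and controlling $\nabla\widetilde\chi_{4R}$ by Lemma~\ref{lem:Coefs}.

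For part (b), my plan exploits the pointwise bound
$$\|\chi_{R/4}\langle x/\sqrt{t}\rangle^{-m}\|_{L^2_x} \lesssim \min\{R,\sqrt{t}\},$$
which follows for $m>1$ by splitting the integral into $|x|\leq\sqrt{t}$ and $\sqrt{t}\leq|x|\leq R/2$ and using radial integration. Cauchy--Schwarz then yields $\|\chi_{R/4}\eta^*\|_{L^1_x}\lesssim \min\{R,\sqrt{t}\}\,\|\langle x/\sqrt{t}\rangle^m\eta^*\|_{L^2_x}$. Taking $B_z$-norms and using $\sqrt{t}\leq\sqrt{T}\leq R^6\leq R$ throughout $0<t\leq T$, so that $\min\{R,\sqrt{t}\}=\sqrt{t}$, combined with the $\bN_c^\beta$ and $\bN_c^0$ bounds, produces \eqref{i:l1-a} and \eqref{i:l1-b}.

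For part (c), the estimates \eqref{i:eta-c1-l2-ugh} and its $\eta^{c2}$-analogue will follow from the elementary chain of embeddings on $\mathbb{T}$,
$$\|g\|_{L^2_{x,z}} \leq |\mathbb{T}|^{1/2}\|g\|_{L^2_x L^\infty_z} \lesssim \|g\|_{B_z L^2_x},$$
applied to $g=\langle x/\sqrt{t}\rangle^m\eta^*$, followed immediately by the $\bN_c^\beta$ bound. For \eqref{i:c1-sep-l2} I will use that $|x|\geq R/4$ on the support of $1-\chi_{R/4}$, hence $\langle x/\sqrt{t}\rangle^m\gtrsim R^m t^{-m/2}$, which converts the weighted $L^2_{x,z}$-bound into the claimed one by multiplying by $R^{-m}t^{m/2}$ and using $t\leq T$. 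Overall the genuinely hard step is \eqref{Embedding3}--\eqref{Embedding1}, where the fractional $\beta$-regularity and the smallness hypothesis $T\leq R^{12}$ must both be deployed to bridge the tension between the isotropic $L^{4/3}_{x,z}$ structure of $\bN_b^\beta$ and the anisotropic $B_z L^{4/3}_x$ target norm.
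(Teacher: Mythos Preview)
Your treatment of parts (b), (c), and the $\eta^g$, $\eta^{c1}$, $\eta^{c2}$ portions of (a) is essentially identical to the paper's.

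For \eqref{Embedding3}--\eqref{Embedding1}, however, you have taken a detour where none is needed, and your outline has a gap. The paper invokes its Lemma~\ref{lem:Sobolev}, namely the sharp interpolation
\[
\|f\|_{B_z L^p_x}\ \lesssim\ \|f\|_{L^p_{x,z}}^{1-\frac1p}\,\|\partial_z f\|_{L^p_{x,z}}^{\frac1p},\qquad 1<p\le 2,
\]
proved by Hausdorff--Young $L^p_z\to L^{p'}_\zeta$, splitting $\{|\zeta|\le A\}\cup\{|\zeta|>A\}$, and optimizing in $A$. With $p=\tfrac43$ this gives directly
\[
t^{\frac14}\|\eta^{b1}\|_{B_z L^{4/3}_x}\ \lesssim\ \bigl(t^{\frac14}\|\eta^{b1}\|_{L^{4/3}}\bigr)^{\frac14}\bigl(t^{\frac14}\|\partial_z\eta^{b1}\|_{L^{4/3}}\bigr)^{\frac34}\ \lesssim\ \bigl(R^{\frac34}\bigr)^{\frac14}\bigl(R^{-\frac14}\bigr)^{\frac34}\|\eta^{b1}\|_{\bN_b^\beta}\ =\ \|\eta^{b1}\|_{\bN_b^\beta},
\]
so the $R$-powers cancel \emph{exactly}; neither the fractional $\beta$-regularity nor the hypothesis $T\le R^{12}$ is used here at all.

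By contrast, your route $\|f\|_{B_zL^{4/3}_x}\lesssim_\epsilon\|\langle\partial_z\rangle^{3/4+\epsilon}f\|_{L^{4/3}_{x,z}}$ necessarily carries an $\epsilon$-loss: interpolating between the two pieces of $\bN_b^\beta$ (after dropping the $\langle\sqrt{t}\nabla\rangle^\beta$ factor, which only helps) with $\theta=\tfrac34+\epsilon$ yields $t^{\frac14}\|\langle\partial_z\rangle^{3/4+\epsilon}\eta^{b1}\|_{L^{4/3}}\lesssim R^{-\epsilon}\|\eta^{b1}\|_{\bN_b^\beta}$. There is no positive power of $t$ left over to absorb this via $T\le R^{12}$, and your suggested bound $\langle\partial_z\rangle\lesssim t^{-1/2}\langle\sqrt{t}\partial_z\rangle$ only introduces further \emph{negative} powers of $t$. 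The statement requires the implicit constant to be independent of $R$, so this $R^{-\epsilon}$ is a genuine obstruction to your plan as written. The fix is simply to use the endpoint inequality above instead of the $\epsilon$-lossy one.
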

\begin{proof}

\begin{itemize}
\item[(a)] The estimate \eqref{i:Basic43bd} follows from the explicit expression for \(\eta^g\). The estimates \eqref{4/3-bnd}, \eqref{c2-I3-bnd} follow from H\"older's inequality, using that \(m>1\). For the estimate \eqref{Embedding3} we apply the Sobolev embedding of Lemma \ref{Sobolev} to obtain
\[
t^{\frac14}\|\eta^{b1}\|_{B_zL^{\frac43}_x}\lesssim t^{\frac14}\|\eta^{b1}\|_{L^{\frac{4}{3}}}^{\frac14}\|\partial_z\eta^{b1}\|_{L^{\frac43}}^{\frac34}\lesssim \|\eta^{b1}\|_{\bN_b^\beta}.
\]
The estimate \eqref{Embedding1} is similar after using the estimates for the change of coordinates in Lemma~\ref{lem:Coefs}.
\item[(b)] From H\"older's inequality we have 
\[
\|\chi_{\frac R4}\eta^{c1}\|_{B_zL^1_x}\lesssim t^{\frac12}\|\<t^{-\frac12}x\>^m\eta^{c1}\|_{B_zL^2_x},
\]
where we have used that \(m\geq 2\).
\item[(c)] These follow from the embedding \(B_zL^2_x\subset L^\infty_zL^2_x\) and H\"older's inequality, where we recall that the curve has length \(2\pi\).
\end{itemize}
\end{proof}

Next we have estimates for the vorticity in the physical coordinates:
\begin{lemma}[Vorticity bounds in physical coordinates] \label{lem:bEmbeds-2}
We have the following estimates for \(0<t\leq T\):
\begin{itemize} 

\item[(a)] \underline{\(L^1_y\) estimates near the core.}
\begin{align}
\|\widetilde\chi_{\frac R4}\, \omega^{c1}\|_{L^1_y} &\lesssim \norm{\omega}_{X_T},\label{c2-I4-bnd-huh}\\
\|\widetilde\chi_{\frac R4}\, \omega^{c2}\|_{L^1_y} &\lesssim M_c^{-1}\norm{\omega}_{X_T}. \label{c2-I4-bnd-huh-1}
\end{align}
\item[(b)] \underline{\(L^2_y\) estimates for the core.}
\begin{align}
t^{\frac{1}{2}}\norm{\<t^{-\frac12}\bd\>^m \omega^{c1}}_{L^2_y} & \lesssim  \norm{\omega}_{X_T},\label{c1-l2y}\\
t^{\frac12}\norm{\<t^{-\frac12}\bd\>^m\widetilde\chi_{8R}\,\omega^{c2}}_{L^2_y} + t^{\frac12}\norm{\<t^{-\frac12}\bd\>^m(1 - \widetilde\chi_{6R})\omega^{c2}}_{L^3_y} & \lesssim M_c^{-1}\|\omega\|_{X_T}\label{c2-l2y},\\
\norm{(1-\widetilde{\chi}_{\frac R4}) \omega^{c1}}_{L^2_y} & \lesssim  T^{\frac{m-1}2}R^{-m}\norm{\omega}_{X_T},\label{c2-I4-bnd-also}\\
\norm{(1-\widetilde{\chi}_{\frac R4}) \omega^{c2}}_{L^2_y} & \lesssim  M_c^{-1} T^{\frac{m-1}{2}} R^{-m} \norm{\omega}_{X_T}.\label{c2-I4-bnd}
\end{align}
\item[(c)] \underline{\(L^2_y\) estimates for the background.} 
\begin{align}
t^{\frac14}\|\omega^{b1}\|_{L^2_y} &\lesssim M_{b_1}^{-1}\|\omega\|_{X_T}, \label{Embedding4}\\
t^{\frac14}\|\omega^{b2}\|_{L^2_y} &\lesssim M_{b_2}^{-1}\|\omega\|_{X_T}. \label{Embedding2}
\end{align}
\end{itemize}
\end{lemma}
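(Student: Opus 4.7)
The strategy is to transfer each estimate from the straightened-frame bounds of Lemma~\ref{lem:bEmbeds-1} to the physical frame via the change of variables $y = \Phi(x,z)$. The three workhorses are: (i) the fact that $\Phi\colon\Sigma_{R_0}\to\Gamma_{R_0}$ has Jacobian $\fD\approx 1$ and $\fJ$ uniformly bounded by Lemma~\ref{lem:Coefs}, so $|Q_\Phi\eta|\circ\Phi \lesssim |\eta|$ and $dy \approx dx\,dz$ on the image of $\Phi$; (ii) the identity $\bd\circ\Phi=|x|$, so weights $\brak{t^{-1/2}\bd}^m$ pull back to $\brak{t^{-1/2}x}^m$; and (iii) the embedding $B_zL^p_x\hookrightarrow L^\infty_zL^p_x$, combined with $|\T|<\infty$ to switch between $L^\infty_z$ and $L^r_z$ for $r<\infty$ whenever the relevant functions are supported in $\Gamma_{R_0}$.

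For part (a), observe that $\widetilde\chi_{R/4}\omega^{*1}=Q_\Phi(\chi_{R/4}\eta^{*1})$ (using $\chi_{R/4}\subset\chi_{2R}$), and $\widetilde\chi_{R/4}\omega^{*2}=Q_\Phi(\chi_{R/4}\eta^{*2})$ (using $\chi_{R/4}\subset\chi_{2R}$ again and the definition of $\eta^{*2}$). The change of variables then gives $\|\widetilde\chi_{R/4}\omega^{*}\|_{L^1_y}\lesssim \|\chi_{R/4}\eta^{*}\|_{L^1_{x,z}}\lesssim\|\chi_{R/4}\eta^{*}\|_{B_zL^1_x}$, and we conclude via \eqref{i:l1-a}--\eqref{i:l1-b}. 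For the first two bounds in part (b), the $\omega^{c1}$ bound follows by pulling back via $\omega^{c1}=Q_\Phi(\chi_{2R}\eta^{c1})$, which converts $\brak{t^{-1/2}\bd}^m\omega^{c1}$ to $\brak{t^{-1/2}x}^m\eta^{c1}$ and reduces matters to \eqref{i:eta-c1-l2-ugh}. For the $\widetilde\chi_{8R}\omega^{c2}$ bound, we write $\widetilde\chi_{8R}\omega^{c2}=Q_\Phi(Q_\Phi^{-1}[\widetilde\chi_{8R}\omega^{c2}])$, whose straightened-frame counterpart is controlled in $\bN^0_c$ directly by $M_c^{-1}\|\omega\|_{X_T}$ from the definition of $X_T$; the change of variables then gives the claimed $L^2_y$ bound. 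The $L^3_y$ bound on $(1-\widetilde\chi_{6R})\omega^{c2}$ is exactly the content of the $\bF_c$ norm and so is immediate.

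For the two ``far from the filament'' estimates \eqref{c2-I4-bnd-also}--\eqref{c2-I4-bnd}, we exploit that $(1-\widetilde\chi_{R/4})\omega^{c1}$ and $(1-\widetilde\chi_{R/4})\omega^{c2}$ are supported where $\bd\gtrsim R$. There the weight satisfies $\brak{t^{-1/2}\bd}^m\gtrsim t^{-m/2}R^m$, so we can trade weighted $L^2_y$ control against an unweighted one at the cost of $t^{m/2}R^{-m}$. Combining with the weighted bounds already established for $\omega^{c1}$ and the analogous bound for $\omega^{c2}$ (obtained from \eqref{c2-l2y} by writing $(1-\widetilde\chi_{R/4})\omega^{c2}=(1-\widetilde\chi_{R/4})\widetilde\chi_{8R}\omega^{c2}$ since, on the support of $\omega^{c2}$, $\widetilde\chi_{8R}=1$ in the region of interest via the structure of $\omega^{c2}$, or else via a direct split), one picks up the factor $T^{(m-1)/2}R^{-m}$.

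For part (c), the clean way is a Gagliardo--Nirenberg interpolation in three dimensions: $\|f\|_{L^2}\lesssim\|f\|_{L^{4/3}}^{1/4}\|\nabla f\|_{L^{4/3}}^{3/4}$, whose exponents match the construction of $\bN_b^\beta$ and $\bF_b$. Applied directly to $\omega^{b2}$ in physical coordinates, this yields $t^{1/4}\|\omega^{b2}\|_{L^2_y}\lesssim\|\omega^{b2}\|_{\bF_b}$ (the $R$-weighting in $\bF_b$ cancels cleanly: $R^{-3/4\cdot 1/4+1/4\cdot 3/4}=R^0$), giving \eqref{Embedding2}. For $\omega^{b1}=Q_\Phi(\chi_{2R}\eta^{b1})$ we first change variables to reduce to $\|\chi_{2R}\eta^{b1}\|_{L^2_{x,z}}$ and then run the same interpolation in the straightened frame, which combines with \eqref{Embedding3} (and the analogous bound for $\nabla\eta^{b1}$) to give the claimed estimate. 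The main bookkeeping obstacle is keeping track of the anisotropy in $Q_\Phi^{-1}[\widetilde\chi_{8R}\omega^{c2}]$ versus $\widetilde\chi_{2R}\omega^{c2}$ in part (b), but both are explicitly controlled by different summands of the $X_T$-norm, so the estimates decouple.
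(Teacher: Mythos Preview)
Your approach is essentially the same as the paper's: pull back through $\Phi$ using Lemma~\ref{lem:Coefs} and the identity $\bd\circ\Phi=|x|$, then invoke the straightened-frame estimates of Lemma~\ref{lem:bEmbeds-1} and the definitions of $\bN_c^0$, $\bF_c$, $\bN_b^\beta$, $\bF_b$. Parts~(a) and~(c), and the first three bounds in~(b), match the paper's argument exactly, including the Gagliardo--Nirenberg step $\|f\|_{L^2}\lesssim\|f\|_{L^{4/3}}^{1/4}\|\nabla f\|_{L^{4/3}}^{3/4}$ for~(c).

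There is one genuine slip in your treatment of~\eqref{c2-I4-bnd}: the claim that $(1-\widetilde\chi_{R/4})\omega^{c2}=(1-\widetilde\chi_{R/4})\widetilde\chi_{8R}\omega^{c2}$ is false, since $\omega^{c2}$ is defined globally in the physical frame by~\eqref{Core2PiecePhys} and is \emph{not} supported in $\{\bd\leq 16R\}$. Your hedge ``or else via a direct split'' is the correct fix and is exactly what the paper does: split $(1-\widetilde\chi_{R/4})\omega^{c2}$ into a near piece $\widetilde\chi_{6R}\omega^{c2}$ (handled as in~\eqref{c2-I4-bnd-also} via the $\bN_c^0$ control) and a far piece $(1-\widetilde\chi_{6R})\omega^{c2}$, for which the paper applies H\"older,
\[
\norm{(1-\widetilde\chi_{6R})\omega^{c2}}_{L^2_y}\lesssim \norm{\brak{t^{-1/2}\bd}^m(1-\widetilde\chi_{6R})\omega^{c2}}_{L^3_y}\norm{(1-\widetilde\chi_R)\brak{t^{-1/2}\bd}^{-m}}_{L^6_y}\lesssim M_c^{-1}t^{\frac{m-1}{2}}R^{\frac13-m}\norm{\omega}_{X_T},
\]
and then uses $R\leq 1$. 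This H\"older step from $L^3$ to $L^2$ against the weight is the one ingredient your sketch does not make explicit.
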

\begin{proof}~
\begin{itemize}
\item[(a)] These follow from the estimates \eqref{i:l1-a}, \eqref{i:l1-b} using Lemma~\ref{lem:Coefs} to bound the change of variables.
\item[(b)] The estimate \eqref{c1-l2y} follows from the estimate \eqref{i:eta-c1-l2-ugh} using Lemma~\ref{lem:Coefs} to bound the change of coordinates. Similarly, the estimate \eqref{c2-I4-bnd-also} follows from the estimate \eqref{i:c1-sep-l2}.

For \eqref{c2-l2y}, notice first by the Lemma~\ref{lem:Coefs} and the definition of $\bN_c^0$, there holds 
\begin{align*}
t^{\frac{1}{2}}\norm{\brak{t^{-\frac{1}{2}}\bd}^{m} \widetilde{\chi}_{8R}\, \omega^{c2}}_{L^2_{y}} \lesssim M_c^{-1}\norm{\omega}_{X_T}. 
\end{align*}
The estimate then follows from the definition of the \(\bF_c\). 

Finally we consider the estimate \eqref{c2-I4-bnd}. 
For \(\widetilde\chi_{6R}\omega^{c2}\) the estimate follows as in \eqref{c2-I4-bnd-also}. Away from the filament, we have
\begin{align*}
\norm{(1-\widetilde\chi_{6R})\omega^{c2}}_{L^2_y} & \lesssim \norm{(1-\widetilde\chi_{6R})\omega^{c2} \brak{t^{-\frac{1}{2}} \bd}^m }_{L^3_y} \norm{(1-\widetilde\chi_{R})\brak{t^{-\frac{1}{2}} \bd}^{-m} }_{L^6_y} \\ 
& \lesssim M_c^{-1} t^{\frac{m-1}{2}} R^{\frac{1}{3}-m}  \norm{\omega}_{X_T},   
\end{align*}
and hence the estimate follows from the fact that \(0<R\leq1\).
\item[(c)] We observe that by Sobolev embedding and interpolation
\[
\|f\|_{L^2_y}\lesssim \|f\|_{L^{\frac43}_y}^{\frac14}\|\nabla f\|_{L^{\frac43}_y}^{\frac34}.
\]
The estimate \eqref{Embedding2} then follows from the definition of the \(\bF_b\) norm and the estimate \eqref{Embedding4} using Lemma~\ref{lem:Coefs} to bound the change of variables.
\end{itemize}
\end{proof}

Next, we prove estimates for the velocity:
\begin{lemma}[Velocity bounds in straightened coordinates] \label{lem:vgests-1}
The following estimates hold for all \(0<t\leq T\):
\begin{itemize}
\item[(a)] \underline{\(B_zL^4_x\cap B_z\dot W^{1,4/3}_x\) estimates.}
\begin{align}
t^{\frac14}\norm{v^g}_{B_z L^{4}_x} + t^{\frac14}\norm{\grad v^g}_{B_z L^{4/3}_x}& \lesssim 1,\label{i:vg-nobar}\\
t^{\frac14}\norm{v^{c1}}_{B_z L^{4}_x} + t^{\frac14}\norm{\grad v^{c1} }_{B_z L^{4/3}_x} &\lesssim \norm{\omega}_{X_T}, \label{ineq:uc1}\\ 
t^{\frac14}\norm{v^{c2}}_{B_z L^{4}_x} + t^{\frac14} \norm{\grad v^{c2} }_{B_z L^{4/3}_x} &\lesssim M_c^{-1}\|\omega\|_{X_T},  \label{ineq:uc2}\\
t^{\frac14}\norm{v^{b1}}_{B_z L^{4}_x} + t^{\frac14} \norm{\grad v^{b1}}_{B_z L^{4/3}_x} + t^{\frac14}\norm{P_\Phi^{-1}(\widetilde\chi_{7R}\, u^{b1})}_{B_zL^4_x}& \lesssim  M_{b1}^{-1}\norm{\omega}_{X_T}, \label{ineq:vb1} \\
t^{\frac14}\norm{v^{b2}}_{B_z L^{4}_x} + t^{\frac14} \norm{\grad v^{b2}}_{B_z L^{4/3}_x}  + t^{\frac14}\norm{P_\Phi^{-1}(\widetilde\chi_{7R}\, u^{b2})}_{B_zL^4_x} & \lesssim R^{-\frac34}M_{b2}^{-1}\norm{\omega}_{X_T}.  \label{ineq:vb2}
\end{align}

\item[(b)] \underline{Estimates for the difference between approximate and actual velocities.}
\begin{align}
 t^{\frac14} \norm{v^g - \chi_R\bar v^g}_{B_zL^4_x} + t^{\frac14} \norm{\grad \left(v^g - \chi_R\bar v^g\right)}_{B_zL^{4/3}_x}  \lesssim T^{\frac14}R^{-\frac12} + R\ln R^{-1},\label{i:vgdiff}  \\
t^{\frac14}\norm{v^{c1} - \chi_R(-\Delta)^{-1}\nabla\times \eta^{c1}}_{B_zL^4_x} \lesssim \left(T^{\frac14}R^{-\frac12} + R\ln R^{-1}\right)\|\omega\|_{X_T}.\label{i:vcdiff}
\end{align}

\item[(c)] \underline{Estimates away from the core.}
\begin{align}
\norm{(1-\chi_R) \bar v^g}_{B_z L^{4}_x} + \norm{\grad ((1-\chi_R) \bar v^g)}_{B_z L^{4/3}_x} & \lesssim R^{-\frac12} ,\label{i:vgsep}\\
\|(1-\chi_R)v^g\|_{B_zL^4_x} + \|P_\Phi^{-1}(\widetilde\chi_{7R}(1 - \widetilde\chi_R)u^g) \|_{B_zL^4_x} &\lesssim R^{-\frac12}, \label{VelocityBzL4-g}\\
\|(1-\chi_R)v^{c1}\|_{B_zL^4_x} + \|P_\Phi^{-1}(\widetilde\chi_{7R}(1 - \widetilde\chi_R)u^{c1}) \|_{B_zL^4_x} &\lesssim R^{-\frac12} \|\omega\|_{X_T},\label{VelocityBzL4}\\
\|(1-\chi_R)v^{c2}\|_{B_zL^4_x} + \|P_\Phi^{-1}(\widetilde\chi_{7R}(1 - \widetilde\chi_R)u^{c2}) \|_{B_zL^4_x} &\lesssim R^{-\frac12}M_c^{-1}\|\omega\|_{X_T}.\label{Spatula}
\end{align}
\end{itemize}
\end{lemma}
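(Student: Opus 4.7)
The overall strategy is to reduce each bound either to a direct computation using the explicit Oseen formula (for the pieces involving $\bar v^g$) or to the $3d$ Biot-Savart estimates of Lemma~\ref{lem:BSx} from the straight filament section, combined with the geometric bounds of Lemma~\ref{lem:Coefs} to control the coordinate change. For part (a), the $\bar v^g$ bound follows by direct computation from the explicit formula for $g$, using the parabolic scaling $\bar v^g(t,x) = \alpha t^{-1/2} g(x/\sqrt{t})$: one checks $\|\bar v^g(t)\|_{L^4_x} = t^{-1/4}\|g\|_{L^4_\xi}$ and $\|\grad \bar v^g(t)\|_{L^{4/3}_x} = t^{-1/4}\|\grad g\|_{L^{4/3}_\xi}$, with the $B_z$ prefix costing nothing since $\bar v^g$ is $z$-independent; the bound for $v^g$ then follows via the triangle inequality using part (b). For $v^{c1}, v^{c2}, v^{b1}, v^{b2}$, the plan is to apply the $B_z$-style Biot-Savart bounds of Lemma~\ref{lem:BSx} (i.e.\ Hardy-Littlewood-Sobolev for the $L^4$ bound and Calder\'on-Zygmund for the $\dot W^{1,4/3}$ bound, frequency-by-frequency in $\zeta$) to $u^* = (-\Delta)^{-1}\nabla\times \omega^*$, and then transfer to the straightened frame via $v^* = P_\Phi^{-1}(\widetilde\chi_R u^*)$ using the uniform bounds on $\fJ^{-1}$ from Lemma~\ref{lem:Coefs} together with the compact support imposed by $\widetilde\chi_R$. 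The relevant vorticity bounds are supplied by Lemma~\ref{lem:bEmbeds-1}(a), while derivatives falling on the coordinate change are lower-order by Lemma~\ref{lem:Coefs}.

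For part (b), the central observation is the identity $\chi_R \bar v^g = P_\Phi^{-1}(\widetilde\chi_R \bar u^g)$, which follows from $\bar u^g = P_\Phi(\chi_{4R}\bar v^g)$ and $\chi_{4R}\equiv 1$ on $\supp \chi_R$. This reduces the first estimate to bounding $P_\Phi^{-1}(\widetilde\chi_R(u^g - \bar u^g))$ in the relevant norms. I would decompose $u^g - \bar u^g$ into a curvature piece, capturing the discrepancy between the full $3d$ Biot-Savart of $Q_\Phi(\chi_{2R}\eta^g)$ and the $2d$-flavored approximation $P_\Phi(\chi_{4R}\bar v^g)$, and a cutoff piece, arising from the Gaussian tails of $\eta^g$ truncated at scale $R$. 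For a straight filament, the classical identity $\int_\R (|x-x'|^2+z'^2)^{-3/2}\,dz' = 2|x-x'|^{-2}$ collapses $3d$ to $2d$ Biot-Savart exactly and the curvature piece vanishes; in the curved case, a Taylor expansion of the $3d$ Biot-Savart kernel along $\gamma$, combined with the pointwise estimate $|\fJ(x,z)-\fJ(0,z)|\lesssim |x|$ from Lemma~\ref{lem:Coefs}, yields the bound $O(R\ln R^{-1})$; the logarithm originates from the borderline-singular integral $\int_R^1 r^{-1}\,dr$ along the tubular neighborhood of $\Gamma$. The cutoff piece is controlled by the Gaussian tails $e^{-cR^2/t}$ together with the far-field decay of the Biot-Savart kernel, yielding $O(T^{1/4}R^{-1/2})$, which is negligible given the hypothesis $T\leq R^{12}$. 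The same decomposition applied to $v^{c1} - \chi_R(-\Delta)^{-1}\nabla\times \eta^{c1}$ yields \eqref{i:vcdiff}, using the $L^{4/3}$ bound on $\eta^{c1}$ from \eqref{4/3-bnd}. The gradient bound is handled analogously, with one extra derivative on the Biot-Savart kernel producing a Calder\'on-Zygmund convolution that absorbs naturally into the same decomposition.

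For part (c), each estimate concerns velocity in a region separated from (most of) the supporting vorticity. For $\bar v^g$, direct computation using $|g(\xi)|\lesssim \langle \xi\rangle^{-1}$ gives $\|(1-\chi_R)\bar v^g\|_{L^4_x}\lesssim R^{-1/2}$, with the gradient bound following similarly from $|\grad g(\xi)|\lesssim \langle\xi\rangle^{-2}$. For $v^g, v^{c1}, v^{c2}$ and the corresponding quantities $P_\Phi^{-1}(\widetilde\chi_{7R}(1-\widetilde\chi_R)u^*)$ in the transition region, the vorticities $\omega^g, \omega^{c1}, \omega^{c2}$ are localized within distance $O(R)$ of $\Gamma$ (via the supports of $\chi_{2R}$), so the Biot-Savart kernel $|y-y'|^{-2}$ evaluated at distance $\gtrsim R$ produces the claimed $R^{-1/2}$ decay after $L^4$ integration over the transition region; the weighted vorticity bounds of Lemmas~\ref{lem:bEmbeds-1} and~\ref{lem:bEmbeds-2} absorb the polynomial growth of any weights.

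The main obstacle is part (b), specifically the extraction of the logarithmic factor $\ln R^{-1}$ from the curvature error. This requires a careful decomposition of $u^g - \bar u^g$ that isolates the log-divergent contribution coming from integration along the filament while keeping every other error strictly polynomial in $R$; the algebraic identity collapsing the $3d$ kernel to $2d$ for a straight filament is the starting point, and the curvature-driven deviation must be tracked via a Taylor expansion of the Biot-Savart kernel along $\gamma$ using the quantitative estimates of Lemma~\ref{lem:Coefs}. Once this is established, the remaining cutoff and gradient bounds are routine.
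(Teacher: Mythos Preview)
Your proposal has a genuine gap in part (a). You plan to apply Lemma~\ref{lem:BSx} to $u^* = (-\Delta)^{-1}\nabla\times\omega^*$ and then pull back via $P_\Phi^{-1}$. But Lemma~\ref{lem:BSx} is a $B_zL^p_x$ estimate for the \emph{flat} Biot-Savart operator on $\R^2\times\T$; here $u^*$ is computed with the Euclidean Laplacian in the \emph{physical} frame $\R^3$, where there is no $(x,z)$ splitting and $B_zL^p_x$ makes no sense. Pulling back first and then trying to apply Lemma~\ref{lem:BSx} fails too, because the pulled-back operator involves $\Delta_\Phi$ rather than $\Delta$. The paper instead invokes Proposition~\ref{prop:StreamlinedBS}, in particular \eqref{ineq:BSbasic}, which bounds the composed operator $\chi_{8R}P_\Phi^{-1}(-\Delta)^{-1}\nabla\times Q_\Phi$ directly in $B_zL^4_x$ and $B_z\dot W^{1,4/3}_x$; its proof rests on the kernel estimate of Lemma~\ref{lem:KernelBounds}, which handles the coordinate change at the level of the integral kernel.

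A second gap, also in part (a): you treat $v^{c2}$ and $v^{b2}$ on the same footing as $v^{c1}$ and $v^{b1}$, but $\omega^{c2}$ and $\omega^{b2}$ are \emph{not} compactly supported near the filament, so \eqref{ineq:BSbasic} alone cannot close the estimate. The paper splits $\omega^{c2} = \widetilde\chi_{4R}\omega^{c2} + (1-\widetilde\chi_{4R})\omega^{c2}$; the near piece goes through \eqref{ineq:BSbasic} and \eqref{c2-I3-bnd}, while for the far piece one exploits the separation between $\supp\chi_R$ and $\supp(1-\widetilde\chi_{4R})$ to apply Young's inequality with the decay of the Biot-Savart kernel, combined with the Sobolev embedding of Lemma~\ref{lem:Sobolev} to pass from isotropic $L^p_y$ bounds back to $B_zL^p_x$. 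An analogous decomposition (with a cutoff $\rho$ at radius $\approx 15R$) is needed for the $\widetilde\chi_{7R}$ terms in \eqref{ineq:vb1}--\eqref{ineq:vb2} and for \eqref{Spatula}; you do not mention any of this.

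For part (b), your Taylor-expansion route is different from the paper's but is a plausible alternative. The paper does not expand the Biot-Savart kernel; instead it writes $v^g - \chi_R\bar v^g = \chi_R(P_\Phi^{-1}(-\Delta)^{-1}\nabla\times Q_\Phi - (-\Delta)^{-1}\nabla\times)\eta^g$ and defers to Proposition~\ref{prop:ApproxBS}. That proposition is proved by an elliptic comparison: set $\psi = \chi_{8R}Q_\Phi^{-1}(-\Delta)^{-1}Q_\Phi\eta$, use the identity $\chi_{2R}(-\Delta_\Phi)\psi = \chi_{2R}\eta$, and split the difference into $T_1 = \chi_R(\fD^{-1}\Curl_\Phi - \nabla\times)\psi$, $T_2 = \chi_R\nabla\times(\psi - (-\Delta)^{-1}\eta)$, and a tail term $T_3$. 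The $R\ln R^{-1}$ factor arises not from a Taylor expansion along $\gamma$ but from the $n=1$ case of the kernel bound in Lemma~\ref{lem:KernelBounds}, which gives $\|\psi\|_{B_zL^4_x}\lesssim R\ln R^{-1}\|\eta\|_{B_zL^{4/3}_x}$; this then feeds into the lower-order pieces of $\Curl_\Phi - \nabla\times$ and $\Delta_\Phi - \Delta$ via Lemma~\ref{lem:Coefs}. Your approach could work but would require reproving much of Lemma~\ref{lem:KernelBounds} by hand.

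For part (c), your sketch is in the right spirit but omits the key mechanism: the paper splits the source vorticity as $\chi_{R/4}\eta^* + (\chi_{2R}-\chi_{R/4})\eta^*$, bounds the first via the separation-of-supports estimate \eqref{ineq:BSbasicSeparation} against $\|\eta^*\|_{B_zL^1_x}$, and the second via \eqref{ineq:BSbasic} with H\"older (the weight giving $R^{-1/2}$). You describe only the heuristic ``Biot-Savart kernel at distance $R$'' without the decomposition needed to make it rigorous in $B_zL^4_x$.
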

\begin{rem}
We note that from the assumptions \eqref{Size}, \eqref{ConstantOrder} we obtain from part (a):
\begin{align}
t^{\frac14}\|v - \alpha v^g\|_{B_zL^4_x} + t^{\frac14}\|\nabla ( v - \alpha v^g )\|_{B_zL^{4/3}_x} &\lesssim \|\omega\|_{X_T},\label{vDiffSummary}\\
t^{\frac14}\|v - \alpha v^g - v^{c1}\|_{B_zL^4_x} + t^{\frac14}\|\nabla ( v - \alpha v^g - v^{c1} )\|_{B_zL^{4/3}_x} &\lesssim M_c^{-1}\|\omega\|_{X_T}.\label{vDiffDiffSummary}
\end{align}
\end{rem}
\begin{proof}~

\begin{itemize}

\item[(a)] For the estimates \eqref{i:vg-nobar}, \eqref{ineq:uc1}, \eqref{ineq:vb1} we apply the estimate \eqref{ineq:BSbasic} together with the estimates \eqref{i:Basic43bd}, \eqref{4/3-bnd}, \eqref{Embedding3} for \(\eta^g\), \(\eta^{c1}\), \(\eta^{b1}\) respectively.

For the estimate \eqref{ineq:uc2} we first observe that by Sobolev embedding (in \(x\)) we have
\[
\|v^{c2}\|_{B_zL^4_x} \lesssim \|\nabla v^{c2}\|_{B_zL^{4/3}_x},
\]
so it suffices to prove the estimate for \(\nabla v^{c2}\).
Next, we decompose
\[
v^{c2}  = \underbrace{P_\Phi^{-1}(\widetilde\chi_R (-\Delta)^{-1}\grad \times ( \widetilde{\chi}_{4R}\, \omega^{c2}))}_{I_1} + \underbrace{P_\Phi^{-1}(\widetilde\chi_R (-\Delta)^{-1}\grad \times( (1-\widetilde{\chi}_{4R}) \omega^{c2})}_{I_2}. 
\]
For $I_1$ we apply the estimate \eqref{ineq:BSbasic} to obtain
\[
\|\nabla I_1\|_{B_zL^{4/3}_x}\lesssim \|Q_\Phi^{-1}(\widetilde\chi_{4R}\,\omega^{c2})\|_{B_zL^{4/3}_x},
\]
and the estimate follows from \eqref{c2-I3-bnd}. For $I_2$ we first apply Lemma \ref{lem:Sobolev} and H\"older's inequality to bound
\[
\norm{ \nabla I_2}_{B_z L^{4/3}_x} \lesssim \norm{\nabla I_2}_{L^{4/3}_{x,z}}^{\frac14} \norm{\partial_z\nabla I_2}_{L^{4/3}_{x,z}}^{\frac34} \lesssim R^{\frac32}\norm{\nabla I_2}_{L^\infty_{x,z}}^{\frac14} \norm{\partial_z\nabla I_2}_{L^\infty_{x,z}}^{\frac34}. 
\]
Applying Lemma~\ref{lem:Coefs} to control the change of variables we may then apply Young's inequality with the separation of supports, recalling that \(0<R\leq 1\), to bound, with the help of~\eqref{c2-I4-bnd}
\begin{align*}
\|\nabla I_2\|_{L^\infty_{x,z}} &\lesssim \left(R^{-1}\|\chi_{\{|y|\gtrsim R\}}K\|_{L^2_y} + \|\chi_{\{|y|\gtrsim R\}}\nabla K\|_{L^2_y}\right) \|(1 - \widetilde\chi_{4R})\omega^{c2}\|_{L^2_y}\\
& \lesssim R^{-\frac32}\|(1 - \widetilde\chi_{R/4})\omega^{c2}\|_{L^2_y}\\
&\lesssim T^{\frac{m-1}2}R^{-\frac32 - m}M_c^{-1} \|\omega\|_{X_T},
\end{align*}
where \(K(y) = \frac1{4\pi}\frac{y\times}{|y|^3}\) denotes the Biot-Savart kernel and \(\chi_{\{|y|\gtrsim R\}}\) is a smooth bump function adapted to the set \(\{|y|\gtrsim R\}\).
Similarly,
\[
\norm{\partial_z \nabla I_2}_{L^\infty_{x,z}}  \lesssim T^{\frac{m-1}2}R^{-\frac52 - m}M_c^{-1}\|\omega\|_{X_T}. 
\]
Combining these estimates we obtain the bound for \(I_2\),
\[
\|\nabla I_2\|_{B_zL^{4/3}_x}\lesssim T^{\frac{m-1}2}R^{-\frac34-m}M_c^{-1}\|\omega\|_{X_T}.
\]
Finally, using the assumption \eqref{Size} and that \(m\geq 2\) we see that \(T^{\frac{m-1}2}R^{-\frac34-m}\lesssim1\).

For the estimate \eqref{ineq:vb2} we decompose 
\begin{align*}
v^{b2} & = \underbrace{P_\Phi^{-1}(\widetilde\chi_R\grad \times (-\Delta)^{-1} \widetilde{\chi}_{4R} \omega^{b2})}_{I_3} + \underbrace{P_\Phi^{-1}(\widetilde\chi_R\grad \times (-\Delta)^{-1} (1-\widetilde{\chi}_{4R}) \omega^{b2})}_{I_4}.  
\end{align*}
The $I_3$ piece is treated as in \eqref{ineq:uc2} using the estimates \eqref{ineq:BSbasic}, \eqref{Embedding1}. For the \(I_4\) contribution, we use the same proof as in \eqref{ineq:uc2}, replacing the estimate \eqref{c2-I4-bnd} by the estimate \eqref{Embedding2}. For the remaining term on LHS\eqref{ineq:vb2} we introduce a non-negative, radial function \(\rho\) that is identically \(1\) on \(\{|x|\leq \frac{29}2R\}\), supported on \(\{|x|\leq \frac{31}2 R\}\), and again denote \(\widetilde\rho\circ\Phi = \rho\). We then decompose
\[
P_\Phi^{-1}(\widetilde\chi_{7R} u^{b2}) = P_\Phi^{-1}(\widetilde\chi_{7R}\grad \times (-\Delta)^{-1} \widetilde\rho\, \omega^{b2}) + P_\Phi^{-1}(\widetilde\chi_{7R}\grad \times (-\Delta)^{-1} (1-\widetilde\rho) \omega^{b2}),
\]
and observe that due to the construction of \(\rho\), an identical argument applies.
\item[(b)] Note that 
\[
v^g - \chi_R\bar v^g = \chi_R \left(P_\Phi^{-1}(-\Delta)^{-1}\nabla\times Q_\Phi - (-\Delta)^{-1}\nabla\times\right)\eta^g.
\]
The estimate \eqref{i:vgdiff} then follows from Proposition~\ref{prop:ApproxBS}. The estimate \eqref{i:vcdiff} is similar.

\item[(c)] The first two estimates follow from the explicit expression for \(\bar v^g\). For the remaining bounds, we first observe that by definition
\[
\|(1 - \chi_R)v^*\|_{B_zL^4_x}\lesssim \|P_\Phi^{-1}(\widetilde\chi_{7R}(1 - \widetilde\chi_R)u^*)\|_{B_zL^4_x}.
\]
For \eqref{VelocityBzL4}, we decompose
\begin{align*}
P_\Phi^{-1}\left(\widetilde\chi_{7R}(1 -\widetilde\chi_R)u^{c1}\right) &= \underbrace{(1 - \chi_R)P_\Phi^{-1}\left(\widetilde\chi_{7R}(-\Delta)^{-1}\nabla\times Q_\Phi((\chi_{2R } - \chi_{\frac R4})\eta^{c1})\right)}_{I_5}\\
&\quad + \underbrace{(1 - \chi_R)P_\Phi^{-1}\left(\widetilde\chi_{12R}(-\Delta)^{-1}\nabla\times Q_\Phi(\chi_{\frac R4}\eta^{c1})\right)}_{I_6}.
\end{align*}
For \(I_5\) we apply the estimate \eqref{ineq:BSbasic} with H\"older's inequality to obtain
\begin{align*}
\|I_5\|_{B_zL^4_x} &\lesssim \|(1 - \chi_{\frac R4})\eta^{c1} \|_{B_zL^{4/3}_x} \lesssim R^{-\frac12} \|\eta^{c1}\|_{\bN^{\beta}_c}.
\end{align*}
For \(I_6\) we instead use the separation of the supports to apply the estimate \eqref{ineq:BSbasicSeparation} and \eqref{i:l1-a} to obtain
\begin{align*}
\|I_6\|_{B_zL^4_x}&\lesssim R^{-\frac12}\|\eta^{c1} \|_{B_zL^1_x} \lesssim R^{-\frac12}\|\omega^{c1}\|_{\bN^{\beta}_c}. 
\end{align*}
The proof of the estimate \eqref{VelocityBzL4-g} is identical.

For the estimate \eqref{Spatula} we take \(\rho\) to be defined as in part (a). We then decompose
\begin{align*}
P_\Phi^{-1}\left(\widetilde\chi_{7R}(1 -\widetilde\chi_R)u^{c2}\right) &= \underbrace{P_\Phi^{-1}\Big(\widetilde\chi_{7R}(1 -\widetilde\chi_R)(-\Delta)^{-1}\nabla\times\left(\widetilde\rho\, \omega^{c2}\right)\Big)}_{I_7}\\
&\quad + \underbrace{P_\Phi^{-1}\Big(\widetilde\chi_{7R}(1 -\widetilde\chi_R)(-\Delta)^{-1}\nabla\times\left((1 - \widetilde\rho)\omega^{c2}\right)\Big)}_{I_8}.
\end{align*}
For \(I_7\) we argue as in the proof of \eqref{VelocityBzL4} using that
\[
\|P_\Phi^{-1}(\widetilde\rho\, \omega^{c2})\|_{\bN_c^0} \lesssim \|P_\Phi^{-1}(\widetilde\chi_{8R}\, \omega^{c2})\|_{\bN_c^0},
\]
to obtain
\[
\|I_7\|_{B_zL^4_x}\lesssim R^{-\frac12}M_c^{-1}\|\omega\|_{X_T}.
\]
For \(I_8\) we proceed identically to the bound for \(I_2\), using Sobolev embedding, Lemma~\ref{lem:Sobolev}, H\"older's inequality and the separation of supports of \(\widetilde\chi_{7R}\) and \((1 - \widetilde\rho)\) to bound
\begin{align*}
\|I_8\|_{B_zL^4_x}\lesssim \|\nabla I_8\|_{B_zL^{4/3}_x}\lesssim R^{-\frac32}\|(1 - \widetilde\rho)\omega^{c2}\|_{L^2_y}\lesssim T^{\frac{m-1}{2}} R^{-\frac32-m} M_c^{-1} \norm{\omega}_{X_T},
\end{align*}
where the final estimate follows from \eqref{c2-I4-bnd}. The estimate \eqref{Spatula} then follows from the assumption \eqref{Size} and that \(m\geq 2\).
\end{itemize}
\end{proof}

To conclude this subsection we prove estimates for the velocity in physical coordinates:

\begin{lemma}[Velocity bounds in physical coordinates] \label{lem:vgests-2}
The following estimates hold for all \(0<t\leq T\):
\begin{itemize}
\item[(a)] \underline{Estimates on the background.}
\begin{align}
R^{\frac{1}{4}}t^{\frac14}\norm{u^{b1}}_{L^{12}_y} + t^{\frac14}\norm{u^{b1}}_{L^6_y} + t^{\frac14}\norm{\grad u^{b1}}_{L^2_y} & \lesssim M_{b1}^{-1}\|\omega\|_{X_T}, \label{ineq:vbLp-1}\\
R^{\frac{1}{4}}t^{\frac14}\norm{u^{b2}}_{L^{12}_y} + t^{\frac14}\norm{u^{b2}}_{L^6_y} + t^{\frac14}\norm{\grad u^{b2}}_{L^2_y} & \lesssim M_{b2}^{-1}\|\omega\|_{X_T}. \label{ineq:vbLp-2}
\end{align}

\item[(b)] \underline{Estimates away from the core.}
\begin{align}
R^{\frac14}\norm{(1-\widetilde\chi_R)u^g}_{L^{12}_y} + \norm{(1-\widetilde\chi_R)u^g}_{L^6_y} + \norm{\grad ( (1-\widetilde\chi_R) u^g)}_{L^2_y}  &\lesssim R^{-\frac32} ,\label{velgL6}\\
R^{\frac14}\norm{(1-\widetilde\chi_R)u^{c1}}_{L^{12}_y} + \|(1 - \widetilde\chi_R)u^{c1}\|_{L^6_y} + \|\nabla\left((1 - \widetilde\chi_R)u^{c1}\right)\|_{L^2_y} &\lesssim R^{-\frac32}\|\omega\|_{X_T},\label{VelocityL6-1}\\
\hspace{1cm}R^{\frac14}\norm{(1-\widetilde\chi_R)u^{c2}}_{L^{12}_y} + \|(1 - \widetilde\chi_R)u^{c2}\|_{L^6_y} + \|\nabla\left((1 - \widetilde\chi_R)u^{c2}\right)\|_{L^2_y} &\lesssim R^{-\frac32} M_c^{-1}\|\omega\|_{X_T}. \label{VelocityL6-2}
\end{align}
\end{itemize}
\end{lemma}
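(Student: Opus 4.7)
The plan is to reduce all estimates to previously-established bounds on the vorticities $\omega^{bi}$ and $\omega^g, \omega^{c1}, \omega^{c2}$, using the classical Biot--Savart / Calder\'on--Zygmund theory and Sobolev embeddings in $\mathbb{R}^3$, augmented in part (b) by a support-separation argument. Throughout, the coordinate-change and cutoff errors from $Q_\Phi$, $P_\Phi$, $\widetilde{\chi}_{4R}$, etc.\ contribute only bounded factors by Lemma \ref{lem:Coefs}.

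For part (a), one invokes $\|\nabla u\|_{L^p_y} \lesssim \|\omega\|_{L^p_y}$ for $1 < p < \infty$ (Calder\'on--Zygmund) together with the Sobolev embeddings $\dot{H}^1(\mathbb{R}^3) \hookrightarrow L^6$, $\dot{W}^{1, 12/5}(\mathbb{R}^3) \hookrightarrow L^{12}$, and $\dot{W}^{1,4/3}(\mathbb{R}^3) \hookrightarrow L^{12/5}$ (the last via the identity $\tfrac{5}{12} = \tfrac{3}{4} - \tfrac{1}{3}$). This gives
\[
\|u^{bi}\|_{L^6_y} + \|\nabla u^{bi}\|_{L^2_y} \lesssim \|\omega^{bi}\|_{L^2_y}, \qquad \|u^{bi}\|_{L^{12}_y} \lesssim \|\nabla u^{bi}\|_{L^{12/5}_y} \lesssim \|\omega^{bi}\|_{L^{12/5}_y} \lesssim \|\nabla \omega^{bi}\|_{L^{4/3}_y},
\]
and one concludes with \eqref{Embedding4}--\eqref{Embedding2} for the $L^2_y$ vorticity bounds and the definition of $\bF_b$ for the $\|\nabla \omega^{bi}\|_{L^{4/3}_y}$ bound, the latter producing the extra $R^{-1/4}$ in the $L^{12}_y$ estimate.

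For part (b), the strategy is a support-based splitting $\omega^\ast = \omega^\ast_{\mathrm{n}} + \omega^\ast_{\mathrm{f}}$ relative to $\{\bd \leq R/4\}$ versus $\{\bd \geq R/4\}$. The near piece $\omega^\ast_{\mathrm{n}}$ is separated by at least $3R/4$ from $\mathrm{supp}(1-\widetilde\chi_R)$, so a direct kernel bound via Young's inequality with the truncated Biot--Savart kernel $K\chi_{|y|\geq 3R/4}$ applies: the explicit norms $\|K \chi_{|y|\geq 3R/4}\|_{L^6_y} \approx R^{-3/2}$, $\|K\chi_{|y|\geq 3R/4}\|_{L^{12}_y} \approx R^{-7/4}$, $\|\nabla K\chi_{|y|\geq 3R/4}\|_{L^2_y} \approx R^{-3/2}$ combined with the uniform $L^1_y$ bounds \eqref{c2-I4-bnd-huh}--\eqref{c2-I4-bnd-huh-1} (and the trivial $\|\omega^g\|_{L^1_y} \lesssim |\alpha|$) produce the target powers of $R$. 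The far piece $\omega^\ast_{\mathrm{f}}$ is very small in $L^2_y$: using that $\langle t^{-1/2}\bd\rangle \gtrsim t^{-1/2}R$ on $\{\bd \geq R/4\}$, the weighted bounds \eqref{c1-l2y}, \eqref{c2-l2y} give
\[
\|\omega^\ast_{\mathrm{f}}\|_{L^2_y} \lesssim t^{(m-1)/2} R^{-m} \|\omega\|_{X_T} \lesssim R^{5m-6} \|\omega\|_{X_T}
\]
by the hypothesis $t \leq T \leq R^{12}$ and $m \geq 2$, so standard Biot--Savart boundedness on $L^2_y$ together with Sobolev embedding handles this contribution negligibly.

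The main technical obstacle will be the cross term $(\nabla \widetilde\chi_R)\, u^\ast$ arising from the product rule in $\nabla((1-\widetilde\chi_R)u^\ast)$: the factor $R^{-1}$ from differentiating the cutoff must be absorbed by a sharp bound on $u^\ast$ in the annular region $\{R \leq \bd \leq 2R\}$. The separated-kernel estimate actually yields a \emph{filament}-type pointwise bound $|u^\ast(y)| \lesssim R^{-1}$ on this annulus (not the naive point-source bound $R^{-2}$), as a consequence of integrating the Biot--Savart kernel along the curve $\Gamma$; Cauchy--Schwarz in the annulus (of volume $\approx R^2$) then produces an $L^2_y$-norm of order $1$ for $u^\ast$ there, giving a contribution $\lesssim R^{-1}$, which is consistent with the target $R^{-3/2}$.
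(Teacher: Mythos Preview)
Your treatment of part (a) and the $L^6$/$\nabla L^2$ estimates in part (b) is essentially the same as the paper's: a near/far splitting of the vorticity at scale $R/4$, Young's inequality with the truncated Biot--Savart kernel for the near piece, and Calder\'on--Zygmund plus Sobolev for the far piece. Your filament pointwise bound for the cross term $(\nabla\widetilde\chi_R)u^\ast$ is a legitimate alternative, though the paper handles that term more directly by folding it into the Young estimate $R^{-1}\|\chi_{\{|y|\gtrsim R\}}K\|_{L^2_y}\approx R^{-3/2}$ for the near piece and absorbing the $R^{-1}$ loss into the overwhelming smallness of the far piece.

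There is, however, a genuine gap in your $L^{12}_y$ estimate for the far piece in part (b). You write that ``standard Biot--Savart boundedness on $L^2_y$ together with Sobolev embedding handles this contribution negligibly,'' but Calder\'on--Zygmund on $L^2_y$ followed by Sobolev embedding only yields $\|u^\ast_{\mathrm f}\|_{L^6_y}\lesssim\|\omega^\ast_{\mathrm f}\|_{L^2_y}$; it does not reach $L^{12}_y$. To get $L^{12}_y$ via Hardy--Littlewood--Sobolev one would need $\|\omega^\ast_{\mathrm f}\|_{L^{12/5}_y}$, and since $12/5>2$ this is not obtainable from the $L^2_y$ bound by H\"older on the (bounded-volume) support. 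Moreover there is no separation between $\mathrm{supp}\,\omega^\ast_{\mathrm f}\subset\{\bd\geq R/4\}$ and $\mathrm{supp}(1-\widetilde\chi_R)\subset\{\bd\geq R\}$, so a kernel-truncation argument is not available either.

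The paper closes this gap by a further decomposition of the far-piece velocity into $(1-\widetilde\chi_{6R})I_1^\ast$ and $\widetilde\chi_{6R}I_1^\ast$. The first has genuine separation from $\mathrm{supp}\,\omega^\ast_{\mathrm f}\subset\{\bd\leq 4R\}$ and is treated by Young's inequality. The second lies inside the tubular neighborhood, where one passes to straightened coordinates and invokes the anisotropic kernel bound of Lemma~\ref{lem:KernelBounds} (the Biot--Savart kernel behaves like $|x-x'|^{-1}\langle\zeta-\zeta'\rangle^{-2}$ after the $z$-Fourier transform), which permits a Hardy--Littlewood--Sobolev estimate in $L^2_zL^{12}_x$. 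This step is not a routine isotropic argument and is the one ingredient your outline is missing.
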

\begin{rem}
Again we note that from the assumptions \eqref{Size}, \eqref{ConstantOrder} we obtain from part (a) and (b):
\begin{equation}
\begin{aligned}
&R^{\frac14}t^{\frac14}\|(1 - \widetilde\chi_R)(u - \alpha u^g)\|_{L^{12}_y} + t^{\frac14}\|(1 - \widetilde\chi_R)(u - \alpha u^g)\|_{L^6_y} + t^{\frac14}\|\nabla ((1 - \widetilde\chi_R)(u -\alpha u^g) )\|_{L^2_y}\\
&\qquad \lesssim \left(T^{\frac14}R^{-\frac32} + M_{b1}^{-1}\right)\|\omega\|_{X_T},
\end{aligned}
\label{uDiffSummary}\end{equation}
which we note has additional smallness over the estimate \eqref{vDiffSummary} for the velocity near the core, provided we choose \(T\ll_R1\) and \(M_{b1}\gg1 \).
\end{rem}
\begin{proof}~

\begin{itemize}
\item[(a)] We observe that by Sobolev embedding and the boundedness of Riesz transforms on \(L^2_y\) we have
\[
\|u^{b1}\|_{L^6_y}\lesssim \|\nabla u^{b_1}\|_{L^2_y}\lesssim \|\omega^{b1}\|_{L^2_y},
\]
and by the Hardy-Littlewood-Sobolev Lemma and Sobolev embedding,
\[
\|u^{b1}\|_{L^{12}_y}\lesssim \|\omega^{b1}\|_{L^{12/5}_y}\lesssim \|\nabla\omega^{b1}\|_{L^{4/3}_y}.
\]
The estimate \eqref{ineq:vbLp-1} then follows from the estimate \eqref{Embedding4}, the definition of the \(\bN_b^\beta\) norm and Lemma~\ref{lem:Coefs}. Similarly, the estimate \eqref{ineq:vbLp-2} follows from \eqref{Embedding2} and the definition of the \(\bF_b\) norm.
\item[(b)] For estimates \eqref{velgL6}--\eqref{VelocityL6-2}, we first note that by Sobolev embedding, the $L^6_y$ estimates follow from the \(L^2_y\) gradient bounds.

Let us first prove these $L^2_y$ gradient bounds and then return to the $L^{12}_y$ estimates. For \(*=g,c1,c2\) we decompose
\[
(1 - \widetilde\chi_R)u^* = \underbrace{(1 - \widetilde\chi_R)(-\Delta)^{-1}\nabla\times Q_\Phi((\chi_{2R} - \chi_{\frac R4})\eta^*)}_{I_1^*} + \underbrace{(1 - \widetilde\chi_R)(-\Delta)^{-1}\nabla\times Q_\Phi(\chi_{\frac R4}\eta^*)}_{I_2^*}.
\]
For the first of these we use H\"older's inequality followed by the Hardy-Littlewood-Sobolev lemma and boundedness of Riesz transforms on \(L^2\) with Lemma~\ref{lem:Coefs} to control the change of variables to obtain
\begin{align*}
\|\nabla I_1^*\|_{L^2_y} &\lesssim \|\nabla(-\Delta)^{-1}\nabla\times Q_\Phi((\chi_{2R} - \chi_{\frac R4})\eta^*)\|_{L^2_y} + R^{-1}\|(-\Delta)^{-1}\nabla\times Q_\Phi((\chi_{2R} - \chi_{\frac R4})\eta^*)\|_{L^2_y}\\
&\lesssim \|\nabla(-\Delta)^{-1}\nabla\times Q_\Phi((\chi_{2R} - \chi_{\frac R4})\eta^*)\|_{L^2_y} + \|(-\Delta)^{-1}\nabla\times Q_\Phi((\chi_{2R} - \chi_{\frac R4})\eta^*)\|_{L^6_y}\\
&\lesssim \|Q_\Phi((\chi_{2R} - \chi_{\frac R4})\eta^*)\|_{L^2_y}\\
&\lesssim\|(1 - \widetilde\chi_{\frac R4})\omega^*\|_{L^2_y}
\end{align*}
We may then apply the estimates \eqref{c2-I4-bnd-also}, \eqref{c2-I4-bnd} and
\[
\|(1 - \widetilde\chi_{\frac R4})\omega^g\|_{L^2_y}\lesssim_k T^{\frac{k-1}2}R^{-k}\quad\text{for any}\quad k\geq 0,
\]
in the cases \(*=c1,c2,g\) respectively. Using the hypothesis \eqref{Size} with the fact that \(m\geq 2\) it is then clear that \(T^{\frac{m-1}2}R^{-m}\leq R^{-\frac32}\).

For the second term we instead use Young's inequality to bound
\[
\|\nabla I_2^*\|_{L^2_y} \lesssim \left(R^{-1}\|\chi_{\{|y|\gtrsim R\}}K\|_{L^2_y} + \|\chi_{\{|y|\gtrsim R\}}\nabla K\|_{L^2_y}\right)\|Q_\Phi(\chi_{\frac R4}\eta^*)\|_{L^1_y}\lesssim R^{-\frac32}\|\widetilde \chi_{\frac R4}\omega^*\|_{L^1_y}.
\]
where \(K(y) = \frac1{4\pi}\frac{y\times}{|y|^3}\) is the \(3d\) Biot-Savart kernel and \(\chi_{\{|y|\gtrsim R\}}\) is a smooth bump function adapted to the set \(\{|y|\gtrsim R\}\). We may then apply the estimates \eqref{c2-I4-bnd-huh}, \eqref{c2-I4-bnd-huh-1} and
\[
\|\widetilde \chi_{\frac R4}\omega^g\|_{L^1_y}\lesssim 1,
\]
in the cases \(*=c1,c2,g\) respectively.

Next, we turn to the $L^{12}_y$ estimates. For the \(I_2^*\) piece we proceed similarly to before, using Young's inequality to estimate
\[
\|I_2^*\|_{L^{12}_y}\lesssim \|\chi_{\{|y|\gtrsim R\}}K\|_{L^{12}_y}\|Q_\Phi(\chi_{\frac R4}\eta^*)\|_{L^1_y}\lesssim R^{-\frac74}\|\widetilde \chi_{\frac R4}\omega^*\|_{L^1_y}.
\]

For the \(I_1^*\) piece we further decompose as
\[
I_1^* = (1 - \widetilde\chi_{6R})I_1^* + \widetilde\chi_{6R}I_1^*.
\]
For the first piece we proceed similarly to the \(I_2^*\) piece, using the separation of supports and H\"older's inequaity to bound
\[
\|(1 - \widetilde\chi_{6R})I_1^*\|_{L^{12}_y}\lesssim \|\chi_{\{|y|\gtrsim R\}}K\|_{L^{12/7}_y}\|Q_\Phi((\chi_{2R} - \chi_{\frac R4})\eta^*)\|_{L^2_y}\lesssim R^{-\frac14} \|(1 - \widetilde\chi_{\frac R4})\omega^*\|_{L^2_y}.
\]
For the second piece, we instead use Lemmas~\ref{lem:Coefs},~\ref{lem:KernelBounds} with the Hardy-Littlewood-Sobolev and H\"older inequalities to change variables and bound
\[
\|\widetilde\chi_{6R}I_1^*\|_{L^{12}_y}\lesssim \|P_\Phi^{-1}(\widetilde\chi_{6R}I_1^*)\|_{L^2_zL^{12}_x}\lesssim \|(\chi_{2R} - \chi_{\frac R4})\eta^*\|_{L^2_zL^{12/7}_x}\lesssim R^{\frac16} \|(1 - \widetilde\chi_{\frac R4})\omega^*\|_{L^2_y}.
\]
\end{itemize}
\end{proof}

\subsection{Estimates on the core corrections, $\omega^c$} 
\subsubsection{Estimates on $\eta^{c1}$}
In this subsection we prove the estimate \eqref{ineq:wc1bd}.
For convenience, we first decompose \(f^{c1}\) into linear, non-linear and inhomogeneous parts as
\begin{align*}
f^{c1} & = f^{c1}_L + f^{c1}_N + f^{c1}_I, 
\end{align*}
where 
\begin{align*}
f^{c1}_L & := \alpha B[v^g - \bar v^g, \eta^{c1}] + \alpha B[v^g, \eta^{c2}] + B[v - \alpha v^g - (-\Delta)^{-1}\nabla\times \eta^{c1}, \alpha\eta^g],\\
f^{c1}_N & := B[v - \alpha v^g, \eta^{c1} + \eta^{c2}],
\end{align*}
and we recall from \eqref{c1InhomogeneousTerm} that
\[
f^{c1}_I = \alpha^2 B[v^g - \chi_R\bar v^g,\eta^g]. 
\]
As in the straight filament case (see \eqref{Bilg} above) under the self-similar coordinate transform
\begin{align*}
B[f,g] = \Div\left( f \otimes g - g \otimes f \right) \mapsto \oDiv \left( F \otimes G - G \otimes F \right) =: \overline{B}[F,G]. 
\end{align*}

As a direct corollary of Theorem~\ref{prop:LinearEstimates} we obtain the following lemma: 
\begin{lemma}
Taking \(\mu = \mu(\alpha)\) as in Theorem~\ref{prop:LinearEstimates}, the following estimate holds for all \(-\infty<s,\tau\leq \ln T\):
\begin{align}
\norm{\brak{\onabla}^{\beta} S(\tau,s) \overline{B}[V,H]}_{B_z L^2_\xi (m)} & \lesssim_{\beta,\alpha}\frac{e^{-\mu(\tau-s)}}{a(\tau-s)^{\frac34 + \frac\beta2}} \norm{V}_{B_zL^4_\xi}\norm{H}_{B_z L^2_\xi(m)}. \label{ineq:SB}
\end{align}
\end{lemma}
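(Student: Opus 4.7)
The plan is to reduce the claim directly to Theorem~\ref{prop:LinearEstimates}. First I would rewrite
$\overline{B}[V,H] = \oDiv F$ with $F = V \otimes H - H \otimes V$, which is antisymmetric, and hence automatically satisfies $\oDiv\,\oDiv F = 0$, the hypothesis needed to invoke the smoothed bounds \eqref{SmoothedBound} and \eqref{SmoothedBound1}. Next I would estimate the tensor $F$ in $B_z L^{4/3}_\xi(m)$ by Hölder's inequality: since $B_z$ is a Wiener algebra (i.e.\ $\|fg\|_{B_z X} \lesssim \|f\|_{B_z Y}\|g\|_{B_z Z}$ whenever $Y \cdot Z \hookrightarrow X$ via Hölder), and since $L^4_\xi \cdot L^2_\xi(m) \hookrightarrow L^{4/3}_\xi(m)$, one obtains
\[
\|F\|_{B_z L^{4/3}_\xi(m)} \lesssim \|V\|_{B_z L^4_\xi}\|H\|_{B_z L^2_\xi(m)}.
\]

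With this in hand, I would apply Theorem~\ref{prop:LinearEstimates} with $p = 4/3$. The estimate \eqref{SmoothedBound} yields the case $\beta = 0$ of the desired bound, with time singularity $a(\tau-s)^{-3/4}$; the estimate \eqref{SmoothedBound1} gives the case ``$\beta = 1$'' with time singularity $a(\tau-s)^{-5/4}$. Interpolating between the endpoints $\beta = 0$ and $\beta = 1$ produces the claimed $a(\tau-s)^{-3/4 - \beta/2}$ weight.

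The one step that requires a little care, and which I expect to be the only real obstacle, is the interpolation producing $\brak{\onabla}^\beta$ from the two endpoint bounds inside the weighted, anisotropic norm $B_z L^2_\xi(m)$. The operator $\brak{\onabla}^\beta$ is a $\tau$-dependent Fourier multiplier (equal, after the $z$-Fourier transform at frequency $\zeta$, to $(1+|\nabla_\xi|^2+ e^\tau\zeta^2)^{\beta/2}$ acting in $\xi$), while the weight $\<\xi\>^m$ does not commute with $\onabla$. One can circumvent this by using the elementary pointwise inequality $\brak{\cdot}^{2\beta} \leq \brak{\cdot}^2 + 1$ together with Hölder's inequality applied frequency-by-frequency to obtain the abstract interpolation bound
\[
\|\brak{\onabla}^\beta g\|_{B_z L^2_\xi(m)} \lesssim \|g\|_{B_z L^2_\xi(m)}^{1-\beta}\|\onabla g\|_{B_z L^2_\xi(m)}^{\beta} + \|g\|_{B_z L^2_\xi(m)},
\]
and then setting $g = S(\tau,s)\oDiv F$. (Alternatively one may view $\brak{\onabla}^\beta$ as a complex interpolation operator between the identity and $\brak{\onabla}$ on the Hilbert space $B_z L^2_\xi(m)$ and apply the Stein interpolation theorem, with the polynomial weight simply carried through by conjugation.) Combining this inequality with the two endpoint estimates from Theorem~\ref{prop:LinearEstimates} yields \eqref{ineq:SB} upon multiplying time weights as $a(\tau-s)^{-3/4(1-\beta)} \cdot a(\tau-s)^{-5\beta/4} = a(\tau-s)^{-3/4 - \beta/2}$, and the uniform exponential factor $e^{-\mu(\tau-s)}$ is preserved because the same $\mu = \mu(\alpha)$ appears in both endpoint estimates.
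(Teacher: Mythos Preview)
Your proposal is correct and follows essentially the same route as the paper: observe that $\oDiv\,\oDiv(V\otimes H - H\otimes V)=0$, apply the product law~\eqref{productlaw} to place $F$ in $B_zL^{4/3}_\xi(m)$, and then interpolate \eqref{SmoothedBound} and \eqref{SmoothedBound1} at $p=4/3$. The paper's proof is a one-line reference to exactly these ingredients; your additional discussion of how to justify the fractional interpolation of $\brak{\onabla}^\beta$ in the weighted space is a helpful elaboration of what the paper leaves implicit (and your complex-interpolation alternative is the cleanest way to make it rigorous).
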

\begin{proof}
We observe that for any vector fields \(F,G\) we have \(\oDiv\ \overline B[F,G] = 0\). As a consequence, we may interpolate the estimates \eqref{SmoothedBound}, \eqref{SmoothedBound1} with \(p=\frac43\) and apply the product law~\eqref{productlaw} to obtain the estimate \eqref{ineq:SB}.
\end{proof}

Applying this bilinear estimate together with Lemmas~\ref{lem:bEmbeds-1},~\ref{lem:vgests-1}, we obtain the following lemma, of which the estimate \eqref{ineq:wc1bd} is a direct consequence (after using \eqref{Size} to bound \(T^{\frac14}R^{-\frac12}\lesssim R\ln R^{-1}\)):
\begin{lemma} 
We have the estimates:
\begin{align}
\norm{\brak{\onabla}^{\beta} \int_{-\infty}^\tau S(\tau,s) F^{c1}_L(s) ds}_{B_z L^2_\xi(m)} & \lesssim \left(T^{\frac14}R^{-\frac12} + R\ln R^{-1} + M_{c}^{-1}\right)\|\omega\|_{X_T},\label{fc-1}\\
\norm{\brak{\onabla}^{\beta} \int_{-\infty}^\tau S(\tau,s) F^{c1}_N(s) ds }_{B_z L^2_\xi(m)} & \lesssim\|\omega\|_{X_T}^2\label{fc-2} \\ 
\norm{\brak{\onabla}^{\beta} \int_{-\infty}^\tau S(\tau,s) F^{c1}_I(s) ds }_{B_z L^2_\xi(m)} &\lesssim T^{\frac14}R^{-\frac12} + R\ln R^{-1}\label{fc-3}
\end{align}
\end{lemma}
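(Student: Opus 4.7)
The plan is to apply the bilinear estimate \eqref{ineq:SB} to each term of $F^{c1} = F^{c1}_L + F^{c1}_N + F^{c1}_I$, and then to bound the resulting time integrals using the velocity estimates from Lemma~\ref{lem:vgests-1} and the vorticity estimates from Lemma~\ref{lem:bEmbeds-1}. Since $\beta < 1/4$ we have $3/4 + \beta/2 < 1$, so the integral
\[
\int_{-\infty}^\tau \frac{e^{-\mu(\tau-s)}}{a(\tau-s)^{3/4 + \beta/2}}\, ds \lesssim 1
\]
is uniformly convergent: integrable singularity at $s = \tau$ and exponential decay as $s \to -\infty$. Every term then reduces to a product $\sup_s \|V_\ast\|_{B_zL^4_\xi}\,\sup_s \|H_\ast\|_{B_zL^2_\xi(m)}$ of the two factors appearing in the bilinear form.

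For \eqref{fc-1}, I will treat the three pieces of $F^{c1}_L$ in turn. For $\bar B[V^g - \bar V^g, H^{c1}]$, I split $V^g - \bar V^g = (V^g - \chi_R \bar V^g) + (\chi_R - 1)\bar V^g$ and bound the two pieces using \eqref{i:vgdiff} and \eqref{i:vgsep} respectively (the cutoff $(1 - \chi_R)$ meets the Gaussian $g$ far from the origin, contributing subcritical smallness under \eqref{Size}), while $H^{c1}$ is controlled by $\|\omega\|_{X_T}$ via the $\bN_c^\beta$ norm; this produces the factor $T^{1/4} R^{-1/2} + R\ln R^{-1}$. For $\bar B[V^g, H^{c2}]$, I use $\|V^g\|_{B_zL^4_\xi}\lesssim 1$ from \eqref{i:vg-nobar} and the $M_c^{-1}\|\omega\|_{X_T}$ bound on $H^{c2}$ from \eqref{c2-I3-bnd}, giving the $M_c^{-1}$ contribution. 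For the third piece, I decompose
\[
V - V^g - (-\oDelta)^{-1}\onabla\times H^{c1} = \bigl[V^{c1} - \chi_R(-\oDelta)^{-1}\onabla\times H^{c1}\bigr] + (\chi_R - 1)(-\oDelta)^{-1}\onabla\times H^{c1} + V^{c2} + V^{b1} + V^{b2},
\]
handling the first bracket by \eqref{i:vcdiff}, and the remaining terms by \eqref{ineq:uc2}, \eqref{ineq:vb1}, \eqref{ineq:vb2} combined with the rapid decay of $H^g$ (which absorbs the $(1-\chi_R)$ cutoff since $H^g \sim G$ is Gaussian and the cutoff lives on $|\xi| \gtrsim R e^{-\tau/2}$).

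For \eqref{fc-2}, the nonlinear term $F^{c1}_N = \bar B[V - V^g, H^{c1} + H^{c2}]$ is handled by combining the velocity bound \eqref{vDiffSummary} (or equivalently, summing \eqref{ineq:uc1}, \eqref{ineq:uc2}, \eqref{ineq:vb1}, \eqref{ineq:vb2} using \eqref{ConstantOrder}) which gives $t^{1/4}\|v - v^g\|_{B_zL^4_x} \lesssim \|\omega\|_{X_T}$, with the $\bN_c^\beta$ and $\bN_c^0$ bounds on $H^{c1}$ and $H^{c2}$; the product structure yields $\|\omega\|_{X_T}^2$. For \eqref{fc-3}, the inhomogeneous term $F^{c1}_I = \bar B[V^g - \chi_R \bar V^g, H^g]$ is a direct application of \eqref{ineq:SB} with \eqref{i:vgdiff} and the uniform bound $\|H^g\|_{B_zL^2_\xi(m)} \lesssim 1$.

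The main obstacle is the bookkeeping for the third piece of $F^{c1}_L$, where the ``error'' velocity $V - V^g - (-\oDelta)^{-1}\onabla \times H^{c1}$ must be decomposed to isolate: (a) the intrinsic mismatch between the curved and straight Biot-Savart laws near the filament (bounded via \eqref{i:vcdiff}), (b) the far-field tail $(1-\chi_R)(-\oDelta)^{-1}\onabla \times H^{c1}$ which must be paired against the Gaussian decay of $H^g$ to produce a harmless contribution, and (c) the background contributions from $V^{b1}, V^{b2}$ whose sizes $M_{b1}^{-1}$ and $R^{-3/4}M_{b2}^{-1}$ are dominated by the constants in \eqref{ConstantOrder}. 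Once these decompositions are set up, the bounds combine cleanly and after absorbing $T^{1/4}R^{-1/2} \lesssim R\ln R^{-1}$ via \eqref{Size}, yield the claimed estimates.
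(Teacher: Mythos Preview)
Your proposal is correct and follows essentially the same approach as the paper: apply the bilinear estimate \eqref{ineq:SB}, reduce to $\sup_s\|V_\ast\|_{B_zL^4_\xi}\cdot\sup_s\|H_\ast\|_{B_zL^2_\xi(m)}$, and invoke the velocity bounds of Lemma~\ref{lem:vgests-1} together with the definition of $\|\omega\|_{X_T}$. One minor imprecision: you describe $g$ as ``Gaussian'' when in fact $|g(\xi)|\sim|\xi|^{-1}$ at infinity; the smallness of $(1-\chi_R)\bar V^g$ in $B_zL^4_\xi$ comes not from rapid decay but from the self-similar time weight, yielding $e^{\tau/4}R^{-1/2}\leq T^{1/4}R^{-1/2}$ exactly as in \eqref{i:vgsep}. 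Your explicit treatment of the tail $(1-\chi_R)(-\oDelta)^{-1}\onabla\times H^{c1}$ via the Gaussian decay of $H^g$ is a detail the paper leaves implicit (it simply cites \eqref{i:vcdiff} and \eqref{vDiffDiffSummary}); under \eqref{Size} this term is exponentially negligible, so either way the bound holds.
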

\begin{proof}
We prove the estimate \eqref{fc-1}, the estimates \eqref{fc-2}, \eqref{fc-3} are similar. We will apply the linear estimate \eqref{ineq:SB} with the velocity estimates of Lemma~\ref{lem:vgests-1}, noting that by rescaling
\[
\|V^*(\tau)\|_{B_zL^4_\xi} = t^{\frac14}\|v^*(t)\|_{B_zL^4_x},
\]
and the estimate
\[
\|H^{c1}\|_{B_zL^2_\xi(m)} + M_{c}\|H^{c2}\|_{B_zL^2_\xi(m)}\leq \|\omega\|_{X_T},
\]
which follows directly from the definition of the norm.

In particular we may apply the estimates \eqref{i:vgdiff}, \eqref{i:vgsep} to bound
\[
\|V^g - \bar V^g\|_{B_zL^4_\xi} \lesssim T^{\frac14}R^{-\frac12} + R\ln R^{-1},
\]
the estimate \eqref{i:vg-nobar} to bound
\[
\|V^g\|_{B_zL^4_\xi}\lesssim 1
\]
and the estimates \eqref{i:vcdiff}, \eqref{vDiffDiffSummary} to bound
\[
\|V - \alpha V^g - (-\oDelta)^{-1}\onabla\times H^{c1}\|_{B_zL^4_\xi}\lesssim \left(T^{\frac14}R^{-\frac12} + R\ln R^{-1} + M_{c}^{-1} \right)\|\omega\|_{X_T}.
\]
As a consequence, there holds
\begin{align*}
&\norm{\<\onabla\>^{\beta} \int_{-\infty}^\tau S(\tau,s) F^{c1}_L(s) ds }_{B_z L^2_\xi(m)}\\
&\qquad \lesssim
\left(T^{\frac14}R^{-\frac12} + R\ln R^{-1} + M_{c}^{-1}\right)\|\omega\|_{X_T} \left(\sup_\tau\int_{-\infty}^\tau\frac{e^{ -\mu(\tau-s)}}{a(\tau-s)^{\frac 34 + \frac\beta2}}\,ds\right)\\
&\qquad\lesssim \left(T^{\frac14}R^{-\frac12} + R\ln R^{-1} + M_{c}^{-1}\right)\|\omega\|_{X_T},
\end{align*}
where we note that the integral converges because \(\mu>0\) and \(0<\beta<\frac14\).
\end{proof} 

\subsubsection{Estimates on $\omega^{c2}$}
In this section we prove the estimates \eqref{ineq:wc2-a}, \eqref{ineq:wc2-b}. Again we start by decomposing into a linear, nonlinear and inhomogeneous piece,
\[
f^{c2} = f^{c2}_L + f^{c2}_N + f^{c2}_I,
\]
where (recalling the inhomogeneous term from \eqref{c2InhomogeneousTerm}), 
\begin{align*}
f^{c2}_L&:= B[(1 - \widetilde\chi_R)\alpha u^g,\omega^{c1} + \omega^{c2}] + B[(1 - \widetilde\chi_R)(u - \alpha u^g),\alpha \omega^g] + \mc E^c,\\
f^{c2}_N &:= B[(1 - \widetilde\chi_R)(u - \alpha u^g),\omega^{c1} + \omega^{c2}], \\ 
f^{c2}_I & = B[(1 - \widetilde\chi_R)\alpha u^g,\alpha \omega^g] + \alpha\mc E^g.
\end{align*}

We begin with the estimates on $\bF_c$. 

\begin{lem}\label{lem:ErrorHandling}
We have the estimates:
\begin{align}
\norm{\int_0^t e^{(t - s)\Delta} f^{c2}_L(s)\,ds}_{\bF_c}& \lesssim \left(T^{\frac14}R^{-2} + R^{-\frac12}M_{b1}^{-1}\right)\|\omega\|_{X_T},\\
\norm{\int_0^t e^{(t - s)\Delta} f^{c2}_N(s)\,ds}_{\bF_c}
&\lesssim \left( T^{\frac14}R^{-2} + R^{-\frac12}M_{b_1}^{-1} \right)\|\omega\|_{X_T}^2,\\
\norm{\int_0^t e^{(t - s)\Delta} f^{c2}_I(s)\,ds}_{\bF_c}&\lesssim T^{\frac 14}R^{-2}.
\end{align}
\end{lem}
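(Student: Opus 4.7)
The plan is to bound each of the three semigroup integrals in $\bF_c$ by applying weighted $L^3_y$ smoothing estimates for $e^{(t-s)\Delta}$ and then inserting the vorticity and velocity bounds of Lemmas \ref{lem:bEmbeds-2} and \ref{lem:vgests-2}. Every bilinear summand of $f^{c2}$ has the divergence form $B[u_\ast,\omega_\ast]=\Div(u_\ast\otimes\omega_\ast-\omega_\ast\otimes u_\ast)$, so I can pull the divergence out of the heat operator and use the standard smoothing $\|e^{(t-s)\Delta}\Div F\|_{L^3_y}\lesssim (t-s)^{-\frac{1}{2}-\frac{3}{2}(\frac{1}{p}-\frac{1}{3})}\|F\|_{L^p_y}$ for a suitable $p$. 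The time-dependent weight $\brak{\bd/\sqrt t}^m$ appearing in $\bF_c$ is shifted onto the input at a comparable weight $\brak{\bd/\sqrt s}^m$ at the cost of lower-order terms, using that $\bd$ is Lipschitz and that the heat kernel spreads at speed $\sqrt{t-s}$. Crucially, the cutoff $(1-\widetilde\chi_R)$ confines every integrand to $\{\bd\geq R\}$, cleanly separated from the core.

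For the linear piece $f^{c2}_L$, the term $B[(1-\widetilde\chi_R)u^g,\omega^{c1}+\omega^{c2}]$ is handled by a H\"older pairing: the velocity is placed in the $L^6_y\cap L^{12}_y$ bound \eqref{velgL6}, $\omega^{c1}$ and $\widetilde\chi_{8R}\omega^{c2}$ in the weighted $L^2_y$ bounds \eqref{c1-l2y}--\eqref{c2-l2y}, and $(1-\widetilde\chi_{6R})\omega^{c2}$ in the weighted $L^3_y$ bound built into the $\bF_c$ norm. Combining these with the smoothing factor yields the $T^{1/4}R^{-2}$ contribution. The second term $B[(1-\widetilde\chi_R)(u-u^g),\omega^g]$ uses \eqref{uDiffSummary} for the velocity difference together with the Gaussian decay of $\omega^g$, which is responsible for the $R^{-1/2}M_{b1}^{-1}$ summand. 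The nonlinear $f^{c2}_N$ is treated identically, now extracting one $\|\omega\|_{X_T}$ from each factor to produce the quadratic bound. The inhomogeneous $f^{c2}_I$ is easiest: on $\{\bd\geq R\}$, $\omega^g$ decays as $e^{-cR^2/t}$, which together with $T\leq R^{12}$ renders any polynomial loss in $R$ innocuous and leaves only the $T^{1/4}R^{-2}$ bound.

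The geometric errors $\mc E^c$ and $\mc E^g$ need separate treatment. By Lemma \ref{lem:Coefs} the difference $\Delta-\Delta_\Phi$ equals $A^{ij}\partial_i\partial_j+B^j\partial_j+C$ with $|A|\lesssim|x|$ and $B^j,C$ bounded, so the worst term in $\mc E^c$ is $O(|x|)\nabla^2\eta^{c1}$ supported on $\{|x|\leq 4R\}$. In self-similar variables this acquires an $O(\sqrt t)$ prefactor and so is effectively subcritical, integrable in time against the smoothing factor $(t-s)^{-1/2}$, producing the desired $T^{1/4}$-small scaling. For $\mc E^g$ the explicit form of $\eta^g$ gives the analogous bound directly. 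The main obstacle I anticipate is not mere boundedness but the \emph{sharp} $R$ and $T$ exponents: the transition region $\{4R\leq\bd\leq 6R\}$, where $\omega^{c1}$ has vanished but $(1-\widetilde\chi_{6R})$ is not yet active, forces a smoothing-based passage from the anisotropic $B_zL^2_x$ control of $\omega^{c1}$ to the isotropic $L^3_y$ output norm, and one must verify that this costs no more than $R^{-2}$. Likewise, the second-derivative loss in $\mc E^c$ is only just saved by the $O(|x|)$ coefficient, so the weighted estimate on $\eta^{c1}$ must be invoked at exactly the critical power of $\brak{\cdot}^m$.
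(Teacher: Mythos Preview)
Your overall architecture---weighted $L^3_y$ smoothing for $e^{(t-s)\Delta}\Div$, H\"older pairings using Lemmas~\ref{lem:bEmbeds-2} and~\ref{lem:vgests-2}, and the weight-transfer inequality $\brak{x/\sqrt t}^m e^{-c|x-y|^2/(t-s)}\lesssim\brak{y/\sqrt s}^m$---matches the paper's proof and is correct for the bilinear terms involving $\omega^{c2}$.

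There is, however, a genuine gap in your handling of $\mc E^c$ (and the analogous terms $\mc E^g$, $B[\cdot,\omega^g]$, $B[\cdot,\omega^{c1}]$). You propose to absorb the second-derivative loss in $A^{ij}\partial_i\partial_j\eta^{c1}$ via the ``subcritical $O(\sqrt t)$ prefactor'' coming from $|A|\lesssim|x|$ and then integrate against a smoothing factor $(t-s)^{-1/2}$. This does not close: since $\eta^{c1}\in\bN_c^\beta$ controls only $\beta<\tfrac14$ derivatives, the heat semigroup must supply $2-\beta$ derivatives plus the $L^2_y\to L^3_y$ gain, giving a factor $(t-s)^{-5/4+\beta/2}$, which is \emph{not} integrable near $s=t$. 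The $O(\sqrt s)$ or $O(R)$ coefficient cannot repair this.

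What you are missing is the separation-of-supports mechanism built into the $\bF_c$ norm itself. By definition, $\bF_c$ carries the cutoff $(1-\widetilde\chi_{6R})$, so the output lives on $\{\bd\ge 6R\}$, while $\mc E^c$, $\mc E^g$, $\omega^g$, and $\omega^{c1}$ are all supported on $\{\bd\le 4R\}$. The heat kernel connecting these regions therefore contributes an extra factor $e^{-cR^2/(t-s)}$, which converts \emph{any} power $(t-s)^{-k}$ into $R^{-2k}$ times an integrable singularity. This is exactly how the paper obtains, for instance,
\[
\norm{\int_0^t e^{(t-s)\Delta}\mc E^c\,ds}_{\bF_c}\lesssim \sup_t t^{\frac12}\int_0^t R^{-2}(t-s)^{-\frac14}\norm{\brak{s^{-1/2}\bd}^m\omega^{c1}}_{L^2_y}\,ds\lesssim T^{\frac34}R^{-2}\norm{\omega}_{X_T},
\]
and similarly the $R^{-1/2}$ in the $B[(1-\widetilde\chi_R)(u-u^g),\omega^g]$ term arises from trading $(t-s)^{1/4}$ against $e^{-cR^2/(t-s)}$, not from Gaussian decay of $\omega^g$. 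Your closing paragraph correctly flags the anisotropic-to-isotropic transition as delicate, but misidentifies the resolution: the gap between $\{\bd\le 4R\}$ and $\{\bd\ge 6R\}$ is the \emph{solution}, not an additional obstacle. (Your subcritical argument is, in fact, exactly what the paper uses for the $\bN_c^0$ estimate in the following lemma, where no such separation is available.)
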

\begin{proof}
We start by noting the following: if $x,y \in \mathbb{R}^2$, $0<s<t$ we have for any $c > 0$,
$$\left\langle \tfrac{x}{\sqrt t} \right\rangle^m e^{-c \frac{|x-y|^2}{t-s}} \lesssim_c \left\langle \tfrac{y}{\sqrt s} \right\rangle^{m},$$
which, combined with Lemma~\ref{lem:PhiDiff}, will allow us to freely pass the spatial weights through the heat propagator.

Using this we consider the error terms $\mc E^c$ and $\mc E^g$ that are supported in the set \(\{\bd \leq 4R\}\). As they are supported near the filament, there will be a significant gain from the separation of supports (see Figure~\ref{fig:c2supports}). From the definition we observe that
\[
Q_\Phi^{-1}\mc E^c = - [\Delta_\Phi,\chi_{2R}]\eta^{c1} - \chi_{2R}\left(\Delta_\Phi - \Delta\right)\eta^{c1}.
\]
From Lemmas~\ref{lem:Geometry},~\ref{lem:Coefs} we then see that it is possible to write
\begin{align} 
Q_\Phi^{-1}\mc E^c = \nabla_{x,z}^\gamma\left(C_\gamma \eta^{c1}\right), \label{eq:EcDecom}
\end{align}
where the summation is taken over multi-indices \(\gamma\in \N^3\) satisfying \(|\gamma|\leq 2\) and the smooth matrix-valued functions \(C_\gamma\) are supported in \(\{|x|\leq 4R\}\) and satisfy
\eq{i:cg-1}{
|\nabla_{x,z}^\mu C_2|\lesssim R^{1-|\mu|},\qquad |\nabla_{x,z}^\mu C_1|\lesssim R^{-1-|\mu|},\qquad|\nabla_{x,z}^\mu C_0|\lesssim R^{-2-|\mu|},
}
for all multi-indices \(\mu\in \N^3\), where we use \(C_2\) to denote all terms with \(|\gamma|=2\), etc. Due to the separation of the support of \(\mc E^c\) and $(1-\widetilde\chi_{6R})$ as well as the assumption that \(0<R\leq 1\) we have the estimate
\begin{align*}
\norm{\int_0^t e^{(t - s)\Delta} \cE^{c} ds}_{\bF_c}&=\sup\limits_{t\in[0,T]}t^{\frac12}\norm{\<t^{-\frac12}\bd\>^m(1 - \widetilde\chi_{6R})\int_0^t e^{(t - s)\Delta} \cE^{c} ds}_{L^3_y}\\
& \lesssim \sup\limits_{t\in[0,T]}t^{\frac{1}{2}} \int_0^t R^{-2}(t-s)^{-\frac14} \norm{\brak{s^{-\frac{1}{2}}\bd}^{m} \omega^{c1}}_{L^2_y} ds  \\ 
& \lesssim T^{\frac 34}R^{-2} \norm{\omega}_{X_T}.
\end{align*}
where the last inequality follows from the estimate \eqref{c1-l2y}. For clarity, we illustrate this bound in more detail with the \(\gamma = 0\) term:
\begin{align*}
\norm{\<t^{-\frac12}\bd\>^m\int_0^t e^{(t - s)\Delta} Q_\Phi\left(C_0 \eta^{c1}\right)ds}_{L^3_y} &\lesssim \int_0^t(t - s)^{-\frac14}e^{-c\frac{R^2}{t-s}}\norm{\<s^{-\frac12}x\>^mC_0 \eta^{c1}}_{L^2_{x,z}}\,ds\\
&\lesssim \int_0^t R^{-2}(t - s)^{-\frac14}e^{-c\frac{R^2}{t-s}}\norm{\<s^{-\frac12}x\>^m\eta^{c1}}_{L^2_{x,z}}\,ds.
\end{align*}

Consider next the remaining terms in $f^{c2}_L$. First turn to the contribution of $\omega^{c2}$. 
Here we apply the estimate \eqref{velgL6} for \(u^g\) to bound
\begin{align*}
&\norm{\int_0^t e^{(t - s)\Delta} B[(1 - \widetilde\chi_R)u^g,\omega^{c2}]\,ds}_{\bF_c}\\
&\qquad \lesssim 
\sup\limits_{t\in[0,T]} t^{\frac 12}\int_0^t (t-s)^{-\frac{3}{4}} \norm{(1 - \widetilde\chi_R)u^g}_{L^{12}_y} \norm{\brak{s^{-\frac{1}{2}}x}^m Q_\Phi^{-1}( \widetilde\chi_{6R}\omega^{c2})}_{L^\infty_zL^2_x}\, ds \\ 
&\qquad \quad + \sup\limits_{t\in[0,T]}t^{\frac 12}\int_0^t(t-s)^{-\frac{3}{4}} \norm{(1 - \widetilde\chi_R)u^g}_{L^6_y} \norm{\brak{s^{-\frac{1}{2}}\bd}^m (1-\widetilde\chi_{6R})\omega^{c2}}_{L^3_y}\, ds \\ 
&\qquad  \lesssim \left(T^{\frac 14}R^{-\frac74} + T^{\frac14}R^{-\frac32}\right) M_c^{-1} \norm{\omega}_{X_T}, 
\end{align*}
where to treat the first term we used from Lemma \ref{lem:StrHeat} (and Lemma \ref{lem:Coefs}), followed by H\"older's inequality, 
\begin{align*}
&\norm{\brak{t^{-\frac12}\bd}^m e^{(t - s)\Delta} B[(1 - \widetilde\chi_R)u^g,\widetilde\chi_{6R}\omega^{c2}]}_{L^3_y}\\
& \qquad\lesssim (t-s)^{-\frac34} \norm{\brak{s^{-\frac12}x}^m P_\Phi^{-1}[(1-\chi_R) u^g] \otimes Q_\Phi^{-1}[\widetilde\chi_{6R}\omega^{c2}]}_{L^{3}_z L^{12/7}_x}\\
&\qquad \lesssim (t-s)^{-\frac34} \norm{P_\Phi^{-1}[(1-\chi_R) u^g]}_{L^{12}_{z,x}} \norm{\brak{s^{-\frac{1}{2}}x}^m Q_\Phi^{-1}[\widetilde\chi_{6R}\omega^{c2}]}_{L^\infty_z L^2_x}. 
\end{align*}
We note that this estimate is likely suboptimal, but it is immediate from the Biot-Savart law in physical variables, whereas the optimal estimate would likely require a more delicate argument.

Next consider the contribution of $\omega^{g}$. Recalling that $\omega^g$ is supported in the set \(\{\bd\leq 4R\}\), hence using  the separation of supports as above and the estimate \eqref{uDiffSummary} we obtain
\begin{align*}
&\norm{\int_0^t e^{(t - s)\Delta} B[(1 - \widetilde\chi_R)(u - \alpha u^g),\alpha \omega^{g}]\,ds}_{\bF_c}\\
&\qquad  \lesssim \sup\limits_{t\in[0,T]}t^{\frac12}\int_0^t R^{-\frac12}(t-s)^{-\frac34} \norm{(1 - \widetilde\chi_R)(u - \alpha u^g)}_{L^{6}_y} \norm{\brak{s^{-\frac12}\bd}^m\omega^{g}}_{L^2_y} ds \\
& \qquad \quad\lesssim \left(T^{\frac14}R^{-2}  + R^{-\frac12}M_{b1}^{-1}\right)\norm{\omega}_{X_T}.   
\end{align*}
Finally, the term involving $\omega^{c1}$  is similar, using the estimate \eqref{velgL6} and the separation of supports to obtain the bound
\[
\norm{\int_0^t e^{(t-s)\Delta} B[(1 - \widetilde\chi_R)u^g,\omega^{c1}]\,ds}_{\bF_c}\lesssim T^{\frac14}R^{-2}\|\omega\|_{X_T}.
\]
Using the assumption that \(0<T,R\leq 1\), this completes the treatment of $f^{c2}_L$.

Next turn to the nonlinear contributions $f^{c2}_N$. Similar to above, the $\omega^{c1}$ contribution is easier due to the separation of supports, hence, we only consider the $\omega^{c2}$ contribution.
Here we may argue as before and apply the estimate \eqref{uDiffSummary} to bound
\begin{align*}
&\norm{\int_0^t e^{(t - s)\Delta} B[(1 - \widetilde\chi_R)(u - \alpha u^g),\omega^{c2}]\,ds}_{\bF_c}\\
& \qquad \lesssim \sup\limits_{t\in[0,T]}t^{\frac12}\int_0^t (t-s)^{-\frac{3}{4}} \norm{(1 - \widetilde\chi_R)(u - \alpha u^g)}_{L^{12}_y} \norm{\brak{s^{-\frac12}x}^mQ_\Phi^{-1}(\widetilde \chi_{6R}\omega^{c2})}_{L^\infty_z L^2_x} ds \\
& \qquad\quad + \sup\limits_{t\in[0,T]}t^{\frac12}\int_0^t (t-s)^{-\frac{3}{4}} \norm{(1 - \widetilde\chi_R)(u -\alpha  u^g)}_{L^{6}_y} \norm{\brak{s^{-\frac12}\bd}^m(1-\widetilde\chi_{6R})\omega^{c2}}_{L^3_y} ds \\ 
& \qquad\lesssim \left(T^{\frac14}R^{-\frac74} + T^{\frac14}R^{-\frac32} +  M_{b1}^{-1}\right)M_c^{-1}\norm{\omega}_{X_T}^2, 
\end{align*}
Using the assumption \eqref{ConstantOrder} and that \(0<R\leq 1\), this completes the treatment of $f_{N}^{c2}$.

Finally, consider $f_{I}^{c2}$. The estimation of $\cE^g$ is similar to the treatment of $\cE^c$ and the $B[(1-\widetilde \chi_R) u^g,\omega^g]$ is estimated as the corresponding terms in $f^{c2}_L$. This completes all of the requisite estimates. 

\end{proof}

We next turn to the estimate of $\omega^{c2}$ in $\bN_c^0$. 
\begin{lem} We have the following estimates:

\begin{itemize}
\item[(i)] If \(f_1\) is supported in \(\{|x|\leq 16R\}\) then:
\begin{equation}
\begin{aligned}
&\left\|Q_\Phi^{-1}\left(\widetilde\chi_{8R}\int_0^te^{(t - s)\Delta}\big(Q_\Phi\Div f_1(s) + \Div f_2(s)\right)\,ds\big)\right\|_{\bN^0_c}\\ &\qquad\qquad \lesssim \sup\limits_{0<s\leq T} \left(s^{\frac34}\|\<s^{-\frac12}x\>^mf_1(s)\|_{B_zL^{4/3}_x} +  s^{\frac34}\|\<s^{-\frac12}\bd\>^m f_2(s)\|_{L^2_y}\right).\label{FarToNear}
\end{aligned}
\end{equation}

\item[(ii)]  If \(\gamma\in \N^3\) is a multi-index satisfying \(|\gamma|\leq 2\) and \(f\) is supported in \(\{|x|\leq 4R\}\) then: 
\begin{align}
\left\|Q_\Phi^{-1}\left(\widetilde\chi_{8R}\int_0^te^{(t - s)\Delta}Q_\Phi\nabla_{x,z}^\gamma  f(s)\,ds\right)\right\|_{\bN^0_c}\lesssim T^{1 - \frac{|\gamma|}2}\|f\|_{\bN^\beta_c}, \label{ErrorContribution-1}
\end{align}
\end{itemize}
\end{lem}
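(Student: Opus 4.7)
The plan for both parts is to reduce the estimates to bounds in the physical variable $y$ (or equivalently the straightened $(x,z)$) via Lemma~\ref{lem:Coefs}, and then apply weighted heat-kernel estimates. By Lemma~\ref{lem:Coefs} the change-of-coordinates factors coming from $Q_\Phi$ and $Q_\Phi^{-1}$ are uniformly bounded near the filament, and by Lemma~\ref{lem:PhiDiff} we have $|x|\approx \bd(\Phi(x,z))$ on the supports of the cutoffs, so
$$
\sqrt{t}\norm{\brak{\tfrac{x}{\sqrt{t}}}^m Q_\Phi^{-1}(\widetilde\chi_{8R}\,\Psi)}_{B_z L^2_x} \lesssim \sqrt{t}\norm{\brak{\tfrac{\bd}{\sqrt{t}}}^m \widetilde\chi_{8R}\,\Psi}_{B_z L^2_y}.
$$

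The key technical ingredient is the weighted heat-kernel estimate
$$
\norm{\brak{\tfrac{\bd}{\sqrt{t}}}^m e^{(t-s)\Delta}\nabla^\alpha h}_{L^p_y} \lesssim (t-s)^{-\frac{3}{2}\left(\frac{1}{q}-\frac{1}{p}\right)-\frac{|\alpha|}{2}}\norm{\brak{\tfrac{\bd}{\sqrt{s}}}^m h}_{L^q_y},
$$
obtained by splitting $\brak{x/\sqrt t}^m\lesssim \brak{(x-y)/\sqrt{t-s}}^m + \brak{y/\sqrt s}^m$ in the convolution representation, exactly as was used in the proof of Lemma~\ref{lem:ErrorHandling} for $\mc E^c$. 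The analogous bound in the mixed norm $B_z L^p_x$ follows by applying the same splitting in the transverse variable $x$ and using that the $B_z$ Wiener algebra absorbs the $z$-direction heat kernel as a uniform $L^\infty_\zeta$-multiplier.

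For part (i), the $f_1$ contribution is estimated after converting to the straightened frame (using Lemma~\ref{lem:Coefs} to handle $Q_\Phi^{-1}$ and the $\Delta$-vs-$\Delta_\Phi$ discrepancy to leading order), applying the heat-divergence smoothing $\|e^{(t-s)\Delta}\Div\|_{B_zL^{4/3}_x\to B_zL^2_x}\lesssim (t-s)^{-3/4}$, and combining with the weighted estimate above. Integrating $\int_0^t (t-s)^{-3/4} s^{-3/4}\,ds\lesssim t^{-1/2}$ and multiplying by the $\sqrt t$ prefactor from $\bN^0_c$ yields a finite constant times the supremum on the right-hand side. The $f_2$ contribution is treated identically in $L^2_y$ using $\|e^{(t-s)\Delta}\Div\|_{L^2_y\to L^2_y}\lesssim (t-s)^{-1/2}$ and $\int_0^t (t-s)^{-1/2} s^{-3/4}\,ds\lesssim t^{-1/4}$; the extra $t^{1/4}$ is absorbed by $T\leq 1$ from \eqref{Size}.

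For part (ii), the plan is to distribute the $|\gamma|$ derivatives between the source $f$ and the heat kernel, placing $\min(\beta,|\gamma|)$ of them on $f$ via the multiplier $\brak{\sqrt s\nabla}^\beta$ built into the $\bN^\beta_c$ norm, and transferring the remaining $(|\gamma|-\beta)_+$ derivatives onto the heat kernel by integration by parts to produce a factor $(t-s)^{-(|\gamma|-\beta)_+/2}$. The $\bN^\beta_c$ bound gives $\norm{\brak{\sqrt s\nabla}^{\min(\beta,|\gamma|)} f(s)}_{L^2(\textup{weighted})}\lesssim s^{-(1+\min(\beta,|\gamma|))/2}\norm{f}_{\bN_c^\beta}$, and the weighted heat-kernel bound applies as in part (i). The time integral evaluates to $t^{1/2-|\gamma|/2}$ for each $|\gamma|\in\{0,1,2\}$ and $\beta\in(0,\tfrac14)$, so after the $\sqrt t$ from $\bN^0_c$ one obtains exactly the scaling $t^{1-|\gamma|/2}\leq T^{1-|\gamma|/2}$. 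The main obstacle will be the case $|\gamma|=2$, which is borderline: the heat kernel can only absorb $2-\beta<2$ derivatives, so $\int_0^t(t-s)^{-1+\beta/2}s^{-(1+\beta)/2}\,ds\lesssim t^{-1/2}$ is only marginally convergent, producing exactly the $T^0$ scaling required to close the estimate, with no room to spare.
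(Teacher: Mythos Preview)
Your treatment of part (ii) is essentially the paper's approach: interpolate so that $\beta$ of the $|\gamma|$ derivatives land on $f$ (via the $\brak{\sqrt s\nabla}^\beta$ built into $\bN^\beta_c$) and the remaining $|\gamma|-\beta$ are absorbed by the heat kernel, yielding the kernel bound $(t-s)^{-|\gamma|/2}((t-s)/s)^{\beta/2}s^{-1/2}$, which is exactly your $(t-s)^{-(|\gamma|-\beta)/2}s^{-(1+\beta)/2}$. The borderline case $|\gamma|=2$ is handled correctly.

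There is, however, a genuine gap in part (i) for the $f_2$ contribution. The norm $\bN^0_c$ is the \emph{anisotropic} $B_z L^2_x$ norm, not $L^2_{x,z}$ or $L^2_y$. Your argument bounds the weighted $L^2_y$ norm of $e^{(t-s)\Delta}\Div f_2$ at rate $(t-s)^{-1/2}$, but this does not control $B_z L^2_x$: there is no embedding $L^2_{x,z}\subset B_z L^2_x$. This is precisely the anisotropic-to-isotropic transition difficulty flagged as issue \textbf{(b)} in Section~\ref{sec:curvedIntro}. Your displayed inequality $\norm{\cdot}_{B_z L^2_x}\lesssim\norm{\cdot}_{B_z L^2_y}$ is not meaningful (the right-hand side mixes the anisotropic $B_z$ with the isotropic variable $y$) and in any case does not supply the missing embedding.

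The paper's fix is to apply the Sobolev-type bound of Lemma~\ref{lem:Sobolev},
\[
\|g\|_{B_z L^2_x}\lesssim \|g\|_{L^2_{x,z}}^{1/2}\|\partial_z g\|_{L^2_{x,z}}^{1/2},
\]
to $g=\brak{t^{-1/2}x}^m Q_\Phi^{-1}(\widetilde\chi_{8R}e^{(t-s)\Delta}\Div f_2)$. One then bounds $\|g\|_{L^2}\lesssim (t-s)^{-1/2}\|\brak{s^{-1/2}\bd}^m f_2\|_{L^2_y}$ and $\|\partial_z g\|_{L^2}\lesssim (t-s)^{-1}\|\brak{s^{-1/2}\bd}^m f_2\|_{L^2_y}$ (the $\partial_z$ costs an extra $(t-s)^{-1/2}$ via the heat kernel, after using Lemma~\ref{lem:Coefs} for the change of variables). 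Interpolating gives $(t-s)^{-3/4}$, which integrates against $s^{-3/4}$ to $t^{-1/2}$ and exactly cancels the $\sqrt t$ prefactor. Note that this matches the $f_1$ rate; your $(t-s)^{-1/2}$ bound with a spare $T^{1/4}$ is a symptom of having landed in the wrong norm.
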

\begin{proof}~

\begin{itemize}

\item[(i)] We may directly apply Lemma \ref{lem:StrHeat} to bound \(f_1\), so it suffices to assume that \(f_1\equiv0\).

From Lemma \ref{lem:Sobolev} we have the estimate 
\[
\|g\|_{B_zL^2_x}\lesssim \|g\|_{L^2_{x,z}}^{\frac12}\|\partial_zg\|_{L^2_{x,z}}^{\frac12}.
\]

Using Lemma~\ref{lem:Coefs} to bound the change of coordinates, 
\begin{align*}
 \norm{\<t^{-\frac12}x\>^mQ_\Phi^{-1}\left(\widetilde\chi_{8R}e^{(t-s)\Delta}\Div f(s)\right)}_{L^2_{x,z}}&\lesssim(t - s)^{-\frac12}\|\<s^{-\frac12} \bd\>^mf(s)\|_{L^2_y}\\
\norm{\<t^{-\frac12}x\>^m\partial_zQ_\Phi^{-1}\left(\widetilde\chi_{8R}e^{(t-s)\Delta}\Div f(s)\right)}_{L^2_{x,z}} &\lesssim (t - s)^{-1}\|\<s^{-\frac12} \bd\>^mf(s)\|_{L^2_y},
\end{align*}
and hence
\[
\|\<t^{-\frac12}x\>^m\chi_{8R}Q_\Phi^{-1}e^{(t - s)\Delta}\Div f(s)\|_{B_zL^2_x}\lesssim (t - s)^{-\frac34} \norm{\<s^{-\frac12}\bd\>^mf(s)}_{L^2_y}.
\]
Integrating, we obtain the estimate \eqref{FarToNear}.

\item[(ii)] 
The estimate \eqref{ErrorContribution-1} is easily proved for $\gamma = 0$; for $|\gamma| \geq 1$, we apply Lemma~\ref{lem:StrHeat} and interpolation to obtain the bound
\begin{align}
\|\<t^{-\frac12}x\>^m Q_\Phi^{-1}\left(\widetilde\chi_{8R}e^{(t - s)\Delta}Q_\Phi \nabla_{x,z}^\gamma f(s)\right)\|_{B_zL^2_x}\lesssim(t-s)^{- \frac{|\gamma|}2}s^{ - \frac12}\left(\frac{ t-s}s\right)^{\frac\beta2}\|f\|_{\bN^{\beta}}, \label{ineq:hwWc2}
\end{align}
whenever \(0<s<t\leq T\), which we may then integrate in \(s\), where the integral converges using the fact that \(0<\beta<\frac14\).
\end{itemize} 
\end{proof}

We now apply these linear bounds to control each of the terms, using the estimate \eqref{ErrorContribution-1} to control the terms \(\mc E^g,\mc E^c\), and the estimates \eqref{FarToNear} to control the remaining terms.

The estimate \eqref{ineq:wc2-a} follows from the following lemma:
\begin{lem}\label{lem:wc2N}
We have the estimates:
\begin{align}
\norm{Q_\Phi^{-1}\left(\widetilde\chi_{8R}\int_0^t e^{(t - s)\Delta} f^{c2}_L(s)\,ds\right)}_{\bN_c^0}& \lesssim \left(R + T^{\frac14}R^{-2} + M_{b1}^{-1} + R^{-\frac34}M_{b2}^{-1}\right)\|\omega\|_{X_T},\\
\norm{Q_\Phi^{-1}\left(\widetilde\chi_{8R}\int_0^t e^{(t - s)\Delta} f^{c2}_N(s)\,ds\right)}_{\bN_c^0}
&\lesssim \left(T^{\frac14}R^{-\frac12} + M_{b1}^{-1} + R^{-\frac34}M_{b2}^{-1}\right)\|\omega\|_{X_T}^2,\\
\norm{Q_\Phi^{-1}\left(\widetilde\chi_{8R}\int_0^t e^{(t - s)\Delta} f^{c2}_I(s)\,ds\right)}_{\bN_c^0}&\lesssim R + T^{\frac12}R^{-2}.
\end{align}
\end{lem}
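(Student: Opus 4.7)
The plan is to treat each summand of $f^{c2}_L$, $f^{c2}_N$, and $f^{c2}_I$ separately and reduce to one of the two master linear bounds \eqref{FarToNear} and \eqref{ErrorContribution-1}. The curvature errors $\mc E^c$ and $\mc E^g$, which are supported in $\{\bd \leq 4R\}$, are handled via \eqref{ErrorContribution-1}; all remaining pieces, bilinears of the form $B[(1-\widetilde{\chi}_R) U, W]$, are handled via \eqref{FarToNear}. The strategy for the third slot in each bound is dictated by the support of the factors: when both factors may extend far from the filament, the physical-frame tensor $f_2$ is the only available slot; when one factor is supported near the filament, we may optionally push things into the straightened frame and use $f_1$.

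For the error terms I invoke the decomposition \eqref{eq:EcDecom}, writing $Q_\Phi^{-1}\mc E^c = \nabla_{x,z}^\gamma (C_\gamma \eta^{c1})$ (summed over $|\gamma| \leq 2$) with the size bounds \eqref{i:cg-1} on $C_\gamma$. Substituting into \eqref{ErrorContribution-1} and using a product rule gives a contribution of order $T^{1 - |\gamma|/2}R^{|\gamma|-2}\|\omega\|_{X_T}$. The dominant piece is $|\gamma| = 2$, producing $R \|\omega\|_{X_T}$ — this is the origin of the $R$ in the $f^{c2}_L$ bound (and likewise in the $f^{c2}_I$ bound via $\mc E^g$); the $|\gamma| = 0, 1$ contributions gain positive powers of $T$ and are absorbed into the $T^{1/4} R^{-2}$ term using the size hypothesis \eqref{Size}.

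For the transport/stretching pieces I apply \eqref{FarToNear}. When both factors may extend far from the filament I place the bilinear in the physical-frame slot $f_2$ and bound $\sup_s s^{3/4}\|\langle \bd/\sqrt{s}\rangle^m f_2\|_{L^2_y}$ by Hölder, pairing an $L^6_y$ or $L^{12}_y$ bound on the velocity factor from Lemma \ref{lem:vgests-2} (producing the $R^{-3/2}$ weight of \eqref{velgL6}, \eqref{VelocityL6-1}, \eqref{VelocityL6-2}) with a weighted $L^3_y$ or $L^2_y$ estimate on the vorticity factor from Lemma \ref{lem:bEmbeds-2}. Applied to $B[(1-\widetilde{\chi}_R) u^g, \omega^{c1}]$ and $B[(1-\widetilde{\chi}_R)(u-u^g), \omega^g]$ this yields the $T^{1/4}R^{-2}$ and $M_{b1}^{-1}$ terms (the latter via \eqref{uDiffSummary} and \eqref{ineq:vbLp-1}). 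For the nonlinear contribution $f^{c2}_N$ the same mechanism gives an extra copy of $\|\omega\|_{X_T}$, producing the $\|\omega\|_{X_T}^2$ rates.

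The $\omega^{c2}$ factors require splitting $\omega^{c2} = \widetilde{\chi}_{6R}\omega^{c2} + (1-\widetilde{\chi}_{6R})\omega^{c2}$: the exterior piece is bounded in $L^3_y$ by $\|\omega^{c2}\|_{\bF_c}$, while the interior piece is pushed into the straightened frame and inserted into the $f_1$ slot of \eqref{FarToNear}, paired with $P_\Phi^{-1}[\widetilde{\chi}_{7R}(\,\cdot\,)]$ of the velocity. The distinctive $R^{-3/4} M_{b2}^{-1}$ factor in the $f^{c2}_L$ bound traces to \eqref{ineq:vb2}: because the anisotropic $\bN_c^0$ target forces a Biot-Savart estimate in straightened coordinates, the $u^{b2}$ piece incurs an $R^{-3/4}$ loss relative to its physical-frame bound \eqref{ineq:vbLp-2}. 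The main obstacle is exactly this $R$-accounting for the bilinear terms bridging the anisotropic and isotropic strata of $\omega^{c2}$; once these are controlled and the lower-order error contributions are absorbed via \eqref{Size}, summing over all summands yields the three claimed bounds.
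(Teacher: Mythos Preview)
Your proposal is correct and follows the same approach as the paper's proof: curvature errors $\mc E^c,\mc E^g$ via \eqref{ErrorContribution-1} with the decomposition \eqref{eq:EcDecom}, bilinear terms via \eqref{FarToNear} with the near/far splitting of $\omega^{c2}$ into the $f_1$/$f_2$ slots, and the $R^{-3/4}M_{b2}^{-1}$ factor correctly traced to the straightened-frame Biot-Savart bound \eqref{ineq:vb2}. There is a minor slip in your schematic $T^{1-|\gamma|/2}R^{|\gamma|-2}$ (at $|\gamma|=2$ it should yield $R$, matching $|C_2|\lesssim R$ from \eqref{i:cg-1}), but the stated conclusion and all key ideas are right.
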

\begin{proof}
First consider the error terms $\cE^c$ and $\cE^g$. 
Consider only $\cE^c$; the $\cE^g$ term is treated similarly. Estimating using \eqref{ErrorContribution-1} together with \eqref{eq:EcDecom} (and \eqref{i:cg-1}), we see
\begin{align}
\norm{Q_\Phi^{-1}\left(\widetilde\chi_{8R}\int_0^t e^{(t - s)\Delta} \cE^c(s)\,ds\right)}_{\bN_c^0} & \lesssim \left(R +  T^{\frac12}R^{-1} + T R^{-2}\right) \norm{\omega}_{X_T}. 
\end{align}
This suffices to treat $\cE^c$. 

Next we consider \(B[(1 - \widetilde\chi_R)u^g,\omega^{c1} + \omega^{c2}]\). First recall that
\[
B[(1 - \widetilde\chi_R)u^g,\omega^{c1}] = Q_\Phi B[P_\Phi^{-1}(\widetilde\chi_{4R}(1 - \widetilde\chi_R)u^g),\eta^{c1}],
\]
and that by H\"older's inequality and the estimate~\eqref{VelocityBzL4-g} we have
\begin{align*}
t^{\frac34}\|\<t^{-\frac12}x\>^mP_\Phi^{-1}(\widetilde\chi_{4R}(1 - \widetilde\chi_R)u^g)\otimes \eta^{c1}\|_{B_zL^{4/3}_x} &\lesssim t^{\frac34}\|P_\Phi^{-1}(\widetilde\chi_{4R}(1 - \widetilde\chi_R)u^g)\|_{B_zL^4_x}\|\<t^{-\frac12}x\>^m\eta^{c1}\|_{B_zL^2_x}\\
&\lesssim T^{\frac 14}R^{-\frac12}\|\omega\|_{X_T}.
\end{align*}
Applying the estimate~\eqref{FarToNear} with \(f_2\equiv0\) we then obtain
\[
\norm{Q_\Phi^{-1}\left(\widetilde\chi_{8R}\int_0^t e^{(t -s)\Delta}B[(1 - \widetilde\chi_R)u^g,\omega^{c1}]\,ds\right)}_{\bN_c^0}\lesssim T^{\frac14}R^{-\frac12}\|\omega\|_{X_T}.
\]
The contribution of \(\widetilde\chi_{6R}\,\omega^{c2}\) is treated similarly, where we note that \(\widetilde\chi_{7R}\gtrsim 1\) on the support of \(\widetilde\chi_{6R}\omega^{c2}\) and hence the estimate \eqref{VelocityBzL4-g} may still be applied to bound the velocity. For the contribution of \((1 - \widetilde\chi_{6R})\omega^{c2}\) we instead apply the estimate~\eqref{FarToNear} with \(f_1\equiv0\), using the estimate \eqref{velgL6} to bound
\begin{align*}
t^{\frac34}\norm{\brak{t^{-\frac{1}{2}}\bd}^m (1-\widetilde\chi_R)u^g \otimes ((1 - \widetilde\chi_{6R})\omega^{c2})}_{L^2} &\leq t^{\frac34}\norm{(1-\widetilde\chi_R)u^g}_{L^6} \norm{\<t^{-\frac12}\bd\>^m(1 -\widetilde\chi_{6R})\omega^{c2}}_{L^3}\\
&\lesssim T^{\frac14} R^{-\frac32} M_c^{-1} \norm{\omega}_{X_T}.
\end{align*}
The remaining terms are treated similarly, where we note that by applying the estimates \eqref{VelocityBzL4}, \eqref{Spatula}, \eqref{ineq:vb1}, and \eqref{ineq:vb2} we may bound
\[
t^{\frac14}\|P_\Phi^{-1}(\widetilde\chi_{7R}(1 - \widetilde\chi_R)(u - \alpha u^g))\|_{B_zL^4_x}\lesssim \left(T^{\frac14}R^{-\frac12} + M_{b1}^{-1} + R^{-\frac34}M_{b2}^{-1}\right)\|\omega\|_{X_T}.
\]

Using the assumption \eqref{ConstantOrder} and that \(0<T,R\leq 1\), this completes the proof of Lemma \ref{lem:wc2N}. 
\end{proof}

The estimates \eqref{ineq:wc2-a}, \eqref{ineq:wc2-b} follow as a consequence of Lemmas~\ref{lem:ErrorHandling},~\ref{lem:wc2N} and the hypotheses \eqref{Size}, \eqref{ConstantOrder}.

\subsection{Estimates on the background, $\omega^b$}

\subsubsection{$\eta^{b1}$ contribution} 
In this section we prove the estimate \eqref{ineq:wb1}. Once again we decompose
\begin{align*}
f^{b1} = f^{b1}_L + f^{b1}_N
\end{align*}
where the linear and nonlinear terms are given by
\begin{align*}
f^{b1}_L & = \alpha B[v^g - \bar v^g,\eta^{b1}] + \alpha B[v^g,\eta^{b2}] \\ 
f^{b1}_N & = B[v - \alpha v^g,\eta^{b1} + \eta^{b2}]. 
\end{align*}

To control these terms we use the following corollary of Proposition~\ref{prop:LinearSmoothing}
\begin{lemma} \label{lem:NLbs}
We have the following estimates for all $\gamma \in (0,\frac{1}{2})$ and \(0<s<t\leq T\):
\begin{align}
\|\<t^{\frac12}\nabla\>^\beta\bS(t,s)B[f,g]\|_{L^{4/3}_{x,z}} &\lesssim (t - s)^{-\frac34} \left(\frac t{t-s}\right)^{\frac\beta2}\left(\frac ts\right)^\gamma \|f\|_{B_zL^4_x}\|g\|_{L^{4/3}_{x,z}},\label{NL1}\\
\|\<t^{\frac12}\nabla\>^\beta\nabla\bS(t,s)B[f,g]\|_{L^{4/3}_{x,z}} &\lesssim(t - s)^{-\frac34} \left(\frac t{t-s}\right)^{\frac\beta2}\left(\frac ts\right)^\gamma \|\nabla f\|_{B_zL^{4/3}_x}\|\nabla g\|_{L^{4/3}_{x,z}},\label{NL2}
\end{align}
\end{lemma}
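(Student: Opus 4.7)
The idea is to reduce everything to the linear bounds of Proposition \ref{prop:LinearSmoothing}, specialized to the mixed Lebesgue space $L^{4/3}_{x,z}$ (which is covered by the ``$B_z$ replaced by $L^r_z$'' version with $r = 4/3$). Two algebraic identities will be used: the divergence form $B[f,g] = \Div(f\otimes g - g\otimes f)$ is convenient for \eqref{NL1}, whereas the Leibniz expansion
\[
B[f,g] = (\Div f)g + (f\cdot\nabla)g - (\Div g)f - (g\cdot\nabla)f
\]
is better suited for \eqref{NL2}, since it lets us put one gradient on each factor. In both cases I will establish the estimate at $\beta = 0$ and $\beta = 1$, and then obtain the factor $(t/(t-s))^{\beta/2}$ by interpolation.

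For \eqref{NL1}, set $F = f\otimes g - g\otimes f$. H\"older in $x$ combined with the embedding $B_z L^4_x \hookrightarrow L^\infty_z L^4_x$ gives $\|F\|_{L^{4/3}_z L^1_x} \lesssim \|f\|_{B_z L^4_x}\|g\|_{L^{4/3}_{x,z}}$. Applying estimates \eqref{LSmooth2} and \eqref{LSmooth2a} with $q = r = 4/3$ then yields
\[
\|\bS(t,s)\Div F\|_{L^{4/3}_{x,z}} \lesssim (t-s)^{-3/4}(t/s)^\gamma \|F\|_{L^{4/3}_z L^1_x},
\]
\[
\|\nabla \bS(t,s)\Div F\|_{L^{4/3}_{x,z}} \lesssim (t-s)^{-5/4}(t/s)^\gamma \|F\|_{L^{4/3}_z L^1_x}.
\]
The first is the $\beta = 0$ case; multiplying the second by $t^{1/2}$ produces the $\beta = 1$ case, since $t^{1/2}(t-s)^{-5/4} = (t-s)^{-3/4}(t/(t-s))^{1/2}$.

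For \eqref{NL2}, I will instead treat $B[f,g]$ as an ordinary (non-divergence) source and invoke the scalar bounds \eqref{LSmooth1a}, \eqref{LSmooth1b} for $\nabla\bS$ and $\nabla^2\bS$ mapping $L^{4/3}_z L^1_x \to L^{4/3}_{x,z}$. The key bilinear bound is $\|B[f,g]\|_{L^{4/3}_z L^1_x} \lesssim \|\nabla f\|_{B_z L^{4/3}_x}\|\nabla g\|_{L^{4/3}_{x,z}}$. To prove this, estimate each of the four Leibniz summands by H\"older $\|f k'\|_{L^1_x} \leq \|f\|_{L^4_x}\|k'\|_{L^{4/3}_x}$, then use the 2D Sobolev embedding $\|h\|_{L^4_x} \lesssim \|\nabla h\|_{L^{4/3}_x}$ to turn every $L^4_x$ factor into an $L^{4/3}_x$-norm of a gradient. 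The mildly delicate point here is the asymmetric placement of $z$-norms: whenever the $L^4_x$ factor involves $f$, use $\|f\|_{L^\infty_z L^4_x} \leq \|f\|_{B_z L^4_x}$; for the terms whose $L^4_x$ factor involves $g$, place the $L^{4/3}_z$-norm on $g$ and the $L^\infty_z$-norm on $\nabla f$, invoking Sobolev in $x$ after the $z$-norms are taken. The $\beta = 0$ case follows; the $\beta = 1$ case uses the same $t^{1/2}(t-s)^{-5/4} = (t-s)^{-3/4}(t/(t-s))^{1/2}$ identity together with \eqref{LSmooth1b}. Finally, the fractional range $\beta \in (0,1)$ is obtained by complex interpolation of the analytic family $\langle t^{1/2}\nabla\rangle^z$ (a Mikhlin multiplier on $L^{4/3}$) between the two endpoints. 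No single step is substantially harder than arguments already made in the paper; the only real bookkeeping issue is the placement of $z$-integrability in the bilinear estimate for \eqref{NL2}.
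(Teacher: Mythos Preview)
Your argument is correct and follows essentially the same route as the paper: reduce to the $L^{4/3}_z L^1_x \to L^{4/3}_{x,z}$ versions of the linear smoothing estimates in Proposition~\ref{prop:LinearSmoothing}, interpolate between the integer-order bounds to generate the $(t/(t-s))^{\beta/2}$ factor, and close with H\"older plus the 2D Sobolev embedding $\|h\|_{L^4_x} \lesssim \|\nabla_x h\|_{L^{4/3}_x}$. The paper writes out only \eqref{NL2} (using \eqref{LSmooth1a}--\eqref{LSmooth1b} applied to $\omega_s = B[f,g]$, which is divergence-free) and declares \eqref{NL1} ``similar''; your explicit treatment of \eqref{NL1} via the tensor $F = f\otimes g - g\otimes f$ and \eqref{LSmooth2}--\eqref{LSmooth2a} is exactly the intended analogue.
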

\begin{proof}
We prove the estimate \eqref{NL2}; the estimate \eqref{NL1} is similar and is omitted for the sake of brevity. We observe that \(\Div B[f,g] = 0\) so we may interpolate the estimates \eqref{LSmooth1a} and \eqref{LSmooth1b} to bound,
\[
\|\<t^{\frac12}\nabla\>^\beta\nabla\bS(t,s)B[f,g]\|_{L^{4/3}_{x,z}} \lesssim_\gamma (t - s)^{-\frac34} \left(\frac t{t-s}\right)^{\frac\beta2}\left(\frac ts\right)^\gamma\|B[f,g]\|_{L^{4/3}_zL^1_x}.
\]
Applying H\"older's inequality we obtain,
\[
\|B[f,g]\|_{L^{4/3}_zL^1_x}\lesssim \|f\|_{B_zL^4_x}\|\nabla g\|_{L^{4/3}_{x,z}} + \|\nabla f\|_{B_zL^{4/3}_x}\|g\|_{L^{4/3}_zL^4_x}.
\]
The estimate \eqref{NL2} then follows from Sobolev embedding.
\end{proof}

We may then apply these estimates to obtain the following:
\begin{lemma} 
There holds 
\begin{align}
\norm{\int_0^t \bS(t,s) f^{b1}_L(s) ds}_{\bN^\beta_b} &\lesssim M_{b_1}^{-1}\left(T^{\frac14}R^{-\frac12} + R\ln R^{-1} + \frac{M_{b1}}{M_{b2}}\right)\|\omega\|_{X_T}, \label{ineq:fb1L}\\
\norm{\int_0^t \bS(t,s) f^{b1}_N(s) ds}_{\bN^\beta_b} & \lesssim M_{b1}^{-1}\|\omega\|_{X_T}^2.\label{ineq:fb1N}
\end{align}
\end{lemma}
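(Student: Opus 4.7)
The plan is to treat each of the four bilinear terms in $f^{b1}_L$ and $f^{b1}_N$ by combining the bilinear smoothing estimates~\eqref{NL1} and~\eqref{NL2} of Lemma~\ref{lem:NLbs} with the velocity bounds of Lemma~\ref{lem:vgests-1} and the vorticity bounds of Lemma~\ref{lem:bEmbeds-1}. Recall that the $\bN_b^\beta$ norm has two contributions, one controlled in $L^{4/3}_{x,z}$ and one in $\nabla L^{4/3}_{x,z}$ (with differing $R$-weights); accordingly, for each bilinear term $B[f,g]$ appearing in the right-hand side, we will invoke~\eqref{NL1} to bound the $L^{4/3}_{x,z}$ part using $\|f\|_{B_zL^4_x}\|g\|_{L^{4/3}_{x,z}}$, and~\eqref{NL2} to bound the $\nabla L^{4/3}_{x,z}$ part using $\|\nabla f\|_{B_zL^{4/3}_x}\|\nabla g\|_{L^{4/3}_{x,z}}$.

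For the linear piece $B[v^g - \bar v^g,\eta^{b1}]$, I would first combine~\eqref{i:vgdiff} with~\eqref{i:vgsep} (after a trivial decomposition $v^g - \bar v^g = (v^g - \chi_R\bar v^g) - (1-\chi_R)\bar v^g$) to obtain
\[
s^{\frac14}\|v^g-\bar v^g\|_{B_zL^4_x} + s^{\frac14}\|\nabla(v^g-\bar v^g)\|_{B_zL^{4/3}_x} \lesssim T^{\frac14}R^{-\frac12} + R\ln R^{-1},
\]
and then pair this with the bound $s^{\frac14}(\|\eta^{b1}\|_{L^{4/3}_{x,z}}+\|\nabla \eta^{b1}\|_{L^{4/3}_{x,z}})\lesssim M_{b1}^{-1}\|\omega\|_{X_T}$ that follows from~\eqref{Embedding3} and the definition of $\bN_b^\beta$. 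For the second linear term $B[v^g,\eta^{b2}]$, I would use~\eqref{i:vg-nobar} for $v^g$ and the $L^{4/3}_{x,z}$-control of $\eta^{b2}$ provided by~\eqref{Embedding1} (along with Lemma~\ref{lem:Coefs} and the $\bF_b$ estimates on $\omega^{b2}$ to reach the gradient), which produces a factor of $M_{b2}^{-1}\|\omega\|_{X_T}$; multiplying by the prefactor $M_{b1}$ in the definition of $X_T$ then gives the desired $M_{b1}/M_{b2}$ term in~\eqref{ineq:fb1L}.

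For the nonlinear piece $f^{b1}_N = B[v - v^g, \eta^{b1} + \eta^{b2}]$, I would apply the velocity difference bound~\eqref{vDiffSummary}, which yields $s^{\frac14}(\|v-v^g\|_{B_zL^4_x}+\|\nabla(v-v^g)\|_{B_zL^{4/3}_x})\lesssim \|\omega\|_{X_T}$, and then combine with the same bounds on $\eta^{b1}$ and $\eta^{b2}$ used above. This produces an extra factor of $\|\omega\|_{X_T}$ relative to the linear piece, hence the $\|\omega\|_{X_T}^2$ on the right of~\eqref{ineq:fb1N}; the $M_{b1}^{-1}$ comes out directly from the $\eta^{b1},\eta^{b2}$ bounds.

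In all four cases the resulting $s$-integral is of Beta-function type,
\[
\int_0^t (t-s)^{-\frac34-\frac\beta2}\left(\tfrac{t}{s}\right)^\gamma s^{-\frac12}\,ds \lesssim t^{-\frac14},
\]
convergent because $0<\beta<\frac14$ and $\gamma\in(0,\tfrac12)$; multiplied by the prefactor $t^{1/4}$ from the $\bN_b^\beta$ norm, this produces a bounded quantity. The main obstacle I foresee is bookkeeping of the $R$-dependence: the $\bN_b^\beta$ and $\bF_b$ norms are built with $R^{\pm 1/4},R^{\pm 3/4}$ weights designed so that scale-invariant interpolations (such as \eqref{ineq:FbL2}) are $R$-uniform, but these weights do not always cancel automatically when crossing between spaces controlled in the curved frame ($\omega^{b2}$) and straightened frame ($\eta^{b1}$). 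Making the $M_{b1}/M_{b2}$ factor emerge with the correct $R$-power requires being careful about how the support of $\eta^{b2}$ within the $R$-tube is exploited, and about which of~\eqref{NL1},~\eqref{NL2} to use where.
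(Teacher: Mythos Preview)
Your proposal is correct and follows essentially the same route as the paper: apply \eqref{NL1}--\eqref{NL2} to each bilinear term, insert the velocity bounds \eqref{i:vg-nobar}, \eqref{i:vgdiff}, \eqref{i:vgsep}, \eqref{vDiffSummary} and the vorticity controls on $\eta^{b1},\eta^{b2}$, then integrate. Your worry about the $R$-bookkeeping is unnecessary once you phrase the inputs directly as $\norm{\eta^{b1}}_{\bN_b^0}$ and $\norm{\omega^{b2}}_{\bF_b}$ (rather than via \eqref{Embedding3}, which gives a $B_zL^{4/3}_x$ bound you do not need here): the $R^{-3/4}$ and $R^{1/4}$ weights in $\bN_b^\beta$ match those in $\bN_b^0$ and $\bF_b$ termwise, so they cancel automatically and the $M_{b1}/M_{b2}$ factor drops out with no extra $R$-power.
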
 
\begin{proof} 
We prove the estimate \eqref{ineq:fb1L}, the estimate \eqref{ineq:fb1N} is similar. Estimates \eqref{NL1} and \eqref{NL2} give
\begin{align*}
\norm{\int_0^t \bS(t,s) f^{b1}_L(s) ds}_{\bN^\beta_b} & \lesssim  \sup\limits_{0 < s < T} \left( s^{\frac 14} \norm{v^g - \bar v^g }_{B_z L^4_x} + s^{\frac 14}\norm{\grad\left(v^g - \bar v^g\right)}_{B_z L^{4/3}_x}\right) \norm{\eta^{b1}}_{\bN^0_b} \\ 
& \quad +  \sup\limits_{0 <s < T} \left(s^{\frac 14}\norm{v^g}_{B_z L^4_x} + s^{\frac 14}\norm{\grad v^g}_{B_z L^{4/3}_x}\right) \norm{\omega^{b2}}_{\bF_b}.
\end{align*}
Then \eqref{ineq:fb1L} follows from the estimates~\eqref{i:vg-nobar},~\eqref{i:vgsep},~\eqref{i:vgdiff}  .
\end{proof}

\subsubsection{$\omega^{b2}$ contribution}
In this section we prove the estimate \eqref{ineq:wb2}. We start with a set of estimates for the heat propagator:
\begin{lemma} 
For \(0<s<t\leq T\) we have:
\begin{align}
\|e^{(t - s)\Delta}B[f,g]\|_{L^{4/3}_y} &\lesssim (t - s)^{-\frac34}\|f\|_{L^6_y}\|g\|_{L^{4/3}_y},\label{NL3}\\
\|\nabla e^{(t - s)\Delta}B[f,g]\|_{L^{4/3}_y} &\lesssim (t - s)^{-\frac34}\left(\|f\|_{L^6_y} + \|\nabla f\|_{L^2_y}\right)\|\nabla g\|_{L^{4/3}_y}.\label{NL4}
\end{align}
Further, for multi-indices \(\gamma\in \N^3\) satisfying \(|\gamma|\leq 2\), $0 \leq \beta \leq \abs{\gamma}$, and \(f\) supported in \(\{|x|\leq 4R\}\),
\begin{equation}
\|e^{(t - s)\Delta}\widetilde\chi_{4R}Q_\Phi\nabla_{x,z}^\gamma f\|_{L^{4/3}_y}\lesssim_\beta (t - s)^{-\frac{|\gamma|}2}\left(\frac{t - s}s\right)^{\frac\beta 2}\|\<s^{\frac12}\nabla\>^\beta f\|_{L^{4/3}_{x,z}}. \label{ineq:Ebheat}
\end{equation}
\end{lemma}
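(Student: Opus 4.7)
The plan for the three estimates is as follows.

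\emph{First two estimates (heat-kernel smoothing).} I would reduce both to the standard $\R^3$ heat-kernel bound $\|e^{t\Delta}\nabla^j h\|_{L^q_y}\lesssim t^{-j/2 - \frac{3}{2}(\frac{1}{p}-\frac{1}{q})}\|h\|_{L^p_y}$. Writing $B[f,g]=\Div(f\otimes g - g\otimes f)$, the first estimate~\eqref{NL3} becomes the case $j=1$, $q=4/3$, $p=12/11$, which produces the factor $(t-s)^{-3/4}$; H\"older then controls $\|f\otimes g\|_{L^{12/11}}$ by $\|f\|_{L^6}\|g\|_{L^{4/3}}$ (since $\tfrac{1}{6}+\tfrac{3}{4}=\tfrac{11}{12}$). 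For~\eqref{NL4}, I observe that $\nabla e^{(t-s)\Delta}\Div$ is a second-order heat-kernel operator, so the same scheme with $j=2$, $q=4/3$, $p=12/7$ again yields the exponent $-3/4$; H\"older gives $\|f\otimes g\|_{L^{12/7}}\le \|f\|_{L^6}\|g\|_{L^{12/5}}$, and the Sobolev embedding $\|g\|_{L^{12/5}}\lesssim \|\nabla g\|_{L^{4/3}}$ on $\R^3$ finishes the bound. The alternative $\|\nabla f\|_{L^2}$ on the right-hand side is accommodated via the Sobolev embedding $\|f\|_{L^6}\lesssim \|\nabla f\|_{L^2}$.

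\emph{Third estimate (integration by parts in straightened frame).} My plan is to express the heat evolution as an integral in $(x',z')$-coordinates. Changing variables via $y'=\Phi(x',z')$ and noting that the Jacobian determinant $\fD$ cancels against the $\fD^{-1}$ in the definition of $Q_\Phi$, one obtains
\[
e^{(t-s)\Delta}[\widetilde\chi_{4R}Q_\Phi\nabla^\gamma_{x,z}f](y) = \int G_{t-s}(y-\Phi(x',z'))\,\chi_{4R}(x')\,\fJ(x',z')\,\nabla^\gamma_{x',z'}f(x',z')\,dx'dz',
\]
where $G_t$ is the Euclidean heat kernel. For each integer $k\in\{0,\ldots,|\gamma|\}$ I would integrate by parts $|\gamma|-k$ times in $(x',z')$. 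The leading contribution (all derivatives hitting the heat kernel) produces $(\nabla_y^{|\gamma|-k}G_{t-s})(y-\Phi(x',z'))$ multiplied by bounded smooth factors coming from chain-rule copies of $\nabla\Phi$ and from $\fJ$; since $\|\nabla_y^{|\gamma|-k}G_{t-s}\|_{L^1_y}\lesssim (t-s)^{-(|\gamma|-k)/2}$, Young's inequality together with the uniform boundedness of $\fD$ (used to pull the $L^{4/3}_y$ norm back to $(x,z)$-coordinates) gives
\[
\|e^{(t-s)\Delta}\widetilde\chi_{4R}Q_\Phi\nabla^\gamma f\|_{L^{4/3}_y}\lesssim (t-s)^{-(|\gamma|-k)/2}\|\nabla^k f\|_{L^{4/3}_{x,z}}\lesssim (t-s)^{-|\gamma|/2}\bigl(\tfrac{t-s}{s}\bigr)^{k/2}\|\brak{s^{1/2}\nabla}^k f\|_{L^{4/3}_{x,z}},
\]
the last step using the elementary $\|\nabla^k f\|_{L^{4/3}}\lesssim s^{-k/2}\|\brak{s^{1/2}\nabla}^k f\|_{L^{4/3}}$. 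This handles the integer $\beta=k$ endpoints; intermediate values of $\beta\in[0,|\gamma|]$ follow by complex interpolation along the Bessel-potential scale $\brak{s^{1/2}\nabla}^{-\beta}L^{4/3}_{x,z}$.

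\emph{Main obstacle.} The principal technical difficulty is the bookkeeping in the third estimate: when $\nabla^{|\gamma|-k}_{x',z'}$ is distributed via Leibniz across $G_{t-s}(y-\Phi(x',z'))$, $\chi_{4R}(x')$, and $\fJ(x',z')$, one must verify that every commutator piece (derivatives landing on the cutoff, on the Jacobian matrix, or producing higher chain-rule derivatives $\nabla^\ell\Phi$ for $\ell\geq 2$) is controlled by a heat-kernel of the appropriate order. This requires the coefficient bounds of Lemma~\ref{lem:Coefs} together with the fact that $f$ is supported in $\{|x|\le 4R\}$, so that the spatial derivatives of the cutoff never enlarge the effective kernel. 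The interpolation step itself is routine once the integer-$\beta$ endpoints are in hand, since both weights $(t-s)^{(|\gamma|-\beta)/2}$ and $s^{\beta/2}$ depend analytically on $\beta$, so Stein's complex interpolation theorem applies directly.
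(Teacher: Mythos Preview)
Your proposal is correct and follows essentially the same approach as the paper, which disposes of the lemma in two sentences (``follow from the explicit expression for the heat kernel'' and ``follows from properties of the heat kernel using Lemma~\ref{lem:Coefs} to bound the change of variables''). Your treatment simply fills in the details the paper omits: the H\"older/Sobolev arithmetic for~\eqref{NL3}--\eqref{NL4}, and the change of variables plus integration by parts and interpolation for~\eqref{ineq:Ebheat}. One small remark: for~\eqref{NL4} your argument already gives the bound with $\|f\|_{L^6_y}$ alone on the right, which is stronger than the stated $\|f\|_{L^6_y}+\|\nabla f\|_{L^2_y}$, so no further Sobolev step is needed there; and for~\eqref{ineq:Ebheat} note that since $f$ is supported in $\{|x|\le 4R\}$ and $\chi_{4R}\equiv 1$ there, the cutoff can be dropped before integrating by parts, so the commutator terms you flag as the ``main obstacle'' in fact do not arise from the cutoff at all---only the harmless derivatives of $\fJ$ and $\nabla\Phi$ remain, and these are uniformly bounded by Lemma~\ref{lem:Coefs}.
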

\begin{proof}
The estimates \eqref{NL3}, \eqref{NL4} follow from the explicit expression for the heat kernel. The estimate \eqref{ineq:Ebheat} follows from properties of the heat kernel using Lemma~\ref{lem:Coefs} to bound the change of variables.
\end{proof}

Again we will decompose the error term into linear and nonlinear parts as
\[
f^{b2} = f^{b2}_L + f^{b2}_N,
\]
where
\begin{align*}
f^{b2}_L &= \alpha B[(1 - \widetilde\chi_R)u^g,\omega^{b1} + \omega^{b2}] + \mc E^b,\\
f^{b2}_N &= B[(1 - \widetilde\chi_R)(u - \alpha u^g),\omega^{b1} + \omega^{b2}].
\end{align*}
We then have the following lemma:
\begin{lemma} \label{lem:ErrorHandlingAgain}
There holds
\begin{align}
\norm{\int_0^t e^{(t-s)\Delta} f^{b2}_{L}(s) ds}_{\bF_b} & \lesssim  M_{b1}^{-1}\left(R + T^{\frac14}R^{-2}\right)\|\omega\|_{X_T},\label{ineq:fb2l}\\
\norm{\int_0^t e^{(t-s)\Delta} f^{b2}_{N}(s) ds}_{\bF_b} & \lesssim M_{b1}^{-1}\left(T^{\frac14}R^{-\frac32} + M_{b1}^{-1}\right) \|\omega\|_{X_T}^2. \label{Whisk}
\end{align} 
\end{lemma}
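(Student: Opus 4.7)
The plan is to mirror the structure of Lemma~\ref{lem:ErrorHandling}, but now working in the isotropic space $\bF_b$ rather than the anisotropic $\bF_c$. Both the $L^{4/3}_y$ part and the $\nabla L^{4/3}_y$ part of the $\bF_b$ norm must be estimated; the bilinear contributions are tailor-made for the heat-kernel estimates \eqref{NL3}, \eqref{NL4}, while the error term $\mc E^b$ is controlled using \eqref{ineq:Ebheat}.

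For the bilinear piece $B[(1-\widetilde\chi_R)u^g,\omega^{b1}+\omega^{b2}]$ of $f^{b2}_L$, I apply \eqref{NL3} and \eqref{NL4}. The velocity factor is handled by \eqref{velgL6}, which gives $\|(1-\widetilde\chi_R)u^g\|_{L^6_y}+\|\nabla((1-\widetilde\chi_R)u^g)\|_{L^2_y}\lesssim R^{-3/2}$. The vorticity factor is controlled in $\nabla L^{4/3}$ directly from the definitions of $\bN_b^\beta$ and $\bF_b$, together with Lemma~\ref{lem:Coefs} to pass $\omega^{b1}=Q_\Phi(\chi_{2R}\eta^{b1})$ into the straightened frame; this yields $\|\nabla(\omega^{b1}+\omega^{b2})(s)\|_{L^{4/3}_y}\lesssim s^{-1/4}R^{-1/4}(M_{b1}^{-1}+M_{b2}^{-1})\|\omega\|_{X_T}$, with the $M_{b2}^{-1}$ absorbed into $M_{b1}^{-1}$ by \eqref{ConstantOrder}. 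Computing the time integral (convergent since all exponents are $>-1$) and multiplying by the $t^{1/4}R^{1/4}$ from the $\bF_b$ norm produces a bound of order $T^{\frac14}R^{-2}M_{b1}^{-1}\|\omega\|_{X_T}$, consistent with \eqref{ineq:fb2l}; the analogous but simpler argument with no gradient yields the $L^{4/3}_y$ piece.

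For the error term $\mc E^b$, I observe that it has exactly the same algebraic structure as $\mc E^c$, so (just as in \eqref{eq:EcDecom}) one can write $Q_\Phi^{-1}\mc E^b = \nabla_{x,z}^{\gamma}(C_\gamma\eta^{b1})$ (summed over multi-indices with $|\gamma|\leq 2$) with $C_\gamma$ supported in $\{|x|\leq 4R\}$ and satisfying \eqref{i:cg-1}. Applying \eqref{ineq:Ebheat} for $|\gamma|=0,1,2$ and picking up the respective $R$-weights ($R$, $R^{-1}$, $R^{-2}$) on the coefficients, the dominant contribution as $T\to 0$ comes from $|\gamma|=0$ and is of order $R\cdot M_{b1}^{-1}\|\omega\|_{X_T}$; the higher-order terms in $\gamma$ yield subcritical factors of $T^{1/2-|\gamma|/2}$ and are harmless thanks to \eqref{Size}. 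This is how the $R$ in the claimed $M_{b1}^{-1}(R+T^{1/4}R^{-2})$ arises.

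For the nonlinear term $f^{b2}_N$, the argument is identical except that $(1-\widetilde\chi_R)u^g$ is replaced by $(1-\widetilde\chi_R)(u-u^g)$, whose $L^6_y$ and $\nabla L^2_y$ norms are controlled by \eqref{uDiffSummary} as $t^{-1/4}(T^{1/4}R^{-3/2}+M_{b1}^{-1})\|\omega\|_{X_T}$. Combining with the previous vorticity bound and integrating yields \eqref{Whisk}. The principal bookkeeping obstacle is ensuring that the various powers of $R$ coming from (i) the $R^{-3/2}$ from the exterior velocity, (ii) the dimension-consistent $R^{\pm 1/4}$, $R^{\pm 3/4}$ in the definitions of $\bN_b^\beta$ and $\bF_b$, and (iii) the coefficient bounds \eqref{i:cg-1} assemble correctly; the size assumption \eqref{Size} is invoked only at the end to trade excess powers of $R$ against the small factor $T^{1/4}$.
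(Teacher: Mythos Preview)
Your approach is the same as the paper's: treat the bilinear terms with \eqref{NL3}--\eqref{NL4} and \eqref{velgL6} (respectively \eqref{uDiffSummary} for $f^{b2}_N$), and treat $\mc E^b$ by writing $Q_\Phi^{-1}\mc E^b=\nabla_{x,z}^\gamma(C_\gamma\eta^{b1})$ and invoking \eqref{ineq:Ebheat}. The bilinear bookkeeping is fine (your $T^{1/4}R^{-2}$ is a harmless overcount of the actual $T^{1/4}R^{-3/2}$; cf.~Remark~\ref{rem:Rsimp}).

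However, your $\mc E^b$ paragraph has the indices reversed. According to \eqref{i:cg-1} the $R$-weights are $|C_0|\lesssim R^{-2}$, $|C_1|\lesssim R^{-1}$, $|C_2|\lesssim R$, i.e.\ the \emph{top-order} coefficient carries the favorable factor $R$, not the zeroth-order one. Correspondingly, from \eqref{ineq:Ebheat} the time factors produced after integrating are $T^{1-|\gamma|/2}$: the terms $|\gamma|=0,1$ are subcritical ($TR^{-2}$, $T^{1/2}R^{-1}$, both absorbed by \eqref{Size}), while $|\gamma|=2$ is the critical one contributing $R\,M_{b1}^{-1}\|\omega\|_{X_T}$ with no positive power of $T$. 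Your formula ``$T^{1/2-|\gamma|/2}$ for higher $\gamma$'' would give $T^{-1/2}$ at $|\gamma|=2$, which does not close. With the weights and time powers swapped back to the correct assignment, your argument coincides with the paper's.
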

\begin{proof}
Again we will just prove the linear estimate \eqref{ineq:fb2l} as the nonlinear estimate \eqref{Whisk} is similar.

First, \eqref{NL3} and \eqref{NL4} together with the change of variables Lemma~\ref{lem:Coefs} give
\begin{align*}
\norm{\int_0^t e^{(t-s)\Delta} B[(1 - \widetilde\chi_R)u^g,\omega^{b1}] ds}_{\bF_b} & \lesssim \sup_{0< s \leq T}s^{\frac14} \big(\norm{(1-\widetilde\chi_R)u^g}_{L^6} + \norm{\grad (1-\widetilde\chi_R)u^g}_{L^2}\big) \norm{\eta^{b1}}_{\bN^0_b}, 
\end{align*}
which we may then bound using the estimate \eqref{velgL6}. The corresponding \(\omega^{b2}\) term is similar.

It remains to bound the error \(\mc E^b\). However, as in Lemma~\ref{lem:ErrorHandling} we may write
\[
Q_\Phi^{-1}\mc E^b = \nabla_{x,z}^\gamma\left(C_\gamma \eta^{b1}\right),
\]
where the coefficients \(C_\gamma\) are compactly supported in the set \(\{|x|\leq 4R\}\) and satisfy the estimate \eqref{i:cg-1}. The desired bound then follows from the estimate \eqref{ineq:Ebheat} and the assumption that \(0<T,R\leq 1\).
\end{proof}

\subsection{Mild solution}\label{sec:MildlyIrritating}

With the above a priori estimates established, we are now in a position to verify that the solution constructed in Theorem \ref{thm:FixedPt} satisfies Definition \ref{milddef}.

We first recall the definition of the space \(M^p_q\) ($1 \leq q \leq p$) of functions \(f\in L^q_{\mr{loc}}\) with 
\[
\|\omega\|_{M^p_q}:=\sup_{r>0,\;y\in \R^3}\left\{r^{\frac3p - \frac 3q}\left(\int_{B(y,r)}|\omega|^q\,dy\right)^{\frac1q}\right\}<\infty,
\]
where we note that \(\cM^{\frac32}_1 \subset \cM^{\frac32}\), where \(\cM^{\frac 32}\) is defined as in Definition~\ref{milddef} (recall that \(\cM^{\frac 32}\) contains measures, not just locally integrable functions). Further, we recall the mapping properties of the heat operator (see~\cite[Proposition~3.2]{MR993821})
\eq{HeatMorrey}{
\|\nabla^\alpha e^{t\Delta}\|_{\cM^{3/2}\rightarrow \cM^{3/2}}\lesssim t^{-\frac{|\alpha|}2}. 
}

Similarly, we point out the following useful embeddings: (recall $1 \leq q \leq p$), 
\begin{align*}
L^p \subset \cM^{p}_q, \quad B_z L_x^{q} \subset \cM^{\frac{3q}{2}}_{q}, 
\end{align*}
which are used several times below. We then have the following Proposition:
\begin{prop} \label{prop:MildSoln}
The solution constructed in Theorem \ref{thm:FixedPt} is a mild solution of the 3D Navier-Stokes equations as in Definition \ref{milddef} (with $\omega$ constructed from $\omega^{c\ast}, \omega^{b\ast}$ as described above in Section \ref{sec:decomp}) and satisfies the estimates 
\begin{align}\label{APostiori}
\norm{\omega(t)}_{\cM^{3/2}} + t^{\frac14}\norm{\omega(t)}_{\cM^2_{4/3}} + t^{\frac14}\norm{u(t)}_{\cM^6_4} \lesssim 1. 
\end{align}
\end{prop}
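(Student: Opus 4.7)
The plan is to mirror the end-of-Section~\ref{sec:Straight} argument for the straight filament, the added difficulty being that $\omega$ now splits into five pieces $\omega^g + \omega^{c1} + \omega^{c2} + \omega^{b1} + \omega^{b2}$ living in two different coordinate frames with different function spaces. The bulk of the proof is the a posteriori bound~\eqref{APostiori}; once this is secured, the verification of Definition~\ref{milddef} is essentially cosmetic.

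To bound $\norm{\omega(t)}_{M^{3/2}}$ I would exploit the embedding $L^\infty_z L^1_x \hookrightarrow M^{3/2}$ for functions concentrated in a tubular neighborhood of $\Gamma$, and, for the far parts, the embedding of $L^p_y$, $p\geq 3/2$, into $M^{3/2}$ on bounded sets. The Gaussian piece $\omega^g$ has $\|\eta^g\|_{L^\infty_z L^1_x}=O(1)$ by inspection, and Lemma~\ref{lem:Coefs} transfers this to $M^{3/2}$. For $\omega^{c1}$ and the near part $\widetilde\chi_{8R}\omega^{c2}$, the Cauchy--Schwarz bound $\|\eta^*\|_{B_z L^1_x}\lesssim \sqrt{t}\,\|\langle x/\sqrt t\rangle^{-m}\|_{L^2_x}\cdot t^{-1/2}\lesssim 1$, combined with the $X_T$ control, yields the claim. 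For the far part $(1-\widetilde\chi_{6R})\omega^{c2}\in L^3_y$ (with weight), local-in-$y$ Hölder gives the $M^{3/2}$ control. For $\omega^{b\ast}$, the combination $W^{1,12/11}\hookrightarrow L^{12/7}$ for the initial data, together with the bounds $t^{1/4}\bigl(\|\omega^{b\ast}\|_{L^{4/3}_y}+\|\nabla\omega^{b\ast}\|_{L^{4/3}_y}\bigr)\lesssim 1$ and the interpolation~\eqref{ineq:FbL2}, yields uniform $M^{3/2}$ bounds. The bound $t^{1/4}\norm{\omega(t)}_{M^2_{4/3}}\lesssim 1$ follows likewise: for $\omega^{c*}$ one uses the embedding $B_z L^{4/3}_x\hookrightarrow M^2_{4/3}$ after $Q_\Phi$ together with~\eqref{4/3-bnd} and~\eqref{c2-I3-bnd}; for $\omega^{b*}$ the direct $L^{4/3}_y\subset M^2_{4/3}$ embedding. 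Finally $t^{1/4}\norm{u(t)}_{M^6_4}\lesssim 1$ follows from the Biot--Savart estimates of Lemmas~\ref{lem:vgests-1}, \ref{lem:vgests-2}, together with Sobolev embeddings and the boundedness of the Biot--Savart kernel on Morrey spaces~\eqref{HeatMorrey}.

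Given~\eqref{APostiori}, the verification that $\omega$ is a mild solution proceeds exactly as in the straight filament case. Continuity on $(0,T]$ in $M^{3/2}$ follows from classical parabolic regularity applied to the smooth $u\cdot \nabla \omega - \omega\cdot \nabla u$ nonlinearity together with the uniform Morrey bound. For attainment of initial data in $\mathscr{S}'$, an inspection of the fixed point in $X_T$ reveals $\lim_{t\searrow 0}\norm{\omega^{c*}(t)}$ in the critical norms goes to zero (beyond the $\omega^g$ piece), while $\omega^g(t)\rightharpoonup\alpha\delta_\Gamma$ weak-$*$ and $\omega^{b*}(t)\to \mu^b$ in $\Schwartz'$ (using Lemma~\ref{lem:Init} for the data-dependent piece); pairing against $\phi\in\Schwartz$ and using the a priori $M^{3/2}$ bound one concludes $\omega(t)\rightharpoonup \alpha\delta_\Gamma + \mu^b$. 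For the Duhamel identity \eqref{milddefDuhamel}, one first writes, for $0<s<t$,
\[
\omega(t) = e^{(t-s)\Delta}\omega(s) - \int_s^t e^{(t-t')\Delta} B[u(t'),\omega(t')]\,dt',
\]
which holds in the classical sense. Passing $s\searrow 0$ in the linear term is immediate from weak continuity and self-adjointness of the heat semigroup on Schwartz; for the nonlinear integral I would apply the Dominated Convergence Theorem, bounding the integrand by combining $\norm{e^{(t-t')\Delta}B[u,\omega]}_{M^{3/2}}\lesssim (t-t')^{-1/2}\norm{u\otimes \omega}_{B_z L^1_x}$ with the product estimates $\norm{u\otimes\omega}_{B_z L^1_x}\lesssim (t')^{-1/2}$ coming from Lemmas~\ref{lem:bEmbeds-1}--\ref{lem:vgests-2} on each cross term (the worst term being $u^g\otimes\omega^g$, which is integrable since $\int_0^t (t-t')^{-1/2}(t')^{-1/2}dt'<\infty$).

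The main technical obstacle will be the far-field behavior: in contrast to the straight filament, $\omega^{c2}$ and $\omega^{b2}$ are estimated in $L^p_y$-type norms that do not a priori give global $L^1$ or $M^{3/2}$ control at spatial infinity, and one has to patch the anisotropic near-filament estimates with the isotropic far-field ones without losing the $r\to\infty$ scaling. The trick, as in the decomposition from Section~\ref{sec:curvedIntro}, is that near the filament the $L^\infty_z L^1_x$ embedding gives $M^{3/2}$ control for small and moderate balls, while far from the filament the $L^3_y$ bound (in $\bF_c$) and the $L^{4/3}_y\cap\dot W^{1,4/3}_y$ bound (in $\bF_b$) together with the compact length of $\Gamma$ control large balls through the uniform $L^{3/2}$-type behavior inherited from the initial data in $W^{1,12/11}$ via Sobolev embedding. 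Keeping these two regimes consistent, with the separation-of-supports arguments developed in Section~\ref{sec:Curve2}, is where the real care is needed.
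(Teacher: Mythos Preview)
Your outline for the $M^2_{4/3}$ and $M^6_4$ bounds, and for the mild-solution verification once \eqref{APostiori} is in hand, is essentially the paper's argument. The real gap is in your $M^{3/2}$ bound for the secondary pieces $\omega^{c2}$, $\omega^{b1}$, $\omega^{b2}$: the $X_T$-norm alone does not give it. For $(1-\widetilde\chi_{6R})\omega^{c2}$, the $\bF_c$ control reads $t^{1/2}\|\langle\bd/\sqrt t\rangle^m\omega^{c2}\|_{L^3_y}\lesssim 1$, i.e.\ $\|\omega^{c2}\|_{L^3_y}\lesssim t^{-1/2}$; $L^3_y$ does not embed in $M^{3/2}$ (for a ball of radius $r$ H\"older gives only $r^{-1}\!\int_{B_r}|\omega|\lesssim r\|\omega\|_{L^3}$, unbounded as $r\to\infty$ and blowing up in $t$ for fixed $r$). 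For $\omega^{b1}$ your Cauchy--Schwarz trick does not apply: $\bN_b^\beta$ is an $L^{4/3}_{x,z}$-based norm, not a weighted $L^2$, and $t^{1/4}\|\eta^{b1}\|_{L^{4/3}}\lesssim 1$ again carries a divergent prefactor with no $M^{3/2}$ embedding. The same obstruction hits $\omega^{b2}\in\bF_b$.

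The paper does not close $M^{3/2}$ by embedding from $X_T$; it revisits the equations. First, it reruns the fixed-point estimates in $B_zL^1$ to obtain the additional bound
\(
\|\langle\onabla\rangle^\beta H^{c1}\|_{B_zL^1_\xi}+\|\langle t^{1/2}\nabla\rangle^\beta\eta^{b1}\|_{B_zL^1_x}\lesssim 1,
\)
which via $B_zL^1_x\hookrightarrow M^{3/2}$ yields $\|\langle t^{1/2}\nabla\rangle^\beta\omega^{*1}\|_{M^{3/2}}\lesssim 1$ for $*=g,c,b$. Then it bounds $\omega^{c2},\omega^{b2}$ directly from their Duhamel formulas using the heat semigroup on Morrey spaces \eqref{HeatMorrey} (extended to fractional derivatives). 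The delicate point is the $\cE^*$ error terms, which contain $\nabla^2\eta^{*1}$: a naive estimate gives the divergent $\int_0^t(t-s)^{-1}ds$, and the fractional regularity $\beta>0$ is essential, producing instead $\int_0^t(t-s)^{-1+\beta/2}s^{-\beta/2}ds<\infty$. The bilinear contributions are handled by the Morrey--H\"older product $\|u\|_{M^6_4}\|\omega\|_{M^2_{4/3}}$ in \eqref{NonlinearHandling}, not the anisotropic $B_zL^1_x$ product you propose (which is only available near the filament).
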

\begin{proof}

We will prove the estimate \eqref{APostiori}. The fact that \(\omega\) is a mild solution in the sense of Definition~\ref{milddef} may then be proved by a variant of the argument used for the straight filament in Section~\ref{sec:Straight}. Note that by H\"older's inequality and the estimate \eqref{HeatMorrey} we may bound the nonlinear terms by
\eq{NonlinearHandling}{
\begin{aligned}
\int_s^t \norm{ e^{(t - \sigma)\Delta} B[u(\sigma),\omega(\sigma)]}_{\cM^{3/2}} \,d\sigma &\lesssim \int_s^t (t - \sigma)^{-\frac12} \|u(\sigma)\|_{\cM^6_4}\|\omega(\sigma)\|_{\cM^2_{4/3}}\,d\sigma\\
&\lesssim \left(\sup\limits_tt^{\frac14}\|u(t)\|_{\cM^6_4}\right)\left(\sup\limits_t t^{\frac14}\|\omega(t)\|_{\cM^2_{4/3}}\right).
\end{aligned}
}

To obtain the estimate on the velocity in \eqref{APostiori} we apply H\"older's inequality and Lemmas~\ref{lem:Coefs},~\ref{lem:PhiDiff} to bound the change of coordinates to obtain
\[
\|u\|_{\cM^6_4}\lesssim \|\widetilde\chi_R u\|_{\cM^6_4} + \|(1 - \widetilde\chi_R)u\|_{\cM^6_4}\lesssim \|v\|_{B_zL^4_x} + \|(1 - \widetilde\chi_R)u\|_{L^6_y}.
\]
The \(\cM^6_4\) estimate for \(u\) then follows from Lemmas~\ref{lem:vgests-1},~\ref{lem:vgests-2}. Similarly, we may bound
\[
\|\omega\|_{\cM^2_{4/3}}\leq \|\widetilde\chi_{\frac R4}\omega\|_{\cM^2_{4/3}} + \|(1 - \widetilde\chi_{\frac R4})\omega\|_{\cM^2_{4/3}}\lesssim \|\chi_{\frac R4}\eta\|_{B_zL^{4/3}_x} + \|(1 - \widetilde\chi_{\frac R4})\omega\|_{L^2_y},
\]
and the \(\cM^2_{\frac43}\) estimate for \(\omega\) follows from Lemmas~\ref{lem:bEmbeds-1},~\ref{lem:bEmbeds-2}.

It remains to establish the \(\cM^{\frac32}\) estimate for \(\omega\). First we observe that by arguing as in the straight filament case in Section~\ref{sec:Straight}, using estimates for \(S(\tau,\sigma)\), \(\bS(t,s)\) in \(B_zL^1_\xi\), \(B_zL^1_x\) respectively, we obtain the estimates
\[
\|\<\onabla\>^\beta H^{c1}\|_{B_zL^1_\xi} + \|\<t^{1/2}\nabla\>^\beta\eta^{b1}\|_{B_zL^1_x}\lesssim 1.
\]
Using the embedding \(B_zL^1_x\subset \cM^{\frac 32}\) and Lemmas~\ref{lem:Coefs},~\ref{lem:PhiDiff} to bound the change of coordinates we obtain the estimate
\eq{M32Frac}{
\|\<t^{\frac 12}\nabla\>^\beta\omega^g\|_{\cM^{3/2}} + \|\<t^{\frac 12}\nabla\>^\beta\omega^{c1}\|_{\cM^{3/2}} + \|\<t^{\frac 12}\nabla\>^\beta\omega^{b1}\|_{\cM^{3/2}}\lesssim 1.
}
Next we note that (see~\cite[Theorem~3.8]{MR1187618}) that the estimate \eqref{HeatMorrey} extends to fractional derivatives and hence we may argue as in Lemma~\ref{lem:ErrorHandling} using the estimate \eqref{M32Frac} to obtain
\[
\norm{\int_0^t e^{(t-s)\Delta}\mc E^*(s)}_{\cM^{3/2}}\lesssim \int_0^t(t - s)^{-1 + \frac\beta 2}s^{-\frac\beta2}\,ds \lesssim 1,
\]
for \(* = c,g,b\). Finally we use the estimate \eqref{NonlinearHandling} to bound the remaining nonlinear contributions to \(\omega^{c2}\), \(\omega^{b2}\), and the estimate
\[
\|e^{t\Delta }\mu^b\|_{\cM^{3/2}}\lesssim \|\mu^b\|_{L^{3/2}}\lesssim \|\mu^b\|_{W^{1,12/11}},
\]
to bound the initial data. Thus, we obtain the estimates
\[
\|\omega^{c2}\|_{\cM^{3/2}} + \|\omega^{b2}\|_{\cM^{3/2}}\lesssim 1,
\]
as required.
\end{proof}

\subsection{Uniqueness} \label{sec:Unique}
In this section we prove the following uniqueness criterion, which is a natural corollary of our proof.

\begin{thrm}
Let $\omega \in \mc C^2((0,T_\ast)\times \mathbb R^3)$ be another mild solution. Define $\widetilde{\omega}^c$ and $\omega^b$ as the solutions 
\begin{align}
&\pde{
\partial_t \widetilde{\omega}^c + B[u,\widetilde{\omega}^c] = \Delta \widetilde{\omega}^c,
}{ 
\widetilde{\omega}^c(t=0) = \alpha \delta_{\Gamma},
}
\\
&\pde{
\partial_t \omega^b +  B[u,\omega^b] = \Delta \omega^b,
}{
\omega^b(t=0) = \mu^b.  
}
\end{align}
Suppose that for sufficiently small $R > 0$ we have
\begin{align}
\lim_{T \searrow 0} \left( \norm{Q_{\Phi}^{-1} \left(\widetilde\chi_{8R}\left(\widetilde{\omega}^c - \alpha \omega^g\right)\right)}_{\bN^0_c} + \norm{\widetilde{\omega}^c -  \alpha \omega^g}_{\bF_c} + \norm{Q_\Phi^{-1}\left(\widetilde\chi_{8R}\omega^b\right)}_{\bN^0_b} + \norm{\omega^b}_{\bF_b}\right)  = 0, \label{uniqVan} 
\end{align}
where, as before, we take \(\omega^g = Q_\Phi^{-1}(\widetilde\chi_{2R}\eta^g)\). Then $\omega$ is necessarily the solution constructed above in Theorem \ref{thm:FixedPt}. 
\end{thrm}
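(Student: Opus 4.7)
The strategy is to show that any mild solution $\omega$ obeying \eqref{uniqVan} induces a quadruple $(\eta^{c1},\omega^{c2},\eta^{b1},\omega^{b2})$ which, for $T$ sufficiently small, lies in the ball $B_{\eps,T,R,M_\ast}$ and is a fixed point of the operator $\mc Q$ from Theorem \ref{thm:FixedPt}. By the uniqueness part of the Banach fixed point obtained in that theorem, this tuple then coincides with the one constructed in Section \ref{sec:Curve1}, and hence so do the corresponding $\omega$'s on $[0,T]$. Classical parabolic uniqueness on $(0,T_\ast) \times \R^3$ --- afforded by the $\mc C^2$ hypothesis and the a posteriori bounds of Proposition~\ref{prop:MildSoln} --- then propagates the identification to the full interval.

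\textit{Extracting the quadruple.} Given $\omega$, the velocity $u = (-\Delta)^{-1}\grad\times\omega$, and hence $v = P_\Phi^{-1}(\widetilde\chi_R u)$, is determined. Viewing $u,v$ as externally prescribed and using zero initial data, solve the coupled linear inhomogeneous system \eqref{Core1Piece}--\eqref{bg2PiecePhys} for $(\eta^{c1},\omega^{c2},\eta^{b1},\omega^{b2})$ by a short-time contraction that uses the propagators $S(\tau,\sigma)$, $\bS(t,s)$ from Sections \ref{sec:SSlin}--\ref{sec:StretchLin} and the heat semigroup, exactly as in the construction of $\mc Q$ but without the nonlinear self-coupling through Biot--Savart. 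Define $\widetilde\omega^{c,\mr{dec}} := \omega^g + Q_\Phi(\chi_{2R}\eta^{c1}) + \omega^{c2}$ and $\omega^{b,\mr{dec}} := Q_\Phi(\chi_{2R}\eta^{b1}) + \omega^{b2}$; summing the defining equations and using Lemma \ref{lem:Geometry}, these satisfy the same linear inhomogeneous transport--diffusion Cauchy problems (with velocity $u$ given) and the same initial data as $\widetilde\omega^c, \omega^b$. Uniqueness for these linear problems in the $M^{3/2}$ class then gives $\widetilde\omega^{c,\mr{dec}} = \widetilde\omega^c$ and $\omega^{b,\mr{dec}} = \omega^b$. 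Consequently, the Biot--Savart law applied to the quadruple reconstructs precisely the $u$ that was fed in, and the quadruple satisfies $\mc Q(\text{quadruple}) = \text{quadruple}$.

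\textit{Placement in the ball.} The hypothesis \eqref{uniqVan} directly controls $\omega^{c2}$ in $\bF_c$ (via the $(1-\widetilde\chi_{6R})$-weighted part of $\widetilde\omega^c - \omega^g$, modulo $Q_\Phi(\chi_{2R}\eta^{c1})$ which has the required support), and $\omega^{b2}$ in $\bF_b$. For the $\bN^\beta_c$ and $\bN^\beta_b$ norms, which contain the fractional gain $\beta \in (0,1/4)$ absent from the hypothesis, one reinserts the equations \eqref{Core1Piece}, \eqref{bg1Piece} and applies Theorem \ref{prop:LinearEstimates} and Proposition \ref{prop:LinearSmoothing}: the parabolic smoothing of $S$ and $\bS$ converts the $\bN^0$ information into $\bN^\beta$ at the cost of an integrable singularity in time, so that for $T$ smaller than the $\liminf$-threshold in \eqref{uniqVan}, all five summands in the $X_T$-norm \eqref{BigNorm} are below $\eps$.

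\textit{Main obstacle.} The delicate point is the self-consistency of the decomposition: the hypothesis bounds $\widetilde\omega^c - \omega^g$ in norms that mix a straightened-frame part near $\Gamma$ and an isotropic part away from it, but the ball $B_{\eps,T,R,M_\ast}$ requires separate estimates on $\eta^{c1}$ (not on $\omega^c$) in $\bN^\beta_c$ and on the $\widetilde\chi_{8R}$-cutoff of $\omega^{c2}$ in $\bN^0_c$. Disentangling these two contributions inside the overlap region $\{|x| \approx R\}$ --- and controlling the $\beta$ fractional derivative uniformly in the small parameter $R$ --- is the step where one must carefully use both the smoothing properties of $S$ and the $O(R)$ separation between the supports of $\omega^{c1}$ and $(1-\widetilde\chi_{6R})\omega^{c2}$ exploited throughout Section \ref{sec:Curve2}.
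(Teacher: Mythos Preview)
Your overall strategy --- decompose the given $\omega$ into a quadruple, show it lies in the ball $B_{\eps,T,R,M_\ast}$, and invoke the uniqueness of the Banach fixed point from Theorem~\ref{thm:FixedPt} --- is the same as the paper's. The implementation, however, differs in a way that creates a genuine gap.

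Your proposed ``short-time contraction using $S(\tau,\sigma),\,\bS(t,s)$'' to solve the coupled system \eqref{Core1Piece}--\eqref{bg2PiecePhys} with $v$ prescribed externally does not close. The propagator $S$ already absorbs the Biot--Savart self-coupling $\overline{B}[(-\oDelta)^{-1}\onabla\times H^{c1},H^g]$ into its definition; when you then feed in the externally given $v$, the Duhamel right-hand side $f^{c1}$ contains the term $B[v-v^g-(-\Delta)^{-1}\nabla\times\eta^{c1},\eta^g]$, and since the given $v^c$ is not a priori related to $(-\Delta)^{-1}\nabla\times\eta^{c1}$ until \emph{after} the identification, this term carries an $O(1)$ linear coefficient in $\eta^{c1}$. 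The integrated contraction constant is then $\gtrsim 1$, not small, regardless of how small $T$ is taken.

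The paper sidesteps this entirely by \emph{not} using $S$ or $\bS$ here. It defines $\eta^{c1}$ explicitly via the plain Fokker--Planck propagator $e^{(\tau-s)\cL + (e^\tau-e^s)\partial_z^2}$, placing \emph{all} velocity terms (including those involving $\bar v^g$) on the right-hand side, and --- crucially --- replacing the unknown $\eta^{c1}+\eta^{c2}$ by the \emph{known} $\underline{\eta}^c = Q_\Phi^{-1}(\widetilde\chi_{2R}\omega^c)$ furnished by the hypothesis. This yields $\eta^{c1}$ by direct integration, with no fixed point needed. Then $\omega^{c2}$ is \emph{defined} as the remainder $\omega^c - Q_\Phi(\chi_{2R}\eta^{c1})$ (not solved for), and the observation that $v$ is supported where $\chi_{2R}\equiv 1$ gives $B[v,\underline{\eta}^c]=B[v,\eta^{c1}+\eta^{c2}]$, so $\eta^{c1}$ automatically satisfies \eqref{Core1Piece} and $\omega^{c2}$ satisfies \eqref{Core2PiecePhys} by subtraction. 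No separate linear-uniqueness step is required. The disentangling you flag as the ``main obstacle'' is then just the triangle inequality $\|Q_\Phi^{-1}(\widetilde\chi_{8R}\omega^{c2})\|_{\bN^0_c}\leq \|Q_\Phi^{-1}(\widetilde\chi_{8R}\omega^c)\|_{\bN^0_c}+\|\eta^{c1}\|_{\bN^0_c}$ together with the support observation $\|\omega^{c2}\|_{\bF_c}=\|\omega^c\|_{\bF_c}$; the genuinely delicate point is rather the order of parameter choices --- $M_c$ is fixed independently of $R$, so one can take $R$ small to absorb the $(1+M_c)R\ln R^{-1}$ contribution, and only then take $T$ small.
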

\begin{proof}
We start by choosing \(\beta,m,\epsilon,M_*\) as in the proof of Theorem~\ref{thm:FixedPt}. Take \(\omega^c := \widetilde\omega^c - \omega^g\) and let
\[
\norm\omega:= \norm{Q_{\Phi} \left(\widetilde\chi_{8R}\omega^c\right)}_{\bN^0_c} + \norm{\omega^c}_{\bF_c} + \norm{Q_\Phi^{-1}\left(\widetilde\chi_{8R}\omega^b\right)}_{\bN^0_b} + \norm{\omega^b}_{\bF_b}
\]
so that from \eqref{uniqVan} we have \(\lim_{T \searrow 0} \norm\omega = 0\).

Our goal is to find a suitable decomposition of \(\omega^c,\omega^b\) so that by choosing sufficiently small \(R,T>0\) sufficiently small, the decomposed solution lies in the ball \(B_{\epsilon,T,R,M_*}\), in which the contraction mapping argument guarantees uniqueness. We are able to do this thanks to two key observations. First, the equations \eqref{Core1Piece}, respectively \eqref{bg1Piece}, for \(\eta^{c1}\), respectively \(\eta^{b1}\), depend on \(\omega^c,\omega^b\) rather than their decomposition. This will enable us to make a decomposition that matches the one outlined in Section~\ref{sec:Curve1}. Second, in the proof of Theorem~\ref{thm:FixedPt} we first fix \(M_*,\epsilon\) (recalling that the \(M_{b*}\approx R^{-\frac34}M_c\)), then choose \(R>0\) sufficiently small, and finally choose \(T>0\) sufficiently small. Thus, provided we choose \(R>0\) sufficiently small before choosing \(T>0\) sufficiently small, the contraction mapping argument still applies.

We now proceed to carry out this approach. We first define $\eta^{c1}$ to be a mild solution of the heat equation
\begin{align*}
\partial_t \eta^{c1} - \Delta \eta^{c1} = -B[v, \underline\eta^{c}] - B[v-\alpha \chi_R \bar v^g,\alpha \eta^g],
\end{align*}
with initial data \(\eta^{c1}(0) = 0\), where \(\underline\eta^c = Q_\Phi^{-1}\left(\widetilde\chi_{2R}\omega^c\right)\) and \(v = P_\Phi^{-1}\left(\widetilde\chi_R (-\Delta)^{-1}\nabla\times \omega\right)\). Switching to self-similar variables we obtain
\begin{align*}
H^{c1} = -\int_{-\infty}^{\tau} e^{(\tau-s)\mathcal{L} + (e^\tau-e^s)\partial_z^2} \left(\bar{B}[V, \underline H^{c}] + \bar{B}[V - \alpha \chi_{Re^{-s/2}}\bar V^g,\alpha H^g] \right) ds.
\end{align*}
By definition we may bound
\[
\|\underline H^c\|_{B_zL^2_\xi(m)}\lesssim \norm{\omega}.
\]
Decomposing \(V = \alpha V^g + V^c + V^b\), where \(V^*\) is the self-similar scaling of \(v^* = P_\Phi^{-1}\left(\chi_R\omega^*\right)\), we may apply the estimate \eqref{ineq:BSbasic} to bound
\[
\|V^c\|_{B_zL^4_\xi}\lesssim  \norm{\omega},
\]
and similarly, using the Sobolev estimate \eqref{Sobolev},
\[
\|V^b\|_{B_zL^4_\xi}\lesssim  \norm{\omega}.
\]
Further, we may apply the estimate \eqref{i:vgdiff} to bound
\[
\|V^g - \chi_{Re^{-s/2}}\bar V^g\|_{B_zL^4_\xi}\lesssim T^{\frac14}R^{-\frac12} + R\ln R^{-1}.
\]
Consequently, we apply \eqref{estimatesFP} to obtain the estimate
\begin{align*}
\|\eta^{c1}\|_{\bN_c^\beta} = \sup\limits_{-\infty<\tau\leq \ln T}\|\langle \overline\nabla\rangle^\beta H^{c1}\|_{B_zL^2_\xi(m)} &\lesssim \norm{\omega} + \norm{\omega}^2 + T^{\frac14}R^{-\frac12} + R\ln R^{-1}.
\end{align*}

We define \(\eta^{b1}\) to be the mild solution of the heat equation
\[
\partial_t\eta^{b_1} - \Delta\eta^{b1} = - B[v,\underline \eta^b],
\]
with initial data \(\eta^{b1}(t=0) = 0\), where \(\underline\eta^b = Q_\Phi^{-1}\left(\widetilde\chi_{2R}\omega^b\right)\). Using the self-similar scaling we may write
\[
H^{b1} = -\int_{-\infty}^\tau e^{(\tau - s)\cL + (e^\tau - e^s)\partial_z^2}\overline B[V,\underline H^b]\,ds.
\]
By definition we may bound
\[
R^{-\frac34}\|\underline H^b\|_{L^{4/3}_{\xi,z}} + R^{\frac14}\|\overline\nabla \underline H^b\|_{L^{4/3}_{\xi,z}}\lesssim \norm{\omega}.
\]
Further, applying the estimate \eqref{i:vg-nobar} for \(V^g\) together with the estimate \eqref{ineq:BSbasic} for \(V^c\), \(V^b\) we may bound
\[
\|V\|_{B_zL^4_\xi} + \|\nabla V\|_{B_zL^{4/3}_\xi}\lesssim 1 + \norm\omega.
\]
As a consequence, we obtain the estimate
\[
M_{b1}\|\eta^{b1}\|_{\bN_b^\beta} =  M_{b1}\sup\limits_{-\infty<\tau\leq \ln T} \left(R^{-\frac34}\|\langle \overline\nabla\rangle^\beta H^{b1}\|_{L^{4/3}_{\xi,z}} + R^{\frac14}\|\langle \overline\nabla\rangle^\beta \overline \nabla H^{b1}\|_{L^{4/3}_{\xi,z}}\right) \lesssim M_{b1}\norm{\omega} + M_{b1}\norm{\omega}^2.
\]

Next we define the remainders
\[
\omega^{c2} := \omega^c - Q_\Phi\left(\chi_{2R} \eta^{c1}\right),\qquad \omega^{b2} := \omega^b - Q_\Phi\left(\chi_{2R} \eta^{b1}\right),
\]
and observe that as \(v\) is supported on the region on which \(\chi_{2R} = 1\), for \(*=c,b\) the corresponding \(\eta^* = Q_\Phi^{-1}(\widetilde\chi_{2R} \omega^{*2}) + \eta^{*1}\) satisfies
\(
B[v,\underline\eta^*] = B[v,\eta^*].
\)
In particular, we may replace \(\underline \eta^*\) by \(\eta^*\) to see that \(\eta^{c1}\), respectively \(\eta^{b1}\), satisfy \eqref{Core1Piece}, respectively \eqref{bg1Piece}. It is then clear that $\omega^{c2}$ is a mild solution of \eqref{Core2PiecePhys} and that \(\omega^{b2}\) is a mild solution of \eqref{bg2PiecePhys}.

To complete the proof we use the triangle inequality to bound
\[
M_c\norm{Q_\Phi^{-1}\left(\widetilde\chi_{8R}\omega^{c2}\right)}_{\bN_c^0} \leq M_c\norm{Q_\Phi^{-1}\left(\widetilde\chi_{8R}\omega^c\right)}_{\bN_c^0} + M_c\norm{\eta^{c1}}_{\bN_c^0}\leq M_c\|\omega\| + M_c\|\eta^{c1}\|_{\bN_c^\beta},
\]
Further, by definition we have
\[
M_c\norm{\omega^{c2}}_{\bF_c} = M_c\norm{\omega^c}_{\bF_c}\leq M_c\norm\omega,
\]
Similarly, we may bound
\[
M_{b2}\|\omega^{c2}\|_{\bF_b}\lesssim M_{b2}\|\omega\| + M_{b2}\norm{\eta^{b1}}_{\bN_b^\beta}.
\]
Combining these bounds we obtain
\[
\|(\eta^{c1},\omega^{c2},\eta^{b1},\omega^{b2})\|_{X_T}\lesssim (1 + M_c + M_{b1} + M_{b2}) \left(\norm\omega + \norm\omega^2\right) + (1 + M_c)\left(T^{\frac14}R^{-\frac12} + R\ln R^{-1}\right).
\]
To complete the proof, we first recall that \(M_c\) is chosen independently of \(R>0\). Thus we may choose \(R>0\) sufficiently small to ensure the contribution of \((1 + M_c)R\ln R^{-1}\) is sufficiently small. The remaining terms are all \(o(1)\) as \(T\searrow 0\) and hence we may choose \(T>0\) sufficiently small to ensure that
\[
\|(\eta^{c1},\omega^{c2},\eta^{b1},\omega^{b2})\|_{X_T}\leq \epsilon.
\]
\end{proof}

\begin{appendix}

\section{Analysis in $B_zL^p_x$ spaces}

\subsection{Basic inequalities} 
\begin{lem}
If $1 \leq p,q,r \leq \infty$ and $\frac{1}{r} = \frac{1}{p} + \frac{1}{q}$, then
\begin{align}
\label{productlaw}
\norm{fg}_{B_z L^r_x} \lesssim \norm{f}_{B_z L^p_x} \norm{g}_{B_z L^q_x}.  
\end{align}
\end{lem}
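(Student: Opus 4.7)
The plan is to exploit the fact that taking the Fourier transform in $z$ converts pointwise multiplication in $z$ into convolution in $\zeta$, after which $B_zL^p_x$ is essentially an $L^1_\zeta$-valued Bochner space with values in $L^p_x$, and the result reduces to Young's convolution inequality combined with H\"older's inequality in $x$.

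First I would record the identity (on $\R^2 \times \R$)
\[
\widehat{fg}(\cdot,\zeta) = \frac{1}{\sqrt{2\pi}} \int_\R \widehat{f}(\cdot, \zeta - \eta)\, \widehat{g}(\cdot, \eta)\, d\eta,
\]
which is simply the convolution theorem in the $z$ variable (with the normalization chosen for $\widehat{\cdot}$ in the paper). Next, for each fixed $\zeta$, apply Minkowski's integral inequality in $x$ followed by H\"older's inequality in $x$ with $\frac{1}{r} = \frac{1}{p} + \frac{1}{q}$ to obtain
\[
\|\widehat{fg}(\cdot,\zeta)\|_{L^r_x} \lesssim \int_\R \|\widehat{f}(\cdot, \zeta - \eta)\|_{L^p_x}\, \|\widehat{g}(\cdot, \eta)\|_{L^q_x}\, d\eta.
\]
Integrating this estimate in $\zeta$ and applying Fubini (or equivalently Young's convolution inequality $L^1 * L^1 \hookrightarrow L^1$) then yields
\[
\|fg\|_{B_z L^r_x} \lesssim \int_\R \int_\R \|\widehat{f}(\cdot, \zeta - \eta)\|_{L^p_x}\, \|\widehat{g}(\cdot, \eta)\|_{L^q_x}\, d\eta\, d\zeta = \|f\|_{B_z L^p_x}\, \|g\|_{B_z L^q_x},
\]
which is the desired bound. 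For the torus case $(x,z) \in \R^2 \times \T$, the same argument works verbatim with $\int_\R d\zeta$ replaced by $\sum_{\zeta \in \Z}$ and the continuous convolution replaced by the discrete convolution, using Young's inequality $\ell^1 * \ell^1 \hookrightarrow \ell^1$.

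There is no real obstacle here: the proof is a one-line consequence of the fact that $B_z$ is modeled on the Wiener algebra $\mathcal{F}L^1_\zeta$, which is itself a Banach algebra under pointwise multiplication, tensored against the H\"older inequality in $x$. The only mild care needed is in the normalization constants coming from the convention $\widehat{f}(\zeta) = (2\pi)^{-1/2}\int f(z) e^{-iz\zeta}\, dz$ and in the endpoint cases $p, q$, or $r$ equal to $\infty$, where one reads Minkowski and H\"older in the usual essential supremum sense.
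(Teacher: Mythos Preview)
Your proof is correct and follows essentially the same approach as the paper: convolution in $\zeta$, Minkowski plus H\"older in $x$, then Young's inequality $L^1 * L^1 \hookrightarrow L^1$ in $\zeta$. The paper's version is a single displayed line carrying out exactly these steps.
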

\begin{proof} We consider the case that \(z\in \R\), the case that \(z\in \T\) is similar. Applying Minkowski's and H\"older's inequalities,
\begin{align*}
\| \widehat{fg}(x,\zeta) \|_{L^1_\zeta L^r_x}
& \leq \iint \left\| \widehat{f}(x,\zeta') \right\|_{L^p_x } \left\| \widehat{f}(x,\zeta-\zeta') \right\|_{ L^q_x } \,d\zeta' \,d\zeta \leq \| \widehat{f}(x,\zeta) \|_{L^1_\zeta L^p_x } \| \widehat{f}(x,\zeta) \|_{L^1_\zeta L^q_x } .
\end{align*}
\end{proof}

\begin{lem}\label{lem:Sobolev}
If \(1 < p\leq 2\) we have the estimate,
\eq{Sobolev}{
\|f\|_{B_zL^p_x}\lesssim \|f\|_{L^p_{x,z}}^{1 - \frac1p}\|\partial_zf\|_{L^p_{x,z}}^{\frac1p}.
}
\end{lem}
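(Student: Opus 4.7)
The plan is to exploit Hausdorff--Young to pass from $L^1_\zeta$ to $L^{p'}_\zeta$ (which is the natural exponent on the Fourier side when $p \leq 2$), and then use H\"older together with the $|\zeta|$ gained from $\partial_z$ to recover an $L^1_\zeta$ bound by splitting frequencies and optimizing. I describe the argument for $z \in \R$; the case $z \in \T$ is identical with $\int d\zeta$ replaced by $\sum_{\zeta \in \Z}$ and Hausdorff--Young for Fourier series.

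First I would establish the two basic endpoint bounds. By Minkowski's inequality (using $p \leq p'$ since $p \leq 2$) and Hausdorff--Young applied in the $z$-variable,
\[
\|\widehat f\|_{L^{p'}_\zeta L^p_x} \leq \|\widehat f\|_{L^p_x L^{p'}_\zeta} \lesssim \|f\|_{L^p_x L^p_z} = \|f\|_{L^p_{x,z}},
\]
and the same reasoning applied to $\partial_z f$ (whose Fourier transform in $z$ is $i\zeta\, \widehat f$) yields
\[
\bigl\| |\zeta|\, \widehat f \bigr\|_{L^{p'}_\zeta L^p_x} \lesssim \|\partial_z f\|_{L^p_{x,z}}.
\]

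Next, for any $R > 0$ I would split
\[
\|f\|_{B_z L^p_x} = \int_{|\zeta| \leq R} \|\widehat f(\cdot,\zeta)\|_{L^p_x}\, d\zeta + \int_{|\zeta| > R} \|\widehat f(\cdot,\zeta)\|_{L^p_x}\, d\zeta.
\]
On the low-frequency part, H\"older's inequality on $[-R,R]$ with exponents $p$ and $p'$ together with the first endpoint bound gives
\[
\int_{|\zeta| \leq R} \|\widehat f\|_{L^p_x}\, d\zeta \leq (2R)^{1/p} \|\widehat f\|_{L^{p'}_\zeta L^p_x} \lesssim R^{1/p} \|f\|_{L^p_{x,z}}.
\]
On the high-frequency part I insert the weight $|\zeta|^{-1}$ and apply H\"older with exponents $p$ and $p'$:
\[
\int_{|\zeta| > R} \|\widehat f\|_{L^p_x}\, d\zeta \leq \Bigl( \int_{|\zeta|>R} |\zeta|^{-p}\, d\zeta \Bigr)^{1/p} \bigl\| |\zeta|\, \widehat f \bigr\|_{L^{p'}_\zeta L^p_x} \lesssim R^{1/p - 1} \|\partial_z f\|_{L^p_{x,z}},
\]
where convergence of the $|\zeta|^{-p}$ integral uses $p > 1$ (this is precisely where the hypothesis is used).

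Combining the two bounds gives
\[
\|f\|_{B_z L^p_x} \lesssim R^{1/p} \|f\|_{L^p_{x,z}} + R^{1/p - 1} \|\partial_z f\|_{L^p_{x,z}},
\]
and optimizing by choosing $R = \|\partial_z f\|_{L^p_{x,z}} / \|f\|_{L^p_{x,z}}$ yields the claimed interpolation inequality. There is no real obstacle here; the only subtleties to verify are the swap of norms via Minkowski (which is why $p \leq 2$ appears) and convergence of the high-frequency weight integral (which is why $p > 1$ is required).
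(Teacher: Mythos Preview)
Your proof is correct and follows essentially the same approach as the paper: split at a frequency threshold, control each piece via H\"older together with the Minkowski--Hausdorff--Young bound $\|\widehat f\|_{L^{p'}_\zeta L^p_x} \lesssim \|f\|_{L^p_{x,z}}$, and optimize the threshold. The paper's argument is identical in structure (with $A$ in place of your $R$), and you correctly identify the roles of the hypotheses $p\leq 2$ and $p>1$.
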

\begin{proof}
We consider the case that \(z\in \R\), the case that \(z\in\T\) is similar and assume \(f\not\equiv 0\). We take \(\frac1p + \frac 1{p'} = 1\) and for a real number \(A>0\) to be chosen shortly we apply H\"older's inequality to obtain,
\[
\int_{|\zeta|\leq A}\|\widehat f(\zeta)\|_{L^p_x}\,d\zeta\lesssim A^{\frac1p}\|\widehat f\|_{L^{p'}_\zeta L^p_x}\lesssim A^{\frac1p}\|\widehat f\|_{L^p_xL^{p'}_\zeta }\lesssim A^{\frac1p}\|f\|_{L^p_{x,z}},
\]
where we note that reversing the order of integration is justified as \(p'\geq p\) and the final inequality is proved using the Hausdorff-Young inequality for a.e. \(x\in \R^2\).

Similarly we may bound,
\[
\int_{|\zeta|>A}\|\widehat f(\zeta)\|_{L^p_x}\,d\zeta \lesssim A^{\frac1p - 1}\|\zeta \widehat f\|_{L^{p'}_\zeta L^p_x}\lesssim A^{\frac1p - 1}\|\partial_z f\|_{L^p_{x,z}}.
\]
We then choose \(A = \frac{\|\partial_z f\|_{L^p}}{\|f\|_{L^p}}\) to obtain the estimate \eqref{Sobolev}.
\end{proof}

\begin{lem}[Boundedness of the Riesz transforms] \label{lem:Riesz}
There holds for all $1<p<\infty$, 
\begin{align}
\norm{\grad^2 f}_{B_z L^p_x} & \lesssim_p \norm{\Delta f}_{B_z L^p_x}. 
\end{align}
\end{lem}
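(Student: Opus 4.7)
The plan is to pass to the Fourier transform in the $z$-variable and reduce the estimate to a uniform-in-$\zeta$ bound on $L^p_x(\R^2)$, so that integrating (or summing) in $\zeta$ against the definition of the $B_z L^p_x$ norm gives the result. Concretely, for any second derivative $\partial_k \partial_\ell$ with $k,\ell \in \{1,2,3\}$, the Fourier transform in $z$ produces one of the symbols $\partial_i \partial_j$, $i\zeta\, \partial_i$, or $-\zeta^2$ (for $i,j \in \{1,2\}$), while $\widehat{\Delta f}(\cdot,\zeta) = (\Delta_x - \zeta^2)\widehat f(\cdot,\zeta)$. It therefore suffices to prove, uniformly in $\zeta \in \R$ (or $\zeta \in \Z$),
\[
\|\partial_i\partial_j \widehat f(\cdot,\zeta)\|_{L^p_x} + \|\zeta\, \partial_i \widehat f(\cdot,\zeta)\|_{L^p_x} + \|\zeta^2 \widehat f(\cdot,\zeta)\|_{L^p_x} \lesssim_p \|(\Delta_x - \zeta^2)\widehat f(\cdot,\zeta)\|_{L^p_x}.
\]

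For $\zeta = 0$ this is the classical $L^p$-boundedness of the two-dimensional second-order Riesz transforms $\partial_i\partial_j(-\Delta_x)^{-1}$, valid for $1 < p < \infty$.

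For $\zeta \neq 0$ I would rescale: setting $g_\zeta(y) := \widehat f(y/|\zeta|, \zeta)$ so that $\widehat f(x,\zeta) = g_\zeta(|\zeta| x)$, a change of variables shows every term above is homogeneous of the same order $\zeta^2 |\zeta|^{-2/p}$ in $\zeta$, so the inequality reduces to
\[
\|\partial_i\partial_j g_\zeta\|_{L^p_x(\R^2)} + \|\partial_i g_\zeta\|_{L^p_x(\R^2)} + \|g_\zeta\|_{L^p_x(\R^2)} \lesssim_p \|(1 - \Delta_x) g_\zeta\|_{L^p_x(\R^2)},
\]
which is the standard statement that the Bessel-potential multipliers $\partial_i\partial_j(1-\Delta_x)^{-1}$, $\partial_i(1-\Delta_x)^{-1}$ and $(1-\Delta_x)^{-1}$ are bounded on $L^p_x(\R^2)$ for $1 < p < \infty$ (e.g., by the Mihlin multiplier theorem applied to the smooth, homogeneous-of-degree-zero symbols). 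Crucially the resulting constant is independent of $g_\zeta$, hence of $\zeta$.

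Integrating this uniform bound over $\zeta \in \R$, or summing over $\zeta \in \Z$ in the periodic case, against the definition $\|h\|_{B_z L^p_x} = \int \|\widehat h(\cdot,\zeta)\|_{L^p_x}\,d\zeta$ then yields the desired estimate. There is no real obstacle: the whole argument is a direct reduction to known two-dimensional Fourier multiplier bounds via rescaling.
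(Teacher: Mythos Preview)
Your proof is correct and follows essentially the same approach as the paper: both take the Fourier transform in $z$, reduce to a uniform-in-$\zeta$ bound $\|\zeta^a\nabla_x^b(\zeta^2-\Delta_x)^{-1}\hat g\|_{L^p_x}\lesssim_p\|\hat g\|_{L^p_x}$ for $a+b=2$, and obtain this by rescaling to $|\zeta|=1$ and invoking standard mapping properties of the Bessel potential (the paper cites Calder\'on--Zygmund for $b=2$ and $L^1$ kernel bounds for $a=1,2$, while you invoke Mihlin, but these are equivalent here). The only cosmetic difference is that you single out $\zeta=0$ as the classical Riesz transform case, which the paper absorbs into the general statement.
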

\begin{proof} 
Let $a,b \geq 0$ with $a + b = 2$; we claim the following holds independently of $\zeta$, 
\begin{align}
\norm{\zeta^{a} \grad_x^b (\zeta^{2} - \Delta_x)^{-1} \hat{g}(\cdot,\zeta)}_{L^p_x} \lesssim_p \norm{\hat{g}(\cdot,\zeta)}_{L^p_x}. 
\end{align}
The case $b = 2$ follows from the Calder\'on-Zygmund theorem applied to the Bessel potential. 
The cases $a = 1,2$ follows by scaling and that the kernels of $(1-\Delta_x)^{-1}$ and $\grad_x (1-\Delta_x)^{-1}$ are in $L^1$. 
\end{proof}

\subsection{Biot-Savart law in physical coordinates}
\begin{lem}
\label{lem:BSx}
\eq{BSx}{
\norm{\grad \times (-\Delta)^{-1} f}_{B_z L^{4}_x} \lesssim \norm{f}_{B_z L^{4/3}_x}.
}
\end{lem}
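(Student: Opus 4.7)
The plan is to reduce to a uniform-in-$\zeta$ family of two-dimensional Fourier multiplier estimates via the $B_z$-structure, following the general philosophy used throughout the paper. Taking the Fourier transform in $z$, the operator $\grad \times (-\Delta)^{-1}$ conjugates to the $\zeta$-parametrized family of operators $\mc{B}_\zeta$ acting on functions of $x \in \R^2$, whose Fourier symbol in the $\xi \in \R^2$ variable is $i(\xi,\zeta) \times (|\xi|^2 + \zeta^2)^{-1}$. The components of $\mc{B}_\zeta$ are therefore of two types: the ``horizontal" multipliers $\partial_{x_j}(|\zeta|^2 - \Delta_x)^{-1}$ for $j=1,2$, and the ``vertical" multiplier $i\zeta(|\zeta|^2 - \Delta_x)^{-1}$. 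Once the uniform-in-$\zeta$ bound $\norm{\mc{B}_\zeta g}_{L^4_x} \lesssim \norm{g}_{L^{4/3}_x}$ is established, an application of Minkowski's inequality,
\[
\norm{\grad \times (-\Delta)^{-1} f}_{B_z L^4_x} = \int \norm{\mc{B}_\zeta \widehat{f}(\cdot,\zeta)}_{L^4_x}\, d\zeta \lesssim \int \norm{\widehat{f}(\cdot,\zeta)}_{L^{4/3}_x}\, d\zeta = \norm{f}_{B_z L^{4/3}_x},
\]
will finish the proof (with the obvious modification for $z \in \T$).

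The main work is the uniform-in-$\zeta$ bound. For $\zeta = 0$, the vertical piece vanishes and the horizontal piece is the classical two-dimensional Biot-Savart operator $\partial_j(-\Delta_x)^{-1}$, whose kernel $-y_j/(2\pi |y|^2)$ is a Riesz kernel of order $1$ in $\R^2$; the Hardy-Littlewood-Sobolev inequality (with the critical relation $1/4 = 1/(4/3) - 1/2$) gives $L^{4/3} \to L^4$ boundedness. For $\zeta \neq 0$ the plan is to rescale via $x = y/|\zeta|$, which converts each multiplier into its $\zeta$-independent counterpart $\partial_{y_j}(1 - \Delta_y)^{-1}$, respectively $(1-\Delta_y)^{-1}$, with the powers of $|\zeta|$ from the rescaling matching on both sides precisely because of the same dimensional relation $\frac{2}{4/3} - \frac{2}{4} = 1$ (so that no extra factor of $|\zeta|$ survives).

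The remaining two-dimensional Bessel-potential estimates will be handled by explicit kernel bounds. The kernel of $(1-\Delta_y)^{-1}$ in $\R^2$ is $(2\pi)^{-1} K_0(|y|)$, which is logarithmically singular at the origin and exponentially decaying at infinity; in particular it belongs to $L^2(\R^2)$, so Young's convolution inequality yields $L^{4/3} \to L^4$. The kernel of $\partial_{y_j}(1-\Delta_y)^{-1}$ is $-(2\pi)^{-1}(y_j/|y|) K_1(|y|)$, which behaves like the Riesz kernel $-y_j/(2\pi|y|^2)$ near the origin and decays exponentially at infinity; it is therefore pointwise dominated by a Riesz kernel of order $1$, so Hardy-Littlewood-Sobolev again provides $L^{4/3} \to L^4$ with a universal constant.

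I do not expect any real obstacle here: the only subtlety is verifying that the constant in the rescaled estimate is genuinely independent of $\zeta$, which is automatic from the kernel-based (rather than Fourier/Mihlin-based) proof, together with the matching of dimensional exponents. In particular, this estimate interpolates cleanly between the two-dimensional Biot-Savart bound ($\zeta = 0$) and exponentially smoothing bounds for $|\zeta|$ large, with the $B_z$-integration in $\zeta$ simply carrying the uniform constant through.
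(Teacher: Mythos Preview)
Your proposal is correct and essentially identical to the paper's proof: reduce via the Fourier transform in $z$ and scaling to the bounds $\|\nabla_x(1-\Delta_x)^{-1}f\|_{L^4_x}\lesssim \|f\|_{L^{4/3}_x}$ and $\|(1-\Delta_x)^{-1}f\|_{L^4_x}\lesssim \|f\|_{L^{4/3}_x}$, then use that the respective kernels lie in $L^{2,\infty}_x$ (your Riesz-kernel domination, handled by Hardy--Littlewood--Sobolev) and $L^2_x$ (handled by Young). Your write-up simply spells out more details (the explicit Bessel kernels $K_0$, $K_1$ and the separate treatment of $\zeta=0$) than the paper does.
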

\begin{proof}
By scaling it suffices to prove that
\[
\|\nabla_x(1-\Delta_x)^{-1}f\|_{L^4_x}\lesssim \|f\|_{L^{4/3}_x},\qquad \|(1-\Delta_x)^{-1}f\|_{L^4_x}\lesssim \|f\|_{L^{4/3}_x}.
\]
This follows from the the Hardy-Littlewood-Sobolev, Young's inequality, and fact that the kernel of \(\nabla_x(1 - \Delta_x)^{-1}\) is in \(L^{2,\infty}\) and the kernel of \((1 - \Delta_x)^{-1}\) is in \(L^2\).
\end{proof}

\subsection{Biot-Savart law in straightened coordinates}

Our main goal in this section is to prove the following pair of propositions:

\begin{prop}\label{prop:StreamlinedBS}
Let $\eta$ be supported in $\{|x|\leq 16R\}$. Then, provided \(0<R\ll1\) is sufficiently small, the following holds with constants independent of \(R\):
\begin{align}
\norm{\chi_{8R}P_\Phi^{-1}(-\Delta)^{-1}\nabla\times Q_\Phi\eta}_{B_zL^4_x} + \norm{\nabla(\chi_{R}P_\Phi^{-1}(-\Delta)^{-1}\nabla\times Q_\Phi\eta)}_{B_zL^{4/3}_x} \lesssim \|\eta\|_{B_zL^{4/3}_x},\label{ineq:BSbasic}\\
\norm{ (1-\chi_R)\chi_{8R}P_{\Phi}^{-1} (-\Delta)^{-1} \grad \times  Q_{\Phi} (\chi_{\frac{R}{4}}\eta)}_{B_z L_x^{4}} \lesssim R^{-\frac{1}{2}}\norm{\eta}_{B_z L^{1}_x}. \label{ineq:BSbasicSeparation}
\end{align}
\end{prop}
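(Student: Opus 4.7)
The proof splits naturally into the two claimed bounds, which I would treat separately.

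For the first inequality \eqref{ineq:BSbasic}, the plan is a perturbative reduction to the flat Biot--Savart estimate of Lemma \ref{lem:BSx}. I would compare
$$ v := P_\Phi^{-1}(-\Delta)^{-1}\nabla\times Q_\Phi\eta \quad\text{with}\quad \tilde v := (-\Delta_{x,z})^{-1}\nabla_{x,z}\times\eta, $$
the latter being the flat Biot--Savart applied to $\eta$ in straightened variables. Representing $v$ via the Biot--Savart kernel and changing variables through $\Phi$, the kernel $K\bigl(\Phi(x,z)-\Phi(x',z')\bigr)$ may be expanded around its principal part $K_{x,z}(x-x',z-z')$, which reproduces $\tilde v$ after accounting for the Jacobian factors. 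The correction terms involve either the matrices $A,B,C,E,F$ from Lemma \ref{lem:Geometry} (with coefficients of size $O(|x|)\lesssim R$ on the support of $\chi_{16R}$ by Lemma \ref{lem:Coefs}), or come from the discrepancy $\Phi(x,z)-\Phi(x',z')-(J(x-x')+\ldots)$ which is again $O(R)$ times the leading term by Taylor expansion. The flat piece is controlled by Lemma \ref{lem:BSx}, and each correction satisfies the same bound with an additional small factor $R$. For the gradient bound, I would write $\nabla(\chi_R v) = (\nabla\chi_R)v + \chi_R\nabla v$; the first term is controlled by $R^{-1}\|v\|_{B_z L^{4/3}_x}$ (which in turn is handled as above, interpolating with the $B_z L^4_x$ bound), and $\chi_R\nabla v$ is estimated by combining the analogous perturbative reduction with the fact that $\nabla(-\Delta_{x,z})^{-1}\nabla_{x,z}\times$ is a zeroth order Fourier multiplier, hence bounded on $B_z L^{4/3}_x$ by Lemma \ref{lem:Riesz}.

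For the second inequality \eqref{ineq:BSbasicSeparation}, the key is the separation of supports: $\chi_{R/4}\eta$ is supported in $\{|x|\leq R/2\}$ while the localization region is $\{R\leq|x|\leq 16R\}$. Writing
$$ v(x,z) = J^{-1}(x,z)\!\int\! K\bigl(\Phi(x,z)-\Phi(x',z')\bigr)\,J(x',z')\,\chi_{R/4}(x')\eta(x',z')\,dx'\,dz' $$
with $K(y)=\tfrac{1}{4\pi}\tfrac{y\times}{|y|^3}$, Lemma \ref{lem:PhiDiff} gives the bound $|K(\Phi-\Phi)|\lesssim(|x-x'|+|z-z'|)^{-2}\lesssim(R+|z-z'|)^{-2}$ on the product of supports, whose $z'$-integral over $\mathbb{T}$ is $\lesssim R^{-1}$. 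Taking the Fourier transform in $z$ (using that the nontrivial $z,z'$ dependence of $J$ and $\Phi$ beyond the $z-z'$ increment is smooth and hence contributes only bounded corrections) yields
$$ \|\hat v(\cdot,\zeta)\|_{L^\infty_x} \lesssim R^{-1}\|\hat\eta(\cdot,\zeta)\|_{L^1_x} \quad\text{uniformly in }\zeta. $$
Since $(1-\chi_R)\chi_{8R}$ confines us to an $x$-annulus of area $\sim R^2$, the embedding $L^\infty\subset L^4$ on that region costs $R^{1/2}$, giving $\|\hat v(\cdot,\zeta)\|_{L^4_x}\lesssim R^{-1/2}\|\hat\eta(\cdot,\zeta)\|_{L^1_x}$; integrating (or summing) in $\zeta$ completes the proof.

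The main obstacle lies in the first inequality, because $(-\Delta)^{-1}$ is globally defined on $\mathbb{R}^3$ while $\Phi$ only makes sense on $\Sigma_{R_0}$. I would decompose $(-\Delta)^{-1}\nabla\times Q_\Phi\eta$ into a near-field contribution supported in a fixed neighborhood of $\Gamma$ (where the perturbative expansion above applies directly) plus a far-field remainder, which is estimated via kernel decay in the spirit of the argument for \eqref{ineq:BSbasicSeparation}. A secondary technical point is that the curvilinear kernel is not strictly translation-invariant in $z$; handling this requires either absorbing the $(z,z')$-nontriviality of $J$ and $\Phi$ into lower order terms (which is permissible by the smoothness bounds of Lemma \ref{lem:Coefs}), or directly verifying the Mikhlin-type estimates on the induced $z$-Fourier multipliers uniformly in $\zeta$.
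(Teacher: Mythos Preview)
Your approach differs substantially from the paper's, and the perturbative route you propose has a real gap.

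The paper does \emph{not} compare to the flat Biot--Savart law. Instead, it proves a uniform pointwise bound on the kernel of the full curvilinear operator after taking the $z$-Fourier transform (Lemma~\ref{lem:KernelBounds}): writing $\widetilde K(x,\zeta,x',\zeta') = \iint \chi_{16R}(x)\chi_{16R}(x')\,k(\Phi(x,z)-\Phi(x',z'))\,e^{i(z'\zeta'-z\zeta)}\,dz\,dz'$, one integrates by parts repeatedly using $(\partial_z+\partial_{z'})e^{i(z'\zeta'-z\zeta)} = -i(\zeta-\zeta')e^{i(z'\zeta'-z\zeta)}$ together with the crucial observation that $(\partial_z+\partial_{z'})^N k(\Phi(x,z)-\Phi(x',z'))$ stays bounded by $(|x-x'|+|z-z'|)^{-n}$. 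This yields $|\widetilde K|\lesssim |x-x'|^{1-n}\langle\zeta-\zeta'\rangle^{-2}$, after which the $B_zL^4_x$ bound in \eqref{ineq:BSbasic} and \eqref{ineq:BSbasicSeparation} both follow from Hardy--Littlewood--Sobolev in $x$ and Young in $\zeta$.

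Your Taylor-expansion comparison to the flat kernel runs into trouble because the curvature corrections are \emph{not} uniformly $O(R)$ in the $z$-direction: the error $\gamma(z)-\gamma(z')-(z-z')\ft(z)$ is of size $O(|z-z'|^2)$, not $O(R|z-z'|)$, and $z,z'$ range over all of $\T$. Your ``far-field'' decomposition concerns the domain of $\Phi$ in $y$-space and does not address large $|z-z'|$. You flag the non-translation-invariance in $z$ as a ``secondary technical point'', but it is in fact the heart of the matter, and the $(\partial_z+\partial_{z'})$ trick is exactly what resolves it.

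For the gradient part of \eqref{ineq:BSbasic}, the paper also proceeds differently: it introduces the stream function $\psi = \chi_{8R}Q_\Phi^{-1}(-\Delta)^{-1}Q_\Phi\eta$, uses the identity $\chi_{2R}(-\Delta_\Phi)\psi = \chi_{2R}\eta$, applies Lemma~\ref{lem:Riesz} to $-\Delta(\chi_{2R}\psi)$, and absorbs the error $(\Delta-\Delta_\Phi)\chi_{2R}\psi$ using that the top-order coefficients are $O(R)$ on the support. Your sketch via ``$\nabla(-\Delta)^{-1}\nabla\times$ is a zeroth-order multiplier'' only treats the flat piece $\tilde v$; controlling $\chi_R\nabla(v-\tilde v)$ would require bounding a difference of \emph{derivatives} of singular kernels, which is strictly harder than the undifferentiated comparison and is not covered by your outline.
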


\begin{prop} \label{prop:ApproxBS}
Let \(m>1\), \(0<R\ll1\) be sufficiently small, \(\eta\) be supported in $\{|x|\leq 16R\}$ with corresponding self-similar scaling \(H(\tau,\xi,z) = e^\tau\eta(e^\tau,e^{\frac\tau2}\xi,z)\) and
\[
v = \chi_RP_{\Phi}^{-1} \grad \times (-\Delta)^{-1} Q_{\Phi} \eta.
\]
Then, whenever \(0<\sqrt t\lesssim R\ll1\) we have the estimates,
\begin{align}
t^{\frac14}\norm{v - \nabla\times(-\Delta)^{-1}\eta}_{B_zL^4_x} &\lesssim \left( e^{\frac \tau4}R^{-\frac 12} + R\ln R^{-1} \right) \norm{H(\tau)}_{B_zL^2_\xi(m)},\label{appBS1}\\
t^{\frac14}\norm{\nabla\left(v - \nabla\times(-\Delta)^{-1}\eta\right)}_{B_zL^{4/3}_x} &\lesssim \left( e^{\frac \tau4}R^{-\frac12} + R\ln R^{-1} \right) \norm{H(\tau)}_{B_zL^2_\xi(m)}\label{appBS2}.
\end{align}
\end{prop}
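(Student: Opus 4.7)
The strategy is to compare the curved and flat Biot--Savart operators directly via their explicit integral kernels, exploiting the near-identity nature of the map $\Phi$ on the support of $\chi_R$ together with the size estimates of Lemma~\ref{lem:Coefs}.

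The first step is to derive a kernel representation. Writing out $(-\Delta)^{-1}\nabla\times$ via convolution with the $3d$ Biot--Savart kernel $K(y)=-\tfrac{1}{4\pi}|y|^{-3}\,y\times\cdot$ and then changing variables $y'=\Phi(x',z')$ (the Jacobian factor $\mathfrak{D}(x',z')$ cancels the factor $\mathfrak{D}^{-1}$ in $(Q_\Phi\eta)\circ\Phi=\mathfrak{D}^{-1}\mathfrak{J}\eta$) yields
\begin{align*}
v(x,z) = -\frac{\chi_R(x)}{4\pi}\int \mathfrak{J}^{-1}(x,z)\,\Bigl[\frac{\Phi(x,z)-\Phi(x',z')}{|\Phi(x,z)-\Phi(x',z')|^3}\times \mathfrak{J}(x',z')\eta(x',z')\Bigr]\,dx'\,dz',
\end{align*}
to be compared with $\widetilde{v}(x,z)=\int K(x-x',z-z')\eta(x',z')\,dx'dz'$. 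I would then decompose
\begin{align*}
v-\widetilde{v}=(\chi_R-1)\widetilde{v}+(v-\chi_R\widetilde{v}),
\end{align*}
and split the second (kernel-difference) term according to whether the separation $\rho:=|x-x'|+|z-z'|$ is $\le\kappa R$ or $>\kappa R$, for $\kappa$ a small absolute constant.

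In the near-diagonal region $\rho\le\kappa R$, I would Taylor expand $\Phi(x,z)-\Phi(x',z')=\mathfrak{J}(x',z')(x-x',z-z')^T+r$ with $|r|\lesssim(|x|+|x'|+\rho)\rho$ (Lemma~\ref{lem:Coefs} and Lemma~\ref{lem:PhiDiff}), together with $\mathfrak{J}^{-1}(x,z)\mathfrak{J}(x',z')=I+O(\rho)$ and the fact that $\mathfrak{J}$ itself differs from the flat embedding by $O(R)$ on the support of $\chi_R$. Substituting these expansions, the leading order of the curved integrand exactly reproduces $K(x-x',z-z')\eta(x',z')$, and the error kernel is controlled by $(R+\rho)/\rho^2$. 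Integrating this against the weighted $L^2$ norm of $\eta$ (which is effectively concentrated on $|x'|\lesssim\sqrt{t}$ in view of the weight $\langle\xi\rangle^m$) over dyadic scales $\sqrt{t}\lesssim\rho\lesssim R$ produces the $R\log R^{-1}$ contribution from the curvature correction, while summing the remaining terms gives the $t^{1/4}R^{-1/2}$ contribution. The far region $\rho\ge\kappa R$ and the boundary term $(\chi_R-1)\widetilde{v}$ (supported on $|x|\ge R$, separated from the effective mass of $\eta$ on scale $\sqrt{t}\ll R$) are handled by direct Young/H\"older estimates with the embedding $\|\eta\|_{B_zL^1_x}\lesssim \sqrt{t}\,\|\langle\cdot/\sqrt{t}\rangle^m\eta\|_{B_zL^2_x}$, valid for $m>1$.

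The gradient bound \eqref{appBS2} is proved along the same lines: a spatial derivative adds a factor $1/\rho$ to the kernel, but the extra singularity is absorbed by dropping the outer integrability from $L^4_x$ to $L^{4/3}_x$. The main obstacle is the sharp tracking of the logarithmic factor in the near-diagonal integral, which requires carefully controlling the cancellation between the $O(R)$ curvature-induced perturbation of the curved kernel and the logarithmic divergence of the $3d$ Biot--Savart kernel near the straightened filament over the annular range $\sqrt{t}\lesssim\rho\lesssim R$. A secondary (but mostly bookkeeping) difficulty is that the estimates are stated in $B_zL^p_x$ rather than $L^p_{x,z}$; this is handled either by taking Fourier transform in $z$ and estimating each frequency, or simply by noting that the kernel-difference estimates are translation invariant in $z$ and the relevant bilinear/Young-type inequalities lift directly to $B_z$-valued spaces.
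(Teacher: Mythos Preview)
Your approach is viable but takes a genuinely different route from the paper's. Rather than comparing kernels directly, the paper introduces the auxiliary potential $\psi = \chi_{8R}Q_\Phi^{-1}(-\Delta)^{-1}Q_\Phi\eta$ and exploits the exact differential identities of Lemma~\ref{lem:Geometry}, namely $\chi_{2R}(-\Delta_\Phi)\psi = \chi_{2R}\eta$ and $v = \chi_R\mathfrak{D}^{-1}\Curl_\Phi\psi$. The difference $v-\nabla\times(-\Delta)^{-1}\eta$ then splits into three pieces: $T_1=\chi_R(\mathfrak{D}^{-1}\Curl_\Phi - \nabla\times)\psi$; $T_2=\chi_R\nabla\times(\psi - (-\Delta)^{-1}\eta)$, which by Lemma~\ref{lem:BSx} is controlled by $\|\chi_{2R}(\Delta - \Delta_\Phi)\psi\|_{B_zL^{4/3}_x}$; and $T_3=(1-\chi_R)\nabla\times(-\Delta)^{-1}\eta$. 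For $T_1$ and $T_2$ the $O(R)$ coefficient bounds of Lemma~\ref{lem:Coefs} combine with the a priori estimates on $\psi,\nabla\psi,\nabla^2\psi$ already established in Proposition~\ref{prop:StreamlinedBS} (in particular $\|\psi\|_{B_zL^4_x}\lesssim R\ln R^{-1}\|\eta\|_{B_zL^{4/3}_x}$) to give the $R\ln R^{-1}$ factor directly; $T_3$ is split by $\chi_{R/4}$ and handled by separation of supports to give the $e^{\tau/4}R^{-1/2}$ contribution. This elliptic route avoids any Taylor expansion of the kernel and reuses the machinery of Lemma~\ref{lem:KernelBounds} and Proposition~\ref{prop:StreamlinedBS}, whereas your dyadic-sum argument must recover the logarithm by hand. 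One point to watch in your expansion: $\mathfrak{J}$ is \emph{not} close to the identity but to the orthogonal frame $[\mathfrak{n}\ \mathfrak{b}\ \mathfrak{t}]$, so the cancellation with the flat kernel relies on $SO(3)$-equivariance of the cross product; your phrase ``differs from the flat embedding by $O(R)$'' should be made precise accordingly.
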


The proof of both propositions rely on the following lemma:
\begin{lem}\label{lem:KernelBounds}
Let \(T\) be a translation-invariant operator with kernel \(k\in C^\infty(\R^3\backslash\{0\})\) satisfying the estimate
\[
|\partial_y^\alpha k(y)|\lesssim |y|^{-n - |\alpha|},
\]
for some \(1\leq n\leq 3\) and all multi-indices \(\alpha\in \N^3\). Then, the corresponding kernel \(K\) of the operator \(\mc FQ_\Phi^{-1}\widetilde\chi_{16R}TQ_\Phi \chi_{16R}\mc F^{-1}\), where \(\mc F\) is the Fourier transform in the \(z\)-variable, satisfies the estimate
\eq{KernelEst}{
|K(x,\zeta,x',\zeta')|\lesssim \begin{cases}
|x - x'|^{1-n} \<\zeta - \zeta'\>^{-2},&\quad n=2,3,\\
\left(1 + \ln |x - x'|\right) \<\zeta - \zeta'\>^{-2},&\quad n = 1
\end{cases}
}
\end{lem}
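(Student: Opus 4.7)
Writing out the composition explicitly, for $g(x,z) = (\mc F^{-1} f)(x,z) = \frac{1}{\sqrt{2\pi}} \int e^{iz\zeta'} f(x,\zeta')\, d\zeta'$, one has
\[
(\mc F Q_\Phi^{-1} \widetilde\chi_{16R} T Q_\Phi \chi_{16R} \mc F^{-1} f)(x,\zeta) = \int K(x,\zeta,x',\zeta')\, f(x',\zeta')\, dx' d\zeta',
\]
so the plan is to read off the kernel as
\[
K(x,\zeta,x',\zeta') = \tfrac{1}{2\pi} \iint_{\T \times \T} e^{-iz\zeta + iz'\zeta'} \chi_{16R}(x)\chi_{16R}(x')\, M(x,z,x',z')\, k\bigl(\Phi(x,z) - \Phi(x',z')\bigr)\, dz\, dz',
\]
where $M(x,z,x',z') = \fD(x,z)\fJ(x,z)^{-1}\cdot \fD(x',z')^{-1}\fJ(x',z')$ is smooth and has derivatives of all orders bounded uniformly, by Lemma~\ref{lem:Coefs}.

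Next, substitute $w = z - z'$ (the $z'$-integral remains over $\T$), so that the phase decouples as $e^{-iw\zeta}\, e^{-iz'(\zeta - \zeta')}$. The $\langle \zeta - \zeta'\rangle^{-2}$ decay will be extracted via the identity $(1 + |\zeta-\zeta'|^2) e^{-iz'(\zeta - \zeta')} = (1 - \partial_{z'}^2) e^{-iz'(\zeta - \zeta')}$, followed by integrating by parts twice in $z'$ (there is no boundary term since $z' \in \T$). This moves two derivatives onto
\[
F(w,z') := M(x, z'+w, x', z')\, k\bigl(\Phi(x,z'+w) - \Phi(x',z')\bigr).
\]

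The crucial structural observation is that $|(1 - \partial_{z'}^2) F(w,z')| \lesssim (|x-x'| + |w|)^{-n}$, with no loss of powers from the differentiation. Indeed, derivatives of $k$ pick up factors bounded by $(|x-x'| + |w|)^{-n - j}$ (using Lemma~\ref{lem:PhiDiff} to write $|\Phi(x,z'+w)-\Phi(x',z')| \approx |x-x'| + |w|$), but $\partial_{z'}$ acts on the argument $\Phi(x,z'+w) - \Phi(x',z')$ by producing the Lipschitz difference $\partial_z\Phi(x,z'+w) - \partial_z\Phi(x',z')$, which is $O(|x-x'| + |w|)$ by smoothness of $\Phi$; similarly for second derivatives one gets either $(\partial_z\Phi - \partial_z\Phi')^2 = O((|x-x'|+|w|)^2)$ against $\nabla^2 k$ or $\partial_z^2\Phi - \partial_z^2\Phi' = O(|x-x'|+|w|)$ against $\nabla k$. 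In every case the losses from differentiating $k$ are exactly compensated. Derivatives falling on $M$ contribute only bounded factors, and the cross terms are controlled the same way.

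With this in hand, the estimate reduces to
\[
|K(x,\zeta,x',\zeta')| \lesssim \langle \zeta - \zeta'\rangle^{-2} \int_{\T} (|x-x'| + |w|)^{-n}\, dw,
\]
(the bounded $z'$-integration over $\T$ absorbs into the implicit constant). A direct computation of the remaining one-dimensional integral yields $|x-x'|^{-2}$ for $n=3$, $|x-x'|^{-1}$ for $n=2$, and $1 + |\ln |x-x'||$ for $n=1$, establishing \eqref{KernelEst}. The only delicate point in the whole argument is the cancellation described above which prevents any loss of powers of $|x-x'|+|w|$ under $\partial_{z'}$; without it the $n=1$ case would fail. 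Everything else is bookkeeping using Lemmas~\ref{lem:Coefs} and~\ref{lem:PhiDiff}.
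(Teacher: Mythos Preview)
Your proof is correct and is essentially the same as the paper's. The paper integrates by parts using the operator $\frac{1}{i(\zeta-\zeta')}(\partial_z+\partial_{z'})$ applied to $e^{i(z'\zeta'-z\zeta)}$, which after your change of variables $w=z-z'$ is exactly your $\partial_{z'}$ with $w$ held fixed; the crucial cancellation you identify---that $(\partial_z+\partial_{z'})[\Phi(x,z)-\Phi(x',z')]=\partial_z\Phi(x,z)-\partial_z\Phi(x',z')=O(|x-x'|+|z-z'|)$ compensates the loss from differentiating $k$---is precisely what the paper uses, and the paper's dyadic sum in $|z-z'|$ is equivalent to your direct evaluation of $\int_{\T}(|x-x'|+|w|)^{-n}\,dw$.
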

\begin{proof}
From Lemmas~\ref{lem:Coefs},~\ref{lem:PhiDiff} it suffices to prove that the approximate kernel
\eq{estimatetK}{
\widetilde K(x,\zeta,x',\zeta') = \iint \chi_{16R}(x)\chi_{16R}(x')k(\Phi(x,z) - \Phi(x',z'))  e^{i(z'\zeta' - z \zeta)}\,dz\,dz',
}
satisfies the estimate \eqref{KernelEst}. To prove this bound we integrate by parts repeatedly with the help of the formula
$$
\frac{1}{i(\zeta-\zeta')} (\partial_z + \partial_{z'}) e^{i(z'\zeta' - z \zeta)} = e^{i(z'\zeta' - z \zeta)},
$$
and observe that by Lemmas~\ref{lem:Coefs},~\ref{lem:PhiDiff} there holds 
$$
\left| (\partial_z + \partial_{z'})^N K(\Phi(x,z) - \Phi(x',z')) \right| \lesssim_N \frac{1}{|x-x'|^n + |z-z'|^n},
$$
Recalling that \(z,z'\in \T\), we may dyadically decompose the integral into sets \(\{2^{k}\leq |z - z'|<2^{2k}\}\), for all $N \geq 0$, and estimate
\begin{align}
\left| \widetilde K(x,\zeta,x',\zeta') \right| \lesssim_N \sum_{k} \frac{1}{|\zeta-\zeta'|^N} \frac{2^k}{|x-x'|^n + 2^{nk}}.
\end{align}
Choosing $N=0$ for $\zeta = \zeta'$ and $N=2$ for $\zeta \neq \zeta'$ then leads to the estimate~\eqref{estimatetK}.
\end{proof}

\begin{proof}[Proof of Proposition~\ref{prop:StreamlinedBS}]
From Lemma~\ref{lem:Coefs}, the Hardy-Littlewood-Sobolev and Young inequalities, to prove the first inequality in \eqref{ineq:BSbasic} and \eqref{ineq:BSbasicSeparation}, it suffices to show that the kernel \(K\) of the operator
\[
\mc FQ_\Phi^{-1}\widetilde\chi_{16R} (-\Delta)^{-1}\nabla\times Q_\Phi\chi_{16R}\mc F,
\]
satisfies the bounds
\[
|K(x,\zeta,x',\zeta')|\lesssim |x - x'|^{- 1}\<\zeta - \zeta'\>^{-2},
\]
which follows directly from Lemma~\ref{lem:KernelBounds}.

Next, consider the second inequality in \eqref{ineq:BSbasic}. 
Write 
\[
\psi = \chi_{8R}Q_\Phi^{-1}(-\Delta)^{-1}Q_\Phi \eta.
\]
We observe that, from Lemma~\ref{lem:Coefs} and Lemma~\ref{lem:KernelBounds} we have
\begin{align}
\frac{1}{R\ln R^{-1}} \|\psi\|_{B_zL^4_x} + \|\nabla\psi\|_{B_zL^4_x} + \frac{1}{R} \| \nabla \psi \|_{B_z L^{4/3}_x} \lesssim \|\eta\|_{B_zL^{4/3}_x}, \label{ineq:psibds}
\end{align}
and from the identity \eqref{Deltag} we have
\[
\chi_{2R}(-\Delta_\Phi) \psi = \chi_{2R}\eta. 
\]
Computing further gives
\begin{align}
-\Delta \grad^2 \chi_{2R} \psi = \grad^2(\chi_{2R}\eta) - \grad^2(\Delta - \Delta_{\Phi})\chi_{2R} \psi - \grad^2 [\chi_{2R}, (-\Delta_{\Phi})] \psi. 
\end{align}
Lemma \ref{lem:Riesz} gives 
\begin{align}
\norm{\grad^2 (\chi_{2R}\psi)}_{B_z L^{4/3}_x} & \lesssim \norm{\chi_{2R}\eta}_{B_z L^{4/3}_x} + \norm{(\Delta - \Delta_{\Phi})\chi_{2R} \psi}_{B_z L^{4/3}_x} + \norm{[\chi_R, (-\Delta_{\Phi})] \psi}_{B_z L^{4/3}_x}. 
\end{align}
By Lemma \ref{lem:Coefs}, \eqref{ineq:psibds} and H\"older's inequality,
\begin{align}
\norm{(\Delta - \Delta_{\Phi})\chi_{2R} \psi}_{B_z L^{4/3}_x} & \lesssim R\norm{\grad^2 (\chi_{2R}\psi)}_{B_z L^{4/3}_x} + \norm{\grad (\chi_{2R} \psi)}_{B_z L^{4/3}_x} + \norm{\chi_{2R} \psi}_{B_z L^{4/3}_x} \\ 
& \lesssim R\norm{\grad^2 (\chi_{2R}\psi)}_{B_z L^{4/3}_x} + \norm{\eta}_{B_z L^{4/3}_x}. 
\end{align}
Similarly, using Lemma \ref{lem:Coefs} and \eqref{ineq:psibds}, 
\begin{align}
\norm{[\chi_R, (-\Delta)_{\Phi}] \psi}_{B_z L^{4/3}} & \lesssim \frac{1}{R} \norm{\psi}_{B_zL^{4/3}_x}  \lesssim \norm{\eta}_{B_z L^{4/3}_x}. 
\end{align}
Therefore, we conclude, for all $R$ sufficiently small, that 
\begin{align}
\norm{\grad^2 (\chi_{2R}\psi)}_{B_z L^{4/3}_x} \lesssim \norm{\eta}_{B_z L^{4/3}_x}. 
\end{align}
This estimate (and the first inequality in \eqref{ineq:BSbasic} together with Lemma \ref{lem:Coefs}) then implies the second inequality in \eqref{ineq:BSbasic}. 
\end{proof}

The proof of Proposition~\ref{prop:ApproxBS} is a tiny bit more involved, but again essentially follows from Lemma~\ref{lem:KernelBounds}:

\begin{proof}[Proof of Proposition~\ref{prop:ApproxBS}]
We prove the estimate \eqref{appBS1}; the proof of the estimate \eqref{appBS2} is similar. As in the proof of \eqref{ineq:BSbasic}, write
\[
\psi = \chi_{8R}Q_\Phi^{-1}(-\Delta)^{-1}Q_\Phi \eta,
\]
and recall that, applying Lemma~\ref{lem:Coefs} to the operator \(\mc F_zQ_\Phi^{-1}\widetilde\chi_{16R}\nabla^\alpha(-\Delta)^{-1}Q_\Phi\chi_{16R}\mc F\), we have (using also Proposition \ref{prop:StreamlinedBS}),
\[
\frac{1}{R\ln R^{-1}} \|\psi\|_{B_zL^4_x} + \|\nabla\psi\|_{B_zL^4_x} + \frac{1}{R} \| \nabla \psi \|_{B_z L^{4/3}_x} + \|\nabla^2\psi\|_{B_zL^{4/3}_x}\lesssim \|\eta\|_{B_zL^{4/3}_x},
\]
and from the identity \eqref{Deltag} we have
\[
\chi_{2R}(-\Delta_\Phi) \psi = \chi_{2R}\eta,\qquad \chi_{2R}\fD^{-1}\Curl_\Phi \psi = v.
\]
Next, we decompose the difference
\begin{align*}
v - \nabla\times(-\Delta)^{-1}\eta &= \underbrace{\chi_R\left(\fD^{-1}\Curl_\Phi - \nabla\times\right)\psi}_{T_1} + \underbrace{\chi_R\nabla\times\left(\psi - (-\Delta)^{-1}\eta\right)}_{T_2}\\
&\quad + \underbrace{(1 - \chi_R)\nabla\times(-\Delta)^{-1}\eta}_{T_3}.
\end{align*}
To bound the first term we use the expression \eqref{Curlg} and Lemma~\ref{lem:Coefs} to bound
\[
t^{\frac14}\|T_1\|_{B_zL^4_x} \lesssim Rt^{\frac14}\|\nabla\psi\|_{B_zL^4_x} + t^{\frac14}\|\psi\|_{B_zL^4_x}\lesssim R\ln R^{-1} t^{\frac14}\|\eta\|_{B_zL^{4/3}_x}.
\]
To bound the second term, we apply Lemma~\ref{lem:BSx} followed by the identity \eqref{Deltag}, and the estimates on $\psi$ and its derivatives above to obtain
\begin{align*}
t^{\frac14}\|T_2\|_{B_zL^4_x} &\lesssim t^{\frac14}\|\chi_{2R}\left(\Delta - \Delta_\Phi\right)\psi\|_{B_zL^{4/3}_x}\\ 
& \lesssim Rt^{\frac14}\|\nabla^2\psi\|_{B_zL^{4/3}_x} + t^{\frac14} \|\nabla \psi\|_{B_zL^{4/3}_x} + t^{\frac14}\|\psi\|_{B_zL^{4/3}_x} \\
&\lesssim R\ln R^{-1}t^{\frac14}\|\eta\|_{B_zL^{4/3}_x}.
\end{align*}
To bound the final term we estimate
\begin{align*}
t^{\frac14}\|T_3\|_{B_zL^4_x} &\lesssim t^{\frac14}\|(1 - \chi_R)\nabla\times(-\Delta)^{-1}(\chi_{\frac R4}\eta)\|_{B_zL^4_x} + t^{\frac14}\|\nabla\times(-\Delta)^{-1}((1 - \chi_{\frac R4})\eta)\|_{B_zL^4_x}\\
&\lesssim \frac{e^{\frac\tau4}}{\sqrt R}\|H\|_{B_zL^2_\xi(m)},
\end{align*}
where we have used the separation of the supports and Young's inequality to bound the first term, and the estimate \eqref{BSx} with the fact that \(m>1\) and \(0<\sqrt t\lesssim R\) to bound the second.
\end{proof}

\subsection{The heat equation}

\begin{lem} \label{lem:StrHeat}
Let \(f\) be supported in $\{|x|\leq 16R\}$ and $1 \leq r \leq 2$. Suppose further that \(0<R\ll1\) and \(0<s<t\leq R^2\). Then for all multi-indices \(\gamma,\gamma'\in \N^3\) there holds 
\begin{align}
\norm{\brak{t^{-\frac{1}{2}}x}^{m}Q_\Phi^{-1}\left(\widetilde\chi_{8R}e^{(t-s)\Delta}\nabla^\gamma Q_\Phi \nabla^{\gamma'} f\right)}_{B_z L^2_x} \lesssim (t - s)^{\frac{1}{2} - \frac{1}{r} - \frac{|\gamma| + |\gamma'|}2}\norm{\brak{s^{-\frac{1}{2}}x}^{m} f}_{B_z L^r_x}, \label{ineq:heatBas}
\end{align}
and $B_z$ can also be replaced by $L^r_z$. 

Furthermore, there holds: for $1 \leq p \leq q \leq \infty$, 
\begin{align}
\norm{\brak{t^{-\frac{1}{2}}\bd}^m \widetilde{\chi}_{8R} e^{(t-s)\Delta} Q_{\Phi} \Div f}_{L^q_y} \lesssim \frac{1}{(t-s)^{\frac{1}{2} + \frac{1}{p} - \frac{1}{q}}}\norm{\brak{s^{-\frac{1}{2}}x}^m f}_{L^q_z L_x^{p}}. \label{ineq:aniso} 
\end{align}
\end{lem}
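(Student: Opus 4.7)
The plan is to represent the operator $Q_\Phi^{-1}\widetilde\chi_{8R}\nabla^\gamma e^{(t-s)\Delta} Q_\Phi \nabla^{\gamma'}$ by an integral kernel in the straightened frame and to reduce everything to a Gaussian convolution on $\R^2\times\T$, at which point Young's inequality yields the claimed gains. Since $f$ is supported in $\{|x|\leq 16R\}$ and $\widetilde{\chi}_{8R}$ is supported in $\{\bd \leq 16R\}$, both the source and the target lie inside $\Gamma_{16R}$, where $\Phi$ is a diffeomorphism. After changing variables $y = \Phi(x,z)$, $y' = \Phi(x',z')$ and using Lemma~\ref{lem:Coefs} to bound the matrices $\fD, \fJ, \fJ^{-1}$ and their derivatives by constants, one can integrate the $\nabla^{\gamma'}$ by parts to reduce the analysis to an integral with kernel $(\nabla^{\gamma+\gamma'} K_{t-s})(\Phi(x,z)-\Phi(x',z'))$, where $K_{t-s}$ is the standard $\R^3$ heat kernel, plus admissible lower-order terms with bounded coefficients.

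The crucial geometric input is Lemma~\ref{lem:PhiDiff}, which gives $|\Phi(x,z)-\Phi(x',z')|\approx |x-x'|+|z-z'|$ throughout the tubular neighborhood; combined with the usual pointwise Gaussian bounds, this yields
\[
\bigl|(\nabla^{\gamma+\gamma'} K_{t-s})(\Phi(x,z) - \Phi(x',z'))\bigr| \lesssim (t-s)^{-\frac{3+|\gamma|+|\gamma'|}{2}} e^{-c\frac{|x-x'|^2+|z-z'|^2}{t-s}}.
\]
The weight is transferred across the kernel using the elementary inequality $\brak{t^{-1/2}x}^m \lesssim \brak{(t-s)^{-1/2}(x-x')}^m\brak{s^{-1/2}x'}^m$, valid because $0<s\leq t$; the factor in $x-x'$ is then absorbed into the Gaussian at the cost of a slightly smaller constant.

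At this point the problem reduces to a Gaussian convolution on $\R^2\times\T$ applied to $\brak{s^{-1/2}\cdot}^m f$. Taking the Fourier transform in $z$, the kernel factors into a Gaussian in $x$ of width $\sqrt{t-s}$ and a uniformly bounded function of $\zeta$, so Young's inequality in $x$ with $1/q = 3/2 - 1/r$ supplies the gain $(t-s)^{-1+1/q} = (t-s)^{1/2-1/r}$; integrating in $\zeta$ and recognizing the $B_zL^r_x$ norm gives \eqref{ineq:heatBas}. The $L^r_z$ variant is identical, with the Fourier step replaced by Young's inequality in $z$ against the $L^1_z$-normalized Gaussian there. For the anisotropic estimate \eqref{ineq:aniso} the same kernel representation is used in the physical frame: transferring the divergence onto the heat kernel supplies the $(t-s)^{-1/2}$, Young in $x$ gives the $L^p_x \to L^q_x$ smoothing factor $(t-s)^{-1/p + 1/q}$, and a contraction on $L^q_z$ coming from the $L^1_z$-normalized $z$-Gaussian handles the $z$-direction; the anisotropy is preserved by Minkowski together with the factorization $e^{-c(|x|^2+|z|^2)/(t-s)} = e^{-c|x|^2/(t-s)}e^{-c|z|^2/(t-s)}$.

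The main technical subtlety will be ensuring that each step is compatible with the $\T$-periodic $z$-direction. Over the short times $t-s\leq R^2\ll 1$ the Gaussian on $\T$ is essentially the Gaussian on $\R$ up to exponentially small corrections, so the periodicity is harmless, but making this rigorous --- in particular carefully handling the sum over periods and the transition between the straightened and physical $z$-variables via the unit-speed parameterization $\gamma$ --- requires a short dyadic decomposition in $|z-z'|$ separating the local regime from the (exponentially suppressed) non-local one.
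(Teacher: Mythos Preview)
Your approach works for the $L^r_z$ variant and for \eqref{ineq:aniso}, since those norms are monotone with respect to pointwise bounds: once you dominate the true kernel by a separable Gaussian via Lemma~\ref{lem:PhiDiff}, Young's inequality finishes. The gap is in the $B_z$ case. The norm $\|g\|_{B_z} = \int \|\widehat g(\cdot,\zeta)\|\,d\zeta$ is \emph{not} monotone in $g$, so the pointwise inequality $|Kf|\leq G*|f|$ does not imply $\|Kf\|_{B_zL^2_x}\lesssim \|G*|f|\|_{B_zL^2_x}$. When you write ``taking the Fourier transform in $z$, the kernel factors,'' you are implicitly replacing the actual kernel $K_{t-s}(\Phi(x,z)-\Phi(x',z'))$ by its Gaussian majorant; but the actual kernel is not translation-invariant in $z$ (because $\Phi(x,z)-\Phi(x',z')$ depends on $z,z'$ separately), so its $z$-Fourier transform is an integral operator in $\zeta$, not a multiplier, and there is no factorization.

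What the paper does instead is work directly with the Fourier transform of the true kernel and extract decay in $\zeta-\zeta'$ by repeated integration by parts against the identity $(\partial_z+\partial_{z'})e^{i(z'\zeta'-z\zeta)} = i(\zeta-\zeta')e^{i(z'\zeta'-z\zeta)}$. The point is that $(\partial_z+\partial_{z'})$ annihilates any function of $z-z'$, so applying it to the kernel measures the failure of translation invariance; by Lemma~\ref{lem:Coefs} each such derivative costs only a factor $(t-s)^{-1/2}$ while preserving the Gaussian bound. Combined with a dyadic decomposition in $|z-z'|$ (which is there to sum the resulting integrals, not to handle periodicity), this yields a bound of the form $\sqrt{t-s}\,\phi(\sqrt{t-s}(\zeta-\zeta'))e^{-c|x-x'|^2/(t-s)}$ with $\phi\in L^1$, and now Young in $\zeta$ closes the $B_z$ estimate. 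Your last paragraph misidentifies the subtlety as periodicity; the real obstruction is the curvature-induced loss of translation invariance, and the $(\partial_z+\partial_{z'})$ trick is the missing idea.
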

\begin{proof}
We consider \eqref{ineq:heatBas}; the proof of \eqref{ineq:aniso} is a straightforward variant. We consider only the case $\gamma,\gamma' = 0$ the case $|\gamma|,|\gamma'| > 0$ follows similarly, using Lemma~\ref{lem:Coefs} to replace the heat kernel by a similar, rapidly decaying kernel.

Consider the operator 
\begin{align}
T f = t^{-\frac32} \iint \chi_{16R}(x)\chi_{16R}(x') e^{\frac{|\Phi(x,z) - \Phi(x',z')|^2}{4t}}  f(x',z')\,dx' \,dz'.
\end{align}
The change of variable estimates of Lemmas~\ref{lem:Coefs},~\ref{lem:PhiDiff} and Young's inequality imply 
\begin{align}
\norm{\brak{t^{-\frac{1}{2}}x}^{m} Tf}_{L^r_z L^2_x} \lesssim t^{\frac{1}{2} - \frac{1}{r}}\norm{\brak{t^{-\frac{1}{2}}x}^{m} f}_{L^r_x}. 
\end{align}
The corresponding estimate follows for the heat operator (again from Lemma \ref{lem:Coefs}). 

To obtain the estimates in the case $\widehat{L^1}$, it suffices to prove that (where $C >0$ denotes a fixed constant independent of the parameters of interest) 
\begin{equation}
\label{estimateHK}
\left| \iint \chi_{16R}(\sqrt{t}\xi)\chi_{16R}(\sqrt{t}\xi') e^{-\frac{|\Phi(\sqrt{t} \xi,z) - \Phi(\sqrt{t} \xi',z')|^2}{4t}} e^{i(z'\zeta' - z \zeta)}\,dz\,dz' \right| \lesssim \sqrt{t} \phi(\sqrt{t}(\zeta - \zeta')) e^{-C|\xi-\xi'|^2},
\end{equation} 
with $\phi \in L^1$. We integrate by parts repeatedly using the formula
$$
\frac{1}{i(\zeta-\zeta')} (\partial_z + \partial_{z'}) e^{i(z'\zeta' - z \zeta)} = e^{i(z'\zeta' - z \zeta)},
$$
and keep in mind that (for constants $C > 0$, not necessarily the same in each inequality or each term), 
$$
\left| (\partial_z + \partial_{z'})^N e^{-\frac{|\Phi(\sqrt{t}\xi,z) - \Phi(\sqrt{t} \xi',z')|^2}{4t}} \right| \lesssim t^{-\frac N2} e^{-C \frac{|\Phi(\sqrt t \xi,z) - \Phi(\sqrt t \xi',z')|^2}{t}}\lesssim t^{-\frac N2}e^{-C|\xi - \xi'|^2}e^{-C\frac{|z - z'|^2}{\sqrt t}}
$$
Dyadically decomposing the integral into sets $\{2^{k}\leq|z-z'|<2^{k+1}\}$, the integral above can then be bounded by (possibly different constants) 
\begin{align*}
 &\left| \iint \chi_{16R}(\sqrt{t}\xi)\chi_{16R}(\sqrt{t}\xi')e^{-\frac{|\Phi(\sqrt t \xi,z) - \Phi(\sqrt{t} \xi',z')|^2}{4t}} e^{i(z'\zeta' - z \zeta)}\,dz\,dz' \right|\\
 & \qquad\lesssim \sum_{k} \frac{1}{(\sqrt{t}|\zeta-\zeta'|)^N} 2^k e^{-C \frac{2^{2k}}t} e^{-C|\xi-\xi'|^2}  \\ &\qquad \lesssim \frac{\sqrt t}{(\sqrt{t}|\zeta-\zeta'|)^N} e^{-C|\xi-\xi'|^2}.
\end{align*}
Choosing $N=0$ for $\zeta = \zeta'$ and $N=2$ for $\zeta \neq \zeta'$ leads to the desired estimate.
\end{proof} 

\section{Two-dimensional semigroup estimates}\label{app:Semigroups}

\subsection{Statement of the estimates}

In this appendix we prove several estimates for compact perturbations of the semigroup \(e^{\tau\cL}\). For concreteness we recall several linear operators defined in the main body of the article:
\begin{itemize}
\item If \(f\) is a \(2\)-vector we define
\[
\Gamma f = g\cdot\nabla_\xi f - f\cdot\nabla_\xi g
\]
\item If \(f\) is a scalar we define
\[
\Lambda f = g\cdot\nabla_\xi f - (-\Delta_\xi)^{-1}\nabla_\xi^\perp f\cdot\nabla_\xi G
\]
\item If \(f\) is a \(2\)-vector we define
\[
\Xi f = (\nabla_\xi\cdot f)g
\]
\item If \(f\) is a \(2\times 2\) tensor we define
\[
\Pi f = g\otimes \Div_\xi f - \Div_\xi f\otimes g.
\]
\end{itemize}
Our main results in this section are as follows.

\begin{prop}[Semigroups on \(L^2_\xi(m)\)]\label{prop:SemigroupL2m}
For all \(\alpha\in \R\) and \(m>1\) the operators \(\cL - \alpha \Gamma\), \(\cL - \alpha \Lambda\) define strongly continuous semigroups \(e^{\tau(\cL - \alpha \Gamma)}\), \(e^{\tau(\cL - \alpha\Lambda)}\) on \(L^2_\xi(m)\) so that for all \(\gamma>0\) we have
\eq{SemigroupL2m}{
\|e^{\tau(\cL - \alpha\Gamma)}\|_{L^2_\xi(m)\rightarrow L^2_\xi(m)}\lesssim e^{\gamma \tau},\qquad \|e^{\tau(\cL - \alpha \Lambda)}\|_{L^2_\xi(m)\rightarrow L^2_\xi(m)}\lesssim e^{\gamma \tau}.
}

If \(\alpha\neq 0\) there exists some \(0<\mu = \mu(\alpha)<\frac12\) so that, whenever \(m>1 + 2\mu\), we have the estimate
\eq{SemigroupL2ma}{
\|e^{\tau(\cL - \alpha\Gamma)}\|_{L^2_\xi(m) \rightarrow L^2_\xi(m)}\lesssim e^{-\mu\tau}.
}

If \(L^2_{\xi,0}(m) = \{f\in L^2_\xi(m):\int f\,d\xi = 0\}\) and \(m>1 + 2\mu\) for some \(0<\mu< \frac12\), then we have the estimate
\eq{SemigroupL2mb}{
\|e^{\tau(\cL - \alpha\Lambda)}\|_{L^2_{\xi,0}(m)\rightarrow L^2_{\xi,0}(m)}\lesssim e^{-\mu \tau}.
}
\end{prop}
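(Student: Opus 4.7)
My plan is a standard spectral-theoretic argument: first establish semigroup generation by bounded perturbation, then verify that the essential spectrum is unchanged via Weyl's theorem, then invoke known eigenvalue-location results for the two perturbed operators, and finally convert the spectral picture into semigroup bounds on the Hilbert space $L^2_\xi(m)$.

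\textbf{Setup and crude bound.} The unperturbed Fokker-Planck operator $\cL = \Delta_\xi + \tfrac12\xi\cdot\nabla_\xi + 1$ generates an explicit $C_0$-semigroup on $L^2_\xi(m)$ for $m>1$, given by a Mehler-type kernel, with $\|e^{\tau\cL}\|_{L^2(m)\to L^2(m)} \lesssim 1$ uniformly in $\tau\geq 0$. Both perturbations $\alpha\Gamma$ and $\alpha\Lambda$ are bounded on $L^2_\xi(m)$: for $\Gamma$ this uses $g\in L^\infty$ together with the rapid decay of $\nabla g$; for $\Lambda$ one additionally uses that $\nabla^\perp(-\Delta)^{-1}$ is bounded on $L^2_\xi(m)$ when composed with multiplication by the rapidly decaying $\nabla G$ (via Riesz-transform bounds in the weighted space). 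Standard bounded-perturbation theory then produces $C_0$-semigroups $e^{\tau(\cL-\alpha\Gamma)}$ and $e^{\tau(\cL-\alpha\Lambda)}$, and a short-time Duhamel-Gronwall iteration of the Duhamel formula combined with the uniform boundedness of $e^{\tau\cL}$ yields the estimate~\eqref{SemigroupL2m} with arbitrary growth rate $\gamma>0$.

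\textbf{Spectral analysis for \eqref{SemigroupL2ma}, \eqref{SemigroupL2mb}.} I would next verify that $\alpha\Gamma$ and $\alpha\Lambda$ are \emph{relatively compact} with respect to $\cL$ on $L^2_\xi(m)$: each combines at most one derivative (or a Riesz-type smoothing in the case of $\Lambda$) with multiplication by a rapidly-decaying coefficient, and a Rellich-Kondrachov argument in the weighted space supplies the compactness. By Weyl's theorem the essential spectrum is preserved, hence remains in $\{\Re\lambda \leq (1-m)/2\}$, and choosing $m>1+2\mu$ pushes it strictly past $-\mu$. The remaining task is to locate the eigenvalues. For $\cL - \alpha\Lambda$, the algebraic identity $\nabla^\perp(-\Delta)^{-1}G = g$ forces $\Lambda G = g\cdot\nabla G - \nabla G\cdot g = 0$, so $0$ is a simple eigenvalue with eigenfunction $G$; the subspace $L^2_{\xi,0}(m)$ is invariant under the flow (mass is conserved) and removes this eigenvalue. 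On $L^2_{\xi,0}(m)$, the angular-mode spectral analysis of Gallay-Wayne \cite{MR1912106} and Gallagher-Gallay \cite{MR2178064} shows that every eigenvalue lies strictly in the open left half-plane, separated from the imaginary axis by some $\mu(\alpha)\in(0,\tfrac12)$. For the 2-vector operator $\cL - \alpha\Gamma$, the analogous Burgers-vortex analysis in \cite{MR2770021} gives the same conclusion whenever $\alpha\ne 0$. Feeding these spectral pictures into the Gearhart-Pr\"uss theorem on the Hilbert space $L^2_\xi(m)$ (or, since the perturbations leave the operators sectorial, into the spectral mapping theorem for analytic semigroups) converts the spectral gap into the desired exponential-decay bounds.

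\textbf{Anticipated obstacle.} The technical heart of the argument is the eigenvalue analysis, which I would largely import from \cite{MR1912106, MR2178064, MR2770021}. The delicate points are (i) checking that the relative compactness of $\Gamma$ and $\Lambda$ with respect to $\cL$ genuinely holds in the weighted $L^2_\xi(m)$ topology — the non-local term in $\Lambda$ requires some care to handle consistently with the weight — and (ii) verifying that the eigenvalue-location results from those references transfer verbatim to the present normalization and yield a uniform constant $\mu=\mu(\alpha)\in(0,\tfrac12)$. Both are routine in principle, but pinning down the precise dependence on $\alpha$ is where I would expect the real work to lie.
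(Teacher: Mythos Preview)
There is a genuine gap in your setup. The perturbations $\Gamma$ and $\Lambda$ are \emph{not} bounded on $L^2_\xi(m)$: the advection term $g\cdot\nabla_\xi$ is a first-order differential operator whose coefficient $g(\xi)\sim \xi^\perp/|\xi|^2$ decays only like $|\xi|^{-1}$, not rapidly. So ``standard bounded-perturbation theory'' does not apply, and the claim that $\Gamma$ is bounded because ``$g\in L^\infty$'' is incorrect. For the same reason, your relative-compactness argument cannot rely on ``rapidly-decaying coefficient''; only the stretching pieces $\nabla_\xi g$ and $\nabla_\xi G$ decay rapidly, while the transport piece does not. This also undermines your derivation of the crude bound~\eqref{SemigroupL2m}: even if the perturbation were bounded, Duhamel--Gronwall would yield $\lesssim e^{C|\alpha|\tau}$ for a fixed $C$, not $\lesssim e^{\gamma\tau}$ for every $\gamma>0$. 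The latter is a genuine spectral statement (no eigenvalues with $\Re\lambda>0$) and cannot be obtained by a soft iteration.

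The paper handles these points differently. Semigroup generation uses a short-time fixed point based on the smoothing estimate $\|e^{\tau\cL}\nabla_\xi\|_{L^2(m)\to L^2(m)}\lesssim a(\tau)^{-1/2}$, which absorbs the derivative in $g\cdot\nabla_\xi$; this is relatively-bounded, not bounded, perturbation. The essential spectrum is analyzed at the level of the \emph{semigroup} rather than the generator: one shows $e^{\tau(\cL-\alpha\Gamma)}-e^{\tau\cL}$ is compact on $L^2_\xi(m)$ for each $\tau>0$ (via Duhamel and parabolic smoothing, as in \cite[Theorem~A.1, Proposition~A.2]{MR1912106}), giving $r_{\mathrm{ess}}=e^{(1-m)\tau/2}$. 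All three estimates in the proposition---including the crude one---then follow from the eigenvalue locations (imported, as you say, from \cite{MR1912106,MR2123378,MR2770021}) together with \cite[Corollary~IV.2.11]{MR1721989}, rather than from Gearhart--Pr\"uss. Your overall strategy is close in spirit, but the specific mechanism you propose for generation and for the crude bound does not work as stated.
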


\begin{prop}[Semigroups on \(L^1_\xi\)]\label{prop:SemigroupL1}
For all \(\alpha\in \R\) the operators \(\cL - \alpha\Gamma\), \(\cL - \alpha \Xi\), \(\cL - \alpha \Pi\) define strongly continuous semigroups \(e^{\tau(\cL - \alpha \Gamma)}\), \(e^{\tau(\cL - \alpha\Xi)}\), \(e^{\tau(\cL - \alpha\Pi)}\) on \(L^1_\xi\) so that for all \(\gamma>0\) we have the estimates
\eq{SemigroupL1}{
\|e^{\tau(\cL - \alpha \Gamma)}\|_{L^1_\xi\rightarrow L^1_\xi}\lesssim e^{\gamma\tau},\qquad \|e^{\tau(\cL - \alpha \Xi)}\|_{L^1_\xi\rightarrow L^1_\xi}\lesssim e^{\gamma\tau},\qquad \|e^{\tau(\cL - \alpha \Pi)}\|_{L^1_\xi\rightarrow L^1_\xi}\lesssim e^{\gamma\tau}.
}
\end{prop}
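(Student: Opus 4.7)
The plan is to treat each of $\alpha\Gamma$, $\alpha\Xi$, $\alpha\Pi$ as a relatively compact perturbation of the sectorial operator $\mathcal{L}$ on $L^1_\xi$, build $e^{\tau(\mathcal{L}-\alpha A)}$ by a contraction-plus-Gronwall argument, and then upgrade the resulting exponential bound to a subexponential one via a spectral analysis in the spirit of \cite[Section~4]{MR2123378}.

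I would begin by recording that $e^{\tau\mathcal{L}}$ is a positivity- and mass-preserving contraction semigroup on $L^1_\xi$, as is visible from the explicit rescaled-Gaussian kernel \eqref{etcl}, from which one also extracts the smoothing estimate
\[
\|e^{\tau\mathcal{L}}\,\nabla_\xi\cdot F\|_{L^1_\xi}\lesssim a(\tau)^{-1/2}\|F\|_{L^1_\xi},\qquad a(\tau)=1-e^{-\tau}.
\]
Each perturbation admits a first-order representation with smooth, $L^\infty$-bounded coefficients: using $\nabla_\xi\cdot g=0$,
\[
\Gamma f=\nabla_\xi\cdot(g\otimes f)-f\cdot\nabla g,\qquad \Xi f=\nabla_\xi\cdot(f\otimes g)-f\cdot\nabla g,
\]
with an analogous decomposition for $\Pi$. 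In each case $Ah=B_0h+\nabla_\xi\cdot(B_1h)$ with $B_0,B_1\in L^\infty_\xi$ built out of $g,\nabla g$, which combined with the smoothing bound gives $\|e^{(\tau-s)\mathcal{L}}Ah\|_{L^1_\xi}\lesssim(1+a(\tau-s)^{-1/2})\|h\|_{L^1_\xi}$. A contraction in $\mathcal{C}([0,\delta(\alpha)];L^1_\xi)$ applied to the Duhamel identity
\[
T(\tau)f=e^{\tau\mathcal{L}}f-\alpha\int_0^\tau e^{(\tau-s)\mathcal{L}}A\,T(s)f\,ds
\]
then produces a strongly continuous semigroup $T(\tau)=e^{\tau(\mathcal{L}-\alpha A)}$, which extends globally by the semigroup property; Gronwall's inequality yields the crude bound $\|T(\tau)\|_{L^1_\xi\to L^1_\xi}\lesssim e^{C(\alpha)\tau}$.

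The hard part will be upgrading this exponential bound to the subexponential $\lesssim_\gamma e^{\gamma\tau}$ for arbitrary $\gamma>0$. The strategy is to verify that $A(\lambda-\mathcal{L})^{-1}$ is compact on $L^1_\xi$ for $\lambda$ in the resolvent set of $\mathcal{L}$, using the decay of $g,\nabla g$ at infinity for tightness and the local smoothing of $(\lambda-\mathcal{L})^{-1}$ for equicontinuity (a Kolmogorov--Riesz argument, since Rellich compactness is unavailable in $L^1_\xi$). By Weyl's theorem on essential spectra this gives $\sigma_{\mathrm{ess}}(\mathcal{L}-\alpha A)=\sigma_{\mathrm{ess}}(\mathcal{L})\subset\{\Re z\leq -1/2\}$, so the spectrum of $\mathcal{L}-\alpha A$ in any right half-plane $\{\Re z>-1/2+\eta\}$ consists of at most finitely many eigenvalues. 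Since $\mathcal{L}-\alpha A$ remains sectorial on $L^1_\xi$, the spectral mapping theorem for analytic semigroups identifies the growth bound with the spectral bound; integration-by-parts identities against test functions rule out spectrum in $\{\Re z>0\}$, giving $\omega_0(\mathcal{L}-\alpha A)\leq 0$ and hence the desired estimate. The genuinely delicate step is the $L^1_\xi$-compactness of $A(\lambda-\mathcal{L})^{-1}$, which requires an explicit Kolmogorov--Riesz verification rather than the weighted-Hilbertian Rellich argument that underlies the parallel Proposition~\ref{prop:SemigroupL2m}.
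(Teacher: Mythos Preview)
Your construction of the semigroups matches the paper's, and your overall architecture---essential spectrum via compact perturbation, then eigenvalue exclusion, then growth bound---is the right one. But there are two genuine problems.

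First, a factual error: on $L^1_\xi$ the essential spectral radius of $e^{\tau\mathcal{L}}$ is $1$, not $e^{-\tau/2}$, so $\omega_{\mathrm{ess}}=0$ and $\sigma_{\mathrm{ess}}(\mathcal{L})$ reaches the imaginary axis. The half-plane $\{\Re z\leq -1/2\}$ is what you get on $L^2_\xi(m)$ for $m>2$, not on $L^1_\xi$. This is not fatal to the overall scheme---you still only need to exclude eigenvalues with $\Re\lambda>0$---but it does mean there is no buffer between the essential spectrum and the threshold.

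Second, and more seriously, the phrase ``integration-by-parts identities against test functions rule out spectrum in $\{\Re z>0\}$'' skips the actual content of the proof. Energy identities in $L^1_\xi$ are not available directly: the operator $\mathcal{L}$ has no useful symmetry there. The paper's route is a two-step bootstrap. One first shows that any $L^1_\xi$ eigenfunction with $\Re\lambda>0$ in fact has Gaussian decay, i.e.\ lies in $L^2_\xi(\infty)$; this is done by decomposing in angular harmonics, reducing to an ODE in the radial variable, and analyzing the asymptotics of solutions at infinity via Levinson-type theorems to see that the polynomially-growing branch is excluded by the $L^1$ condition. Only then can one exploit that $\mathcal{L}$ is self-adjoint on $L^2_\xi(\infty)$ and that the perturbations $\Gamma,\Xi,\Pi$ have tractable structure there (for $\Xi$ and $\Pi$ one actually passes to the equation satisfied by $\nabla_\xi\cdot f$ or $\operatorname{Div}_\xi f$, which reduces to operators already understood). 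Your proposal does not mention the Gaussian-decay step, and without it there is no Hilbert structure in which to run the energy argument.

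A smaller point on compactness: you propose tightness of $A(\lambda-\mathcal{L})^{-1}$ from the decay of $g$ and $\nabla g$, but $|g(\xi)|\sim|\xi|^{-1}$ at infinity, which is not enough to localize $g\cdot\nabla_\xi f$ directly. The paper avoids this by establishing compactness at the \emph{semigroup} level, where the Fokker-Planck smoothing supplies both the regularity and the localization needed for Kolmogorov--Riesz, and then invokes \cite[Corollary~IV.2.11]{MR1721989} rather than the spectral mapping theorem for analytic semigroups.
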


The proof of Propositions~\ref{prop:SemigroupL2m}~\ref{prop:SemigroupL1} essentially follows the argument of~\cite{MR1912106}, using additional estimates from~\cite{MR2770021},~\cite{MR2178064}. The general strategy of the proof for \(* = \Gamma,\Lambda,\Xi,\Pi\) is summarized as follows:
\begin{itemize}
\item \textbf{Step 1:} Prove that all eigenvalues of \(\cL - \alpha*\) are in the Gaussian-weighted space \(L^2_\xi(\infty)\).
\item \textbf{Step 2:} Use symmetries of the operators in \(L^2_\xi(\infty)\) to obtain bounds on the eigenvalues.
\item \textbf{Step 3:} Use the fact that \(e^{\tau(\cL - \alpha *)}\) is a compact perturbation of \(e^{\tau\cL}\) to deduce the corresponding spectral radii.
\end{itemize}

\subsection{Properties of the Fokker-Plank operator}
Before proceeding, it will be useful to collect some properties of the Fokker-Planck operator \(\cL\). We first recall (see e.g.~\cite[Theorem~A.1]{MR1912106}) the explicit expression
\eq{etcl}{
e^{\tau\cL}f = \frac{e^\tau}{4\pi a(\tau)}\int e^{-\frac{|\xi - \xi'|^2}{4a(\tau)}}f(e^{\frac\tau2}\xi'  )\,d\xi',
}
which immediately implies, if $1 \leq p \leq q \leq \infty$, 
\begin{equation}
\label{estimatesFP}
\begin{split}
& \| \nabla^\beta e^{\tau\cL}f \|_{L^q_\xi(m)} \lesssim \frac{e^{\tau(1-\frac{1}{p})}}{a(\tau)^{\frac{1}{p}-\frac{1}{q}+ \frac{|\beta|}{2}}} \| f \|_{L^p_{\xi}(m)}
\\
& \| e^{\tau\cL}\nabla^\beta f \|_{L^q_\xi(m)} \lesssim \frac{e^{\tau(1-\frac{1}{p})} e^{-\frac{|\beta|\tau}{2}}}{a(\tau)^{\frac{1}{p}-\frac{1}{q}+ \frac{|\beta|}{2}}} \| f \|_{L^p_{\xi}(m)}.
\end{split}
\end{equation}
Further, on \(L^2_\xi(m)\) we have the following proposition:
\begin{prop}[{\cite[Theorem~A.1]{MR1912106}}]\label{prop:FPProps}
Let \(m\in[0,\infty]\) and \(\cL\) be considered to be an unbounded operator on \(L^2_\xi(m)\) defined on its maximal domain \(\cD(m)\).
\begin{enumerate}
\item The spectrum of \(\cL\) is given by
\[
\sigma(\cL) = \left\{\lambda\in \C:\Re(\lambda)\leq \frac{1 - m}2\right\}\cup\left\{-\frac k2:k\in \N\right\}.
\]
\smallskip
\item If \(m>1\) then \(0\) is an isolated eigenvalue of \(\cL\), the corresponding eigenspace is spanned by the Gaussian \(G\). Further, the corresponding spectral projection on \(L^2_\xi(\infty)\) is given by \[P_0f = G\int_{\R^2}f\,d\xi.\]
\smallskip
\item If \(m>2\) then \(-\frac12\) is an isolated eigenvalue of \(\cL\) and the corresponding eigenspace is spanned by \(\{\frac12\xi_1 G,\frac12\xi_2G\}\). Further, the corresponding spectral projection on \(L^2_\xi(\infty)\) is given by
\[
P_1f = \frac12\xi_1G\int_{\R^2}\xi_1 f\,d\xi + \frac12\xi_2G\int \xi_2f\,d\xi.
\]
\end{enumerate}
\end{prop}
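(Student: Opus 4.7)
The statement is Theorem~A.1 of~\cite{MR1912106}, and the plan is to reproduce the Gallay-Wayne argument.

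The first step is a direct verification of the discrete eigenvalues. Using $\nabla G = -\tfrac{1}{2}\xi G$ and $\Delta G = -G + \tfrac{1}{4}|\xi|^2 G$, one checks that $\cL G = 0$ and $\cL(\xi_i G) = -\tfrac{1}{2}\xi_i G$; induction using the commutator $[\cL,\partial_i] = -\tfrac{1}{2}\partial_i$ then gives $\cL(\partial^\alpha G) = -\tfrac{|\alpha|}{2}\partial^\alpha G$ for every multi-index $\alpha$, so $\{-k/2 : k \in \mathbb{N}\}$ consists of eigenvalues. The projectors $P_0$ and $P_1$ in the statement are then verified to be idempotent, mutually orthogonal, and to commute with $\cL$ by direct computation using $\int G \, d\xi = 1$ and $\int \xi_i \xi_j G \, d\xi = 2\delta_{ij}$. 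Cauchy-Schwarz against $\langle \xi \rangle^{-m}$ shows that the dual pairings $f \mapsto \int f \, d\xi$ and $f \mapsto \int \xi_i f \, d\xi$ extend boundedly to $L^2_\xi(m)$ when $m > 1$ and $m > 2$ respectively, so the stated spectral projectors are bounded on the relevant weighted spaces.

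The second step identifies the essential spectrum. The explicit representation~\eqref{etcl} factors $e^{\tau \cL}$ as a heat convolution composed with the self-similar dilation $f \mapsto e^\tau f(e^{\tau/2}\,\cdot\,)$. A change of variable shows that, for $f$ with vanishing zeroth- and first-order moments, the weight $\langle \xi \rangle^m$ contributes a gain $e^{-m\tau/2}$ that beats the prefactor $e^\tau$, yielding the bound $\|e^{\tau \cL} f\|_{L^2_\xi(m)} \lesssim e^{(1-m)\tau/2 + \epsilon \tau} \|f\|_{L^2_\xi(m)}$ on the complement of the computed eigenspaces. This furnishes the inclusion $\sigma_{\mr{ess}}(\cL) \subset \{\Re \lambda \leq (1-m)/2\}$. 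The reverse inclusion is obtained by constructing Weyl sequences supported at larger and larger $|\xi|$, where the weight suppression can be made arbitrarily close to sharp.

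The main obstacle is the sharp decay $e^{-k\tau/2}$ on the complement of $P_0 + \cdots + P_{k-1}$ in the weighted space $L^2_\xi(m)$ with $m > 1 + 2k$, which is not automatic since $L^2_\xi(m) \not\subset L^2_\xi(\infty)$ and the Hermite-expansion argument available in the Gaussian-weighted space does not transfer directly. The resolution is to use that the moment-orthogonality conditions cut out by $I - P_0 - \cdots - P_{k-1}$ are preserved by the flow, to combine this with successive integrations by parts on the heat kernel (each vanishing moment improving the self-similar estimate of the second step by an additional factor of $e^{-\tau/2}$), and finally to invoke standard results on the stability of isolated eigenvalues under compact perturbation to conclude that $0$ and $-1/2$ are isolated points of $\sigma(\cL)$ with the indicated spectral projectors.
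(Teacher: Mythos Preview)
The paper does not prove this proposition; it is stated as a direct citation of \cite[Theorem~A.1]{MR1912106} and no argument is given. Your proposal to reproduce the Gallay--Wayne argument is therefore consistent with the paper's treatment, and the sketch you give (Hermite eigenfunctions via the commutator $[\cL,\partial_i]=-\tfrac12\partial_i$, boundedness of the moment functionals on $L^2_\xi(m)$, essential spectrum from the explicit semigroup formula plus Weyl sequences, and moment cancellation to sharpen the decay) is a faithful outline of that reference.
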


\subsection{Construction of the semigroups}

The semigroups that are the object of this section, namely
on the one hand $e^{\tau(\cL - \alpha\Gamma)}$ and $e^{\tau(\cL - \alpha \Lambda)}$ on $L^2_\xi(m)$, and on the other hand $e^{\tau(\cL - \alpha\Gamma)}$, $e^{\tau(\cL - \alpha \Xi)}$ and $e^{\tau(\cL - \alpha \Pi)}$ on $L^1_\xi$, can be constructed by a straightforward fixed point procedure.

We illustrate this for $e^{\tau(\cL - \alpha\Gamma)}$ on $L^2_\xi(m)$. For $f_0 \in L^2_\xi(m)$, let $\mathcal{T}$ be the operator
$$
\mathcal{T} f = e^{\tau \mathcal{L}} f_0 - \alpha \int_0^\tau e^{(\tau-\sigma) \mathcal{L}} \Gamma f(\sigma)\,d\sigma.
$$
Using the estimates~\eqref{estimatesFP}, it is easy to check that, for $\delta$ sufficiently small, $\mathcal{T}$ is a contraction on $\mathcal{C}([0,\delta],L^2_\xi(m))$.

This procedure gives strongly continuous semigroups; it also specifies a domain for the operators $\cL - \alpha\Gamma$ and $\cL - \alpha \Lambda$ on $L^2_\xi(m)$ (abusing notations, we denote them indistinctly by $\mathcal{D}(m)$), and similarly for the operators on $L^1_\xi$ (which we denote by $\mc D_1$). Whenever the spectrum (or essential spectrum, etc...) of these operators is discussed below, it is with respect to this domain.

\subsection{Gaussian decay of eigenfunctions}
In order to control the eigenvalues, we first show that they have Gaussian decay. We first consider the operator \(\cL - \alpha\Gamma\):
\begin{lem}\label{lem:EV-G-Inf}
Let \(\alpha\in \R\) and \(f\) be an eigenfunction of \(\cL - \alpha \Gamma\) with eigenvalue \(\lambda\). If one of:
\begin{enumerate}
\item \(f\in \cD(m)\), $\lambda \notin \sigma_{\operatorname{ess}}(\cL - \alpha \Gamma)$ and \(\Re\lambda>\frac{1 - m}2\) for \(m>1\),
\item \(f\in \cD_1\), $\lambda \notin \sigma_{\operatorname{ess}}(\cL - \alpha \Gamma)$ and \(\Re\lambda>0\),
\end{enumerate}
hold, then \(\xi^k \nabla_\xi^\ell f\in L^2_\xi(\infty)\) for all integers $k,\ell$.
\end{lem}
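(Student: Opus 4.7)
The plan is a bootstrap argument based on the resolvent representation of the eigenfunction, modeled after \cite{MR1912106}. First, I would apply Duhamel's formula to the identity $e^{\tau(\cL - \alpha\Gamma)}f = e^{\lambda\tau}f$ to obtain
\begin{equation*}
f = e^{-\tau\lambda}e^{\tau\cL}f - \alpha \int_0^\tau e^{-\sigma\lambda}e^{\sigma\cL}\Gamma f\,d\sigma,
\end{equation*}
and send $\tau \to \infty$. After subtracting the projections of $f$ onto the finitely many $\cL$-eigenspaces with eigenvalues $\mu$ satisfying $\Re\mu \geq \Re\lambda$---which by Proposition~\ref{prop:FPProps} are spanned by Hermite polynomials times $G$ and thus automatically lie in $L^2_\xi(\infty)$---the hypothesis on $\Re\lambda$ together with the semigroup decay on the complementary subspace forces $e^{-\tau\lambda}e^{\tau\cL}f \to 0$. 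This identifies $f$, modulo an explicit $L^2_\xi(\infty)$-contribution, with the resolvent integral $-\alpha \int_0^\infty e^{-\sigma\lambda}e^{\sigma\cL}\Gamma f\,d\sigma$.

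The first bootstrap step is to improve the polynomial decay iteratively. The crucial observation is that $g(\xi) = \frac{1}{2\pi}\frac{\xi^\perp}{|\xi|^2}(1 - e^{-|\xi|^2/4})$ is divergence-free and satisfies $|g(\xi)| \lesssim \langle\xi\rangle^{-1}$ and $|\nabla_\xi g(\xi)| \lesssim \langle\xi\rangle^{-2}$. Writing
\begin{equation*}
\Gamma f = \nabla_\xi \cdot(g\otimes f - f\otimes g) + (\nabla_\xi\cdot f)\, g
\end{equation*}
exhibits $\Gamma f$ as a divergence plus a lower-order term with one extra power of decay. Combining this structure with the smoothing estimate \eqref{estimatesFP} applied to $e^{\sigma\cL}\nabla_\xi\cdot$, I would show that $f, \nabla f \in L^2_\xi(m)$ implies $f \in L^2_\xi(m+1)$. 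A parallel argument applied to spatial derivatives of the eigenvalue equation propagates the control of higher derivatives. Iterating yields $\nabla^\ell_\xi f \in L^2_\xi(m')$ for every $\ell \geq 0$ and $m' \geq m$, i.e., arbitrary polynomial decay.

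The hard part will be upgrading this to Gaussian decay. The cleanest approach I would pursue is to conjugate by $G^{1/2}$: setting $h = f/G^{1/2}$, a direct calculation transforms the eigenvalue equation into
\begin{equation*}
(\mathcal{H} - \alpha\widetilde\Gamma)h = \lambda h, \qquad \mathcal{H} = \Delta_\xi - \tfrac{|\xi|^2}{16} + \tfrac12,
\end{equation*}
where $\widetilde\Gamma h = g\cdot\nabla_\xi h - \tfrac{\xi}{4}\cdot g\, h - h\cdot\nabla_\xi g$. Since $g$, $\xi g$, and $\nabla_\xi g$ are uniformly bounded, $\widetilde\Gamma$ is a relatively bounded perturbation of the shifted harmonic oscillator $\mathcal{H}$, which has compact resolvent and a confining quadratic potential. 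An Agmon-type estimate applied to $\mathcal{H} - \alpha\widetilde\Gamma - \lambda$ then yields $e^{\rho}h \in L^2$ for every $\rho(\xi) < |\xi|^2/8$, so that $f \in L^2_\xi(\infty)$. Combining this with the derivative bootstrap of the previous paragraph gives $\xi^k\nabla^\ell_\xi f \in L^2_\xi(\infty)$ for all integers $k, \ell$, as claimed.
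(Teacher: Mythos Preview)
Your approach is genuinely different from the paper's, and the polynomial bootstrap in your first two steps is plausible (it is close in spirit to the alternative argument the paper attributes to \cite{MR2770021}). However, the Agmon step contains a circularity that, as written, is fatal.

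Standard Agmon estimates for $(\mathcal{H}-\alpha\widetilde\Gamma-\lambda)h=0$ presuppose that $h$ lies in $L^2$ (or at least in the form domain of $\mathcal{H}$), and then upgrade this to exponential decay. But $h\in L^2$ is \emph{exactly} the statement $f\in L^2_\xi(\infty)$ that you are trying to prove. Your polynomial bootstrap only yields $f\in\bigcap_{m'}L^2_\xi(m')$, i.e.\ faster-than-polynomial decay; this is compatible with $h=f/G^{1/2}$ growing like $e^{|\xi|^2/8}$ times an inverse polynomial, so $h$ is not a priori in any $L^2$ space and the Agmon machinery does not engage. Since the conjugated equation admits both exponentially growing and exponentially decaying solutions at infinity, you need an independent mechanism to exclude the former.

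The paper circumvents this by a direct ODE argument: it expands $f$ in angular harmonics (using $\lambda\notin\sigma_{\mathrm{ess}}$ to reduce to finitely many modes), obtains for each mode a second-order system in the radial variable, and applies Coddington--Levinson asymptotics to show that a basis of solutions behaves like $r^{2(\lambda-1)}$ or $r^{-2\lambda}e^{-r^2/4}$ as $r\to\infty$. The integrability hypothesis on $f$ then rules out the polynomial branch directly, with no bootstrap needed. If you want to salvage your route, you would need either an Agmon-type argument valid for tempered (not $L^2$) solutions of confining Schr\"odinger operators, or an iterative weighted energy estimate that pushes the Gaussian exponent from $0$ toward $1/4$; neither is supplied here.
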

\begin{proof}
We follow the argument given in~\cite[Lemma~4.5]{MR2123378}, although we note that an alternative proof for case 1 is given in in~\cite[Proposition~3.4]{MR2770021}.

Suppose that \((\cL - \alpha\Gamma )f = \lambda f\) and switch to polar coordinates, taking
\[
\xi = \begin{bmatrix}r\cos\theta\\r\sin\theta\end{bmatrix},\qquad \begin{bmatrix}
f^r\\f^\theta
\end{bmatrix}
=
\begin{bmatrix}
\cos\theta&\sin\theta\\-\sin\theta&\cos\theta
\end{bmatrix}
f.
\]
Using the formulas $g \cdot \nabla f = V(\partial_\theta f^r - f^\theta) e_r + V ( \partial_\theta f^\theta + f^r) e_\theta$ and $f \cdot \nabla g = f^r \partial_r (rV) e_\theta - V f^\theta e_r$, we obtain the system
\[
\pde{
\lambda f^r + \alpha V\partial_\theta f^r = \left(\partial_r^2 + \dfrac1r\partial_r + \dfrac1{r^2}\partial_\theta^2 + \dfrac12 r\partial_r + 1 - \dfrac1{r^2}\right)f^r - \dfrac2{r^2}\partial_\theta f^\theta,
}{
\lambda f^\theta + \alpha V\partial_\theta f^\theta - \alpha r \partial_r V f^r  = \left(\partial_r^2 + \dfrac1r\partial_r + \dfrac1{r^2}\partial_\theta^2 + \dfrac12 r\partial_r + 1 - \dfrac1{r^2}\right)f^\theta + \dfrac2{r^2}\partial_\theta f^r,
}
\]
where
\[
V(r) = \frac1{2\pi r^2}\left(1 - e^{-\frac14 r^2}\right). 
\]

The decomposition of $f$ in angular harmonics is given by
$f = \sum_{n\in\Z} \widehat{f_n} e^{in\theta}$; the crucial observation is that the projectors on angular harmonics commute with $\cL - \alpha \Gamma$. If $\widehat{f_n}$ was non-zero for infinitely many $n$, then the kernel of $\mathcal{L}-\alpha \Gamma$ would contain all the linear combinations of the $\widehat{f_n} e^{in\theta}$, and hence be infinite-dimensional, which would contradict $\lambda \notin \sigma_{\operatorname{ess}} (\mathcal{L}-\alpha \Gamma)$. Therefore, only a finite number of $\widehat{f_n}$ are non-zero, and thus it suffices to prove the result when 
$f$ contains a single angular harmonic.

Taking \(\Phi = \Phi_n = \begin{bmatrix}\widehat f^r_n \\\widehat f^\theta_n \end{bmatrix}\), we obtain the equation
\[
\Phi'' + \left(\frac r2 + \frac1r\right)\Phi' + \left(1 - \lambda - \frac{1 + n^2}{r^2} + \frac{2in}{r^2}\begin{bmatrix}0&-1\\1&0\end{bmatrix} - i\alpha n V + \begin{bmatrix}0& 0 \\\alpha r \partial_r V  &0\end{bmatrix}\right)\Phi = 0.
\]
Changing variables by taking \(\rho = \frac14 r^2\) we obtain the ODE
\[
\Phi'' + \left(1 + \frac1\rho\right)\Phi' + \left(\frac{1 - \lambda}{\rho} - A(\rho)\right)\Phi = 0,
\]
where the coefficient
\[
A(\rho) = \frac{1 + n^2}{4\rho^2} - \frac{in}{2\rho^2}\begin{bmatrix}0&-1\\1&0\end{bmatrix} + \frac{i\alpha n}{8\pi\rho^2}\left(1 - e^{-\rho}\right) - \alpha
\begin{bmatrix}0& 0 \\  \frac{-1 + e^{-\rho}  + \rho e^{-\rho}}{4\pi \rho^2}  &0\end{bmatrix}
\]
is such that $|A(\rho)| \lesssim \frac{1}{\rho^2}$.

Taking \(\Psi = \begin{bmatrix}\Phi\\\Phi'\end{bmatrix}\) we obtain the first order system
\[
\Psi' = \left(\begin{bmatrix}0&I\\0&-I\end{bmatrix} + \begin{bmatrix}0&0\\\frac{\lambda-1}\rho I&-\frac1\rho I \end{bmatrix} + \begin{bmatrix}0&0\\A(\rho)&0\end{bmatrix}\right)\Psi,
\]
The sum of the two first matrices above has the eigenvalues $\mu_+ = \frac{\lambda-1}{\rho} + O \left( \frac{1}{\rho^2} \right)$ and $\mu_- = -1 - \frac{\lambda}{\rho} + O \left( \frac{1}{\rho^2} \right)$, with eigenvectors $\begin{bmatrix}1 & 0 & \mu_{\pm} & 0\end{bmatrix}^T$ and $\begin{bmatrix}0 & 1 & 0 & \mu_{\pm}\end{bmatrix}^T$.
Therefore, it is possible to write
$$
\begin{bmatrix}0&I\\0&-I\end{bmatrix} + \begin{bmatrix}0&0\\\frac{\lambda-1}\rho I&-\frac1\rho I \end{bmatrix} = S \operatorname{diag}(\mu_+,\mu_+,\mu_-,\mu_-) S^{-1},
$$
where the matrix $S = S(\rho)$ has a nonsingular limit and satisfies $|S'(\rho)| \lesssim \frac{1}{\rho^2}$. Thus, we may apply (the proof of)~\cite[Theorem~III.8.1]{MR0069338} to obtain linearly independent solutions satisfying
\begin{alignat}{2}
&\Psi \sim \begin{bmatrix}1\\0\\0\\0\end{bmatrix} \rho^{\lambda - 1}, &&\qquad\Psi \sim  \begin{bmatrix}0\\1\\0\\0\end{bmatrix} \rho^{\lambda - 1}, \\
&\Psi\sim \begin{bmatrix}1\\0\\-1\\0\end{bmatrix} \rho^{-\lambda}e^{-\rho},&&\qquad\Psi\sim \begin{bmatrix}0\\1\\0\\-1\end{bmatrix} \rho^{-\lambda}e^{-\rho},
\end{alignat}
as \(\rho\rightarrow+\infty\). In particular, as \(r\rightarrow+\infty\) eigenfunctions must satisfy
\[
\widehat f \sim r^{2(\lambda - 1)} \text{ or }\widehat f\sim r^{-2\lambda}e^{-\frac14 r^2},
\]
and we readily see that \( r^{2(\lambda - 1)}\not\in L^2_\xi(m)\) if \(\Re\lambda>\frac{1 - m}2\) and \(r^{2(\lambda - 1)}\not\in L^1_\xi\) if \(\Re\lambda>0\).
\end{proof}

For the operator \(\cL - \alpha\Lambda\) a similar argument yields the following:

\begin{lem}[{\cite[Lemma~4.5]{MR2123378}}]\label{lem:EV-Lambda-Inf}
If \(\alpha\in \R\), \(m>1\) and \(f\in \cD(m)\) is an eigenfunction of \(\cL - \alpha\Lambda\) with eigenvalue \(\lambda\) satisfying \(\Re\lambda>\frac{1 - m}2\) and $\lambda \notin \sigma_{\operatorname{ess}} (\cL - \alpha\Lambda)$, then \(\xi^k \nabla_\xi^\ell f\in L^2_\xi(\infty)\) for any integers $k,\ell$.
\end{lem}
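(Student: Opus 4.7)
The plan is to mimic the argument used for Lemma~\ref{lem:EV-G-Inf}, with the simplification that we are now dealing with a scalar-valued eigenfunction, and the complication that $\Lambda$ contains the nonlocal Biot--Savart term $(-\Delta_\xi)^{-1}\nabla_\xi^\perp f \cdot \nabla_\xi G$. The saving grace is that $\nabla_\xi G$ is a Gaussian, so this nonlocal coupling is exponentially small at infinity and can be absorbed into an exponentially decaying remainder for the asymptotic ODE analysis.

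First I would pass to polar coordinates $(r,\theta)$ and expand $f = \sum_{n \in \Z} \widehat f_n(r) e^{in\theta}$. Because the operators $\cL$, $g \cdot \nabla_\xi$, and $(-\Delta_\xi)^{-1}\nabla_\xi^\perp \cdot \nabla_\xi G$ all commute with rotations, the operator $\cL - \alpha \Lambda$ preserves each angular harmonic subspace. As in Lemma~\ref{lem:EV-G-Inf}, if infinitely many $\widehat f_n$ were nonzero then the kernel of $\cL - \alpha\Lambda - \lambda$ would be infinite-dimensional, contradicting $\lambda \notin \sigma_{\mathrm{ess}}(\cL - \alpha\Lambda)$. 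Hence it suffices to treat a single harmonic $f = \widehat f_n(r)e^{in\theta}$, for which the eigenvalue equation becomes a scalar ODE in $r$. The contribution of $g \cdot \nabla$ produces a term $in V(r)\widehat f_n$ with $V(r)=\frac{\alpha}{2\pi r^2}(1-e^{-r^2/4})$, while the nonlocal term produces an inhomogeneous contribution of the form $G'(r) \cdot [(-\Delta_\xi)^{-1}\nabla_\xi^\perp f]_{\text{radial}}$ which, because of the Gaussian factor $G'(r)$, is $O(e^{-r^2/4})$ as $r \to \infty$ uniformly in the (bounded) behaviour of $(-\Delta_\xi)^{-1}\nabla^\perp f$.

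Changing variable to $\rho = r^2/4$, the resulting second order scalar ODE takes the form
\[
\Phi'' + \left(1 + \tfrac{1}{\rho}\right)\Phi' + \left(\tfrac{1-\lambda}{\rho} - A(\rho)\right) \Phi = R(\rho),
\]
where $|A(\rho)| \lesssim \rho^{-2}$ and $|R(\rho)| \lesssim e^{-\rho}$, just as in the proof of Lemma~\ref{lem:EV-G-Inf}. Writing this as a first order system for $\Psi = (\Phi,\Phi')^T$ and applying \cite[Theorem~III.8.1]{MR0069338}, the homogeneous problem admits two linearly independent asymptotic solutions
\[
\Psi \sim \begin{bmatrix} 1 \\ 0 \end{bmatrix}\rho^{\lambda-1}, \qquad \Psi \sim \begin{bmatrix} 1 \\ -1 \end{bmatrix}\rho^{-\lambda}e^{-\rho},
\]
and the exponentially small inhomogeneity $R(\rho)$ contributes a particular solution of size $O(e^{-\rho})$, hence does not affect the leading asymptotics. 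Since $\lambda$ satisfies $\Re \lambda > (1-m)/2$, the algebraically decaying branch $r^{2(\lambda-1)}$ fails to lie in $L^2_\xi(m)$, so the eigenfunction must correspond to the Gaussian-decaying branch $r^{-2\lambda}e^{-r^2/4}$. Standard ODE elliptic regularity then upgrades this pointwise decay to $\xi^k \nabla_\xi^\ell f \in L^2_\xi(\infty)$ for all $k,\ell$.

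The main obstacle will be justifying that the nonlocal term $(-\Delta_\xi)^{-1}\nabla^\perp f \cdot \nabla G$ is indeed a harmless exponentially decaying perturbation rather than something that could interact delicately with the ODE structure. To handle this rigorously, I would first establish that any eigenfunction $f$ must lie in $L^2_\xi(m)$ (hence in $L^p_\xi$ for appropriate $p$) and then use the explicit kernel of $(-\Delta_\xi)^{-1}\nabla^\perp$ together with the Gaussian factor $\nabla G$ to derive the pointwise bound $|(-\Delta_\xi)^{-1}\nabla^\perp f \cdot \nabla G|(\xi) \lesssim e^{-r^2/8}$, uniformly in the (bounded) Biot--Savart coefficient. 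Once this estimate is in hand, the rest of the argument is a direct transcription of the proof of Lemma~\ref{lem:EV-G-Inf}, and a bootstrap using the equation $(\cL - \lambda) f = \alpha \Lambda f$ (whose right-hand side decays faster than any polynomial) yields the claimed regularity and decay for all derivatives.
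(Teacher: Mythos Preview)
Your proposal is correct and follows exactly the approach the paper indicates: the paper does not give an independent proof here but simply cites \cite[Lemma~4.5]{MR2123378} and remarks that ``a similar argument'' to Lemma~\ref{lem:EV-G-Inf} applies, which is precisely what you have outlined. Your observation that the nonlocal Biot--Savart contribution is rendered harmless by the Gaussian factor $\nabla_\xi G$ is the key point in the cited reference as well.
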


For the operator \(\cL - \alpha\Xi\) we have the following result, which follows from~\cite[Proposition~4.3]{MR2178064}:

\begin{lem}[{\cite[Proposition~4.3]{MR2178064}}]\label{lem:EV-ag-Inf}
If \(\alpha\in \R\) and \(f\in \cD_1\) is an eigenfunction of \(\cL - \alpha \Xi\) with eigenvalue \(\lambda\) satisfying \(\Re\lambda>0\) and \(\lambda\not\in \sigma_{\operatorname{ess}}(\cL - \alpha\Xi)\), then \(\xi^k \nabla_\xi^\ell (\nabla_\xi\cdot f)\in L^2_\xi(\infty)\) for all $k$, $\ell$.
\end{lem}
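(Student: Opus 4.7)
The plan is to reduce to a scalar problem for $h := \nabla_\xi \cdot f$ and then invoke the same kind of ODE analysis at spatial infinity used in the proof of Lemma~\ref{lem:EV-G-Inf}, exactly as in the cited~\cite[Proposition~4.3]{MR2178064}. First I would apply $\nabla_\xi\cdot$ to the eigenvalue equation $(\cL - \alpha\Xi)f = \lambda f$. Using $\nabla_\xi\cdot g = 0$ and the commutator identity
$$
\nabla_\xi\cdot(\cL f) = \Bigl(\cL + \tfrac12\Bigr)\nabla_\xi\cdot f,
$$
which follows from $[\nabla_\xi\cdot,\tfrac12\xi\cdot\nabla_\xi] = \tfrac12\nabla_\xi\cdot$, one obtains the scalar equation
$$
\cL h - \alpha\, g\cdot\nabla_\xi h = \bigl(\lambda - \tfrac12\bigr) h,
$$
that is, $h$ is an eigenfunction of the scalar advection-diffusion operator associated with the Oseen velocity field, with eigenvalue $\mu := \lambda - 1/2$.

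Next, mirroring Step 2 of Lemma~\ref{lem:EV-G-Inf}, the projectors onto angular Fourier harmonics $e^{in\theta}$ commute with this scalar operator, since the coefficient $V(r) = \frac{\alpha}{2\pi r^2}(1 - e^{-r^2/4})$ is radial. The hypothesis $\lambda\notin\sigma_{\operatorname{ess}}(\cL-\alpha\Xi)$ forces the geometric eigenspace at $\lambda$ to be finite dimensional, and hence only finitely many angular modes $\hat h_n(r)$ can be nonzero. For each such mode, after the change of variables $\rho = r^2/4$, one is reduced to a scalar second-order ODE of the same type analyzed in Lemma~\ref{lem:EV-G-Inf}. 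Writing it as a first-order $2\times 2$ system and applying~\cite[Theorem~III.8.1]{MR0069338}, the asymptotic behavior of solutions as $\rho \to \infty$ splits into a slow-decay branch $\rho^{\mu - 1}$ and a Gaussian branch $\rho^{-\mu}e^{-\rho}$.

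The main obstacle is ruling out the slow-decay branch for $\hat h_n$, since, in contrast to Lemma~\ref{lem:EV-G-Inf}, we do not have an a priori $L^1$ (or weighted) bound on $h$ itself, only on $f$. Here the trick is to use the eigenvalue equation in the form $(\cL-\lambda)f = \alpha h g$ and to note that $g(\xi)$ has exact decay $|g|\approx|\xi|^{-1}$. Thus any slow polynomial behavior $\hat h_n(r) \sim c\, r^{2(\mu-1)}$ with $c\neq 0$ would produce, through this relation and elliptic regularity for the Fokker-Planck type operator $\cL-\lambda$ (which has an explicit, Gaussian-decaying fundamental solution by~\eqref{etcl}), a contribution to $f$ decaying no faster than $r^{2\mu-3}$, and this contradicts $f\in\cD_1 \subset L^1_\xi$ once one checks that $\Re\mu$ forces $2\mu - 3 > -2$ along the borderline cases. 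Consequently every angular mode of $h$ has Gaussian decay, so $h\in L^2_\xi(\infty)$.

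Finally, once Gaussian decay of $h$ is established, the conclusion $\xi^k\nabla_\xi^\ell h \in L^2_\xi(\infty)$ for all integers $k,\ell$ follows by a standard bootstrap: one differentiates the scalar eigenvalue equation for $h$, observes that $[\cL,\partial_j] = -\tfrac12\partial_j$ and $\partial_j(g\cdot\nabla h)$ involves only bounded lower-order corrections (and Gaussian-decaying factors from $\partial_j V$), and applies the same ODE analysis mode-by-mode, yielding Gaussian decay for each derivative. Multiplication by polynomial weights $\xi^k$ is then harmless.
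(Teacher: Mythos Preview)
Your reduction to the scalar equation for $h = \nabla_\xi\cdot f$ is correct and matches the paper, but the argument you give to rule out the slow-decay branch has a genuine gap. You claim that if $\hat h_n \sim c\,r^{2(\mu-1)}$ with $\mu = \lambda - \tfrac12$, then feeding this through $(\cL-\lambda)f = \alpha h g$ forces $f\sim r^{2\mu-3}$, which you say contradicts $f\in L^1_\xi$. But $r^{2\mu-3}\in L^1(\R^2)$ precisely when $2\Re\mu - 3 < -2$, i.e.\ when $\Re\lambda < 1$. So your contradiction only fires for $\Re\lambda \geq 1$, and the entire range $0<\Re\lambda<1$ is left uncovered. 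The phrase ``along the borderline cases'' does not repair this.

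The paper closes this gap by a different mechanism: before taking the divergence, it first bootstraps $f$ itself to the Schwartz class. Writing $f = -\alpha\int_0^\infty e^{-t\lambda}e^{t\cL}\Xi f\,dt$ and using $\Xi f = \Div_\xi(f\otimes g) - f\cdot\nabla_\xi g$ together with the $\langle\xi\rangle^{-1}$ decay of $g$, one gains one polynomial weight per iteration, so $f\in L^1(m)$ for all $m$; a further bootstrap gives all derivatives. Once $f$ is Schwartz, $h=\nabla_\xi\cdot f$ is Schwartz, and the slow branch $r^{2\lambda-3}$ is immediately excluded (equivalently, $h\in L^2_\xi(m)$ for $m$ large enough that \cite[Proposition~4.3]{MR2178064} applies directly). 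This is what you are missing: some independent a priori decay on $h$ beyond what the bare hypothesis $f\in L^1$ provides. The resolvent bootstrap using the decay of $g$ is the clean way to get it.
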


\begin{proof}
If \(f\in L^1_\xi\) satisfies
\[
(\cL - \alpha\Xi)f = \lambda f 
\]
then we claim that it belongs to the Schwartz class. Indeed, write
\[ 
f = -\alpha \int_0^\infty e^{-t \lambda} e^{t\cL} \Xi f dt. 
\]
Note $\Xi f = \Div_\xi (f \otimes g) - f \cdot \grad_\xi g$, hence by the decay of $g$, we have
\[
\norm{f}_{L^{1}(1)} \lesssim \int_0^\infty e^{-t \Re \lambda} \frac{1}{a(t)^{\frac{1}{2}}} dt \norm{f}_{L^1} \lesssim \frac{1}{\abs{\Re \lambda}}\norm{f}_{L^1}.
\]
By bootstrap it follows that $f \in L^1(m)$ for all $m \geq 0$. From there a bootstrap argument (going say, $1/2$ a derivative at a time) gives that $\grad_\xi^k f \in L^1(m)$ for all $k \geq 0$ and all $m \geq 0$; this implies that $f$ belongs to the Schwartz class.  
Hence, taking the divergence, 
\[
(\cL - \alpha g\cdot\nabla_\xi)(\nabla_\xi\cdot f) = (\lambda - \tfrac12)(\nabla_\xi\cdot f).
\]
In particular, \(\nabla_\xi\cdot f\in L^1_\xi\) is an eigenfunction of \(\cL - \alpha g\cdot\nabla_\xi\) with corresponding eigenvalue \(\lambda - \frac12\). From~\cite[Proposition~4.3]{MR2178064} we then see that \(\nabla_\xi\cdot f\in L^2_\xi(\infty)\).
\end{proof}

Finally, a similar argument yields the following:
\begin{lem}\label{lem:EV-Pi-Inf}
If \(\alpha\in \R\) and \(f\in\cD_1\) is an eigenfunction of \(\cL - \alpha\Pi\) with eigenvalue \(\lambda \notin \sigma_{\operatorname{ess}} (\mathcal{L} - \alpha \Pi)\) satisfying \(\Re\lambda>0\) then \(\xi^k \nabla_\xi^\ell \Div_\xi f\in L^2_\xi(\infty)\) for any integers $k,\ell$.

\end{lem}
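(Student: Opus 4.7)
The plan is to mimic Lemma~\ref{lem:EV-ag-Inf}, with the caveat that $\Pi$ involves the divergence of a tensor rather than of a vector. The argument splits into three steps.

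First, I would show $f \in \Schwartz$. Since $e^{\tau\cL}$ is a contraction semigroup on $L^1_\xi$, every $\lambda$ with $\Re\lambda > 0$ lies in the resolvent set of $\cL$ on $L^1_\xi$, and the eigenvalue equation may be rewritten as
\[
f = -\alpha \int_0^\infty e^{-t\lambda}\, e^{t\cL} \Pi f\, dt.
\]
Writing $\Pi f = g \otimes \Div_\xi f - \Div_\xi f \otimes g$ and integrating by parts via $g^i \partial_k f^{kj} = \partial_k(g^i f^{kj}) - (\partial_k g^i) f^{kj}$, one expresses $e^{t\cL}\Pi f$ as a divergence (smoothed by $e^{t\cL}$ with factor $a(t)^{-1/2} e^{-t/2}$ from \eqref{estimatesFP}) plus a pointwise term controlled by $|\nabla g| = O(\langle\xi\rangle^{-2})$. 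Alternately gaining weight and half a derivative, exactly as in Lemma~\ref{lem:EV-ag-Inf}, bootstraps $f \in L^1_\xi$ up to $f \in W^{k,1}_\xi(m)$ for all $k,m \geq 0$, hence $f$ is Schwartz.

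Second, with $f$ Schwartz, set $w = \Div_\xi f$. The commutator $[\Div_\xi,\cL] = \tfrac12 \Div_\xi$ (immediate from $\cL = \Delta_\xi + \tfrac12\xi\cdot\nabla_\xi + 1$) together with the identity $\Div_\xi \Pi f = \Gamma w - (\Div_\xi w)\,g$, which uses $\Div_\xi g = 0$, yields
\[
(\cL - \alpha \Gamma) w = (\lambda - \tfrac12) w - \alpha (\Div_\xi w)\,g.
\]
Applying $\Div_\xi$ once more with $s := \Div_\xi w$, the two surviving transport terms both equal $g\cdot\nabla_\xi s$ and therefore cancel, giving $\cL s = (\lambda - 1) s$. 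Hence $s \in \Schwartz$ is an eigenfunction of the Fokker-Planck operator with eigenvalue $\lambda - 1$.

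Third, one concludes by a case split. Since $\Re(\lambda - 1) > -1$, Proposition~\ref{prop:FPProps} forces either $s \equiv 0$ or $\lambda \in \{1,\tfrac12\}$ with $s$ lying in a finite-dimensional eigenspace of $\cL$ spanned by Gaussian-decaying functions, so $s \in L^2_\xi(\infty)$. In the first case $w$ is itself an eigenfunction of $\cL - \alpha \Gamma$ with eigenvalue $\lambda - \tfrac12$ satisfying $\Re(\lambda - \tfrac12) > -\tfrac12$, and Lemma~\ref{lem:EV-G-Inf} directly yields $\xi^k \nabla_\xi^\ell w \in L^2_\xi(\infty)$ for all $k,\ell$. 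In the second case $sg$ has Gaussian decay and the inhomogeneous equation $((\cL - \alpha\Gamma) - (\lambda - \tfrac12))\,w = -\alpha s g$ may be analyzed by the same asymptotic-ODE method as in Lemma~\ref{lem:EV-G-Inf}, again producing Gaussian decay for $w$ and its weighted derivatives.

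The main obstacle is the bootstrap to Schwartz class in the first step, where one must carefully interleave gains of weight and regularity using only the polynomial decay of $g$ and $\nabla g$. The inhomogeneous case $s \not\equiv 0$ in the third step is also mildly subtle (one must verify that $\lambda - \tfrac12$ is not an obstruction for inverting $\cL - \alpha\Gamma$), but reduces to the asymptotic ODE analysis already carried out in Lemma~\ref{lem:EV-G-Inf}.
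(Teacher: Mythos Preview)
Your proposal is correct and takes a genuinely different route from the paper. The paper does not take the second divergence: it works directly with $F=\Div_\xi f$ and performs the asymptotic ODE analysis on the full coupled system for $F$. Because the extra term $\alpha(\nabla_\xi\cdot F)g$ remains, the first-order matrix is \emph{not} diagonalizable, and the paper carries out a Jordan normal form computation to extract asymptotics of the form $r^{2\lambda-3}(A+B\log r)$ and $r^{1-2\lambda}e^{-r^2/4}(C+D\log r)$, then excludes the polynomially growing branch using $F=\Div_\xi f$ with $f\in L^1_\xi$.

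Your idea of applying $\Div_\xi$ once more is a real simplification: the identity $\cL s=(\lambda-1)s$ decouples the system, and in the generic case $s\equiv 0$ you land directly on Lemma~\ref{lem:EV-G-Inf} with no new ODE work and no Jordan block. The trade-off is that you must handle the exceptional cases $\lambda\in\{1,\tfrac12\}$ separately. There the inhomogeneous equation $\bigl(\cL-\alpha\Gamma-(\lambda-\tfrac12)\bigr)w=-\alpha sg$ has a Gaussian-decaying right-hand side, and variation of parameters against the asymptotic basis already computed in Lemma~\ref{lem:EV-G-Inf} produces a Gaussian-decaying particular solution; since $w\in\Schwartz$ excludes the $r^{2(\lambda-1/2)-2}$ homogeneous branch, you are done. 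This step is not hard, but it is where the work lives in your scheme, whereas the paper absorbs it into the single Jordan-form computation. Either way the ODE analysis of Lemma~\ref{lem:EV-G-Inf} is the engine; your decoupling just means you can invoke it rather than redo it in a non-diagonalizable setting.
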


\begin{proof}
If \(f\in \cD_1\) satisfies
\[
(\cL - \alpha\Pi)f = \lambda f,
\]
then its divergence \(F = \Div_\xi f\) satisfies
\[
(\cL - \alpha \Gamma)F + \alpha(\nabla_\xi\cdot F) g  = (\lambda - \frac12)F,
\]
and following a similar argument to Lemma~\ref{lem:EV-ag-Inf} we see that \(\nabla_\xi^k F\in L^2_\xi(m)\) for all \(k,m\geq 0\).

Following the argument of Lemma~\ref{lem:EV-G-Inf}, we switch to polar coordinates and expand in angular harmonics to obtain and equation for \(\Phi = \Phi_n = \begin{bmatrix}\widehat F^r_n \\\widehat F^\theta_n \end{bmatrix}\). Using that 
$$
\nabla \cdot (F^r e_r + F^\theta e_\theta) = \frac{1}{r} \partial_r (rF^r) + \frac{1}{r} \partial_\theta F^\theta,
$$
we obtain the equation
\[
\Phi'' + \left(\frac r2 + \frac1r + \begin{bmatrix}0&0 \\ \alpha rV&0\end{bmatrix}\right)\Phi' + \left(\frac32 - \lambda - \frac{1 + n^2}{r^2} + \frac{2in}{r^2}\begin{bmatrix}0&-1\\1&0\end{bmatrix} + \begin{bmatrix} -i\alpha nV &0\\\alpha \partial_r(rV)& 0 \end{bmatrix}\right)\Phi = 0.
\]

Switching to the variable $\rho = \frac{r^2}{4}$ and letting $\Psi(\rho) = \begin{bmatrix} \Phi \\ \Phi' \end{bmatrix}$, we find the equation
$$
\Psi' + \left( M(\rho) + B(\rho) \right) \Psi = 0, \qquad \mbox{where} \qquad M(\rho) = \begin{bmatrix} 0 & I \\ 0 & -I \end{bmatrix} + \frac{1}{\rho} \begin{bmatrix} 0 & 0 \\ (\lambda - \frac{3}{2}) I & \begin{bmatrix} -1 & 0 \\ \alpha/4\pi & -1 \end{bmatrix} \end{bmatrix} 
$$
and $|B(\rho)| \lesssim \frac{1}{\rho^2}$. Theorem III.8.1 in~\cite{MR0069338} does not quite apply, since the matrix $M(\rho)$ is not diagonalizable. However, it can be put in Jordan normal form: denoting its eigenvalues
$$
\mu_{\pm}(\rho) = \frac{1}{2}\left( - 1 - \frac{1}{\rho} \pm \sqrt{\left( 1+\frac{1}{\rho} \right)^2 + \frac{4\lambda - 6}{\rho}} \right),
$$
and switching to the basis
$$
e_1 = \begin{bmatrix} 0 \\ 1 \\ 0 \\ \mu_+ \end{bmatrix}, \qquad e_2 = \begin{bmatrix} 1 \\ 0 \\ \mu_+ \\ \frac{\alpha \mu_+}{4\pi(1 + \rho + 2\mu \rho)} \end{bmatrix}, \qquad e_3 = \begin{bmatrix} 0 \\ 1 \\ 0 \\ \mu_- \end{bmatrix}, \qquad e_4 = \begin{bmatrix} 1 \\ 0 \\ \mu_- \\ \frac{\alpha \mu_+}{4\pi(1 + \rho + 2\mu_- \rho)} \end{bmatrix}
$$
it becomes
$$
\begin{bmatrix} \mu_+ & \frac{\alpha \mu_+}{4\pi(1+\rho + 2\mu_+ \rho)} & 0 & 0 \\ 0& \mu_+ & 0 & 0 \\ 0 & 0 & \mu_- & \frac{\alpha \mu_-}{4\pi(1+\rho + 2\mu_- \rho)} \\ 0 & 0 & 0 & \mu_- \end{bmatrix}.
$$
Noting that $\mu_+(\rho) = \frac{\lambda - \frac{3}{2}}{\rho}+ O \left( \frac{1}{\rho^2} \right)$ and $\mu_-(\rho) = - 1 + \frac{\frac{1}{2}-\lambda}{\rho} + O \left( \frac{1}{\rho^2} \right)$, the Jordan blocks above, up to errors of order $O \left( \frac{1}{\rho^2} \right)$, can be written
$$
\begin{bmatrix} \frac{\lambda - \frac{3}{2}}{\rho} & \frac{\alpha (\lambda - \frac{3}{2})}{4\pi \rho} \\ 0 & \frac{\lambda - \frac{3}{2}}{\rho}\end{bmatrix} \qquad \mbox{and} \qquad \begin{bmatrix} - 1 + \frac{\frac{1}{2} - \lambda}{\rho} & \frac{\alpha}{4\pi \rho} \\ 0 & - 1 + \frac{\frac{1}{2} - \lambda}{\rho} \end{bmatrix}.
$$
Solving $f' = P f$ for each of these two matrices, one obtains solutions with the asymptotic behavior $\rho^{\lambda - \frac{3}{2}} (A+B \log \rho)$ and $\rho^{\frac{1}{2} - \lambda} e^{-\rho} (C+D \log \rho)$ (for constants $C$ and $D$), respectively. Since the matrix $S = \begin{bmatrix} e_1 & e_2 & e_3 & e_4 \end{bmatrix}$ has a nonsingular limit, and satisfies $S'(\rho) \lesssim \frac{1}{\rho^2}$, this conclusion can be transferred to the full system. In $r$ coordinates, this means that a basis of solutions has the asymptotics
$$
(e_1,e_2) r^{2\lambda -3}(A+B \log r) \quad \mbox{and} \quad (e_3,e_4) r^{1-2\lambda} e^{-\frac{r^2}4}(C+D \log r)
$$

However, as \(F = \Div_\xi f\) for \(f\in L^1_\xi\) we see that a behavior $\sim  r^{2\lambda -3}(A+B \log r)$ is excluded whenever \(\Re\lambda>0\).  Indeed, for \(R>0\) take \(\chi_R\in C^\infty_c(\R^2)\) to be a bump function supported on \(\{|\xi|\leq 2R\}\) and identically \(1\) on \(\{|\xi|\leq 1\}\). Then,
\begin{align*}
\abs{\int \brak{\xi} \chi_R(\xi) F d\xi} = \abs{\int \brak{\xi} \chi_R(\xi) \Div_\xi f d\xi} \lesssim \int \abs{f} d\xi,  
\end{align*}
hence $F \sim  r^{2\lambda -3}(A+B \log r)$ is excluded whenever \(\Re\lambda>0\) as the left hand side would diverge as \(R\to \infty\), leading to a contradiction. Therefore, \(\xi^k \nabla_\xi^\ell F\in L^2_\xi(\infty)\) as required.
\end{proof}

\subsection{Upper bounds on the eigenvalues}
As the eigenfunctions of the operators \(\cL - \alpha \Gamma\), \(\cL - \alpha\Lambda\), \(\cL - \alpha\Xi\), \(\cL - \alpha \Pi\) have Gaussian decay whenever \(\Re\lambda\) is sufficiently large, we may now use the fact that these operators exhibit certain symmetries with respect to the natural inner product on \(L^2_\xi(\infty)\) to obtain bounds on the eigenvalues.

We first consider the operator \(\cL - \alpha\Gamma\):
\begin{lem}[{\cite[Proposition~3.5]{MR2770021}}]\label{lem:GammaEV}
If \(\alpha\neq 0\), then there exists \(0<\mu(\alpha) \leq\frac12\) so that: if \(f\) such that $\xi^k \nabla_\xi^\ell f \in L^2_\xi(\infty)$ for all $k,\ell$ is an eigenfunction of \(\cL - \alpha\Gamma\) with eigenvalue \(\lambda\), then \(\Re\lambda\leq-\mu\).
\end{lem}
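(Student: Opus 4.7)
The plan is to adapt Gallay's spectral analysis for Burgers vortices in [MR2770021, Proposition~3.5]. Since the hypothesis gives $\xi^k\nabla_\xi^\ell f\in L^2_\xi(\infty)$ for all $k,\ell$, I would work throughout in the Gaussian-weighted Hilbert space $L^2_\xi(\infty)$ equipped with the inner product $\langle f,h\rangle_{L^2_\xi(\infty)} = \int f\cdot\bar h\, G^{-1}\,d\xi$, where all necessary integrations by parts are justified by the Gaussian decay.

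The first observation is that the advection operator $g\cdot\nabla_\xi$ is skew-Hermitian on $L^2_\xi(\infty)$. Indeed, $g$ is purely azimuthal while $G$ is radial, so $g\cdot\nabla G=0$, which gives $\nabla\cdot(Gg)=0$; integration by parts with the weight $G^{-1}$ then yields the skew-symmetry. Consequently the Hermitian part of $\cL-\alpha\Gamma$ is $\cL + \alpha(\nabla_\xi g)_{\mathrm{sym}}$, and only the bounded symmetric multiplication operator $(\nabla_\xi g)_{\mathrm{sym}}$ competes with the coercive negative quadratic form $\langle\cL f,f\rangle_{L^2_\xi(\infty)}$.

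Next, I would exploit rotational symmetry. In polar coordinates, $\cL-\alpha\Gamma$ commutes with the projections on angular harmonics, so $L^2_\xi(\infty)$ splits into an orthogonal direct sum of invariant subspaces $\mathcal{H}_n$ for $n\in\Z$. On $\mathcal{H}_n$ the angular Laplacian provides an additional $-n^2/r^2$ contribution in the Hermitian part, so for $|n|\geq N_0(\alpha)$ sufficiently large this term dominates $\alpha(\nabla_\xi g)_{\mathrm{sym}}$ and immediately yields $\Re\lambda \lesssim -|n|$ via the quadratic-form inequality. For the finitely many remaining harmonics $|n|<N_0$, the operator on $\mathcal{H}_n$ is a relatively compact perturbation of $\cL|_{\mathcal{H}_n}$ (using the Gaussian decay of $g$ and $\nabla_\xi g$), whose essential spectrum lies in $\{\Re\lambda\leq -\tfrac12\}$ for $n\neq 0$ and equals that half-plane for $n=0$. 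Hence on each such $\mathcal{H}_n$ only finitely many eigenvalues can lie in $\{\Re\lambda>-\tfrac12\}$.

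The main obstacle — which forces the quantitative, $\alpha$-dependent choice of $\mu$ and the upper bound $\mu\leq \tfrac12$ — is to show that these finitely many residual eigenvalues all satisfy $\Re\lambda\leq -\mu(\alpha)<0$. For $n=0$ the drift drops out and one faces a radial ODE system for $\Phi = (\widehat{f}^r_0,\widehat{f}^\theta_0)$; the asymptotic analysis already performed in Lemma~\ref{lem:EV-G-Inf} rules out the algebraically-decaying branch for $\Re\lambda > 0$, and a direct inspection of the exponentially-decaying branch at the origin shows that no nontrivial solution compatible with the required Gaussian tail exists unless $\Re\lambda<0$. For each $0<|n|<N_0$, an analogous angular-harmonic ODE reduction together with a compactness/continuation argument in $\alpha$ (starting from $\alpha=0$, where the spectrum of $\cL$ on $\mathcal{H}_n$ is known explicitly to consist of $\{-k/2 : k\geq |n|\}$) shows that eigenvalues cannot cross the imaginary axis. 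Taking $\mu(\alpha)$ to be the minimum over $|n|<N_0$ of the strictly positive distances from the obtained point spectra to the imaginary axis (and capped at $\tfrac12$) completes the proof.
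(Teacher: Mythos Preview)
Your approach diverges from the paper's, and the continuation argument has a genuine gap. The paper does not proceed via angular-harmonic decomposition plus continuation in $\alpha$; instead it exploits a specific algebraic reduction: from $(\cL-\alpha\Gamma)f=\lambda f$ one computes that the scalar quantities $\xi\cdot f$ and $\nabla_\xi\cdot f$ satisfy much simpler equations in which the stretching term $f\cdot\nabla_\xi g$ disappears. In particular, $\nabla_\xi\cdot f$ solves a pure advection--diffusion eigenproblem $(\cL-\alpha g\cdot\nabla_\xi)(\nabla_\xi\cdot f)=(\lambda-\tfrac12)(\nabla_\xi\cdot f)$ after a shift, and since $g\cdot\nabla_\xi$ is skew on $L^2_\xi(\infty)$ and $P_0(\nabla_\xi\cdot f)=0$, one reads off $\Re\lambda\le 0$ directly; a short argument then excludes $\Re\lambda=0$ and bounds $|\Im\lambda|$. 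The remaining cases $\nabla_\xi\cdot f=0$ are handled by the analogous identity for $\xi\cdot f$, and finally by the energy identity for $f$ itself (your formula for the Hermitian part), in which the troublesome symmetric stretching term $\alpha\Re\langle p\,\xi\cdot f,\xi^\perp\cdot f\rangle$ vanishes because $\xi\cdot f=0$. This yields $\Re\lambda\le -\tfrac12$ in those cases with no continuation needed.

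The gap in your argument is the step ``eigenvalues cannot cross the imaginary axis.'' A continuation argument from $\alpha=0$ does not by itself prevent an eigenvalue from drifting rightward and touching $\Re\lambda=0$ at some finite $\alpha$; to close it you would need, for every $\alpha\neq 0$, an independent proof that there is no eigenfunction with $\Re\lambda=0$ on each $\mathcal H_n$ --- which is essentially the statement you are trying to prove. Your ``direct inspection of the exponentially-decaying branch at the origin'' for $n=0$ is too vague to fill this, and for $0<|n|<N_0$ you give no mechanism at all. The paper's scalar reductions for $\xi\cdot f$ and $\nabla_\xi\cdot f$ are precisely what circumvents this difficulty: they strip away the symmetric part of $\nabla_\xi g$ that obstructs a direct quadratic-form bound on $f$.
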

\bpf
Let $f$ satisfy the hypotheses of the theorem, in particular
\[
(\cL - \alpha\Gamma)f = \lambda f.
\]
We then compute
\begin{align}
\cL(\xi\cdot f) - \frac12(\xi\cdot f) - 2\nabla_\xi\cdot f - \alpha g\cdot\nabla_\xi(\xi\cdot f) &= \lambda\xi\cdot f,\label{DotEV}\\
\cL(\nabla_\xi\cdot f) + \frac12(\nabla_\xi\cdot f) - \alpha g\cdot\nabla_\xi(\nabla_\xi\cdot f) &= \lambda \nabla_\xi\cdot f.\label{DivEV}
\end{align}
Integrating by parts then yields the identities,
\begin{align}
\Re\lambda\|f\|_{L^2_\xi(\infty)}^2 &= \<\cL f,f\>_{L^2_\xi(\infty)} + \alpha \Re\<p\ \xi\cdot f,\xi^\perp \cdot f\>_{L^2_\xi(\infty)},\label{ID1}\\
\Re\lambda \|\xi\cdot f\|_{L^2_\xi(\infty)}^2 &= \<\cL(\xi\cdot f),\xi\cdot f\>_{L^2_\xi(\infty)} - \frac12\|\xi\cdot f\|_{L^2_\xi(\infty)}^2 - 2\Re \<\nabla_\xi\cdot f,\xi\cdot f\>_{L^2_\xi(\infty)},\label{ID2}\\
\Re\lambda \|\nabla_\xi\cdot f\|_{L^2_\xi(\infty)}^2 &= \<\cL(\nabla_\xi\cdot f),\nabla_\xi\cdot f\>_{L^2_\xi(\infty)} + \frac12\|\nabla_\xi\cdot f\|_{L^2_\xi(\infty)},\label{ID3}
\end{align}
where the smooth function \(p(\xi) = -\frac1{\pi |\xi|^4}(1 - e^{-\frac14|\xi|^2}) + \frac1{4\pi|\xi|^2}e^{-\frac14|\xi|^2}\) so that (c.f.~\eqref{p-def})
\[
\nabla_\xi g = p(\xi)\left(\xi^\perp\otimes \xi\right) + \frac1{2\pi|\xi|^2}\left(1 - e^{-\frac14|\xi|^2}\right)\begin{bmatrix}0&-1\\1&0\end{bmatrix}.
\]

\smallskip\noindent\underline{\emph{Case 1:} \(\nabla_\xi\cdot f\neq 0\).} Here we observe that the projection \(P_0(\nabla_\xi\cdot f) = 0\) so we may apply the identity \eqref{ID3} to conclude that \(\Re\lambda\leq 0\). If \(\Re\lambda = 0\) then from the properties of \(\cL\) we have \(\nabla_\xi\cdot f\in \Sp\{\xi_1 G,\xi_2 G\}\). However, we may explicitly verify that if \(\nabla_\xi\cdot f\) satisfies \eqref{DivEV} with \(\lambda = 0\) then \(\nabla_\xi\cdot f\not\in \Sp\{\xi_1 G,\xi_2 G\}\). As a consequence, we must have \(\Re\lambda < 0\).

In order to prove the existence of \(\mu\), it remains to rule out the possibility that there exists a sequence of eigenvalues \(\{\lambda_j\}\) so that \(\Re\lambda_j\nearrow0\) and \(|\Im \lambda_j|\rightarrow\infty\). If \(\Re\lambda\geq - \frac12\) then integrating by parts in the second term in \eqref{ID3} we have
\[
\|\nabla_\xi(\nabla_\xi\cdot f)\|_{L^2_\xi(\infty)}^2 = (\tfrac32 - \Re\lambda)\|\nabla_\xi\cdot f\|_{L^2_\xi(\infty)}^2 \leq 2\|\nabla_\xi\cdot f\|_{L^2_\xi(\infty)}^2.
\]
Further, a similar computation to \eqref{ID3} yields
\begin{align*}
|\Im \lambda|\ \|\nabla_\xi\cdot f\|_{L^2_\xi(\infty)}^2 &= |\alpha|\ |\Im\<g\cdot\nabla_\xi(\nabla_\xi\cdot f),\nabla_\xi\cdot f\>_{L^2_\xi(\infty)}|\\\
&\leq |\alpha|\ \|g\|_{L^\infty_\xi}\|\nabla_\xi(\nabla_\xi\cdot f)\|_{L^2_\xi(\infty)}\|\nabla_\xi\cdot f\|_{L^2_\xi(\infty)}\\
&\lesssim |\alpha|\ \|\nabla_\xi\cdot f\|_{L^2_\xi(\infty)}^2.
\end{align*}
As the (discrete) spectrum of \(\cL - \alpha\Gamma\) on \(L^2_\xi(\infty)\) consists only of isolated points, it is then clear that there exists some \(0<\mu\leq \frac12\) so that
\[
\Re\lambda\leq-\mu.
\]

\smallskip\noindent\underline{\emph{Case 2:} \(\nabla_\xi\cdot f = 0\), \(\xi\cdot f\neq 0\).} Here we may use the identity \eqref{ID2} with the fact that
\[
\<\cL (\xi\cdot f),(\xi\cdot f)\>_{L^2_\xi(\infty)}\leq 0,
\]
to show that \(\Re\lambda\leq-\frac12\).

\smallskip\noindent\underline{\emph{Case 3:} \(\nabla_\xi\cdot f = 0 = \xi\cdot f\).} If \(\lambda = 0\) then from the identity \eqref{ID1} and the properties of \(\cL\) we have \(f \in \Sp\{G\}\), which is a contradiction as \(\nabla_\xi\cdot f = 0\). Otherwise we may integrate the equation \((\cL - \alpha \Gamma)f = \lambda f\) to show that \(P_0f = 0\) so from the properties of \(\cL\),
\[
\<\cL f, f\>_{L^2_\xi(\infty)}\leq -\tfrac12\|f\|_{L^2_\xi(\infty)}^2.
\]
We may then apply the identity \eqref{ID1} to again conclude that \(\Re\lambda\leq-\frac12\).
\epf

Next, we consider the operator \(\cL - \alpha\Lambda\):
\begin{lem}[{\cite[Proposition~4.1]{MR2123378}}]\label{lem:UghMod}
If \(f\) such that $\xi^k \nabla^\ell_\xi f \in L^2_\xi(\infty)$ for all $k,\ell$ is an eigenfunction of \(\cL - \alpha\Lambda\) with eigenvalue \(\lambda\), then \(\Re\lambda\leq 0\). Further, if \(\int_{\R^2} f\,d\xi = 0\) then \(\Re\lambda\leq -\frac12\).
\end{lem}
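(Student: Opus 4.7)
The strategy is to test the eigenvalue equation $(\cL - \alpha\Lambda)f = \lambda f$ against $f$ itself in the weighted inner product $\langle\cdot,\cdot\rangle_{L^2_\xi(\infty)}$, mirroring the argument of Lemma~\ref{lem:GammaEV}. The hope is that the $\alpha$--dependent quadratic form vanishes identically, so that the bound on $\Re\lambda$ reduces to the spectral analysis of the Fokker-Planck operator $\cL$ on $L^2_\xi(\infty)$ already recalled in Proposition~\ref{prop:FPProps}. The Gaussian decay hypothesis on $f$ (and its derivatives) is what will justify all the integrations by parts.

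First, I would check that advection by $g$ contributes nothing to the real part: since $\nabla\cdot g = 0$ and $g\cdot\nabla G = 0$ (the stream function of $g$ is radial, so $g$ is orthogonal to the radial field $\nabla G$), integration by parts gives
\[
2\Re\langle g\cdot\nabla_\xi f,f\rangle_{L^2_\xi(\infty)} = -\int|f|^2\, g\cdot\nabla_\xi G^{-1}\,d\xi = \int |f|^2 \frac{g\cdot\nabla_\xi G}{G^2}\,d\xi = 0.
\]
Next, for the non-local part, introduce the stream function $\psi = (-\Delta_\xi)^{-1}f$, so that $(-\Delta_\xi)^{-1}\nabla_\xi^\perp f = \nabla_\xi^\perp \psi$. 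Using $\nabla_\xi G = -\tfrac12\xi G$, the weight $G^{-1}$ cancels and
\[
\langle (-\Delta_\xi)^{-1}\nabla_\xi^\perp f\cdot\nabla_\xi G,\,f\rangle_{L^2_\xi(\infty)} = -\tfrac12\int (\xi\cdot\nabla_\xi^\perp\psi)\,\overline{f}\,d\xi = \tfrac12\int (\partial_\theta\psi)\Delta_\xi\overline\psi\,d\xi,
\]
where in polar coordinates $\xi\cdot\nabla_\xi^\perp\psi = \partial_\theta\psi$. One integration by parts (boundary terms vanishing by Gaussian decay of $f$ and the decay of $\nabla\psi$ at infinity) turns this into $-\tfrac12\int\partial_\theta\nabla\psi\cdot\nabla\overline\psi\,d\xi$, whose real part equals $-\tfrac14\int\partial_\theta|\nabla\psi|^2\,d\xi = 0$ by $2\pi$-periodicity in $\theta$.

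Combining these two cancellations gives $\Re\lambda\,\|f\|_{L^2_\xi(\infty)}^2 = \Re\langle\cL f,f\rangle_{L^2_\xi(\infty)}$. Since $\cL$ is self-adjoint and non-positive on $L^2_\xi(\infty)$ (Proposition~\ref{prop:FPProps}), this immediately yields $\Re\lambda\leq 0$. Moreover, the identity $\langle f,G\rangle_{L^2_\xi(\infty)} = \int f\,d\xi$ shows that the assumption $\int f\,d\xi = 0$ is exactly the orthogonality to $\ker\cL = \Sp\{G\}$, so the spectral gap of $\cL$ between $0$ and $-\tfrac12$ upgrades the bound to $\Re\lambda\leq -\tfrac12$.

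\textbf{Main obstacle.} The only delicate point is the integration by parts in Step~2, because $\psi$ need not decay at infinity (only logarithmically when $\int f\,d\xi\neq 0$). However, the Gaussian decay of $f$ together with the Biot-Savart representation $\nabla\psi = O(|\xi|^{-1})$ (with faster decay of higher derivatives) makes the boundary contributions at $|\xi|\to\infty$ vanish; this is the key technical check that makes the entire $\alpha$-contribution drop out and reduces the lemma to the Fokker-Planck spectrum.
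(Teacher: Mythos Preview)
Your proposal is correct and follows exactly the paper's approach: the paper's proof consists of the single line ``A short computation shows that $\Lambda$ is skew-adjoint on $L^2_\xi(\infty)$,'' followed by the same reduction to the spectrum of $\cL$ via Proposition~\ref{prop:FPProps}. You have simply carried out that short computation explicitly, splitting $\Lambda$ into its local and non-local parts and verifying that each contributes nothing to the real part of the quadratic form.
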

\bpf
A short computation shows that \(\Lambda\) is skew-adjoint on \(L^2_\xi(\infty)\). As a consequence,
\[
\Re\lambda\|f\|_{L^2_\xi(\infty)}^2 = \Re\<\cL f,f\>.
\]
The result then follows from Proposition~\ref{prop:FPProps}.
\epf

Next, we consider the operator \(\cL - \alpha\Xi\), and have the following:
\begin{lem}\label{lem:EV-Xi}
If \(f\) such that $\xi^k \nabla^\ell_\xi f \in L^2_\xi(\infty)$ for all $k,\ell$ is an eigenfunction of \(\cL - \alpha\Xi\) with eigenvalue \(\lambda\), then \(\Re\lambda\leq 0\).
\end{lem}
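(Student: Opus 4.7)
The plan is to reduce the eigenvalue problem for $\cL - \alpha \Xi$ to that of $\cL - \alpha g\cdot\nabla_\xi$ acting on scalars, by taking the divergence of the eigenvalue equation. Suppose $(\cL - \alpha \Xi) f = \lambda f$, i.e.
\[
\cL f - \alpha (\nabla_\xi\cdot f)\, g = \lambda f,
\]
and set $h = \nabla_\xi \cdot f$. Using the commutator identity $\nabla_\xi\cdot(\cL f) = \cL h + \tfrac12 h$ (which follows from $\cL = \Delta_\xi + \tfrac12 \xi\cdot\nabla_\xi + 1$) together with $\nabla_\xi\cdot g = 0$, I would compute
\[
\nabla_\xi\cdot\bigl((\nabla_\xi\cdot f)\, g\bigr) = g\cdot\nabla_\xi h,
\]
and deduce the scalar equation
\[
(\cL - \alpha\, g\cdot\nabla_\xi)\, h = \bigl(\lambda - \tfrac12\bigr) h.
\]

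I would then split into two cases. If $h \equiv 0$, then $\cL f = \lambda f$ with $f$ having Gaussian decay, so by Proposition~\ref{prop:FPProps} we have $\lambda \in \{-k/2 : k \in \N\}$ and in particular $\Re\lambda \leq 0$. Otherwise $h \not\equiv 0$, and I will take the $L^2_\xi(\infty)$ inner product with $h$. The key observation is that $g\cdot\nabla_\xi$ is \emph{skew-adjoint} on $L^2_\xi(\infty)$: indeed, since $g$ is proportional to $\xi^\perp$ while $\nabla_\xi G^{-1}$ is proportional to $\xi$, we have $g\cdot \nabla_\xi G^{-1} = 0$, and combined with $\nabla_\xi\cdot g = 0$ this gives $\nabla_\xi\cdot(g G^{-1}) = 0$, hence
\[
2\Re\langle g\cdot\nabla_\xi h, h\rangle_{L^2_\xi(\infty)} = \int g\cdot\nabla_\xi |h|^2\, G^{-1}\, d\xi = 0.
\]
Consequently
\[
\Re\bigl(\lambda - \tfrac12\bigr)\,\|h\|_{L^2_\xi(\infty)}^2 = \Re\langle \cL h, h\rangle_{L^2_\xi(\infty)}.
\]

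It remains to show the right-hand side is bounded by $-\tfrac12 \|h\|^2_{L^2_\xi(\infty)}$. Since $f$ and all its derivatives have Gaussian decay by assumption, the integral $\int h\, d\xi = \int \nabla_\xi\cdot f\, d\xi = 0$, so $P_0 h = 0$ in the notation of Proposition~\ref{prop:FPProps}. On $L^2_\xi(\infty)$ the operator $\cL$ is self-adjoint with pure point spectrum $\{-k/2 : k \geq 0\}$, so on the orthogonal complement of the $0$-eigenspace we have $\Re\langle \cL h, h\rangle \leq -\tfrac12 \|h\|^2_{L^2_\xi(\infty)}$. Combining,
\[
\Re\bigl(\lambda - \tfrac12\bigr) \leq -\tfrac12, \qquad \text{i.e.,} \qquad \Re\lambda \leq 0.
\]

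The main point — really the only point where anything subtle happens — is the verification that $g\cdot\nabla_\xi$ is skew-adjoint on the Gaussian-weighted space $L^2_\xi(\infty)$; everything else is bookkeeping using the properties of $\cL$ already recorded in Proposition~\ref{prop:FPProps}.
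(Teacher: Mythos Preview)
Your proof is correct and follows essentially the same approach as the paper: both take the divergence to reduce to the scalar equation $(\cL - \alpha g\cdot\nabla_\xi)h = (\lambda - \tfrac12)h$, use the skew-adjointness of $g\cdot\nabla_\xi$ on $L^2_\xi(\infty)$, and exploit $P_0 h = 0$ to obtain $\Re\langle \cL h,h\rangle \leq -\tfrac12\|h\|^2$. You are in fact slightly more careful than the paper, which omits the case $h\equiv 0$ and leaves the skew-adjointness implicit.
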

\begin{proof}
We recall that for \(F = \nabla_\xi\cdot f\) we have
\[
(\cL - \alpha g\cdot\nabla_\xi)F = (\lambda - \tfrac12)F.
\]
As \(P_0F = 0\) we may then use Proposition~\ref{prop:FPProps} to obtain
\[
(\Re\lambda - \tfrac12)\|F\|_{L^2_\xi(\infty)}^2 = \<\cL F,F\>_{L^2_\xi(\infty)}\leq -\tfrac12 \norm{F}_{L^2_\xi(\infty)}^2
\]
and hence \(\Re\lambda\leq 0\).
\end{proof}

Finally, we consider the operator \(\cL - \alpha\Pi\):
\begin{lem}\label{lem:EV-Pi}
If $f$ such that $\xi^k \nabla^\ell_\xi f \in L^2_\xi(\infty)$ for all $k,\ell$ is an eigenfunction of \(\cL - \alpha\Pi\) with eigenvalue \(\lambda\),  then \(\Re\lambda\leq 0\).
\end{lem}
\begin{proof}
We recall that \(F = \Div_\xi f\) satisfies the equation
\[
(\cL - \alpha\Gamma)F + \alpha (\nabla_\xi\cdot F)g = (\lambda - \tfrac12) F.
\]
Arguing as in Lemma~\ref{lem:GammaEV} we see that \(\xi\cdot F,\nabla_\xi\cdot F\in L^2_\xi(\infty)\) satisfy the equations
\begin{gather*}
\cL(\xi\cdot F) - \tfrac12(\xi\cdot F) - 2\nabla_\xi\cdot F - \alpha g\cdot\nabla_\xi(\xi\cdot F)  = (\lambda - \tfrac12)(\xi\cdot F),\\
\cL(\nabla_\xi\cdot F) + \tfrac12(\nabla_\xi\cdot F) = (\lambda - \tfrac12)(\nabla_\xi\cdot F)
\end{gather*}
Integrating by parts then yields the analogues of the identities \eqref{ID1}--\eqref{ID3}:
\begin{align}
\Re\lambda\|F\|_{L^2_\xi(\infty)} &= \<\cL F,F\>_{L^2_\xi(\infty)} + \tfrac12\|F\|_{L^2_\xi(\infty)}^2 + \alpha\Re\<p \xi\cdot F + q\nabla_\xi\cdot F,\xi^\perp\cdot F\>_{L^2_\xi(\infty)},\label{ID1*}\\
\Re\lambda\|\xi\cdot F\|_{L^2_\xi(\infty)}^2 &= \<\cL(\xi\cdot F),\xi\cdot F\>_{L^2_\xi(\infty)} - 2\Re\<\nabla_\xi \cdot F,\xi\cdot F\>_{L^2_\xi(\infty)},\label{ID2*}\\
\Re\lambda\|\nabla_\xi\cdot F\|_{L^2_\xi(\infty)}^2 &= \<\cL(\nabla_\xi\cdot F),\nabla_\xi\cdot F\>_{L^2_\xi(\infty)} + \|\nabla_\xi\cdot F\|_{L^2_\xi(\infty)}^2\label{ID3*},
\end{align}
where the functions \(
p = - \frac1{\pi|\xi|^4}(1 - e^{-\frac14|\xi|^2}) + \frac1{4\pi|\xi|^2}e^{-\frac14|\xi|^2}\), \(q = \frac1{2\pi|\xi|^2}(1 - e^{-\frac14|\xi|^2})\).

\smallskip\noindent\underline{\emph{Case 1:} \(\nabla_\xi\cdot F\neq 0\).} We observe that \(P_0(\nabla_\xi\cdot F) = 0\) and as \(F = \Div_\xi f\) we also have \(P_1(\nabla_\xi\cdot F) = 0\). From Proposition~\ref{prop:FPProps} we then have,
\[
\<\cL(\nabla_\xi\cdot F),\nabla_\xi\cdot F\>_{L^2_\xi(\infty)} \leq -\|F\|_{L^2_\xi(\infty)}^2,
\]
so from the identity \eqref{ID3*} we obtain \(\Re\lambda\leq 0\).

\smallskip\noindent\underline{\emph{Case 2:} \(\nabla_\xi\cdot F = 0\), \(\xi\cdot F\neq 0\).} Here we simply apply the identity \eqref{ID2*} to obtain \(\Re\lambda\leq 0\).

\smallskip\noindent\underline{\emph{Case 3:} \(\nabla_\xi\cdot F = 0 = \xi\cdot F\).} Here we apply the identity \eqref{ID1*} and use the fact that \(P_0F = 0\) to obtain
\[
\<\cL F,F\>_{L^2_\xi(\infty)} + \tfrac12\|F\|_{L^2_\xi(\infty)}^2\leq 0.
\]
\end{proof}

\subsection{Essential spectrum and growth bound}

Given a bounded operator $T$ on a Banach space $X$, its essential spectrum $\sigma_{ess}(T)$ is the set of $\lambda \in \mathbb{C}$ such that $T - \lambda$ is not Fredholm. The essential spectral radius is then given by
$$
r_{ess}(T,X) = \sup \{ |\lambda|,\; \lambda \in \sigma_{ess}(T) \}.
$$

\begin{lem} With the previous definition,
\begin{align*}
&r_{ess}(e^{\tau(\mathcal{L} - \alpha \Gamma)},L^2_\xi(m)) = r_{ess}(e^{\tau(\mathcal{L} - \alpha \Lambda)},L^2_\xi(m)) = e^{\frac{1 - m}2\tau},\\
&r_{ess}(e^{\tau(\cL - \alpha \Gamma)},L^1_\xi) = r_{ess}(e^{\tau(\cL - \alpha \Xi)},L^1_\xi) = r_{ess}(e^{\tau(\cL - \alpha \Pi)},L^1_\xi) = 1.
\end{align*}
\end{lem}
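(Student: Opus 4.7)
\medskip

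My plan is to reduce all five identities to a single principle: the essential spectral radius of a $C_0$-semigroup is preserved under bounded perturbations of its generator provided the resulting semigroup differs from the original by a compact operator. The reference semigroup $e^{\tau\cL}$ has essential spectral radius $e^{\frac{1-m}{2}\tau}$ on $L^2_\xi(m)$ (this is immediate from Proposition~\ref{prop:FPProps}, since the continuous part of $\sigma(\cL)$ is exactly the closed half-plane $\{\Re\lambda \leq \frac{1-m}{2}\}$), and $1$ on $L^1_\xi$ (from the explicit kernel \eqref{etcl}, $e^{\tau\cL}$ is mass-preserving with $\|e^{\tau\cL}\|_{L^1\to L^1}=1$, and no discrete eigenvalue exceeds $1$ in modulus by Proposition~\ref{prop:FPProps}). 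So it suffices, for each $P \in \{\Gamma,\Lambda,\Xi,\Pi\}$ acting on the relevant space, to show that $e^{\tau(\cL-\alpha P)} - e^{\tau\cL}$ is a compact operator for every $\tau>0$.

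To prove the compactness I would use the Duhamel identity
\[
e^{\tau(\cL-\alpha P)} - e^{\tau\cL} = -\alpha \int_0^\tau e^{(\tau-s)\cL}\, P\, e^{s(\cL-\alpha P)}\,ds,
\]
which reduces matters to proving, for each $P$, that the composition $e^{(\tau-s)\cL}\, P\, e^{s(\cL-\alpha P)}$ is compact with norm integrable on $(0,\tau)$. Using the smoothing/weight-gain bounds \eqref{estimatesFP} for $e^{\tau\cL}$, the semigroup $e^{s(\cL-\alpha P)}$ is first shown to map $L^2_\xi(m)$ (resp.\ $L^1_\xi$) boundedly into itself by the Banach fixed-point construction already outlined in the appendix. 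The outer factor $e^{(\tau-s)\cL}$ then supplies \emph{both} a gain of one full derivative (smoothing) and a gain of one order of polynomial weight (the convolution with a Gaussian at scale $a(\tau-s)$ improves spatial localization). Combined with the explicit spatial decay of the coefficients of $P$ — $|g(\xi)|\lesssim \langle\xi\rangle^{-1}$, $|\nabla g(\xi)|\lesssim\langle\xi\rangle^{-2}$, and $G$ together with $\nabla G$ decaying like a Gaussian — the Rellich–Kondrachov-type compact embedding $H^1_{\loc} \cap L^2_\xi(m') \hookrightarrow L^2_\xi(m)$ (for $m'>m$) then gives the desired compactness in $L^2_\xi(m)$. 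For $L^1_\xi$ one argues via the analogous weighted compact embedding after noting that, under $e^{(\tau-s)\cL}$, elements of $L^1_\xi$ land in $L^1_\xi(m)\cap W^{1,1}$ for every $m$, by the same kernel formula.

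The main obstacle is handling the nonlocal pieces of $\Lambda$ and $\Pi$. For $\Lambda f = g\cdot\nabla f - \nabla G\cdot\nabla^\perp(-\Delta_\xi)^{-1}f$, the second term involves the Biot–Savart operator, which does not preserve compact support. However, one multiplies by $\nabla G$, which has Gaussian decay, so the relevant mapping factors through $f \mapsto \nabla G \cdot (\nabla_\xi^\perp (-\Delta_\xi)^{-1} f)$; using Lemma~\ref{lem:Nonlocal} to control the Biot–Savart piece in $L^r$ for $r$ slightly larger than $2$, and then multiplying by the Gaussian weight, the composition with the smoothing/localizing action of $e^{(\tau-s)\cL}$ again yields compactness. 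For $\Pi f = g\otimes\Div_\xi f - \Div_\xi f\otimes g$ acting on $L^1_\xi$, the derivative loss is absorbed by the short-time smoothing $a(\tau-s)^{-1/2}$ from \eqref{estimatesFP}, which is integrable in $s$, and the decay of $g$ at infinity restores compactness. A completely analogous computation works for $\Xi$ and for $\Gamma$ on $L^1_\xi$. Once compactness of the difference is established, the preservation of essential spectral radius is a standard consequence (e.g.\ Nussbaum's formula or the Desch–Schappacher perturbation theorem), completing the proof of the lemma.
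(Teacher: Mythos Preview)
Your approach is essentially the same as the paper's: show that each perturbed semigroup differs from $e^{\tau\cL}$ by a compact operator, then invoke stability of the essential spectrum under compact perturbations. The paper's own proof simply cites \cite{MR1912106} (their Theorem~A.1 and Proposition~A.2) for both the value of $r_{ess}(e^{\tau\cL})$ and the compact-perturbation argument, whereas you spell out the Duhamel/smoothing/Rellich--Kondrachov mechanism explicitly; this extra detail is fine and in the same spirit.

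One small gap worth noting: your computation of $r_{ess}(e^{\tau\cL},L^1_\xi)=1$ only yields the upper bound. Mass preservation and $\|e^{\tau\cL}\|_{L^1\to L^1}=1$ give spectral radius $1$, but $0$ is an eigenvalue of $\cL$ (eigenfunction $G$), so $1$ could a priori be an isolated eigenvalue of $e^{\tau\cL}$ rather than essential spectrum. To get $r_{ess}=1$ exactly you need to know that on $L^1_\xi$ the essential spectrum of $\cL$ fills the half-plane $\{\Re\lambda\leq 0\}$; this is contained in the Gallay--Wayne reference the paper cites, but does not follow from Proposition~\ref{prop:FPProps}, which is stated only for $L^2_\xi(m)$.
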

\begin{proof}
The essential spectrum is stable by compact perturbations. Relying on~\cite[Theorem~A.1]{MR1912106}, \cite[Proposition~A.2]{MR1912106}, the result follows from the fact that $e^{\tau(\mathcal{L} - \alpha \Gamma)}$ and $e^{\tau(\mathcal{L} - \alpha \Lambda)}$ are compact perturbations of $e^{\tau \mathcal{L}}$ on \(L^2_\xi(m)\), and similarly that $e^{\tau(\mathcal{L} - \alpha \Gamma)}$, $e^{\tau(\mathcal{L} - \alpha \Xi)}$, $e^{\tau(\mathcal{L} - \alpha \Pi)}$ are compact perturbations of \(e^{\tau\cL}\) on \(L^1_\xi\).
\end{proof}

Using the previous estimates we may now complete the proof of Propositions~\ref{prop:SemigroupL2m},~\ref{prop:SemigroupL1}:

\bpf[Proof of Proposition~\ref{prop:SemigroupL2m}]

By~\cite[Definition~IV.2.10]{MR1721989}, the essential growth bound of the semigroups $e^{\tau(\mathcal{L} - \alpha \Gamma)}$ and $e^{\tau(\mathcal{L} - \alpha \Lambda)}$ on \(L^2_\xi(m)\) is $\omega_{ess} = \frac{1 - m}{2}$. By~\cite[Corollary~IV.2.11]{MR1721989}, we learn that $\sigma(\mathcal{L} - \alpha \Gamma) \cap \{\lambda\; | \; \operatorname{Re} \lambda>\frac{1 - m}{2}\}$ and $\sigma(\mathcal{L} - \alpha \Lambda) \cap \{\lambda\; | \; \operatorname{Re} \lambda>\frac{1 - m}{2}\}$ only consist of eigenvalues.

Lemmas~\ref{lem:EV-G-Inf},~\ref{lem:GammaEV} show that all eigenvalues of \(\cL - \alpha\Gamma\) on $L^2_\xi(m)$, with $m>1$, satisfy \(\Re\lambda\leq 0\); with the improvement if $m>1+2\mu$ that \(\Re\lambda\leq -\mu\), for some \(\mu = \mu(\alpha)\in (0,\frac12]\) whenever \(\alpha\neq0\). Similarly, Lemmas~\ref{lem:EV-Lambda-Inf},~\ref{lem:UghMod} show that all eigenvalues of \(\cL - \alpha\Lambda\) on $L^2_\xi(m)$, with $m>1$,  satisfy \(\Re\lambda\leq 0\); with the improvement \(\Re\lambda\leq -\mu\) on $L^2_\xi(1+2\mu)$, with $0<\mu<\frac{1}{2}$, whenever $\int f^z\,d\xi = 0$.

Applying~\cite[Corollary IV.2.11]{MR1721989} once again gives the desired growth bounds of both semigroups, since for \(m>1 + 2\mu\),  we have \(e^{- \mu\tau} > e^{\tau\frac{1 - m}2}\).

\epf

\bpf[Proof of Proposition~\ref{prop:SemigroupL1}]
Applying an identical argument to the proof of Proposition~\ref{prop:SemigroupL2m} we see that the essential growth bound of the semigroups \(e^{\tau(\cL - \alpha \Gamma)}\), \(e^{\tau(\cL - \alpha \Xi)}\), \(e^{\tau(\cL - \alpha\Pi)}\) on \(L^1_\xi\) is \(\omega_{ess} = 0\). Further, applying Lemmas~\ref{lem:EV-G-Inf},~\ref{lem:GammaEV} for \(\cL - \alpha\Gamma\), Lemmas~\ref{lem:EV-ag-Inf},~\ref{lem:EV-Xi} for \(\cL - \alpha\Xi\) and Lemmas~\ref{lem:EV-Pi-Inf},~\ref{lem:EV-Pi} for \(\cL - \alpha \Pi\) we see that all eigenvalues of the corresponding operators satisfy \(\Re\lambda\leq 0\). The corresponding growth bounds for the semigroups then follow from~\cite[Corollary IV.2.11]{MR1721989}.
\epf

\end{appendix}
\bibliography{refs}
\end{document}